\newcommand{\R}{{\mathbf{R}}}
\newcommand{\C}{{\mathbf{C}}}
\newcommand{\lefthook}{\mbox{$\, \rule{8pt}{.5pt}\rule{.5pt}{6pt}\, \, $}}
\newcommand{\SE}{\mathop{\! \, \rm SE }\nolimits}
\newcommand{\SO}{\mathop{\! \, \rm SO }\nolimits}
\newcommand{\Diff}{\mathop{\! \, \rm Diff }\nolimits}
\newcommand{\diff}{\mathop{\! \, \rm diff }\nolimits}
\theoremstyle{plain}
\newtheorem{claim}{\sc Claim}[section]
\newtheorem{corollary}[claim]{\sc Corollary}
\newtheorem{lemma}[claim]{\sc Lemma}
\newtheorem{proposition}[claim]{\sc Proposition}
\newtheorem{theorem}[claim]{\sc Theorem}
\theoremstyle{definition}
\newtheorem{definition}[claim]{\sc Definition}
\newtheorem{example}[claim]{\sc Example}
\theoremstyle{remark}
\newtheorem{remark}[claim]{\sc Remark}
\newtheorem{conclusion}[claim]{\sc Conclusion}
\newtheorem{summary}[claim]{\sc Summary}
\renewcommand{\small}{}
\begin{document}

\title{Elastica as a dynamical system}
\author[L. Bates, R. Chhabra and J. \'Sniatycki]{Larry Bates, Robin Chhabra and 
J\k{e}drzej \'{S}niatycki}
\date{}

\begin{abstract}
The elastica is a curve in $\R^3$ that is stationary under variations
of the integral of the square of the curvature. Elastica are viewed as a 
dynamical system that arises from the second order calculus of variations, 
and its quantization is discussed.
\end{abstract}

\maketitle

\section{Introduction}

\noindent Ever since the beginning of the calculus of variations, 
second order problems such as the classical problem of the elastica have been 
considered.  The peculiar situation that distinguishes most of the interesting 
examples in second order problems from the more familiar first order theory is 
that they are parameter independent, and so the theory of such problems has a 
somewhat distinctive tone from that of the more familiar first order theory.  A 
comprehensive review of this theory, as it was understood up until the 1960s, 
may be found in the monograph by Gr\"asser \cite{grasser}.\footnote{This 
monograph is especially noteworthy for its comprehensive bibliography.}
By way of contrast, this paper emphasizes the relation of the variational 
problem to the geometry of the corresponding Cartan form and its interplay with 
symmetry, conservation laws, the Noether theorems and the associated first 
order canonical formalism. The problem of elastica is then re-examined in light 
of this discussion.

The rationale for this paper is to give a biased view of that portion 
of the theory of second order variational problems that could reasonably be 
expected to be useful for understanding the common behaviour of several 
geometric functionals on curves.  Good examples to keep in mind while 
reading this paper (these are the three main examples that motivated our 
study) are the elastica, the shape of a real 
M\"obius band in terms of the geometry of the central 
geodesic \cite{wunderlich}, and the curve of least friction.    

For reasons not entirely clear to us, the geometric theory of higher order 
variational problems seems to have developed in a manner largely detached from 
the needs and concerns of concrete problems.  This is a startling contrast to 
recent developments in geometric mechanics and their understanding of 
stability, bifurcation, numerical schemes, the incorporation of nonholonomic 
constraints, etc. The consequences of this are at least two-fold: first, it 
leads to a palpable sense of dread\footnote{This may be reduced to mere 
frustration by those less ignorant than the authors.} when faced with trying to 
look up a formulation of some part of the theory that will cleanly explain how 
to compute something obvious, and second, a real disconnect between the 
theoretical insights and the actual computational methods.  This disconnect is  
vividly illustrated in the problem of elastica.  

Planar elastica (the equilibrium shape of a linearly elastic thin wire) were 
considered at least as early as 1694 by James Bernoulli.\footnote{See the 
delightful 
discussion by Levien in \cite{levien} or 
\cite{levien:EECS-2008-103}.}  
However, it was not until about 1742 that Daniel Bernoulli convinced Euler to 
solve the problem by using the isoperimetric method (the old name for the 
calculus of variations before Lagrange.) From a variational point of view, the 
elastica is idealized as a curve that minimizes 
the integral over its length of the square of the curvature (that is, minimize 
$\int\kappa^2\,ds$), and is thus 
naturally treated as a second order problem in the calculus of 
variations.  Exhaustive results were then published 
by Euler in 1744 \cite{euler1744}.  Since Euler's results were so 
comprehensive, it is not surprising that the study of elastica remained 
somewhat 
dormant until taken up again by Max Born in his thesis \cite{born06}. More 
recently a striking  result was obtained in 1984 by Langer and Singer 
\cite{langer-singer} when they demonstrated the existence of closed elastica 
that were torus knots.  Their proof was noteworthy because they eschewed the 
usual variational machinery and employed clever {\it ad hoc} geometric 
arguments 
such as an adapted cylindrical coordinate system to aid their integration.  In 
fact, a significant motivation for this paper was to see to what extent their 
results could be understood by a more pedestrian use of the second order 
calculus 
of variations that looked more like just `turning the crank' on the variational 
machine, and thus had the comfort of familiarity of technique.  

Some features of the elastica problem instantly spring to 
mind in the modern geometrically oriented 
reader.  The first is that the problem is manifestly invariant by the action of 
the Euclidean group.  The second is that it would be very nice to have a theory 
that explained how to reduce the symmetry using the concomitant 
conservation laws that Emmy Noether taught us are in the problem, and then wind 
up with some form of reduced Euler-Lagrange equations.  Assuming we can solve 
these reduced equations, and hence know the curvature and torsion of the 
elastic 
curve, we would expect a good theory to show us methods to determine the 
shape of our curve that go beyond a trite referral to the fundamental theorem 
of 
curves stating that the curvature and torsion of the curve determine it up 
to a Euclidean motion.  Given all this, what we actually find when we look at 
the published work on elastica (such as 
\cite{langer-singer} or \cite{bryant-griffiths}) is that it proceeds 
somewhat differently.  
In particular, almost none of the actual computations seem to follow any 
 method that resembled the current theory.  There are good reasons 
for this, and it is not due 
to ignorance of those geometers but a reflection that the theory 
at that time was presented in such a way as to simply be unhelpful, and unable 
to easily identify the geometric meaning of some of their calculations.  This 
is 
the best explanation we have of the situation at the time and why it was still 
necessary a decade after \cite{langer-singer} appeared for Foltinek (see 
\cite{foltinek}) to write a paper 
demonstrating the integration constants in elastica in terms of the conserved 
Noetherian momenta.      

The plan of this paper is to first discuss the Euler-Lagrange equations.  This 
section will serve to fix notation and some basic notions.  Next, we discuss 
the Cartan form as the geometrization of the variational problem using the 
structure of the jet bundle. This is followed by discussion of parametrization 
invariance and the Hamiltonian formalism.  Elastica are then studied from 
this point of view.  The problem is then recast as a constrained Hamiltonian 
system and the reparametrization group and its reduction are examined.  The 
paper concludes with the geometric quantization, and discusses the quantum 
representations of the groups $\SE(3)$ and $\Diff_+\R$ as well as the quantum 
implementation of constraints.

\section{Second order variational problems}

\subsection{Variation of the action integral}

A curve $[t_{0},t_{1}]\rightarrow \R^{n}:t\mapsto x(t)$ can be
uniquely described by the corresponding section
\begin{equation}
\sigma :[t_{0},t_{1}]\rightarrow \lbrack t_{0},t_{1}]\times \R^{n}:t\mapsto 
(t,x(t)).  \label{sigma}
\end{equation}
We consider $Q=\R\times \R^n$ as a bundle over $\R$ with
typical fibre $\R^{n}$. For each integer $k\geq 0$, we denote by $ 
J^{k}$ the $k$-th jet of sections of $Q$. Furthermore, interpret the section $ 
\sigma $ given in (\ref{sigma}) as a local section of $Q$ and denote by $ 
j^{k}\sigma :[t_{0},t_{1}]\rightarrow J^{k}$ the $k$-jet extension of $ 
\sigma $. This paper considers variational problems defined by
second order Lagrangians. For a Lagrangian $L:J^{2}\rightarrow \R 
:(t,x,\dot{x},\ddot{x})\mapsto L(t,x,\dot{x},\ddot{x})$, the corresponding
action integral is 
\begin{equation*}
A(\sigma )=\int_{t_{0}}^{t_{1}}(L\circ j^{2}\sigma
)\,dt=\int_{t_{0}}^{t_{1}}L(t,x(t),\dot{x}(t),\ddot{x}(t))\,dt.
\end{equation*} 
A variation of a section (without variation of time) $\sigma $ $\mapsto
\sigma +\delta \sigma :t\mapsto x(t)+\delta x(t)$ extends to the second jets as 
\begin{equation*}
j^{2}\sigma \mapsto j^{2}\sigma +\delta j^{2}\sigma :t\mapsto (x(t)+\delta
x(t),\dot{x}(t)+\delta \dot{x}(t),\ddot{x}(t)+\delta \ddot{x}(t)),
\end{equation*} 
where 
\begin{equation*}
\delta \dot{x}(t)=\frac{d}{dt}\delta x(t)\text{ \ and \ }\delta \ddot{x}(t)= 
\frac{d}{dt}\delta \dot{x}(t)=\frac{d^{2}}{dt^{2}}\delta x(t).
\end{equation*} 
Then, integrating by parts twice, it follows that the action varies as 
\begin{eqnarray*}
\delta A(\sigma ) &=&\int_{t_{0}}^{t_{1}}\delta L(t,x(t),\dot{x}(t),\ddot{x} 
(t))\,dt \\
&=&\int_{t_{0}}^{t_{1}}\left\{ \frac{\partial L}{\partial x}(t)-\frac{d}{dt} 
\frac{\partial L}{\partial \dot{x}}(t)+\frac{d^{2}}{dt^{2}}\frac{\partial L}{ 
\partial \ddot{x}}(t)\right\} \,\delta x \, dt+ \\
&&+\left. \left( \frac{\partial L}{\partial \dot{x}}(t)-\frac{d}{dt}\frac{ 
\partial L}{\partial \ddot{x}}(t)\right) \,\delta x\,\right\vert
_{t_{0}}^{t_{1}}+\left. \frac{\partial L}{\partial \ddot{x}}(t)\, \delta 
\dot{x} \,
\right\vert _{t_{0}}^{t_{1}}.
\end{eqnarray*} 
It follows from the fundamental lemma of the calculus of variations that

\begin{conclusion}
\label{2.1}The action integral 
\begin{equation*}
A(\sigma )=\int_{t_{0}}^{t_{1}}(L\circ j^{2}\sigma
)\,dt=\int_{t_{0}}^{t_{1}}L(t,x(t),\dot{x}(t),\ddot{x}(t))\,dt
\end{equation*} 
is stationary with respect to all variations $\sigma $ $\mapsto \sigma
+\delta \sigma :t\mapsto x(t)+\delta x(t)$, such that $\delta x$ and $\delta 
\dot{x}$ vanish on the boundary, if and only if the section $\sigma$ satisfies
the Euler-Lagrange equations 
\begin{equation}
\frac{\partial L}{\partial x}-\frac{d}{dt}\frac{\partial L}{\partial \dot{x}} 
+\frac{d^{2}}{dt^{2}}\frac{\partial L}{\partial \ddot{x}}=0.
\label{Euler Lagrange}
\end{equation}
\end{conclusion}

\bigskip

Consider now the boundary terms in the variation. The partial derivative $ 
\frac{\partial L}{\partial \ddot{x}}$ is a map from $J^{2}$ to $\R 
^{n}$, and 
\begin{equation*}
\frac{\partial L}{\partial \ddot{x}}(t)\,\delta \dot{x}=\left\langle \frac{ 
\partial L}{\partial \ddot{x}}(t,x(t),\dot{x}(t),\ddot{x}(t)), \, \delta 
\dot{x}(t)\right\rangle ,
\end{equation*} 
where the angle bracket denotes the Euclidean scalar product in $\R 
^{n}$. However, 
\begin{eqnarray*}
\frac{d}{dt}\left( \frac{\partial L}{\partial \ddot{x}}(t)\right) =\left( 
\frac{\partial }{\partial t}\frac{\partial L}{\partial \ddot{x}}
+ \dot{x}\frac{\partial }{ 
\partial x} \frac{\partial L}{\partial \ddot{x}}  
+ \ddot{x}\frac{\partial }{\partial \dot{x}} \frac{ 
\partial L}{\partial \ddot{x}} + \dddot{x}\frac{\partial }{\partial 
\ddot{x}} \frac{\partial L}{ 
\partial \ddot{x}}\right) 
\end{eqnarray*} 
depends on the third derivative $\dddot{x}$ of the section $\sigma $, 
and hence, $\frac{d}{dt}\frac{\partial L}{\partial \ddot{x}}$ can be interpreted
as a map from $J^{3}$ to $\R^{n}$. Using the projection map 
\begin{equation*}
\pi _{32}:J^{3}\rightarrow J^{2}:(t,x,\dot{x},\ddot{x},\dddot{x})\mapsto
(t,x,\dot{x},\ddot{x}),
\end{equation*} 
 define Ostrogradski's momenta by 
\begin{eqnarray*}
p_{\dot{x}} &=&\pi _{32}^{\ast }\frac{\partial L}{\partial \ddot{x}}, \\
p_{x} &=&\pi _{32}^{\ast }\frac{\partial L}{\partial \dot{x}}-\frac{d}{dt} 
\frac{\partial L}{\partial \ddot{x}},
\end{eqnarray*} 
and interpret them as maps from $J^{3}$ to $\R^{n}$. In the following, in order 
to
simplify the notation, the pull-back sign is omitted  and an overdot is used to
denote the derivative with respect to $t$. This leads to the usual
expressions 
\begin{eqnarray}\label{usual-momenta}
p_{\dot{x}} &=&\frac{\partial L}{\partial \ddot{x}},
\label{Ostrogradski's momenta} \\
p_{x} &=&\frac{\partial L}{\partial \dot{x}}-\frac{d}{dt}\frac{\partial L}{ 
\partial \ddot{x}}=\frac{\partial L}{\partial \dot{x}}-\dot{p}_{\dot{x}}.  \notag
\end{eqnarray} 
With this notation, the variation equation is 
\begin{equation}
\delta A(\sigma )=\int_{t_{0}}^{t_{1}}\left\{ \frac{\partial L}{\partial x} 
(t)-\frac{d}{dt}\frac{\partial L}{\partial \dot{x}}(t)+\frac{d^{2}}{dt^{2}} 
\frac{\partial L}{\partial \ddot{x}}(t)\right\} \,\delta x\,dt+\left. p_x\,\delta
x\right\vert _{t_{0}}^{t_{1}}+\left. p_{\dot{x}}\,\delta \dot{x}\right\vert
_{t_{0}}^{t_{1}}.  \label{Variation 1}
\end{equation}

With an eye towards towards the Cartan form, it is convenient to reinterpret
a variation as the Lie derivative with respect to a vector field. 
Let a variation $\sigma $ $\mapsto \sigma +\delta \sigma
:t\mapsto x(t)+\delta x(t)$ of $\sigma$ be given by a vector field $X$ on 
$J^{2}$
that is tangent to the fibres of the source map $J^{2}\rightarrow \lbrack
t_{0},t_{1}]:(t,x,\dot{x},\ddot{x})\mapsto t$. In other words, if $X=X_{x} 
\frac{\partial }{\partial x}$, then 
\begin{equation*}
\delta x(t)=X_{x}(\sigma(t)).
\end{equation*} 
Then the variation $j^{2}\sigma \mapsto j^{2}\sigma +\delta j^{2}\sigma $ is
given by the prolongation 
\begin{equation*}
X^{2}=X_{x}\frac{\partial}{\partial x}+X_{\dot{x}}\frac{\partial }{\partial 
\dot{x}}+X_{\ddot{x}}\frac{\partial }{\partial \ddot{x}}
\end{equation*} 
of $X$ to $J^{2}$, where  
\begin{equation*} 
X_{\dot{x}}=\frac{d }{d t}X_{x}\text{ and }X_{\ddot{x}}=\frac{ 
d }{dt}X_{\dot{x}}=\frac{d^{2}}{dt^{2}}X_{x}.
\end{equation*} 
In other words, 
\begin{equation*}
\delta \dot{x}(t)=X_{\dot{x}}(j^{2}\sigma (t))\text{ \ and \ }\delta \ddot{x} 
(t)=X_{\ddot{x}}(j^{2}(\sigma )).
\end{equation*} 
With this identification, 
\begin{eqnarray}
\delta A(\sigma ) &=&\int_{t_{0}}^{t_{1}}\left\{ \frac{\partial L}{\partial x 
}(t)\,\delta x+\frac{\partial L}{\partial \dot{x}}(t)\, \delta \dot{x}+\frac{ 
\partial L}{\partial \ddot{x}}(t)\,\delta \ddot{x}\right\} dt
\label{Variation0} \\
&=&\int_{t_{0}}^{t_{1}}\pounds _{X^{2}}(L\,dt)=\int_{t_{0}}^{t_{1}}X^{2} 
 \lefthook
d(L\,dt)  \notag
\end{eqnarray} 
because 
\begin{equation*}
\pounds _{X^{2}}(L\,dt)=X^{2} 
\lefthook
d(L\,dt)+d(X^{2} 
\lefthook
L\,dt),
\end{equation*} 
and the the assumption that $X$ is tangent that to the fibres of the source
map implies that $X^{2} 
\lefthook
L\,dt=0.$ Comparing equations (\ref{Variation 1}) and (\ref{Variation0})
yields 
\begin{eqnarray}
\int_{t_{0}}^{t_{1}}X^{2} 
\lefthook
d(L\,dt) &=&\int_{t_{0}}^{t_{1}}\left\{ \frac{\partial L}{\partial x}-\frac{d}{ 
dt}\frac{\partial L}{\partial \dot{x}}+\frac{d^{2}}{dt^{2}}\frac{\partial L}{ 
\partial \ddot{x}}\right\} X_{x}\,dt+\left. p_xX_{x}\right\vert
_{t_{0}}^{t_{1}}+\left. p_{\dot{x}}X_{\dot{x}}\right\vert _{t_{0}}^{t_{1}}
\label{Variation 2} \\
&=&\int_{t_{0}}^{t_{1}}\left\{ \frac{\partial L}{\partial x}-\frac{d}{dt} 
\frac{\partial L}{\partial \dot{x}}+\frac{d^{2}}{dt^{2}}\frac{\partial L}{ 
\partial \ddot{x}}\right\} X_{x}\,dt+\left. \left\langle 
p_x\,dx+p_{\dot{x}}\,d\dot{x} 
,X^{1}\right\rangle \right\vert _{t_{0}}^{t_{1}},  \notag
\end{eqnarray} 
where $\langle p_x\,dx+p_{\dot{x}}d\dot{x},X^{1}\rangle $ is the 
evaluation of
a 1-form $p_x\,dx+p_{\dot{x}}\,d\dot{x}$ on the first jet bundle on the first jet
prolongation $X^{1}$ of $X$ (see proposition (\ref{4.1})).  In 
equation (\ref{Variation 2}) $p_x\,dx$ and 
$p_{\dot{x}}\,d\dot{x}$ are interpreted as one-forms on $J^{1}$.

\subsection{The Cartan form}

The contact forms of the second jet bundle $J^{2}$ are 
\begin{equation*}
\theta _{1}=dx-\dot{x}\,dt\text{ \ and \ }\theta _{2}=d\dot{x}-\ddot{x}\,dt.
\end{equation*} 
Their importance stems from the following

\begin{proposition}
\label{3.1}A section $\sigma :[t_{0},t_{1}]\rightarrow J^{2\text{ } 
}:t\mapsto (t,x(t),\dot{x}(t),\ddot{x}(t))$ is the jet extension of the section
of its projection $[t_{0},t_{1}]\rightarrow \R^{n}:t\mapsto (t,x(t))$
by the source map $J^{2}\rightarrow \lbrack t_{0},t_{1}]:(t,x,\dot{x},\ddot{x 
})\mapsto (t,x)$ if and only if $\sigma^{\ast }\theta _1=0$ and $\sigma
^{\ast }\theta _2=0.$
\end{proposition}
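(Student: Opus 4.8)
The plan is to verify the equivalence by computing the two pull-backs directly in coordinates; on a one-dimensional base this reduces each of the two conditions to the vanishing of a single $\R^{n}$-valued coefficient.

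First I would rename the components of $\sigma$ so that the notation does not prejudge the answer. Write
\[
\sigma:[t_{0},t_{1}]\to J^{2}:t\mapsto(t,a(t),b(t),c(t)),
\]
where $a,b,c:[t_{0},t_{1}]\to\R^{n}$ are a priori arbitrary smooth curves; thus the fibre coordinates satisfy $x\circ\sigma=a$, $\dot{x}\circ\sigma=b$, $\ddot{x}\circ\sigma=c$. Let $\bar{\sigma}:t\mapsto(t,a(t))$ denote the image of $\sigma$ under the source projection $J^{2}\to[t_{0},t_{1}]\times\R^{n}$. By the definition of the jet extension, $\sigma=j^{2}\bar{\sigma}$ precisely when $b=\dot{a}$ and $c=\dot{b}=\ddot{a}$.

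Next I would compute. Since $\sigma^{\ast}\,dt=dt$ and pull-back commutes with the exterior derivative, we have $\sigma^{\ast}\,dx=\dot{a}\,dt$ and $\sigma^{\ast}\,d\dot{x}=\dot{b}\,dt$, hence
\[
\sigma^{\ast}\theta_{1}=\sigma^{\ast}(dx-\dot{x}\,dt)=(\dot{a}-b)\,dt,\qquad
\sigma^{\ast}\theta_{2}=\sigma^{\ast}(d\dot{x}-\ddot{x}\,dt)=(\dot{b}-c)\,dt.
\]
Because $dt$ is nowhere zero on $[t_{0},t_{1}]$, the condition $\sigma^{\ast}\theta_{1}=0$ is equivalent to $b=\dot{a}$ and the condition $\sigma^{\ast}\theta_{2}=0$ is equivalent to $c=\dot{b}$. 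Imposing both gives $b=\dot{a}$ and $c=\ddot{a}$, i.e. $\sigma=j^{2}\bar{\sigma}$; conversely, if $\sigma=j^{2}\bar{\sigma}$ then $b=\dot{a}$ and $c=\ddot{a}$ by definition, so both pull-backs vanish. This establishes both implications.

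There is no real obstacle here beyond bookkeeping. The one point that needs care is the double role of the overdot — a label for a fibre coordinate on $J^{2}$ on the one hand, and the time derivative of a curve on the other — which the substitution $a,b,c$ keeps cleanly separated; the computation is identical component by component, so treating $a,b,c$ as $\R^{n}$-valued rather than scalar costs nothing.
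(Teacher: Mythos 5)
Your proof is correct; the paper states Proposition \ref{3.1} without proof, and your direct coordinate computation — pulling back $\theta_1$ and $\theta_2$ along $t\mapsto(t,a,b,c)$ to get $(\dot a-b)\,dt$ and $(\dot b-c)\,dt$ and using that $dt$ is nowhere zero on the base — is exactly the standard argument the authors leave implicit. The renaming of the fibre coordinates to $a,b,c$ to avoid the overloaded overdot is a sensible piece of hygiene and nothing more needs to be said.
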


\begin{definition}
\label{3.2}The Cartan form corresponding to a Lagrangian $L$ is the one-form $ 
\Theta $ on $J^{3}$ given by 
\begin{equation}
\Theta =L\,dt+p_{x}(dx-\dot{x}\,dt)+p_{\dot{x}}(d\dot{x}-\ddot{x}\,dt),
\label{Theta}
\end{equation} 
where $p_x$ and $p_{\dot{x}}$ are the Ostrogradski momenta 
(\ref{usual-momenta}).
\end{definition}

Observe that $\Theta $ may by written in the form 
\begin{equation}
\Theta =p_x\,dx+p_ {\dot{x}}\,d\dot{x}-H\,dt,  \label{Theta1}
\end{equation} 
where 
\begin{equation}
H=p_x\dot{x}+p_{\dot{x}}\ddot{x}-L  \label{Hamiltonian}
\end{equation} 
is the Hamiltonian of the theory. \ Since $\Theta $ differs from the
Lagrange form $L\,dt$ by terms that are proportional to the contact forms,
it follows that for any section $\sigma $ the action $A(\sigma)$ can be 
expressed
as the integral of $\Theta $ over $j^{3}\sigma .$ In other words, 
\begin{equation}
A(\sigma )=\int_{t_{0}}^{t_{1}}(L\circ j^{2}\sigma
)\,dt=\int_{t_{0}}^{t_{1}}(j^{2}\sigma )^{\ast
}L\,dt=\int_{t_{0}}^{t_{1}}(j^{3}\sigma )^{\ast }\Theta .  \label{Action1}
\end{equation} 
Therefore, the Cartan form $\Theta $ may be used instead of the Lagrange form 
$L\,dt$ to describe the variational problem under consideration. Other aspects
of the Cartan form are discussed in  \cite{krupka-krupkova-saunders}.

\begin{definition}
\label{3.3}A Langrangian $L$ is {\it regular} if the matrix 
\begin{equation*}
\frac{\partial ^{2}L}{\partial \ddot{x}_{j}\partial \ddot{x}_{i}}
\end{equation*} 
is non-singular.
\end{definition}

\begin{theorem}
\label{3.4}Let $\gamma $ be a section of the source map $J^{3}\rightarrow
\lbrack t_{0},t_{1}]$ projecting to a section $\sigma $ of $ 
[t_{0},t_{1}]\times \R^{n}\rightarrow \lbrack t_{0},t_{1}]$ and let $ 
j^{3}\sigma $ be the the third jet extension of $\sigma$.\newline
\begin{enumerate}
  \item  If $\gamma =j^{3}\sigma $, then $\sigma $ satisfies the 
Euler-Lagrange
equations if and only if the tangent bundle of the range of $\gamma $ is
contained in the kernel of $d\Theta .$\newline
  \item  If the Lagrangian $L$ is regular and the tangent bundle of the 
range of 
$\gamma $ is contained in the kernel of $d\Theta $, then $\gamma
=j^{3}\sigma $ and $\sigma $ satisfies the the Euler-Lagrange equations.
  \end{enumerate}
  \end{theorem}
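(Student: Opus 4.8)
The plan is to bring $d\Theta$ into a normal form by a direct computation in the jet coordinates on $J^{3}$ and then read off its kernel; only the Euler--Lagrange expression and the regularity matrix will survive. Introduce the third contact form $\theta_{3}=d\ddot{x}-\dddot{x}\,dt$, so that $\{dt,\theta_{1},\theta_{2},\theta_{3},d\dddot{x}\}$ is a coframe on $J^{3}$ with dual frame $\{D,\partial_{x},\partial_{\dot{x}},\partial_{\ddot{x}},\partial_{\dddot{x}}\}$, where $D=\partial_{t}+\dot{x}\,\partial_{x}+\ddot{x}\,\partial_{\dot{x}}+\dddot{x}\,\partial_{\ddot{x}}$ is the truncated total--derivative field. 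Differentiating $\Theta=L\,dt+p_{x}\theta_{1}+p_{\dot{x}}\theta_{2}$, using $d\theta_{1}=dt\wedge\theta_{2}$ and $d\theta_{2}=dt\wedge\theta_{3}$ and expanding $dL,dp_{x},dp_{\dot{x}}$ in the coframe, the defining relations $p_{\dot{x}}=\partial L/\partial\ddot{x}$ and $p_{x}=\partial L/\partial\dot{x}-\dot{p}_{\dot{x}}$ make every term of type $\theta_{2}\wedge dt$ and $\theta_{3}\wedge dt$ cancel, leaving
\begin{equation*}
d\Theta=\bigl(\tfrac{\partial L}{\partial x}-Dp_{x}\bigr)\,\theta_{1}\wedge dt+\theta_{1}\wedge\omega-\tfrac{\partial^{2}L}{\partial\ddot{x}^{2}}\,\theta_{2}\wedge\theta_{3},
\end{equation*}
where $R:=\partial^{2}L/\partial\ddot{x}^{2}$ is the regularity matrix and $\omega$ is a one--form whose $d\dddot{x}$--coefficient is $R$ and whose remaining coefficients multiply $\theta_{2}$ and $\theta_{3}$. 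The one arithmetic identity to keep straight is that the truncated derivative $Dp_{x}$ (on $J^{3}$) and the genuine total derivative $\dot{p}_{x}$ (on $J^{4}$) differ by $R\,x^{(4)}$: since $\partial p_{x}/\partial\dddot{x}=-R$ one gets $\partial L/\partial x-Dp_{x}=E(L)-R\,x^{(4)}$, where $E(L)=\partial L/\partial x-\frac{d}{dt}\partial L/\partial\dot{x}+\frac{d^{2}}{dt^{2}}\partial L/\partial\ddot{x}$ is the left side of (\ref{Euler Lagrange}) and $x^{(4)}$ the fourth derivative.

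For part (1), $\gamma=j^{3}\sigma$ and the range of $\gamma$ is the curve $t\mapsto j^{3}\sigma(t)$ with velocity $D+x^{(4)}\partial_{\dddot{x}}$; this vector is annihilated by $\theta_{1},\theta_{2},\theta_{3}$ and pairs with $dt$ and $d\dddot{x}$ to give $1$ and $x^{(4)}$. Contracting it with $d\Theta$, the $\theta_{2}\wedge\theta_{3}$ term and all of $\theta_{1}\wedge\omega$ except its $R\,\theta_{1}\wedge d\dddot{x}$ part drop out, giving $(D+x^{(4)}\partial_{\dddot{x}})\lefthook d\Theta=-\bigl[(\partial L/\partial x-Dp_{x})+R\,x^{(4)}\bigr]\theta_{1}=-E(L)\,\theta_{1}$. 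Since the tangent bundle of the range of $\gamma$ is spanned by this velocity, it lies in $\ker d\Theta$ exactly when $E(L)$ vanishes along $j^{4}\sigma$, i.e.\ exactly when $\sigma$ satisfies the Euler--Lagrange equations.

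For part (2), assume $L$ regular, so $R$ is nonsingular. Contract a general tangent vector $v=v^{0}D+v^{1}\partial_{x}+v^{2}\partial_{\dot{x}}+v^{3}\partial_{\ddot{x}}+v^{4}\partial_{\dddot{x}}$ with $d\Theta$ and set the result to zero: the $d\dddot{x}$--component yields $Rv^{1}=0$, hence $v^{1}=0$; with $v^{1}=0$ the $\theta_{3}$-- and $\theta_{2}$--components yield $Rv^{2}=0$ and $Rv^{3}=0$, hence $v^{2}=v^{3}=0$; and the $\theta_{1}$--component then forces $v^{4}=-R^{-1}(\partial L/\partial x-Dp_{x})\,v^{0}$. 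Therefore $\ker d\Theta$ is the one--dimensional distribution spanned by $Z:=D-R^{-1}\bigl(\partial L/\partial x-Dp_{x}\bigr)\partial_{\dddot{x}}$. If $\gamma$ is a section of the source map whose range has tangent bundle inside $\ker d\Theta$, then its velocity, having $dt$--component $1$, equals $Z$ along $\gamma$; the components of $Z$ along $\partial_{x},\partial_{\dot{x}},\partial_{\ddot{x}}$ show that the successive coordinates of $\gamma$ are the successive derivatives of the first, so $\gamma=j^{3}\sigma$ with $\sigma$ the base projection of $\gamma$, and the component along $\partial_{\dddot{x}}$, combined with $\partial L/\partial x-Dp_{x}=E(L)-R\,x^{(4)}$, reads $E(L)=0$ along $j^{4}\sigma$; hence $\sigma$ satisfies the Euler--Lagrange equations.

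The only genuine work is the first step: performing the exterior derivative of $\Theta$ without dropping or double--counting terms, and in particular respecting the distinction between the on-$J^{3}$ truncated derivative $Dp_{x}$ and the on-$J^{4}$ total derivative $\dot{p}_{x}$ — it is exactly the discrepancy $R\,x^{(4)}$ between them, conspiring with the $\theta_{2}\wedge\theta_{3}$ term, that reassembles the Euler--Lagrange operator. Once $d\Theta$ is in the displayed form both assertions are immediate linear algebra, regularity being used only to pin $\ker d\Theta$ down to a line, which is what promotes "tangent to $\ker d\Theta$" from a pointwise condition into the statement that $\gamma$ is an integral curve, hence a jet extension.
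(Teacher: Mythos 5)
Your proof is correct. It reaches the same conclusion by a genuinely different organization of the computation. The paper leaves $d\Theta$ in the mixed form $\dot p_{\dot x}\,d\dot x\wedge dt+L_x\,dx\wedge dt+dp_x\wedge\theta_1+dp_{\dot x}\wedge\theta_2$, contracts it only with the tangent vector $T\gamma(\partial_t)$ of the given section, and then (for part 2) evaluates the resulting one-form on the vertical test vectors $v\,\partial/\partial\dddot x$ and $v\,\partial/\partial\ddot x$, invoking regularity twice to recover the two contact conditions one at a time; it never determines $\ker d\Theta$ as a distribution. You instead complete the contact system to the adapted coframe $\{dt,\theta_1,\theta_2,\theta_3,d\dddot x\}$, put $d\Theta$ into a normal form in which only the Euler--Lagrange expression and the Hessian $R=\partial^2L/\partial\ddot x\,\partial\ddot x$ survive, and compute the full characteristic distribution: in the regular case a line field spanned by $Z=D-R^{-1}(L_x-Dp_x)\,\partial_{\dddot x}$, so that both assertions reduce to the statement that sections tangent to $\ker d\Theta$ are exactly the integral curves of $Z$, i.e.\ prolonged solutions. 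What your route buys is transparency — the kernel is found once and for all, and the bookkeeping identity $L_x-Dp_x=E(L)-R\,x^{(4)}$ makes it visible exactly how the discrepancy between the truncated derivative $Dp_x$ on $J^3$ and the total derivative $\dot p_x$ on $J^4$ conspires with the $\theta_2\wedge\theta_3$ term to reassemble the Euler--Lagrange operator; the price is the extra care with that distinction, which the paper sidesteps by never expanding $dp_x$ in a coframe. Both arguments use regularity in the same logical place (to kill the $d\dddot x$-, $\theta_3$- and $\theta_2$-components, equivalently to force the contact conditions), and your verification that the kernel is exactly one-dimensional (including the case $v^0=0$) closes the argument cleanly.
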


\begin{proof}
The exterior differential of $\Theta $ can be written as 
\begin{equation*}
d\Theta =\dot{p}_{\dot{x}}\,d\dot{x}\wedge dt+\frac{\partial L}{\partial 
x}\,dx\wedge
dt+dp_x\wedge (dx-\dot{x}\,dt)+dp_{\dot{x}}\wedge (d\dot{x}-\ddot{x}\,dt),
\end{equation*} 
because 
\begin{eqnarray*}
d\Theta &=&dL\wedge dt-p_x\,d\dot{x}\wedge dt-p_{\dot{x}}\,d\ddot{x}\wedge dt+dp_x\wedge (dx- 
\dot{x}\,dt)+dp_{\dot{x}}\wedge (d\dot{x}-\ddot{x}\,dt) \\
&=&(L_{\ddot{x}}-p_{\dot{x}})\,d\ddot{x}\wedge dt+(L_{\dot{x}}-p_x)\,d\dot{x}\wedge
dt+L_{x}\,dx\wedge dt+ \\
&&+dp_x\wedge (dx-\dot{x}\,dt)+dp_{\dot{x}}\wedge (d\dot{x}-\ddot{x}\,dt) \\
&=&\dot{p}_{\dot{x}}\,d\dot{x}\wedge dt+L_{x}\,dx\wedge dt+dp_x\wedge (dx-\dot{x}\, 
dt)+dp_{\dot{x}}\wedge (d\dot{x}-\ddot{x}\,dt).
\end{eqnarray*} 
A section $\gamma :t\mapsto $ $\gamma (t)=(t,x(t),\dot{x}(t),\ddot{x}(t))$
of the source map projects to a section $\sigma :t\mapsto \sigma
(t)=(t,x(t)) $. Moreover, 
\begin{equation*}
T\gamma (\partial_t)=\frac{{\small  
\partial }}{{\small \partial t}}+\frac{{\small dx(t)}}{{\small dt}}\frac{ 
{\small \partial }}{{\small \partial x}}+\frac{{\small d\dot{x}(t)}}{{\small  
dt}}\frac{{\small \partial }}{{\small \partial \dot{x}}}+\frac{{\small d 
\ddot{x}(t)}}{{\small dt}}\frac{{\small \partial }}{{\small \partial \ddot{x} 
}}
\end{equation*} 
spans the tangent bundle of the range of $\gamma $, and 
\begin{equation*}
T\gamma (\partial_ t)(p_x)=\dot{p}_x~~\text{ 
and}~~T\gamma (\partial_t)(p_{\dot{x}})=\dot{p}_{\dot{x}}.
\end{equation*} 
Hence, 
\begin{eqnarray*}
&&T\gamma (\partial_t) 
\lefthook
d\Theta = \\
&=&\dot{p}_{\dot{x}}\frac{{\small d\dot{x}(t)}}{{\small 
dt}}\,dt-\dot{p}_{\dot{x}}\,d\dot{x}+\frac{ 
{\small \partial L}}{\partial x}\frac{{\small dx(t)}}{{\small dt}}dt-\frac{ 
{\small \partial L}}{{\small \partial 
x}}\,dx+p_{\dot{x}}(dx-\dot{x}\,dt)+\dot{p}_{\dot{x}}(d\dot{x}- 
\ddot{x}\,dt)+ \\
&&-\left\langle dx-\dot{x}\,dt,T\gamma (\partial_t)\right\rangle dp_x-\left\langle d\dot{x}-\ddot{x}\,dt,T\gamma ( \partial_t)\right\rangle dp_{\dot{x}} \\
&=&\dot{p}_{\dot{x}}\left( \frac{{\small d\dot{x}(t)}}{{\small dt}}-\ddot{x}\right) dt+ 
\frac{{\small \partial L}}{{\small \partial x}}\left( \frac{{\small dx(t)}}{ 
{\small dt}}-\dot{x}\right) dt+\left( \dot{p}_x-\frac{{\small \partial L}}{ 
{\small \partial x}}\right) (dx-\dot{x}\,dt)+ \\
&&-\left\langle dx-\dot{x}\,dt,T\gamma (\partial_t)\right\rangle dp_x-\left\langle d\dot{x}-\ddot{x}\,dt,T\gamma (\partial_t)\right\rangle dp_{\dot{x}}.
\end{eqnarray*}

To prove the first statement observe that if $\gamma =j^{3}\sigma $, \ then 
\begin{eqnarray*}
\frac{{\small d\dot{x}(t)}}{{\small dt}}-\ddot{x} &=&0, \\
\frac{{\small dx(t)}}{{\small dt}}-\dot{x}\,dt &=&0, \\
\left\langle dx-\dot{x}dt,T\gamma (\partial_t)\right\rangle &=&0, \\
\left\langle d\dot{x}-\ddot{x}dt,T\gamma (\partial_t)\right\rangle &=&0,
\end{eqnarray*} 
and 
\begin{equation*}
T\gamma (\partial_t) 
\lefthook
d\Theta =\left( \dot{p}_x-\frac{{\small \partial L}}{{\small \partial x}} 
\right) (dx-\dot{x}\,dt).
\end{equation*} 
Hence, $T\gamma (\partial_t)$ is in the
kernel of $d\Theta $ if and only if $\dot{p}_x-\frac{{\small \partial L}}{ 
{\small \partial x}}=0$. By definition of the Ostrogradski momenta, 
\begin{equation*}
\dot{p}_x-\frac{{\small \partial L}}{{\small \partial x}}=\frac{d}{dt}\left( 
\frac{{\small \partial L}}{{\small \partial \dot{x}}}-\frac{d}{dt}\frac{ 
{\small \partial L}}{{\small \partial \ddot{x}}}\right) -\frac{{\small  
\partial L}}{{\small \partial x}},
\end{equation*} 
and so $T\gamma (\partial_t)$ is in
the kernel of $d\Theta $ if and only if the section $\sigma $ satisfies the
Euler-Lagrange equations.

To prove the second part, suppose that $T\gamma (\partial_t)$ is in the kernel of $d\Theta $. Then 
\begin{eqnarray}
&&\dot{p}_{\dot{x}}\left( \frac{{\small d\dot{x}(t)}}{{\small dt}}-\ddot{x}\right) dt+ 
\frac{{\small \partial L}}{{\small \partial x}}\left( \frac{{\small dx(t)}}{ 
{\small dt}}-\dot{x}\right) dt+\left( \dot{p}_x-\frac{{\small \partial L}}{ 
{\small \partial x}}\right) (dx-\dot{x}\,dt)+  \label{Proof1} \\
&&-\left\langle dx-\dot{x}\,dt,T\gamma (\partial_t)\right\rangle dp_x-\left\langle d\dot{x}-\ddot{x}\,dt,T\gamma ( \partial_t)\right\rangle dp_{\dot{x}}=0.  \notag
\end{eqnarray} 
Evaluating the left hand side on a vector $v\frac{\partial }{\partial \dddot{ 
x}}$, where $v$ is an arbitrary vector in $\R^{n}$, yields
\begin{equation*}
\left\langle dx-\dot{x}\,dt,T\gamma (\partial_t)\right\rangle v\frac{\partial p_x}{\partial \dddot{x}}=0,
\end{equation*} 
because only $p_x$ depends on $\dddot{x}$. Since
\begin{equation*}
p_x=\frac{{\small \partial L}}{{\small \partial \dot{x}}}-\frac{d}{dt}\frac{ 
{\small \partial L}}{{\small \partial \ddot{x}}}=\frac{{\small \partial L}}{ 
{\small \partial \dot{x}}}-\frac{{\small \partial }^{2}{\small L}}{{\small  
\partial t\partial \ddot{x}}}-\dot{x}\frac{{\small \partial }^{2}{\small L}}{ 
{\small \partial x\partial \ddot{x}}}-\ddot{x}\frac{{\small \partial }^{2} 
{\small L}}{{\small \partial \dot{x}\partial \ddot{x}}}-\dddot{x}\frac{ 
{\small \partial }^{2}{\small L}}{{\small \partial \ddot{x}\partial \ddot{x}} 
},
\end{equation*} 
it follows that
\begin{equation*}
v\frac{\partial p_x}{\partial \dddot{x}}=-v\frac{{\small \partial }^{2}{\small  
L}}{{\small \partial \ddot{x}\partial \ddot{x}}}.
\end{equation*} 
Therefore, 
\begin{equation*}
\left\langle dx-\dot{x}\,dt,T\sigma (\partial_t)\right\rangle v\frac{{\small \partial }^{2}{\small L}}{{\small  
\partial \ddot{x}\partial \ddot{x}}}=0
\end{equation*} 
for an arbitrary vector $v$. By assumption of regularity of the Lagrangian,
the matrix $\left( \frac{{\small \partial }^{2}{\small L}}{{\small \partial 
\ddot{x}\partial \ddot{x}}}\right) $ is non-singular. Hence, 
\begin{equation*}
\left\langle dx-\dot{x}\,dt,T\sigma (\partial_t)\right\rangle =0,
\end{equation*} 
which implies that $\frac{{\small dx(t)}}{{\small dt}}-\dot{x}(t)=0$.
Substituting these results into equation (\ref{Proof1}) yields 
\begin{equation}
\dot{p}_{\dot{x}}\left( \frac{{\small d\dot{x}(t)}}{{\small dt}}-\ddot{x}\right)
dt+\left( \dot{p}-\frac{{\small \partial L}}{{\small \partial x}}\right) (dx- 
\dot{x}\,dt)-\left\langle d\dot{x}-\ddot{x}\,dt,T\gamma (\partial_t)\right\rangle dp_{\dot{x}}=0.  \label{Proof2}
\end{equation} 
Evaluating the left hand side of this equation on a vector $v\frac{\partial 
}{\partial \ddot{x}}$, where $v$ is an arbitrary vector in $\R^{n}$,
yields 
\begin{equation*}
\left\langle d\dot{x}-\ddot{x}\,dt,T\gamma (\partial_t)\right\rangle v\frac{\partial p_{\dot{x}}}{\partial \ddot{x}}=0.
\end{equation*} 
Since
\begin{equation*}
\frac{\partial p_{\dot{x}}}{\partial \ddot{x}}=\frac{\partial ^{2}L}{\partial \ddot{x} 
\partial \ddot{x}},
\end{equation*} 
\begin{equation*}
\left\langle d\dot{x}-\ddot{x}\,dt,T\gamma (\partial_t)\right\rangle v\frac{\partial ^{2}L}{\partial \ddot{x}\partial 
\ddot{x}}=0
\end{equation*} 
for every vector $v$ in $\R^{n}.$ The assumed regularity of $L$
implies that 
\begin{equation*}
\left\langle d\dot{x}-\ddot{x}\,dt,T\gamma (\partial_t)\right\rangle =0,
\end{equation*} 
so that $\frac{{\small d\dot{x}(t)}}{{\small dt}}-\ddot{x}=0.$ Hence, $ 
\gamma =j^{3}\sigma $ and equation (\ref{Proof2}) reads 
\begin{equation*}
\left( \dot{p}_x-\frac{{\small \partial L}}{{\small \partial x}}\right) (dx- 
\dot{x}\,dt)=0,
\end{equation*} 
which implies that $\sigma$ satisfies the Euler-Lagrange equations.
\end{proof}

\subsection{Symmetries and conservation laws}

\subsubsection{Symmetries of the Lagrange form}

Consider an infinitesimal transformation in $(t_{0},t_{1})\times \R 
^{n}$ given by 
\begin{equation*}
\bar{t}=t+\epsilon \tau (t,x)\text{, \ \ }\bar{x}^{i}=x^{i}+\epsilon \xi 
^{i}(t,x).
\end{equation*} 
It corresponds to a local one-parameter group of local diffeomorphisms
generated by the vector field 
\begin{equation}
X=\tau \frac{\partial }{\partial t}+\xi ^{i}\frac{\partial }{\partial x^{i}}.
\label{X0}
\end{equation}

\begin{proposition}
\label{4.1}The prolongations $X^{1}$, $X^{2}$ and $X^{2}$ of the vector
field $X$ in equation (\ref{X0}) to the jet bundles $J^{1},$ $J^{2}$ and $ 
J^{3}$, respectively, are 
\begin{eqnarray*}
X^{1} &=&\tau \frac{\partial }{\partial t}+\xi ^{i}\frac{\partial }{\partial
x^{i}}+(\dot{\xi}-\dot{x}\dot{\tau})\frac{\partial }{\partial \dot{x}}, \\
X^{2} &=&\tau \frac{\partial }{\partial t}+\xi \frac{\partial }{\partial x}+( 
\dot{\xi}-\dot{x}\dot{\tau})\frac{\partial }{\partial \dot{x}}+(\ddot{\xi}-2 
\ddot{x}\dot{\tau}-\dot{x}\ddot{\tau})\frac{\partial }{\partial \ddot{x}}, \\
X^{3} &=&\tau \frac{\partial }{\partial t}+\xi \frac{\partial }{\partial x}+( 
\dot{\xi}-\dot{x}\dot{\tau})\frac{\partial }{\partial \dot{x}}+(\ddot{\xi}-2 
\ddot{x}\dot{\tau}-\dot{x}\ddot{\tau})\frac{\partial }{\partial \ddot{x}}+( 
\dddot{\xi}-3\dddot{x}\dot{\tau}-3\ddot{x}\ddot{\tau}-\dot{x}\dddot{\tau}) 
\frac{\partial }{\partial \dddot{x}}.
\end{eqnarray*}
\end{proposition}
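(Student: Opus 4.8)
The plan is to obtain $X^{k}$ as the infinitesimal generator of the one-parameter group of transformations that the flow of $X$ induces on $J^{k}$, and then to read off its coefficients from a first-order Taylor expansion in $\epsilon$. Let $\phi_{\epsilon}$ be the flow of $X$, so that $\bar t=t+\epsilon\tau(t,x)+O(\epsilon^{2})$ and $\bar x^{i}=x^{i}+\epsilon\xi^{i}(t,x)+O(\epsilon^{2})$. Given a local section $t\mapsto(t,x(t))$, the flow carries its graph to the set of points $(\bar t,\bar x)$ with $\bar t=t+\epsilon\tau(t,x(t))+O(\epsilon^{2})$ and $\bar x=x(t)+\epsilon\xi(t,x(t))+O(\epsilon^{2})$; since $d\bar t/dt=1+O(\epsilon)$ is nonzero for small $\epsilon$, the implicit function theorem guarantees that this image is again the graph of a section $\bar t\mapsto\bar x(\bar t)$, call it $\sigma_{\epsilon}$. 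Taking $k$-jet extensions of the $\sigma_{\epsilon}$ gives a flow on $J^{k}$, and $X^{k}$ is by definition its generator. Throughout, an overdot denotes the total derivative along a section, $\partial_{t}+\dot x^{i}\partial_{x^{i}}+\ddot x^{i}\partial_{\dot x^{i}}+\cdots$; in particular the symbols $\dot\tau,\dot\xi,\ddot\xi,\dddot\xi$ in the statement are the functions on the relevant jet bundles obtained by iterating this operator.

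Write $\bar x^{(j)}:=d^{j}\bar x/d\bar t^{j}$ for the successive derivatives appearing in $j^{k}\sigma_{\epsilon}$ and expand $\bar x^{(j)}=x^{(j)}+\epsilon\,\delta x^{(j)}+O(\epsilon^{2})$; since every point of $J^{k}$ lies on some jet extension, $\delta x^{(j)}$ is exactly the $\partial/\partial x^{(j)}$-component of $X^{k}$, while $\delta t=\tau$ and $\delta x=\xi$ give the $\partial_{t}$- and $\partial_{x}$-components at every order. From $d\bar t/dt=1+\epsilon\dot\tau+O(\epsilon^{2})$ one gets $dt/d\bar t=1-\epsilon\dot\tau+O(\epsilon^{2})$, and feeding this into $\bar x^{(j)}=(dt/d\bar t)\,(d/dt)\,\bar x^{(j-1)}$ yields, componentwise in $i$ (as $\tau$ is a scalar), the recursion
\begin{equation*}
\delta x^{(j)}=\frac{d}{dt}\,\delta x^{(j-1)}-x^{(j)}\,\dot\tau ,\qquad \delta x^{(0)}=\xi .
\end{equation*}
Iterating this three times produces $\delta\dot x=\dot\xi-\dot x\dot\tau$, then $\delta\ddot x=\ddot\xi-2\ddot x\dot\tau-\dot x\ddot\tau$, and then $\delta\dddot x=\dddot\xi-3\dddot x\dot\tau-3\ddot x\ddot\tau-\dot x\dddot\tau$, which are precisely the coefficients appearing in $X^{1}$, $X^{2}$ and $X^{3}$; an easy induction gives the pattern for all $k$.

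The only genuine subtlety is the reparametrization step: because $X$ has a nonzero $\partial_{t}$-component, the flow moves $t$ as well as $x$, so one must re-express $\bar x$ as a function of $\bar t$ before differentiating, and one must take care to use total, rather than partial, derivatives of $\tau$ and $\xi$ throughout the expansion. Once these points are in hand the rest is a routine first-order computation. As an independent check one may verify directly that each $X^{k}$ listed preserves the contact ideal generated by $dx-\dot x\,dt$, $d\dot x-\ddot x\,dt$, $\dots$ — i.e. that $\pounds_{X^{k}}$ of each contact form is again a combination of contact forms — which, together with projectability onto $X$, characterizes the prolongation uniquely.
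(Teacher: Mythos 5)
Your proof is correct. The paper states Proposition \ref{4.1} without proof, so there is nothing to compare against; your argument is the standard one — push the graph of a section forward by the flow of $X$, re-express the image as a graph over $\bar t$, expand the transformed jet coordinates to first order in $\epsilon$, and read off the recursion $\delta x^{(j)}=\frac{d}{dt}\delta x^{(j-1)}-x^{(j)}\dot\tau$, which reproduces exactly the coefficients in the statement. You also correctly flag the two points where such computations usually go wrong (the reparametrization in $\bar t$, and the use of total rather than partial derivatives of $\tau$ and $\xi$), and the closing remark that preservation of the contact ideal plus projectability characterizes the prolongation gives a legitimate independent check.
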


It remains to relate the prolongations of $X$ to the contact forms $\theta
_{1}=dx-\dot{x}\,dt$ and $\theta _{2}=d\dot{x}-\ddot{x}\,dt.$

\begin{proposition}
\label{4.2} Let $I=[t_0,t_1]$. For a section $\sigma $ of $I\times\R^{n} 
\rightarrow
I$, 
\begin{equation*}
j^{1}\sigma ^{\ast }\pounds _{X^{1}}\theta _{1}=0~~~~and~~~~j^{2}\sigma
^{\ast }\pounds _{X^{2}}\theta _{2}=0.
\end{equation*}
\end{proposition}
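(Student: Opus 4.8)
The plan is to prove the stronger pointwise fact that $\pounds_{X^{1}}\theta_{1}$ and $\pounds_{X^{2}}\theta_{2}$ lie in the contact ideal — each being a linear combination over functions of $\theta_{1}$ and $\theta_{2}$ — and then to pull back along the jet extension, where contact forms are annihilated. The second ingredient is elementary and is the easy half of Proposition \ref{3.1}: for a section $\sigma$, $(j^{1}\sigma)^{\ast}\theta_{1} = d(x(t)) - \dot x(t)\,dt = 0$ and $(j^{2}\sigma)^{\ast}\theta_{2} = d(\dot x(t)) - \ddot x(t)\,dt = 0$, and $(j^{2}\sigma)^{\ast}\theta_{1} = 0$ as well since $j^{2}\sigma$ projects to $j^{1}\sigma$ under $\pi_{21}$.

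For the first identity I would compute $\pounds_{X^{1}}\theta_{1}$ directly from the formula for $X^{1}$ in Proposition \ref{4.1} together with $\pounds_{X}(df) = d(Xf)$ and the Leibniz rule: writing $\theta_{1}$ componentwise as $\theta_{1}^{i} = dx^{i} - \dot x^{i}\,dt$, one gets $\pounds_{X^{1}}\theta_{1}^{i} = d\xi^{i} - (\dot\xi^{i} - \dot x^{i}\dot\tau)\,dt - \dot x^{i}\,d\tau$. The single useful device is to split the exterior derivative of a function $f = f(t,x)$ into a horizontal part and a contact part, $df = \dot f\,dt + (\partial f/\partial x^{j})\,\theta_{1}^{j}$, where $\dot f = \partial_{t}f + \dot x^{j}\,\partial_{x^{j}}f$ is precisely the total derivative used to prolong $X$. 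Applying this to $\xi^{i}$ and $\tau$, the $dt$-terms cancel by construction and one is left with $\pounds_{X^{1}}\theta_{1}^{i} = (\partial\xi^{i}/\partial x^{j} - \dot x^{i}\,\partial\tau/\partial x^{j})\,\theta_{1}^{j}$, which lies in the contact ideal; pulling back by $j^{1}\sigma$ then gives zero.

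The second identity is the same computation one step up the jet tower. Setting $\eta^{i} := \dot\xi^{i} - \dot x^{i}\dot\tau$ for the $\dot x^{i}$-component of $X^{2}$, the Leibniz rule gives $\pounds_{X^{2}}\theta_{2}^{i} = d\eta^{i} - (\ddot\xi^{i} - 2\ddot x^{i}\dot\tau - \dot x^{i}\ddot\tau)\,dt - \ddot x^{i}\,d\tau$, and the point to verify is the identity $\ddot\xi^{i} - 2\ddot x^{i}\dot\tau - \dot x^{i}\ddot\tau = \dot\eta^{i} - \ddot x^{i}\dot\tau$, where now $\dot\eta^{i} = \partial_{t}\eta^{i} + \dot x^{j}\partial_{x^{j}}\eta^{i} + \ddot x^{j}\partial_{\dot x^{j}}\eta^{i}$ is the total derivative on $J^{2}$. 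Splitting $d\eta^{i}$ and $d\tau$ into horizontal plus contact pieces (now $\eta^{i}$ also depends on $\dot x$, so $\theta_{2}$ appears too), the $dt$-terms again cancel and one obtains $\pounds_{X^{2}}\theta_{2}^{i} = (\partial\eta^{i}/\partial x^{j} - \ddot x^{i}\,\partial\tau/\partial x^{j})\,\theta_{1}^{j} + (\partial\eta^{i}/\partial\dot x^{j})\,\theta_{2}^{j}$; pulling back by $j^{2}\sigma$ kills every term.

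The only labour is bookkeeping — checking that the prolongation coefficients in Proposition \ref{4.1} are indeed the successive total derivatives of the previous coefficient, corrected by the appropriate multiples of $\dot\tau$, so that the horizontal $dt$-terms cancel identically. There is no genuine obstacle here; this cancellation is exactly what the definition of prolongation is designed to produce. More conceptually — and this could be recorded as a one-line remark — the flow of $X^{k}$ is the $k$-th prolongation of the flow of $X$, prolonged point transformations preserve the contact ideal, hence so does $\pounds_{X^{k}}$; the displayed formulas are just the explicit form of this fact, and the proposition follows since jet extensions annihilate contact forms.
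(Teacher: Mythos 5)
Your proof is correct and follows essentially the same route as the paper: both arguments compute $\pounds_{X^{1}}\theta_{1}$ and $\pounds_{X^{2}}\theta_{2}$ explicitly, observe that the result lies in the contact ideal (your formula $\pounds_{X^{1}}\theta_{1}=(\partial\xi/\partial x-\dot{x}\,\partial\tau/\partial x)\,\theta_{1}$ is exactly the paper's), and conclude by pulling back along the jet extension, which annihilates contact forms. The only cosmetic difference is that you organize the calculation via the Leibniz rule together with the horizontal-plus-contact splitting of $df$, whereas the paper expands Cartan's formula $\pounds_{X}\theta=X\lefthook d\theta+d\langle\theta,X\rangle$ directly; the cancellation of the $dt$-terms you verify is the same identity the paper uses implicitly through the prolongation formulas of Proposition \ref{4.1}.
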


\begin{proof}
Since 
\begin{equation*}
X^{1}=\tau \frac{\partial }{\partial t}+\xi \frac{\partial }{\partial x}+( 
\dot{\xi}-\dot{x}\dot{\tau})\frac{\partial }{\partial \dot{x}},
\end{equation*} 
then 
\begin{eqnarray*}
\pounds _{X^{1}}\theta _{1} &=&X^{1} 
\lefthook
d\theta _{1}+d\langle \theta _{1},X^{1}\rangle \\
&=&\left( \frac{\partial \xi }{\partial x}-\dot{x}\frac{\partial \tau }{ 
\partial x}\right) \left( dx-\dot{x}\,dt\right) .
\end{eqnarray*} 
Therefore 
\begin{eqnarray*}
j^{1}\sigma ^{\ast }\pounds _{X^{1}}\theta _{1} &=&j^{1}\sigma ^{\ast }(- 
\dot{\xi}\,dt+\dot{x}\dot{\tau}\,dt+d\xi -\dot{x}\,d\tau ), \\
&=&-\dot{\xi}\,dt+\dot{x}\dot{\tau}\,dt+\dot{\xi}\,dt-\dot{x}\dot{\tau}\,dt, \\
&=&0.
\end{eqnarray*}

On the other hand, since $\theta _{2}=d\dot{x}-\ddot{x}\,dt,$ and 
\begin{equation*}
X^{2}=\tau \frac{\partial }{\partial t}+\xi \frac{\partial }{\partial x}+( 
\dot{\xi}-\dot{x}\dot{\tau})\frac{\partial }{\partial \dot{x}}+(\ddot{\xi}-2 
\ddot{x}\dot{\tau}-\dot{x}\ddot{\tau})\frac{\partial }{\partial \ddot{x}}
\end{equation*} 
it follows that 
\begin{eqnarray*}
\pounds _{X^{2}}\theta _{2} &=&X^{2} 
\lefthook
d\theta _{2}+d\langle \theta _{2},X^{2}\rangle \\
&=&\left( d\dot{\xi}-\ddot{\xi}\,dt\right) -\dot{\tau}(d\dot{x}-\ddot{x}\,dt)- 
\ddot{x}\left\langle \frac{\partial \tau }{\partial x},\left( dx-\dot{x} \,
dt\right) \right\rangle \\
&&+\dot{x}\left\langle \frac{\partial }{\partial x}\left( \frac{\partial
\tau }{\partial t}+\frac{\partial \tau }{\partial x}\dot{x}\right) ,(dx-\dot{ 
x}\,dt)\right\rangle +\dot{x}\left\langle \frac{\partial \tau }{\partial x},(d 
\dot{x}-\ddot{x}\,dt)\right\rangle .
\end{eqnarray*} 
Therefore, 
\begin{eqnarray*}
j^{2}\sigma ^{\ast }\pounds _{X^{2}}\theta _{2} &=&j^{2}\sigma ^{\ast
}\left\{ \left( d\dot{\xi}-\ddot{\xi}\,dt\right) -\dot{\tau}(d\dot{x}-\ddot{x}\, 
dt)-\ddot{x}\left\langle \frac{\partial \tau }{\partial x},\left( dx-\dot{x} \,
dt\right) \right\rangle \right\} + \\
&&+j^{2}\sigma ^{\ast }\left\{ \dot{x}\left\langle \frac{\partial }{\partial
x}\left( \frac{\partial \tau }{\partial t}+\frac{\partial \tau }{\partial x} 
\dot{x}\right) ,(dx-\dot{x}\,dt)\right\rangle +\dot{x}\left\langle \frac{ 
\partial \tau }{\partial x},(d\dot{x}-\ddot{x}\,dt)\right\rangle \right\} \\
&=&0.
\end{eqnarray*}
\end{proof}

Let $\sigma $ be section of $I\times \R^{n}\rightarrow I,$ where $ 
I=[t_{0},t_{1}]$ and 
\begin{equation}
A(\sigma)=\int_{I}(L\circ j^{2}\sigma)\,dt=\int_{I}j^{2}\sigma^{\ast
}(L\,dt)=\int_{j^{2}\sigma(I)}L\,dt.  \label{A}
\end{equation}

\begin{proposition}
\label{4.3}The action integral (\ref{A}) is invariant under the one-parameter
local group $\exp tX^2$ of local diffeomorphisms of $J^{2}$ generated
by $X^{2}$ if 
\begin{equation*}
j^{2}\sigma ^{\ast }(\pounds _{X^{2}}(L\,dt))=0.
\end{equation*} 
Moreover, $\frac{d}{dt}A(\exp tX(\sigma ))|_{t=0}$ for every section $\sigma $, if and only if $\pounds _{X^{2}}(L\,dt)=0.$
\end{proposition}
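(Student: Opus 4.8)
The plan is to reduce both assertions to the single first-variation formula
\[
\frac{d}{dt}\,A(\exp tX\cdot\sigma)\Big|_{t=0}=\int_{I}j^{2}\sigma^{\ast}\big(\pounds_{X^{2}}(L\,dt)\big),
\]
now valid for the general vector field $X=\tau\,\partial_t+\xi^{i}\,\partial_{x^{i}}$ of (\ref{X0}) and not just for those tangent to the fibres of the source map. To derive it I would use that $\exp tX^{2}$ is, by Proposition \ref{4.1}, the $J^{2}$-prolongation of the flow of $X$, so prolongation commutes with the flow and $j^{2}(\exp tX\cdot\sigma)=(\exp tX^{2})\circ j^{2}\sigma$; hence the second jet of the transformed section has image $(\exp tX^{2})(j^{2}\sigma(I))$, and by the change-of-variables formula for integrals of forms over the image of a chain,
\[
A(\exp tX\cdot\sigma)=\int_{(\exp tX^{2})(j^{2}\sigma(I))}L\,dt=\int_{I}j^{2}\sigma^{\ast}(\exp tX^{2})^{\ast}(L\,dt).
\]
Differentiating under the integral, applying Cartan's identity $\frac{d}{dt}(\exp tX^{2})^{\ast}\alpha=(\exp tX^{2})^{\ast}\pounds_{X^{2}}\alpha$, and setting $t=0$ yields the formula. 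Expanding $\pounds_{X^{2}}(L\,dt)=X^{2}\lefthook d(L\,dt)+d(X^{2}\lefthook L\,dt)=X^{2}\lefthook d(L\,dt)+d(\tau L)$ recovers the boundary-term form (\ref{Variation0})--(\ref{Variation 2}) of Section 2.1, the only difference being the exact term $d(\tau L)$, which supplies the extra boundary contribution present once $\tau\neq0$.

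For the first assertion I would differentiate at a general parameter value $s$ and use the group law $\exp(s+t)X=\exp tX\circ\exp sX$ together with the fact that $\exp sX\cdot\sigma$ is again (the base of) a holonomic section; the master formula applied to it gives $\frac{d}{ds}A(\exp sX\cdot\sigma)=\int_{I}j^{2}(\exp sX\cdot\sigma)^{\ast}(\pounds_{X^{2}}(L\,dt))$. If $j^{2}\sigma^{\ast}(\pounds_{X^{2}}(L\,dt))=0$ for every section, this derivative vanishes for all $s$, so $s\mapsto A(\exp sX\cdot\sigma)$ is constant and the action is invariant under $\exp tX^{2}$; at the very least, the master formula shows $\delta A(\sigma)=0$ for any individual section satisfying the hypothesis.

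For the equivalence: if $\pounds_{X^{2}}(L\,dt)=0$ then the integrand of the master formula is identically zero, so $\frac{d}{dt}A(\exp tX\cdot\sigma)|_{t=0}=0$ for every $\sigma$. Conversely, if that derivative vanishes for every section and every interval $I=[t_{0},t_{1}]$, then letting $t_{1}$ vary and applying the fundamental lemma of the calculus of variations to $\int_{t_{0}}^{t_{1}}j^{2}\sigma^{\ast}(\pounds_{X^{2}}(L\,dt))$ forces $j^{2}\sigma^{\ast}(\pounds_{X^{2}}(L\,dt))=0$ for every section; it then remains to conclude that the ambient form on $J^{2}$ itself vanishes. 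Here I would use the computation $\pounds_{X^{2}}(L\,dt)=(X^{2}L+L\tau_{t})\,dt+L\,\tau_{x^{i}}\,dx^{i}$, in which the $d\dot{x}$- and $d\ddot{x}$-components produced by $X^{2}\lefthook d(L\,dt)$ are exactly cancelled by those coming from $d(\tau L)$: so $\pounds_{X^{2}}(L\,dt)$ involves only $dt$ and the $dx^{i}$, and evaluating its vanishing pull-backs at every point of $J^{2}$ (each being the $2$-jet of some section) pins down the surviving coefficients.

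The routine ingredients are the commutation of prolongation with the flow, Cartan's magic formula, and the fundamental lemma. The step that needs the most care is the last one: one must verify that ``$j^{2}\sigma^{\ast}(\pounds_{X^{2}}(L\,dt))=0$ for all $\sigma$'' really does force $\pounds_{X^{2}}(L\,dt)=0$ on $J^{2}$ rather than leaving it in the contact ideal. This is precisely where the structural cancellation of the $d\dot{x}$ and $d\ddot{x}$ terms is used; and for the vector fields of interest in this paper --- those acting on $\R^{n}$ alone, for which $\tau=0$, and those generating a reparametrisation, for which $\tau=\tau(t)$ --- one has $\tau_{x^{i}}=0$, which removes the remaining $dx^{i}$ term outright, so the pull-back condition is then equivalent to $\pounds_{X^{2}}(L\,dt)=0$.
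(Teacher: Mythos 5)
Your derivation of the master formula
\[
\frac{d}{dt}\,A(\exp tX\cdot\sigma)\Big|_{t=0}=\int_{I}j^{2}\sigma^{\ast}\bigl(\pounds_{X^{2}}(L\,dt)\bigr)
\]
— commutation of prolongation with the flow, change of variables over the transformed chain, differentiation under the integral, and Cartan's identity — is exactly the paper's argument, so the core of your proof matches. Where you go beyond the paper is in the two places it is silent: you upgrade ``derivative vanishes at $t=0$'' to genuine invariance by differentiating at general $s$ via the group law (which does require the hypothesis to hold along the whole orbit of $\sigma$, as you note), and you actually confront the ``only if'' direction of the final equivalence, correctly observing that the pull-back condition for all $\sigma$ only forces the single scalar $F+L\,\tau_{x^{i}}\dot{x}^{i}$ to vanish on $J^{2}$, not the $dt$- and $dx^{i}$-coefficients of $\pounds_{X^{2}}(L\,dt)$ separately; your resolution — that Lemma \ref{4.5} leaves only $dt$ and $dx^{i}$ components, and that $\tau_{x^{i}}=0$ for all the vector fields actually used in the paper — is the honest way to close that gap, which the paper simply asserts away.
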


\begin{proof}
Consider a vector field $X$ on $I\times \R^{n}$. Denote by $\exp tX$ the local one-parameter group of local diffeomorphisms of $ 
I\times \R^{n}$ generated by $X$, and by $\exp tX^{2}$ the
local one-parameter group of local diffeomorphisms of $J^{2}$ generated by
the prolongation $X^{2}$ of $X$ to $J^{2}$. Then, 
\begin{equation*}
A(\exp tX(\sigma ))=\int_{\exp tX^{2}(j^{2}\sigma (I))}L\,dt.
\end{equation*} 
The change of variables theorem asserts that $\int_{\phi _{t}(c)}\omega
=\int_{c}\phi _{t}^{\ast }\omega $ for any form $\omega ,$ chain $c$ and a
one-parameter group $\phi _{t}$ diffeomorphisms. Hence, 
\begin{equation*}
\int_{\exp tX^{2}(j^{2}\sigma (I))}L\,dt=\int_{j^{2}\sigma (I)}(\exp tX^{2})^{\ast }L\,dt.
\end{equation*} 
Therefore, 
\begin{equation*}
A(\exp tX(\sigma ))=\int_{j^{2}\sigma (I)}(\exp tX^{2})^{\ast }L\,dt.
\end{equation*} 
Now, differentiating under the integral sign with respect to $t$, 
\begin{equation*}
\frac{d}{dt}A(\exp tX(\sigma ))=\int_{j^{2}\sigma (I)} 
\frac{d}{dt}(\exp tX^{2})^{\ast }L\,dt=\int_{j^{2}\sigma (I)}(\exp tX^{2})^{\ast }\pounds _{X^{2}}L\,dt,
\end{equation*} 
and setting $t=0$, 
\begin{equation*}
\frac{d}{dt}A(\exp tX(\sigma ))|_{t =0}=\int_{j^{2}\sigma (I)}\pounds_{X^{2}}L\,dt=\int_{I}j^{2}\sigma ^{\ast }( 
\pounds_{X^{2}}(L\,dt)),
\end{equation*} 
it follows that if $j^{2}\sigma ^{\ast }\pounds _{X^{2}}L\,dt=0$, then $\frac{ 
d}{dt}A(\exp tX(\sigma ))|_{t=0}=0$. Moreover, \linebreak $\frac{d}{dt}A(\exp 
tX(\sigma ))|_{t=0}=0$ for every section $\sigma $, if and only if $\pounds 
_{X^{2}}L\,dt=0.$
\end{proof}

\begin{definition}
\label{4.4}A vector field $X$ on $I\times \R^{n}$ is an
infinitesimal symmetry of the Lagrange form $L\,dt$ if $\pounds  
_{X^{2}}(L\,dt)=0 $.
\end{definition}

\begin{lemma}
\label{4.5}The Lie derivative of the Lagrange form $L\,dt$ with respect to $ 
X^{2}$ is 
\begin{equation*}
\pounds _{X^{2}}(L\,dt)=\left( \tau \frac{\partial L}{\partial t}+\xi ^{i} 
\frac{\partial L}{\partial x^{i}}+(\dot{\xi}-\dot{x}\dot{\tau})\frac{ 
\partial L}{\partial \dot{x}}+(\ddot{\xi}-2\ddot{x}\dot{\tau}-\dot{x}\ddot{ 
\tau})\frac{\partial L}{\partial \ddot{x}}\right) dt+L\,d\tau .
\end{equation*} 
Hence, for every section $\sigma $ of $I\times \R^{n}\rightarrow I$, 
\begin{equation*}
j^{2}\sigma ^{\ast }(\pounds _{X^{2}}(L\,dt))=\left( \tau \frac{\partial L}{ 
\partial t}+\xi ^{i}\frac{\partial L}{\partial x^{i}}+(\dot{\xi}-\dot{x}\dot{ 
\tau})\frac{\partial L}{\partial \dot{x}}+(\ddot{\xi}-2\ddot{x}\dot{\tau}- 
\dot{x}\ddot{\tau})\frac{\partial L}{\partial \ddot{x}}+L\dot{\tau}\right)
dt.
\end{equation*}
\end{lemma}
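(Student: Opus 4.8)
The plan is to compute $\pounds_{X^{2}}(L\,dt)$ directly from Cartan's magic formula
$$\pounds_{X^{2}}(L\,dt)=X^{2}\lefthook d(L\,dt)+d(X^{2}\lefthook L\,dt),$$
exactly as in the variation computation leading to equation (\ref{Variation0}), except that now $\tau$ need not vanish. Since $L$ is a function on $J^{2}$, we have $d(L\,dt)=dL\wedge dt=\frac{\partial L}{\partial x^{i}}\,dx^{i}\wedge dt+\frac{\partial L}{\partial \dot{x}}\,d\dot{x}\wedge dt+\frac{\partial L}{\partial \ddot{x}}\,d\ddot{x}\wedge dt$, a $2$-form on $J^{2}$; since $X^{2}$ from Proposition \ref{4.1} has no $\partial/\partial\dddot{x}$ component, the contraction stays on $J^{2}$ and no jet-order subtleties arise.

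Next I would contract, using the identity $\iota_{Y}(\alpha\wedge dt)=(\iota_{Y}\alpha)\,dt-\alpha\,(\iota_{Y}dt)$ for a one-form $\alpha$ and reading off $\iota_{X^{2}}dx^{i}=\xi^{i}$, $\iota_{X^{2}}d\dot{x}=\dot{\xi}-\dot{x}\dot{\tau}$, $\iota_{X^{2}}d\ddot{x}=\ddot{\xi}-2\ddot{x}\dot{\tau}-\dot{x}\ddot{\tau}$ and $\iota_{X^{2}}dt=\tau$ from Proposition \ref{4.1}. This gives
$$X^{2}\lefthook d(L\,dt)=\Bigl(\xi^{i}\tfrac{\partial L}{\partial x^{i}}+(\dot{\xi}-\dot{x}\dot{\tau})\tfrac{\partial L}{\partial \dot{x}}+(\ddot{\xi}-2\ddot{x}\dot{\tau}-\dot{x}\ddot{\tau})\tfrac{\partial L}{\partial \ddot{x}}\Bigr)dt-\tau\Bigl(\tfrac{\partial L}{\partial x^{i}}\,dx^{i}+\tfrac{\partial L}{\partial \dot{x}}\,d\dot{x}+\tfrac{\partial L}{\partial \ddot{x}}\,d\ddot{x}\Bigr).$$
The one genuinely useful observation is to recognise the last parenthesis as $dL-\frac{\partial L}{\partial t}\,dt$, which rewrites the right-hand side as $\bigl(\tau\frac{\partial L}{\partial t}+\xi^{i}\frac{\partial L}{\partial x^{i}}+(\dot{\xi}-\dot{x}\dot{\tau})\frac{\partial L}{\partial \dot{x}}+(\ddot{\xi}-2\ddot{x}\dot{\tau}-\dot{x}\ddot{\tau})\frac{\partial L}{\partial \ddot{x}}\bigr)\,dt-\tau\,dL$.

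Finally, $X^{2}\lefthook L\,dt=L\tau$, so $d(X^{2}\lefthook L\,dt)=\tau\,dL+L\,d\tau$; the two $\tau\,dL$ terms cancel and the first displayed formula of the lemma results. To obtain the second formula I would apply $j^{2}\sigma^{\ast}$, using $j^{2}\sigma^{\ast}dt=dt$, $j^{2}\sigma^{\ast}d\tau=\dot{\tau}\,dt$ (the total derivative $\partial_{t}\tau+\dot{x}^{i}\partial_{x^{i}}\tau$ evaluated along $\sigma$, which matches the overdot convention of Proposition \ref{4.1}), and the fact that the overdotted quantities $\dot{\xi},\dot{\tau},\ddot{\xi},\ddot{\tau}$ are pulled back to genuine time derivatives along $\sigma$; the term $L\,d\tau$ becomes $L\dot{\tau}\,dt$. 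There is no real obstacle: the only points requiring care are the interior-product sign convention on $dx^{i}\wedge dt$ and the bookkeeping of the prolongation coefficients, the rewriting of $\frac{\partial L}{\partial x^{i}}\,dx^{i}+\frac{\partial L}{\partial \dot{x}}\,d\dot{x}+\frac{\partial L}{\partial \ddot{x}}\,d\ddot{x}$ as $dL-\frac{\partial L}{\partial t}\,dt$ being what makes the cancellation transparent.
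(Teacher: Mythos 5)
Your proposal is correct and follows essentially the same route as the paper: Cartan's magic formula applied to $L\,dt$, contraction of $dL\wedge dt$ with the prolongation coefficients from Proposition \ref{4.1}, and the cancellation that converts $-\tau\bigl(\frac{\partial L}{\partial x^{i}}dx^{i}+\frac{\partial L}{\partial \dot{x}}d\dot{x}+\frac{\partial L}{\partial \ddot{x}}d\ddot{x}\bigr)+\tau\,dL$ into $\tau\frac{\partial L}{\partial t}\,dt$. The only (harmless) difference is bookkeeping order, and you additionally spell out the pullback step $j^{2}\sigma^{\ast}d\tau=\dot{\tau}\,dt$, which the paper leaves implicit.
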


\begin{proof}
The Lie derivative of the Lagrangian form $L\,dt$ with respect to $X^{2}$  is 
\begin{eqnarray*}
\pounds _{X^{2}}(L\,dt) &=&X^{2} 
\lefthook
dL\wedge dt+d(X^{2} 
\lefthook
L\,dt) \\
&=&X^{2} 
\lefthook
\left( \left( \frac{\partial L}{\partial t}dt+\frac{\partial L}{\partial
x^{i}}dx+\frac{\partial L}{\partial \dot{x}}d\dot{x}+\frac{\partial L}{ 
\partial \ddot{x}}d\ddot{x}\right) \wedge dt\right) +d(X^{2} 
\lefthook
L\,dt) \\
&=&X^{2} 
\lefthook
\left( \left( \frac{\partial L}{\partial x^{i}}dx+\frac{\partial L}{\partial 
\dot{x}}d\dot{x}+\frac{\partial L}{\partial \ddot{x}}d\ddot{x}\right) \wedge
dt\right) +d(X^{2} 
\lefthook
L\,dt)
\end{eqnarray*} 
because $dt\wedge dt=0.$ Hence, 
\begin{eqnarray*}
\pounds _{X^{2}}(Ldt) &=&\left( \xi ^{i}\frac{\partial L}{\partial x^{i}}+( 
\dot{\xi}-\dot{x}\dot{\tau})\frac{\partial L}{\partial \dot{x}}+(\ddot{\xi}-2 
\ddot{x}\dot{\tau}-\dot{x}\ddot{\tau})\frac{\partial L}{\partial \ddot{x}} 
\right) dt+ \\
&&-\tau \left( \frac{\partial L}{\partial x^{i}}dx+\frac{\partial L}{ 
\partial \dot{x}}d\dot{x}+\frac{\partial L}{\partial \ddot{x}}d\ddot{x} 
\right) +L\,d\tau +\tau \,dL \\
&=&\left( \tau \frac{\partial L}{\partial t}+\xi ^{i}\frac{\partial L}{ 
\partial x^{i}}+(\dot{\xi}-\dot{x}\dot{\tau})\frac{\partial L}{\partial \dot{ 
x}}+(\ddot{\xi}-2\ddot{x}\dot{\tau}-\dot{x}\ddot{\tau})\frac{\partial L}{ 
\partial \ddot{x}}\right) dt+L\,d\tau .
\end{eqnarray*}
\end{proof}

\begin{lemma}
\label{4.6}(Noether identities.) The equation $j^{2}\sigma ^{\ast }(\pounds  
_{X^{2}}(L\,dt))=0$ is equivalent to 
\begin{eqnarray}
&&\frac{d}{dt}\left( L\tau +\left( \frac{\partial L}{\partial \dot{x}}-\frac{ 
d}{dt}\left( \frac{\partial L}{\partial \ddot{x}}\right) \right) (\xi -\tau 
\dot{x})+\frac{\partial L}{\partial \ddot{x}}\frac{d}{dt}(\xi -\tau \dot{x} 
)\right) =  \label{Noether0} \\
&=&-\left( \frac{\partial L}{\partial x}-\frac{d}{dt}\left( \frac{\partial L 
}{\partial \dot{x}}\right) +\frac{d^{2}}{dt^{2}}\left( \frac{\partial L}{ 
\partial \ddot{x}}\right) \right) (\xi -\tau \dot{x}).  \notag
\end{eqnarray}
\end{lemma}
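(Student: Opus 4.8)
The plan is to take the explicit formula for $j^{2}\sigma^{\ast}(\pounds_{X^{2}}(L\,dt))$ supplied by Lemma~\ref{4.5} and to rewrite its right-hand side, up to a total time derivative, as the Euler--Lagrange operator applied to $L$ multiplied by $\xi-\tau\dot{x}$. The device that organizes the bookkeeping is the \emph{characteristic} of the symmetry,
\[
\phi:=\xi-\tau\dot{x},
\]
the vertical representative of $X$. With this substitution the prolongation coefficients appearing in Lemma~\ref{4.5} collapse, by the chain rule, to $\dot{\xi}-\dot{x}\dot{\tau}=\dot{\phi}+\tau\ddot{x}$ and $\ddot{\xi}-2\ddot{x}\dot{\tau}-\dot{x}\ddot{\tau}=\ddot{\phi}+\tau\dddot{x}$ (throughout, $d/dt$ is the total derivative along the prolonged section, which is why the natural domain is pushed up to $J^{3}$). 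Substituting $\xi=\phi+\tau\dot{x}$ into Lemma~\ref{4.5} and collecting every term carrying a factor $\tau$, one recognizes $\tau\big(L_{t}+\dot{x}L_{x}+\ddot{x}L_{\dot{x}}+\dddot{x}L_{\ddot{x}}\big)+L\dot{\tau}=\tfrac{d}{dt}(\tau L)$, hence
\[
j^{2}\sigma^{\ast}(\pounds_{X^{2}}(L\,dt))=\Big(\tfrac{d}{dt}(\tau L)+\phi\,\tfrac{\partial L}{\partial x}+\dot{\phi}\,\tfrac{\partial L}{\partial\dot{x}}+\ddot{\phi}\,\tfrac{\partial L}{\partial\ddot{x}}\Big)\,dt .
\]

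Next I would integrate by parts in reverse on the last two terms, using $\dot{\phi}L_{\dot{x}}=\tfrac{d}{dt}(\phi L_{\dot{x}})-\phi\,\tfrac{d}{dt}L_{\dot{x}}$ and $\ddot{\phi}L_{\ddot{x}}=\tfrac{d}{dt}\big(\dot{\phi}L_{\ddot{x}}-\phi\,\tfrac{d}{dt}L_{\ddot{x}}\big)+\phi\,\tfrac{d^{2}}{dt^{2}}L_{\ddot{x}}$, to regroup the bracket as
\[
\phi\Big(\tfrac{\partial L}{\partial x}-\tfrac{d}{dt}\tfrac{\partial L}{\partial\dot{x}}+\tfrac{d^{2}}{dt^{2}}\tfrac{\partial L}{\partial\ddot{x}}\Big)+\tfrac{d}{dt}\Big(\tau L+\phi\big(\tfrac{\partial L}{\partial\dot{x}}-\tfrac{d}{dt}\tfrac{\partial L}{\partial\ddot{x}}\big)+\dot{\phi}\,\tfrac{\partial L}{\partial\ddot{x}}\Big).
\]
Restoring $\phi=\xi-\tau\dot{x}$ and $\dot{\phi}=\tfrac{d}{dt}(\xi-\tau\dot{x})$ identifies the first summand with the negative of the right-hand side of~(\ref{Noether0}) and the total derivative with the left-hand side of~(\ref{Noether0}). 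Since $j^{2}\sigma^{\ast}(\pounds_{X^{2}}(L\,dt))$ is a scalar multiple of $dt$, it vanishes for the given section if and only if that scalar vanishes, i.e.\ if and only if~(\ref{Noether0}) holds.

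No step here is deep; the whole content is carrying the two integrations by parts without error. The two places to be careful are that every $d/dt$ is a total derivative along the prolonged section (so that the $\tau$-terms genuinely assemble into $\tfrac{d}{dt}(\tau L)$, with $\dddot{x}$ appearing), and that the total-derivative remainder collected above is \emph{exactly} the momentum combination $L\tau+\big(\tfrac{\partial L}{\partial\dot{x}}-\tfrac{d}{dt}\tfrac{\partial L}{\partial\ddot{x}}\big)(\xi-\tau\dot{x})+\tfrac{\partial L}{\partial\ddot{x}}\tfrac{d}{dt}(\xi-\tau\dot{x})$ and not something differing from it by a further exact term; I would confirm the latter by differentiating that combination directly and matching the $dt$-coefficient above. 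That verification is the only, and mild, obstacle.
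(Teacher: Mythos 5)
Your proof is correct. Note that the paper does not actually carry out this computation: its ``proof'' of Lemma \ref{4.6} consists of the single sentence deferring the ``routine but lengthy calculation'' to Logan's monograph, so your write-up supplies the argument the paper omits. The route you take --- passing to the characteristic $\phi=\xi-\tau\dot{x}$, observing that the prolongation coefficients become $\dot\phi+\tau\ddot{x}$ and $\ddot\phi+\tau\dddot{x}$ so that all $\tau$-terms assemble into $\tfrac{d}{dt}(\tau L)$, and then integrating by parts twice on $\dot\phi L_{\dot x}$ and $\ddot\phi L_{\ddot x}$ --- is the standard derivation, and the boundary combination you obtain, $\tau L+\phi\bigl(L_{\dot x}-\tfrac{d}{dt}L_{\ddot x}\bigr)+\dot\phi L_{\ddot x}$, matches the momentum expression in (\ref{Noether0}) exactly; your closing remark about confirming this by direct differentiation is the right sanity check and poses no difficulty.
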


\begin{proof}
The details of this routine but lengthy calculation may be found in \cite{logan}.
\end{proof}
\begin{remark}
\label{4.7}The Noether identity is essentially the extension of the equation 
$j^{2}\sigma ^{\ast }(\pounds _{X^{2}}(L\,dt))=0$ to the fourth jet bundle.
More precisely, if $\pi _{4,2}:J^{4}\rightarrow J^{2}$ is the natural
projection and $X^{4}$ is the prolongation of $X$ to $J^{4}$, then 
\begin{eqnarray}
j^{4}\sigma ^{\ast }(\pi _{4,2}^{\ast }(\pounds _{X^{2}}(L\,dt)))
&=&j^{4}\sigma ^{\ast }\left( \left( \frac{\partial L}{\partial x}-\frac{d}{ 
dt}\left( \frac{\partial L}{\partial \dot{x}}\right) +\frac{d^{2}}{dt^{2}} 
\left( \frac{\partial L}{\partial \ddot{x}}\right) \right) (\xi -\tau \dot{x} 
)\right) +  \label{Noether2} \\
&& \! \! \! \! \! \! \! \! \! \! \! \! \! \! \!+j^{4}\sigma ^{\ast }\left( 
\frac{d}{dt} \left( L\tau +\left( \frac{ 
\partial L}{\partial \dot{x}}-\frac{d}{dt}\left( \frac{\partial L}{\partial 
\ddot{x}}\right) \right) (\xi -\tau \dot{x})+\frac{\partial L}{\partial 
\ddot{x}}\frac{d}{dt}(\xi -\tau \dot{x})\right) \right) .  \notag
\end{eqnarray}
\end{remark}

An immediate corollary of the Noether identity is the following
conservation law.

\begin{theorem}
\label{4.8}(First Noether theorem.) To every infinitesimal symmetry $X=\tau 
\frac{\partial }{\partial t}+\xi \frac{\partial }{\partial x}$ of the
Lagrange form $L\,dt,$ there corresponds a conserved quantity 
\begin{equation}
\mathscr{J}_{X}=L\tau +\left( \frac{\partial L}{\partial \dot{x}}-\frac{d}{dt 
}\left( \frac{\partial L}{\partial \ddot{x}}\right) \right) (\xi -\tau \dot{x 
})+\frac{\partial L}{\partial \ddot{x}}\frac{d}{dt}(\xi -\tau \dot{x}).
\label{PX}
\end{equation} 
That is, $\mathscr{J}_{X}$ is constant along solutions of the Euler-Lagrange equations 
\begin{equation*}
\frac{\partial L}{\partial x}-\frac{d}{dt}\left( \frac{\partial L}{\partial 
\dot{x}}\right) +\frac{d^{2}}{dt^{2}}\left( \frac{\partial L}{\partial \ddot{ 
x}}\right) =0.
\end{equation*} 
In other words, if $\sigma $ satisfies the Euler-Lagrange equations, then $ 
j^{3}\sigma ^{\ast }(X^{3} 
\lefthook
\Theta )$ is constant.
\end{theorem}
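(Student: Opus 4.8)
The plan is to read the conservation law directly off the Noether identity of Lemma \ref{4.6}, and then to recognize the conserved quantity as the pullback of the interior product $X^{3}\lefthook\Theta$.

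First, since $X$ is by Definition \ref{4.4} an infinitesimal symmetry of the Lagrange form, $\pounds_{X^{2}}(L\,dt)=0$ identically, and in particular $j^{2}\sigma^{\ast}(\pounds_{X^{2}}(L\,dt))=0$ for every section $\sigma$. By Lemma \ref{4.6} this is equivalent to the Noether identity (\ref{Noether0}). Suppose now that $\sigma$ solves the Euler-Lagrange equations (\ref{Euler Lagrange}). Then the Euler-Lagrange operator $\frac{\partial L}{\partial x}-\frac{d}{dt}\frac{\partial L}{\partial\dot{x}}+\frac{d^{2}}{dt^{2}}\frac{\partial L}{\partial\ddot{x}}$ vanishes along $j^{4}\sigma$, so the right-hand side of (\ref{Noether0}) is identically zero along $\sigma$, and hence $\frac{d}{dt}\mathscr{J}_{X}=0$ along $\sigma$. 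This is exactly the assertion that $\mathscr{J}_{X}$ is constant along solutions of the Euler-Lagrange equations. (As noted in Remark \ref{4.7}, the identity really lives on $J^{4}$, pulled back by $j^{4}\sigma$; this causes no difficulty.)

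It remains to identify $\mathscr{J}_{X}$ with $j^{3}\sigma^{\ast}(X^{3}\lefthook\Theta)$. Using the expression (\ref{Theta}) for the Cartan form together with the prolongation formula for $X^{3}$ from Proposition \ref{4.1}, the interior product picks out only the $dt$, $dx$ and $d\dot{x}$ components, giving
\begin{equation*}
X^{3}\lefthook\Theta = L\tau + p_{x}(\xi-\tau\dot{x}) + p_{\dot{x}}\bigl(\dot{\xi}-\dot{x}\dot{\tau}-\tau\ddot{x}\bigr).
\end{equation*}
The key elementary observation is the identity $\dot{\xi}-\dot{x}\dot{\tau}-\tau\ddot{x}=\frac{d}{dt}(\xi-\tau\dot{x})$, which turns the last term into $p_{\dot{x}}\frac{d}{dt}(\xi-\tau\dot{x})$. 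Substituting the Ostrogradski momenta $p_{\dot{x}}=\frac{\partial L}{\partial\ddot{x}}$ and $p_{x}=\frac{\partial L}{\partial\dot{x}}-\frac{d}{dt}\frac{\partial L}{\partial\ddot{x}}$ and pulling back along $j^{3}\sigma$ reproduces (\ref{PX}) verbatim, so $j^{3}\sigma^{\ast}(X^{3}\lefthook\Theta)=\mathscr{J}_{X}$ and the closing sentence of the statement follows.

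There is no genuine obstacle here: the theorem is an immediate corollary of Lemma \ref{4.6}. The only place demanding a modicum of care is the bookkeeping — tracking which jet bundle each term inhabits and verifying the algebraic identity relating the $\frac{\partial}{\partial\dot{x}}$-component of the prolongation $X^{3}$ to $\frac{d}{dt}(\xi-\tau\dot{x})$ — but this is entirely routine.
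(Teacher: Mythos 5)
Your proof is correct and takes essentially the same route as the paper: the paper presents the conservation statement as an immediate corollary of the Noether identity of Lemma \ref{4.6} (exactly your first paragraph), and the identification $j^{3}\sigma^{\ast}(X^{3}\lefthook\Theta)=\mathscr{J}_{X}$ is established by the same direct contraction of $X^{3}$ with the Cartan form, using $\dot{\xi}-\dot{x}\dot{\tau}-\tau\ddot{x}=\frac{d}{dt}(\xi-\tau\dot{x})$, in Proposition \ref{JX}.
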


\begin{example}
\label{Conservation of linear momentum}(Conservation of linear momentum.) If
the Lagrangian $L$ does not depend on the coordinate $x^{i}$, then $X=\frac{ 
\partial }{\partial x^{i}}$ is an infinitesimal symmetry and the momentum 
\begin{equation*}
\mathscr{J}_{\frac{\partial }{\partial x^{i}}}=\frac{\partial L}{\partial 
\dot{x}^{i}}-\frac{d}{dt}\left( \frac{\partial L}{\partial \ddot{x}^{i}} 
\right)
\end{equation*} 
is conserved.
\end{example}

\begin{example}
\label{energy}(Conservation of energy) If the Lagrangian $L$ does not depend
on the parameter $t$, then $X=\frac{\partial }{\partial t}$ is an
infinitesimal symmetry and the energy 
\begin{equation*}
H=p_x\dot{x}+p_{\dot{x}}\ddot{x}-L=-\mathscr{J}_{\frac{\partial }{\partial t}}
\end{equation*} 
is conserved.
\end{example}

\begin{proof}
\begin{eqnarray*}
\pounds _{\frac{\partial }{\partial t}}(L\,dt) &=&\frac{\partial }{\partial t} 
\lefthook
d(L\,dt)+d(\frac{\partial }{\partial t} 
\lefthook
L\,dt) \\
&=&\frac{\partial }{\partial t} 
\lefthook
dL\wedge dt+dL \\
&=&-\left( \frac{\partial L}{\partial x}dx+\frac{\partial L}{\partial \dot{x} 
}d\dot{x}+\frac{\partial L}{\partial \ddot{x}}d\ddot{x}\right) +\left( \frac{ 
\partial L}{\partial t}dt+\frac{\partial L}{\partial x}dx+\frac{\partial L}{ 
\partial \dot{x}}d\dot{x}+\frac{\partial L}{\partial \ddot{x}}d\ddot{x} 
\right) \\
&=&\frac{\partial L}{\partial t}dt.
\end{eqnarray*} 
Hence, $\frac{\partial L}{\partial t}=0$ implies that $\frac{\partial }{ 
\partial t}$ is an infinitesimal symmetry of the Lagrange form $L\,dt.$
\end{proof}

Other conserved quantities will be discussed later.

\begin{remark}
There is a vast amount of work on symmetry principles and conservation laws
following Noether's fundamental paper \cite{noether}. Three works in 
particular are
noteworthy: the monographs by Logan \cite{logan} and 
Kosmann-Schwarzbach \cite{kosmann-schwarzbach}, and a
review by Krupkova \cite{krupkova2009}.
\end{remark}

\subsubsection{The Cartan form approach}

Recall that the Cartan form $\Theta $ is 
\begin{eqnarray}
\Theta &=&L\,dt+p_x\theta _{1}+p_{\dot{x}}\theta _{2}  \label{Theta2} \\
&=&L\,dt+\left( \frac{\partial L}{\partial \dot{x}}-\frac{d}{dt}\frac{\partial
L}{\partial \ddot{x}}\right) (dx-\dot{x}\,dt)+\frac{\partial L}{\partial \ddot{ 
x}}(d\dot{x}-\ddot{x}\,dt).  \notag
\end{eqnarray}

\begin{lemma}\label{cartan-form-lemma}
\label{4.9}For every section $\sigma :I\rightarrow I\times \R^{n}$,
and each vector field $X=\tau \frac{\partial }{\partial t}+\xi \frac{ 
\partial }{\partial x}$ on $I\times \R^{n}$, 
\begin{equation*}
j^{3}\sigma ^{\ast }(\pounds _{X^{3}}\Theta )=j^{2}\sigma ^{\ast }(\pounds  
_{X^{2}}(L\,dt)).
\end{equation*} 
In particular, if 
\begin{equation*}
j^{2}\sigma ^{\ast }(\pounds _{X^{2}}(L\,dt))=0,
\end{equation*} 
then, 
\begin{equation*}
j^{3}\sigma ^{\ast }(\pounds _{X^{3}}\Theta )=0.
\end{equation*}
\end{lemma}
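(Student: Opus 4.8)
The plan is to exploit the fact, already established in Definition~\ref{3.2} and equation~(\ref{Action1}), that $\Theta$ differs from the Lagrange form $L\,dt$ only by the contact terms $p_x\theta_1 + p_{\dot x}\theta_2$, and that these contact forms pull back to zero under jet prolongations of sections (Proposition~\ref{3.1}). So I would write $\Theta = L\,dt + p_x\theta_1 + p_{\dot x}\theta_2$ and compute $\pounds_{X^3}\Theta = \pounds_{X^3}(L\,dt) + \pounds_{X^3}(p_x\theta_1) + \pounds_{X^3}(p_{\dot x}\theta_2)$. The first term, after pulling back by $j^3\sigma$ and using that $X^3$ is $\pi_{32}$-related to $X^2$ (Proposition~\ref{4.1}) together with the naturality of the Lie derivative under pull-back along the projection $\pi_{32}$, becomes $j^2\sigma^\ast(\pounds_{X^2}(L\,dt))$. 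The whole content of the lemma is therefore that the two contact contributions pull back to zero under $j^3\sigma$.

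For the contact terms I would use the Leibniz rule $\pounds_{X^3}(p_x\theta_1) = (\pounds_{X^3}p_x)\theta_1 + p_x\,\pounds_{X^3}\theta_1$, and similarly for $p_{\dot x}\theta_2$. The coefficient factors $\pounds_{X^3}p_x$ and $\pounds_{X^3}p_{\dot x}$ are just functions (on $J^3$, resp.\ $J^4$), so after pulling back by $j^3\sigma$ the first piece of each becomes $(\text{function})\cdot j^3\sigma^\ast\theta_1$, which vanishes by Proposition~\ref{3.1} since $j^3\sigma^\ast\theta_1 = j^3\sigma^\ast(dx-\dot x\,dt) = 0$ (and likewise $j^3\sigma^\ast\theta_2=0$). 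For the remaining pieces $p_x\,j^3\sigma^\ast(\pounds_{X^3}\theta_1)$ and $p_{\dot x}\,j^3\sigma^\ast(\pounds_{X^3}\theta_2)$, I would invoke Proposition~\ref{4.2}: that proposition is stated for $\pounds_{X^1}\theta_1$ and $\pounds_{X^2}\theta_2$, but since $\theta_1,\theta_2$ and their exterior derivatives involve only $J^2$-coordinates and the prolongations $X^1,X^2,X^3$ agree on those coordinates, $\pounds_{X^3}\theta_i$ differs from $\pounds_{X^{i}}\theta_i$ only by contact-form multiples (indeed the extra $\partial/\partial\dddot x$ term in $X^3$ contracts to zero against $d\theta_1,d\theta_2$, and $\theta_1,\theta_2$ annihilate $\partial/\partial\dddot x$), so $j^3\sigma^\ast(\pounds_{X^3}\theta_i) = j^i\sigma^\ast(\pounds_{X^i}\theta_i) = 0$. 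Adding up, all contact contributions vanish and we are left with $j^3\sigma^\ast(\pounds_{X^3}\Theta) = j^2\sigma^\ast(\pounds_{X^2}(L\,dt))$.

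The main obstacle — really the only place requiring care rather than bookkeeping — is making the compatibility of the prolongations rigorous: one must check that $j^3\sigma^\ast(\pounds_{X^3}\omega)$ equals $j^k\sigma^\ast(\pounds_{X^k}\omega)$ whenever $\omega$ is (the pull-back of) a form living on $J^k$ with $k\le 3$. This follows from two facts: first, $X^3$ projects to $X^k$ under $\pi_{3k}:J^3\to J^k$ (a consequence of Proposition~\ref{4.1}), so $X^3$ and $X^k$ are $\pi_{3k}$-related; second, for $\pi_{3k}$-related vector fields one has $\pi_{3k}^\ast(\pounds_{X^k}\omega) = \pounds_{X^3}(\pi_{3k}^\ast\omega)$; and third, $\pi_{3k}\circ j^3\sigma = j^k\sigma$. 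Chaining these gives the claim. Once this lemma is in place, the "in particular" clause is immediate: if $j^2\sigma^\ast(\pounds_{X^2}(L\,dt))=0$ then $j^3\sigma^\ast(\pounds_{X^3}\Theta)=0$, which is exactly the statement that the Cartan-form version of the Noether computation agrees with the Lagrange-form version, and sets up the conservation law $j^3\sigma^\ast(X^3\lefthook\Theta)$ from Theorem~\ref{4.8}.
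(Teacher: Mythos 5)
Your proposal is correct and follows essentially the same route as the paper's own proof: decompose $\Theta$ as $\pi_{3,2}^{\ast}(L\,dt)$ plus coefficient functions times the pulled-back contact forms, apply the Leibniz rule to $\pounds_{X^{3}}$, and kill every contact contribution using $j^{k}\sigma^{\ast}\theta_{k}=0$ together with Proposition \ref{4.2}. The only difference is that you spell out the $\pi_{3,k}$-relatedness of the prolongations (so that $\pounds_{X^{3}}\pi_{3,k}^{\ast}=\pi_{3,k}^{\ast}\pounds_{X^{k}}$), a step the paper uses implicitly when it ``lifts'' the identities of Proposition \ref{4.2} to $J^{3}$.
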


\begin{proof}
By proposition \ref{4.2}, $\ $ 
\begin{equation*}
j^{2}\sigma ^{\ast }\pounds _{X^{2}}\theta _{2}=0\quad\text{and}\quad 
j^{1}\sigma
^{\ast }\pounds _{X^{1}}\theta _{1}=0.
\end{equation*} 
Lifting these equations to $J^{3}$, we get 
\begin{equation*}
j^{3}\sigma ^{\ast }\pounds _{X^{3}}\pi _{3,2}^{\ast }\theta _{2}=0\quad 
\text{and}\quad j^{3}\sigma ^{\ast }\pounds _{X^{3}}\pi _{3,1}^{\ast }\theta 
_{1}=0.
\end{equation*} 
Equation (\ref{Theta2}) written in terms of pull-backs of the jet
projections $\pi _{j,i}$ reads 
\begin{equation*}
\Theta =\pi _{3,2}^{\ast }(L\,dt)+\left( \frac{\partial L}{\partial \dot{x}}- 
\frac{d}{dt}\frac{\partial L}{\partial \ddot{x}}\right) \pi _{3,1}^{\ast
}\theta _{1}+\frac{\partial L}{\partial \ddot{x}}\pi _{3,2}^{\ast }\theta
_{2},
\end{equation*} 
where we consider the coefficients $\left( \frac{\partial L}{\partial \dot{x} 
}-\frac{d}{dt}\frac{\partial L}{\partial \ddot{x}}\right) $ and $\frac{ 
\partial L}{\partial \ddot{x}}$ as functions on $J^3$. Hence, 
\begin{eqnarray*}
j^{3}\sigma ^{\ast }(\pounds _{X^{3}}\Theta ) &=&j^{3}\sigma ^{\ast }( 
\pounds _{X^{3}}(\pi _{3,2}^{\ast }(L\,dt))+j^{3}\sigma ^{\ast }\left( \pounds  
_{X^{3}}\left( \frac{\partial L}{\partial \dot{x}}-\frac{d}{dt}\frac{ 
\partial L}{\partial \ddot{x}}\right) \right) j^{3}\sigma ^{\ast }\pi
_{3,1}^{\ast }\theta _{1} \\
&&+j^{3}\sigma ^{\ast }\left( \left( \frac{\partial L}{\partial \dot{x}}- 
\frac{d}{dt}\frac{\partial L}{\partial \ddot{x}}\right) \right) j^{3}\sigma
^{\ast }(\pounds _{X^{3}}\pi _{3,1}^{\ast }\theta _{1})+ \\
&&+j^{3}\sigma ^{\ast }\left( \pounds _{X^{3}}\left( \frac{\partial L}{ 
\partial \ddot{x}}\right) \right) j^{3}\sigma ^{\ast }(\pi _{3,2}^{\ast
}\theta _{2})+j^{3}\sigma ^{\ast }\left( \pounds _{X^{3}}\frac{\partial L}{ 
\partial \ddot{x}}\right) j^{3}\sigma ^{\ast }(\pounds _{X^{3}}\pi
_{3,2}^{\ast }\theta _{2}).
\end{eqnarray*} 
But, 
\begin{eqnarray*}
j^{3}\sigma ^{\ast }(\pounds _{X^{3}}(\pi _{3,2}^{\ast }(L\,dt))
&=&j^{2}\sigma ^{\ast }(\pounds _{X^{2}}(L\,dt)), \\
j^{3}\sigma ^{\ast }\pi _{3,1}^{\ast }\theta _{1} &=&j^{1}\sigma ^{\ast
}\theta _{1}=0, \\
j^{3}\sigma ^{\ast }(\pi _{3,2}^{\ast }\theta _{2}) &=&j^{2}\sigma ^{\ast
}\theta _{2}=0, \\
j^{3}\sigma ^{\ast }(\pounds _{X^{3}}\pi _{3,1}^{\ast }\theta _{1})
&=&j^{3}\sigma ^{\ast }(\pi _{3,1}^{\ast }(\pounds _{X^{1}}\theta
_{1}))=j^{1}\sigma ^{\ast }(\pounds _{X^{1}}\theta _{1})=0, \\
j^{3}\sigma ^{\ast }(\pounds _{X^{3}}\pi _{3,2}^{\ast }\theta _{2})
&=&j^{3}\sigma ^{\ast }(\pi _{3,2}^{\ast }(\pounds _{X^{2}}\theta
_{2}))=j^{2}\sigma ^{\ast }(\pounds _{X^{2}}\theta _{2})=0.
\end{eqnarray*} 
Hence, 
\begin{equation*}
j^{3}\sigma ^{\ast }(\pounds _{X^{3}}\Theta )=j^{2}\sigma ^{\ast }(\pounds  
_{X^{2}}(L\,dt)).
\end{equation*}
\end{proof}

\begin{proposition}
\label{JX}If $X=\tau \frac{\partial }{\partial t}+\xi \frac{\partial }{ 
\partial x}$ is an infinitesimal symmetry of the Lagrange form $L\,dt$ then,
for every section $\sigma $ of $I\times \R^{n}\rightarrow I,$ 
\begin{equation}
j^{3}\sigma ^{\ast }\mathscr{J}_{X}=j^{3}\sigma ^{\ast }(X^{3} 
\lefthook
\Theta ),  \label{Conserved quantity}
\end{equation} 
where $X^{3}$ is the prolongation of $X$ to $J^{3}$. If $\sigma $ satisfies
the Euler-Lagrange equations for $L$, then $j^{3}\sigma ^{\ast }(X^{3} 
\lefthook
\Theta )$ is constant.
\end{proposition}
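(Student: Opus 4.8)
The plan is to verify the identity (\ref{Conserved quantity}) by a direct contraction of $X^{3}$ into the Cartan form, and then to deduce constancy along solutions from Lemma \ref{cartan-form-lemma} together with Theorem \ref{3.4}.

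\emph{Step 1 (the identity).} Using the expression (\ref{Theta}) for $\Theta$ and the formula for the prolongation $X^{3}$ in Proposition \ref{4.1}, contract $X^{3}$ into $\Theta = L\,dt + p_{x}(dx-\dot x\,dt) + p_{\dot x}(d\dot x-\ddot x\,dt)$ term by term. Since $X^{3}\lefthook dt = \tau$, $X^{3}\lefthook(dx-\dot x\,dt) = \xi-\tau\dot x$, and $X^{3}\lefthook(d\dot x-\ddot x\,dt) = (\dot\xi-\dot x\dot\tau)-\tau\ddot x$, this yields
\[
X^{3}\lefthook\Theta = L\tau + p_{x}(\xi-\tau\dot x) + p_{\dot x}\bigl((\dot\xi-\dot x\dot\tau)-\tau\ddot x\bigr).
\]
The key remark is that $(\dot\xi-\dot x\dot\tau)-\tau\ddot x = \frac{d}{dt}(\xi-\tau\dot x)$. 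Substituting the Ostrogradski momenta $p_{\dot x}=\frac{\partial L}{\partial\ddot x}$ and $p_{x}=\frac{\partial L}{\partial\dot x}-\frac{d}{dt}\frac{\partial L}{\partial\ddot x}$ from (\ref{usual-momenta}) turns the right-hand side into precisely the expression (\ref{PX}) for $\mathscr{J}_{X}$. Hence $X^{3}\lefthook\Theta$ and $\mathscr{J}_{X}$ coincide as functions on $J^{3}$, and in particular (\ref{Conserved quantity}) follows after pulling back by $j^{3}\sigma$. This step is purely computational; the only thing to watch is the bookkeeping of the coefficients of $X^{3}$ and the recognition of the total-derivative combination.

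\emph{Step 2 (constancy along solutions).} Apply Cartan's formula $\pounds_{X^{3}}\Theta = X^{3}\lefthook d\Theta + d(X^{3}\lefthook\Theta)$ and pull back by $j^{3}\sigma$. Because $X$ is an infinitesimal symmetry of $L\,dt$, Lemma \ref{cartan-form-lemma} gives $j^{3}\sigma^{\ast}(\pounds_{X^{3}}\Theta)=j^{2}\sigma^{\ast}(\pounds_{X^{2}}(L\,dt))=0$; as pullback commutes with $d$, this reads
\[
j^{3}\sigma^{\ast}(X^{3}\lefthook d\Theta) + d\bigl(j^{3}\sigma^{\ast}(X^{3}\lefthook\Theta)\bigr)=0 .
\]
Now suppose $\sigma$ satisfies the Euler-Lagrange equations. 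By Theorem \ref{3.4}(1) the tangent bundle of the range of $j^{3}\sigma$, which is spanned by $T(j^{3}\sigma)(\partial_{t})$, lies in the kernel of $d\Theta$, so $T(j^{3}\sigma)(\partial_{t})\lefthook d\Theta=0$. Since a $1$-form on the one-dimensional interval $I$ is determined by its value on $\partial_{t}$, and
\[
\bigl(j^{3}\sigma^{\ast}(X^{3}\lefthook d\Theta)\bigr)(\partial_{t})=d\Theta\bigl(X^{3},\,T(j^{3}\sigma)(\partial_{t})\bigr)=-\bigl(T(j^{3}\sigma)(\partial_{t})\lefthook d\Theta\bigr)(X^{3})=0 ,
\]
the first term vanishes. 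Therefore $d\bigl(j^{3}\sigma^{\ast}(X^{3}\lefthook\Theta)\bigr)=0$, and so $j^{3}\sigma^{\ast}(X^{3}\lefthook\Theta)=j^{3}\sigma^{\ast}\mathscr{J}_{X}$ is constant on $I$.

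The real content is in Step 2, and the one delicate point is making sure the pulled-back interior product $j^{3}\sigma^{\ast}(X^{3}\lefthook d\Theta)$ really vanishes. This uses the full strength of Theorem \ref{3.4}(1) --- that the \emph{entire} tangent direction of $j^{3}\sigma$ annihilates $d\Theta$, not just some particular contraction of it --- together with the trivial fact that a form on $I$ is detected by pairing with $\partial_{t}$. Everything else is routine exterior calculus of the kind already carried out in the proofs of Theorem \ref{3.4} and Lemma \ref{cartan-form-lemma}.
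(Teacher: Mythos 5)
Your proof is correct and follows essentially the same route as the paper's: the explicit contraction of $X^{3}$ into $\Theta$ to obtain $\mathscr{J}_{X}$, followed by Cartan's formula combined with Lemma \ref{cartan-form-lemma} and Theorem \ref{3.4}(1) to get constancy along solutions. If anything, your Step 2 is slightly more careful than the paper in spelling out why $j^{3}\sigma^{\ast}(X^{3}\lefthook d\Theta)$ vanishes by pairing with $\partial_{t}$ and using antisymmetry of $d\Theta$.
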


\begin{proof}
Omitting pull-backs by $j^{3}\sigma $ for the sake of transparency, we may
write 
\begin{eqnarray}
&&(X^{3} 
\lefthook
\Theta )= \\
&=&\tau \frac{\partial }{\partial t}+\xi \frac{\partial }{\partial x}+(\dot{ 
\xi}-\dot{x}\dot{\tau})\frac{\partial }{\partial \dot{x}}+(\ddot{\xi}-2\ddot{ 
x}\dot{\tau}-\dot{x}\ddot{\tau})\frac{\partial }{\partial \ddot{x}}+(\dddot{ 
\xi}-3\dddot{x}\dot{\tau}-3\ddot{x}\ddot{\tau}-\dot{x}\dddot{\tau})\frac{ 
\partial }{\partial \ddot{x}}  \notag \\
&& 
\lefthook
\left( L\,dt+\left( \frac{\partial L}{\partial \dot{x}}-\frac{d}{dt}\frac{ 
\partial L}{\partial \ddot{x}}\right) (dx-\dot{x}\,dt)+\frac{\partial L}{ 
\partial \ddot{x}}(d\dot{x}-\ddot{x}\,dt)\right)  \notag \\
&=&\tau \left( L-\dot{x}\left( \frac{\partial L}{\partial \dot{x}}-\frac{d}{ 
dt}\frac{\partial L}{\partial \ddot{x}}\right) -\ddot{x}\frac{\partial L}{ 
\partial \ddot{x}}\right) +\xi \left( \frac{\partial L}{\partial \dot{x}}- 
\frac{d}{dt}\frac{\partial L}{\partial \ddot{x}}\right) +(\dot{\xi}-\dot{x} 
\dot{\tau})\frac{\partial L}{\partial \ddot{x}}  \notag \\
&=&L\tau +\left( \frac{\partial L}{\partial \dot{x}}-\frac{d}{dt}\left( 
\frac{\partial L}{\partial \ddot{x}}\right) \right) (\xi -\tau \dot{x})+ 
\frac{\partial L}{\partial \ddot{x}}\frac{d}{dt}(\xi -\tau \dot{x})=\mathscr{J}_{X}.  \notag
\end{eqnarray} 
Hence, $j^{3}\sigma^{\ast }(X^{3} )
\lefthook
\Theta )=j^{3}\sigma^{\ast }\mathscr{J}_{X}$ and is a constant if $\sigma $
satisfies the Euler-Lagrange equations. Suppose that $\gamma :I\rightarrow
J^{3}$ is a section of the source map such that its tangent $T\gamma (I)$ is
contained in $\ker d\Theta $. Since $\pounds _{Z}\Theta =Z \lefthook d\Theta +d(Z 
\lefthook \Theta )$, 
it follows that for any infinitesimal symmetry $Z$ of $\Theta $, we have 
\begin{equation*}
d\gamma ^{\ast }(Z 
\lefthook
\Theta )=-\gamma ^{\ast }d(Z 
\lefthook
\Theta )=-\gamma ^{\ast }\pounds _{Z}\Theta =0,
\end{equation*} 
which implies that $\gamma ^{\ast }(Z 
\lefthook
\Theta )$ is constant. This holds for any section $\gamma $ with $T\gamma
(I)\subset \ker \Theta .$ Moreover, if $\gamma =j^{3}\sigma $, where $\sigma 
$ satisfies the Euler-Lagrange equations, and $Z=X^{3}$, where $X$ is an
infinitesimal symmetry of the Lagrange form $L\,dt$, then lemma 
(\ref{cartan-form-lemma})
implies that 
\begin{equation}
\gamma ^{\ast }\pounds _{Z}\Theta =j^{2}\sigma ^{\ast }\pounds _{X}L\,dt.
\end{equation} 
Hence, if $j^{2}\sigma ^{\ast }(\pounds _{X^{2}}(L\,dt))=0$, then $j^{3}\sigma
^{\ast }(\pounds _{X^{3}}\Theta )=0,$ and $j^{3}\sigma ^{\ast }(X^{3} 
\lefthook
\Theta )$ is constant. Moroever, if $X=\tau \frac{\partial }{\partial t}+\xi 
\frac{\partial }{\partial x}$, then 
\begin{eqnarray}
&&(X^{3} 
\lefthook
\Theta )= \\
&=&\tau \frac{\partial }{\partial t}+\xi \frac{\partial }{\partial x}+(\dot{ 
\xi}-\dot{x}\dot{\tau})\frac{\partial }{\partial \dot{x}}+(\ddot{\xi}-2\ddot{ 
x}\dot{\tau}-\dot{x}\ddot{\tau})\frac{\partial }{\partial \ddot{x}}+(\dddot{ 
\xi}-3\dddot{x}\dot{\tau}-3\ddot{x}\ddot{\tau}-\dot{x}\dddot{\tau})\frac{ 
\partial }{\partial \ddot{x}}  \notag \\
&& 
\lefthook
\left( L\,dt+\left( \frac{\partial L}{\partial \dot{x}}-\frac{d}{dt}\frac{ 
\partial L}{\partial \ddot{x}}\right) (dx-\dot{x}\,dt)+\frac{\partial L}{ 
\partial \ddot{x}}(d\dot{x}-\ddot{x}\,dt)\right)  \notag \\
&=&\tau \left( L-\dot{x}\left( \frac{\partial L}{\partial \dot{x}}-\frac{d}{ 
dt}\frac{\partial L}{\partial \ddot{x}}\right) -\ddot{x}\frac{\partial L}{ 
\partial \ddot{x}}\right) +\xi \left( \frac{\partial L}{\partial \dot{x}}- 
\frac{d}{dt}\frac{\partial L}{\partial \ddot{x}}\right) +(\dot{\xi}-\dot{x} 
\dot{\tau})\frac{\partial L}{\partial \ddot{x}}  \notag \\
&=&L\tau +\left( \frac{\partial L}{\partial \dot{x}}-\frac{d}{dt}\left( 
\frac{\partial L}{\partial \ddot{x}}\right) \right) (\xi -\tau \dot{x})+ 
\frac{\partial L}{\partial \ddot{x}}\frac{d}{dt}(\xi -\tau \dot{x})  \notag
\end{eqnarray}
\end{proof}

Equation (\ref{Conserved quantity}) gives a simple way of finding constants
of motion corresponding to symmetries of the Lagrange form.

\begin{example}
\label{Conservation of angular momentum}If $x=(x^{i})$ are Cartesian
coordinates in $\R^{n}$, then the action of $\SO(n)$ on $J^{3}$ is
generated by vector fields 
\begin{equation*}
X_{ij}^{3}=x^{i}\frac{\partial }{\partial x^{j}}-x^{j}\frac{\partial }{ 
\partial x^{i}}+\dot{x}^{i}\frac{\partial }{\partial \dot{x}^{j}}-\dot{x}^{j} 
\frac{\partial }{\partial \dot{x}^{i}}+\ddot{x}^{i}\frac{\partial }{\partial 
\ddot{x}^{j}}-\ddot{x}^{j}\frac{\partial }{\partial \ddot{x}^{i}}+\dddot{x} 
^{i}\frac{\partial }{\partial \dddot{x}^{j}}-\dddot{x}^{j}\frac{\partial }{\partial \dddot{x}^{i}}.
\end{equation*} 
Hence, for a section $\sigma $ of $I\times \R^n\rightarrow I$, 
\begin{eqnarray*}
&&j^{3}\sigma ^{\ast }(\mathscr{J}_{X_{ij}})=j^{3}\sigma ^{\ast }(X_{ij}^{3} 
\lefthook
\Theta )= \\
&=&j^{3}\sigma ^{\ast }(x^{i}p_{x^j}-x^{j}p_{x^i}+\dot{x}^{i}p_{\dot{x}^j}- \dot{x}^{j}p_{\dot{x}^i}).
\end{eqnarray*} 
In the following we omit the symbol $j^{3}\sigma ^{\ast },$ and  write 
\begin{equation*}
\mathscr{J}_{X_{ij}}=x^{i}p_{x^j}-x^{j}p_{x^i}+\dot{x}^{i}p_{\dot{x}^j}-\dot{x}^
{ j}p_{\dot{x}^i}.
\end{equation*} 
If $L$ is invariant under the action of $\SO(3)$ on $J^{2}$, then $\mathscr{J} 
_{X_{ij}}$ is constant on solutions of the Euler-Lagrange equations.
\end{example}

This suggests that there may be additional conserved quantities coming from
symmetries of the Cartan form that are not symmetries of the Lagrange form.

\begin{definition}
\label{4.10} An infinitesimal symmetry of the Cartan form $\Theta $ is a
vector field $Z$ on $J^{3}$ such that $\pounds _{Z}\Theta =0.$
\end{definition}
Set
\begin{equation*}
\mathscr{J}_{Z}=Z 
\lefthook
\Theta
\end{equation*} 
for each infinitesimal symmetry $Z$ of the Cartan form $\Theta .$

\begin{theorem}
Let $Z$ be an infinitesimal symmetry of the Cartan form and let $\gamma
:I\rightarrow J^{3}$ be a section of the source map. If $T\gamma (I)$ is
contained in $\ker d\Theta $, then 
\begin{equation*}
\gamma ^{\ast }\mathscr{J}_{Z}=\gamma ^{\ast }(Z 
\lefthook \Theta )
\end{equation*} 
is constant. In particular, if $\sigma $ satisfies the Euler-Lagrange
equations, then $j^{3}\sigma \mathscr{J}_{Z}$ is a constant.
\end{theorem}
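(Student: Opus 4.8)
The plan is to run the standard ``abstract Noether'' argument, which already appeared in miniature inside the proof of Proposition \ref{JX}. I would begin from Cartan's magic formula
\[
\pounds_Z\Theta = Z\lefthook d\Theta + d(Z\lefthook\Theta)
\]
on $J^{3}$. Since $Z$ is by hypothesis an infinitesimal symmetry of $\Theta$, the left-hand side vanishes, so $d(Z\lefthook\Theta) = -\,Z\lefthook d\Theta$. Pulling back along $\gamma$ and using that pull-back commutes with the exterior derivative gives
\[
d\bigl(\gamma^{\ast}(Z\lefthook\Theta)\bigr) = -\,\gamma^{\ast}\bigl(Z\lefthook d\Theta\bigr).
\]

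Next I would show the right-hand side is zero. The form $\gamma^{\ast}(Z\lefthook d\Theta)$ is a $1$-form on the interval $I$, so it is enough to evaluate it on $\partial_t$. By naturality of contraction and pull-back, at each point $\gamma(t)$ one has $\langle\gamma^{\ast}(Z\lefthook d\Theta),\partial_t\rangle = d\Theta\bigl(Z,T\gamma(\partial_t)\bigr)$. Because $T\gamma(I)\subset\ker d\Theta$, the vector $T\gamma(\partial_t)$ satisfies $T\gamma(\partial_t)\lefthook d\Theta = 0$, so antisymmetry of $d\Theta$ turns $d\Theta(Z,T\gamma(\partial_t)) = -\,d\Theta(T\gamma(\partial_t),Z)$ into $0$. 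Hence $\gamma^{\ast}(Z\lefthook d\Theta)=0$, so $d(\gamma^{\ast}(Z\lefthook\Theta))=0$, and since $I=[t_0,t_1]$ is a connected interval the function $\gamma^{\ast}\mathscr{J}_Z = \gamma^{\ast}(Z\lefthook\Theta)$ is constant. For the ``in particular'' clause I would take $\gamma=j^{3}\sigma$ where $\sigma$ satisfies the Euler--Lagrange equations; by part (1) of Theorem \ref{3.4} the tangent bundle of the range of $j^{3}\sigma$ then lies in $\ker d\Theta$, so the first part applies and $j^{3}\sigma^{\ast}\mathscr{J}_Z = j^{3}\sigma^{\ast}(Z\lefthook\Theta)$ is constant.

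There is essentially no obstacle here; the one point worth flagging is that the vanishing of $\gamma^{\ast}(Z\lefthook d\Theta)$ uses no special property of $Z$ whatsoever --- it is forced purely by the one-dimensionality of $I$ together with $T\gamma(I)\subset\ker d\Theta$. Indeed the same computation shows $\gamma^{\ast}(W\lefthook d\Theta)=0$ for \emph{every} vector field $W$ on $J^{3}$; the role of the symmetry hypothesis on $Z$ is only to identify $-\gamma^{\ast}(Z\lefthook d\Theta)$ with $d(\gamma^{\ast}(Z\lefthook\Theta))$, which is what converts this vanishing into a conservation law.
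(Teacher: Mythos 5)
Your proof is correct and follows essentially the same route as the paper's: Cartan's magic formula, the symmetry hypothesis $\pounds_Z\Theta=0$, commutation of pull-back with $d$, and the vanishing of $\gamma^{\ast}(Z\lefthook d\Theta)$ from $T\gamma(I)\subset\ker d\Theta$. The only difference is that you spell out the antisymmetry argument for why $\gamma^{\ast}(Z\lefthook d\Theta)=0$ (and correctly note it holds for any vector field), a step the paper simply asserts.
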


\begin{proof}
Since 
\begin{equation*}
\pounds _{Z}\Theta =Z 
\lefthook
d\Theta +d(Z 
\lefthook
\Theta ),
\end{equation*} 
it follows that for any infinitesimal symmetry $Z$ of $\Theta $, and any
section $\gamma :I\rightarrow J^{3}$ of the source map, that
\begin{equation*}
d\gamma ^{\ast }(Z 
\lefthook
\Theta )=-\gamma ^{\ast }(Z 
\lefthook
d\Theta ).
\end{equation*} 
If $T\gamma (I)$ is contained in $\ker d\Theta $, then $\gamma ^{\ast }(Z 
\lefthook
d\Theta )=0$ and $\gamma ^{\ast }(Z 
\lefthook
\Theta )$ is constant.
\end{proof}

\subsubsection{Symmetries up to a differential}

The Cartan form may yield more conserved
quantities than those that follow directly from the Lagrangian approach.
However, even more conserved quantities may arise if the notion of symmetry
is relaxed somewhat.

\begin{definition}
\label{4.11}A vector field $X$ on $I\times\R$ is a {\it symmetry up to
a differential} of the Lagrange form $L\, dt$ if there exists a function $F$
on $J^{2}$ such that 
\begin{equation*}
\pounds _{X^{2}}(L\, dt)=-dF,
\end{equation*} 
where $X^{2}$ is the prolongation of $X$ to $J^{2}.$
\end{definition}

\begin{proposition}
\label{4.12}If a vector field $X=\tau \frac{\partial }{\partial t}+\xi \frac{ 
\partial }{\partial x}$ on $I\times \R^n$ satisfies the condition 
\begin{equation}
\pounds _{X^{2}}(L\,dt)=-dF,  \label{differential1}
\end{equation} 
where $X^{2}$ is the prolongation of $X$ to $J^{2},$ and $F$ is a function
on $J^{2}$, then 
\begin{equation}
\mathscr{J}_{X}=F+L\tau +\left( \frac{\partial L}{\partial \dot{x}}-\frac{d}{ 
dt}\left( \frac{\partial L}{\partial \ddot{x}}\right) \right) (\xi -\tau 
\dot{x})+\frac{\partial L}{\partial \ddot{x}}\frac{d}{dt}(\xi -\tau \dot{x})
\end{equation} 
is constant along solutions of the Euler-Lagrange equations 
\begin{equation*}
\frac{\partial L}{\partial x}-\frac{d}{dt}\left( \frac{\partial L}{\partial 
\dot{x}}\right) +\frac{d^{2}}{dt^{2}}\left( \frac{\partial L}{\partial \ddot{ 
x}}\right) =0.
\end{equation*}
\end{proposition}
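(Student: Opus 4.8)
The plan is to read Proposition~\ref{4.12} off the Noether identity in the same way Theorem~\ref{4.8} was, the only change being that the pull-back of $\pounds_{X^{2}}(L\,dt)$ is now a total $t$-derivative rather than zero. Write
\begin{equation*}
K := L\tau + \left( \frac{\partial L}{\partial \dot{x}} - \frac{d}{dt}\frac{\partial L}{\partial \ddot{x}} \right)(\xi - \tau\dot{x}) + \frac{\partial L}{\partial \ddot{x}}\frac{d}{dt}(\xi - \tau\dot{x})
\end{equation*}
for the bare Noether current (\ref{PX}), so that the quantity to be shown constant is $\mathscr{J}_{X} = F + K$. Combining Lemma~\ref{4.5} with the Noether identity (Lemma~\ref{4.6}, equation~(\ref{Noether0}), and its unconditional form in Remark~\ref{4.7}) gives, for \emph{every} section $\sigma$ of $I\times\R^{n}\rightarrow I$,
\begin{equation*}
j^{2}\sigma^{\ast}\bigl(\pounds_{X^{2}}(L\,dt)\bigr) = \left[ \left( \frac{\partial L}{\partial x} - \frac{d}{dt}\frac{\partial L}{\partial \dot{x}} + \frac{d^{2}}{dt^{2}}\frac{\partial L}{\partial \ddot{x}} \right)(\xi - \tau\dot{x}) + \frac{d}{dt}K \right] dt .
\end{equation*}

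First I would substitute the hypothesis (\ref{differential1}), $\pounds_{X^{2}}(L\,dt) = -dF$ with $F$ a function on $J^{2}$. Since pull-back commutes with $d$ and the base $I$ is one-dimensional, the left-hand side becomes $-d(F\circ j^{2}\sigma) = -\tfrac{d}{dt}(F\circ j^{2}\sigma)\,dt$. Now restrict to a section $\sigma$ solving the Euler--Lagrange equations: the Euler--Lagrange expression vanishes, so the displayed identity reduces to $-\tfrac{d}{dt}(F\circ j^{2}\sigma) = \tfrac{d}{dt}(K\circ j^{3}\sigma)$, i.e. $\tfrac{d}{dt}\bigl(F\circ j^{2}\sigma + K\circ j^{3}\sigma\bigr)=0$. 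Hence $\mathscr{J}_{X}=F+K$ is constant along solutions, which is the assertion. (The same conclusion follows through the Cartan form: by Lemma~\ref{4.9}, $j^{3}\sigma^{\ast}(\pounds_{X^{3}}\Theta)=j^{2}\sigma^{\ast}(\pounds_{X^{2}}(L\,dt))=-d(F\circ j^{2}\sigma)$; for a solution $\sigma$ the range of $j^{3}\sigma$ lies in $\ker d\Theta$ by Theorem~\ref{3.4}, so $\pounds_{X^{3}}\Theta = X^{3}\lefthook d\Theta + d(X^{3}\lefthook\Theta)$ pulls back to $d\bigl(j^{3}\sigma^{\ast}(X^{3}\lefthook\Theta)\bigr)$, and since the algebraic computation in the proof of Proposition~\ref{JX} gives $X^{3}\lefthook\Theta = K$, comparing the two yields $d\bigl(j^{3}\sigma^{\ast}(K+F)\bigr)=0$.)

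I do not expect a genuine obstacle: the statement is a one-line perturbation of the First Noether theorem, and both routes above are short. The only care needed is bookkeeping of jet levels — that $\pounds_{X^{2}}(L\,dt)$ and $dF$ are forms on $J^{2}$ (by Lemma~\ref{4.5} this in fact forces $F$ to be, up to an additive constant, a function of $(t,x)$ alone, though this is not needed), that $K$ is a function on $J^{3}$ while $\tfrac{d}{dt}K$ lives on $J^{4}$, and the elementary identity $j^{k}\sigma^{\ast}dF = \tfrac{d}{dt}(F\circ j^{k}\sigma)\,dt$ — and assembling these is routine.
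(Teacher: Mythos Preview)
Your proposal is correct and follows essentially the same approach as the paper: pull the hypothesis $\pounds_{X^{2}}(L\,dt)=-dF$ back along a section, invoke the Noether identity (Remark~\ref{4.7}/equation~(\ref{Noether2})) to rewrite the left side as the Euler--Lagrange expression plus the total derivative of $K$, and conclude on solutions. The paper phrases the jet-level bookkeeping via the explicit pull-back $\pi_{4,2}^{\ast}$ while you handle it in your closing paragraph; your parenthetical Cartan-form route is an extra not present in the paper's proof.
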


\begin{proof}
The pull-back of equation (\ref{differential1}) by the jet bundle projection 
$\pi _{4,2}:J^{4}\rightarrow J^{2}$ gives 
\begin{equation*}
\pi _{4,2}^{\ast }\pounds _{X^{2}}(L\,dt)=-\pi _{4,2}^{\ast }dF.
\end{equation*} 
Therefore, for a section $\sigma $ of $I\times \R^n,$ 
\begin{equation*}
j^{4}\sigma ^{\ast }(\pi _{4,2}^{\ast }\pounds _{X^{2}}(L\,dt))=-j^{4}\sigma
^{\ast }(\pi _{4,2}^{\ast }dF)=-j^{2}\sigma ^{\ast }dF.
\end{equation*} 
Taking into account the form of the Noether identity given in equation (\ref 
{Noether2}) yields
\begin{eqnarray*}
-j^{2}\sigma ^{\ast }dF &=&j^{4}\sigma ^{\ast }\left( \left( \frac{\partial L 
}{\partial x}-\frac{d}{dt}\left( \frac{\partial L}{\partial \dot{x}}\right) + 
\frac{d^{2}}{dt^{2}}\left( \frac{\partial L}{\partial \ddot{x}}\right)
\right) (\xi -\tau \dot{x})\right) + \\
&&+j^{4}\sigma ^{\ast }\left( \frac{d}{dt}\left( L\tau +\left( \frac{ 
\partial L}{\partial \dot{x}}-\frac{d}{dt}\left( \frac{\partial L}{\partial 
\ddot{x}}\right) \right) (\xi -\tau \dot{x})+\frac{\partial L}{\partial 
\ddot{x}}\frac{d}{dt}(\xi -\tau \dot{x})\right) \right) .
\end{eqnarray*} 
If $\sigma $ satisfies the Euler-Lagrange equations, then 
\begin{equation*}
j^{4}\sigma ^{\ast }\left( \left( \frac{\partial L}{\partial x}-\frac{d}{dt} 
\left( \frac{\partial L}{\partial \dot{x}}\right) +\frac{d^{2}}{dt^{2}} 
\left( \frac{\partial L}{\partial \ddot{x}}\right) \right) (\xi -\tau \dot{x} 
)\right) =0
\end{equation*} 
and consequently
\begin{equation*}
\mathscr{J}_{X}=F+\left( L\tau +\left( \frac{\partial L}{\partial \dot{x}}- 
\frac{d}{dt}\left( \frac{\partial L}{\partial \ddot{x}}\right) \right) (\xi
-\tau \dot{x})+\frac{\partial L}{\partial \ddot{x}}\frac{d}{dt}(\xi -\tau 
\dot{x})\right)
\end{equation*} 
is constant along $\sigma $.
\end{proof}

\begin{example}
Probably the most well-known example of this sort of behaviour occurs in the first-order theory as the Runge-Lenz vector in the Kepler problem.  In this case there is a symmetry of the dynamical system that is not lifted from the configuration space, and this implies that the Lagrangian is not invariant under the action of the symmetry group, but changes by a total derivative. This is discussed in \cite{levy-leblond}.
\end{example}

In a similar way, infinitesimal symmetries up to a differential of
the Cartan form are defined as

\begin{definition}
\label{4.13}A vector field $Z$ on $J^{3}$ is a {\it symmetry up to a
differential} of the Cartan form $\Theta $ if there exists a function $F$ on 
$J^{3}$ such that 
\begin{equation}
\pounds _{Z}\Theta =-dF.  \label{differential2}
\end{equation}
\end{definition}

\begin{proposition}
\label{4.14}If $\pounds _{Z}\Theta =-dF$, then for a section $\gamma
:I\rightarrow J^{3}$ such that $T(\gamma (I))$ is contained in $\ker d\Theta 
$, the function $F+\left\langle \Theta ,Z\right\rangle $ is constant along $ 
\gamma .$ In particular, $F+\left\langle \Theta ,Z\right\rangle $ is
constant along the jet extensions of sections $\sigma $ of $I\times \R^n$ that satisfy the Euler-Lagrange equations.
\end{proposition}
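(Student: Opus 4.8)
The plan is to reprise the argument behind the (preceding) theorem on symmetries of the Cartan form, carrying the extra correction term $F$ through exactly as in the proof of Proposition~\ref{4.12}. First I would apply Cartan's magic formula
\begin{equation*}
\pounds _{Z}\Theta = Z \lefthook d\Theta + d(Z \lefthook \Theta);
\end{equation*}
since $\langle \Theta ,Z\rangle = Z \lefthook \Theta$, the hypothesis $\pounds _{Z}\Theta = -dF$ is equivalent to the identity
\begin{equation*}
d\bigl(F+\langle \Theta ,Z\rangle\bigr) = -\,Z \lefthook d\Theta
\end{equation*}
of one-forms on $J^{3}$.

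Next I would pull this identity back along $\gamma$. Because exterior differentiation commutes with pull-back, the left-hand side becomes $d\bigl(\gamma ^{\ast }(F+\langle \Theta ,Z\rangle)\bigr)$, a one-form on the interval $I$. For the right-hand side, recall from the proof of Theorem~\ref{3.4} that the tangent bundle of the range of $\gamma$ is one-dimensional and is spanned by $T\gamma (\partial_t)$; the hypothesis $T(\gamma (I))\subset \ker d\Theta$ says precisely that $T\gamma (\partial_t) \lefthook d\Theta = 0$. Hence, evaluating the pulled-back one-form $\gamma ^{\ast }(Z \lefthook d\Theta)$ on $\partial_t$ gives
\begin{equation*}
d\Theta\bigl(Z,\,T\gamma (\partial_t)\bigr) = -\,d\Theta\bigl(T\gamma (\partial_t),\,Z\bigr) = 0,
\end{equation*}
so $\gamma ^{\ast }(Z \lefthook d\Theta) = 0$.

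Combining the two steps, $d\bigl(\gamma ^{\ast }(F+\langle \Theta ,Z\rangle)\bigr) = 0$ on the connected interval $I$, so $\gamma ^{\ast }(F+\langle \Theta ,Z\rangle)$ is constant; that is, $F+\langle \Theta ,Z\rangle$ is constant along $\gamma$. For the final assertion I would invoke part~(1) of Theorem~\ref{3.4}: if $\sigma$ satisfies the Euler--Lagrange equations then $\gamma = j^{3}\sigma$ has $T(\gamma (I))\subset \ker d\Theta$, so the conclusion just obtained applies verbatim. I do not anticipate a genuine obstacle; the only points needing care are that $\ker d\Theta$ is to be read in the interior-product sense, and that because $\gamma (I)$ is a curve a single spanning vector field $T\gamma (\partial_t)$ suffices to detect membership in that kernel --- everything else is a transcription of the earlier Cartan-form computations with the correction term $F$ restored.
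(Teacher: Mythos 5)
Your proposal is correct and follows essentially the same route as the paper: Cartan's formula converts the hypothesis into $Z \lefthook d\Theta = -d(F+\langle \Theta ,Z\rangle )$, pulling back along $\gamma$ and using $T(\gamma (I))\subset \ker d\Theta$ kills the left side, and constancy follows on the connected interval. Your extra remark spelling out why $\gamma ^{\ast }(Z \lefthook d\Theta )=0$ (antisymmetry of $d\Theta$ evaluated against the spanning vector $T\gamma (\partial _t)$) is a detail the paper leaves implicit but is exactly what is meant.
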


\begin{proof}
Since $\pounds _{Z}\Theta =Z 
\lefthook
d\Theta +d\left\langle \Theta ,Z\right\rangle $, equation (\ref 
{differential2}) gives 
\begin{equation*}
Z 
\lefthook
d\Theta =-d(F+\left\langle \Theta ,Z\right\rangle ).
\end{equation*} 
Hence, 
\begin{equation*}
\gamma ^{\ast }\left( Z 
\lefthook
d\Theta \right) =-\gamma ^{\ast }d(F+\left\langle \Theta ,Z\right\rangle
)=-d\gamma ^{\ast }(F+\left\langle \Theta ,Z\right\rangle ).
\end{equation*} 
However, $\gamma ^{\ast }\left( Z 
\lefthook
d\Theta \right) =0$ if $T(\gamma (I))$ is contained in $\ker d\Theta $.
Therefore, $d\gamma ^{\ast }(F+\left\langle \Theta ,Z\right\rangle )=0,$
which implies that $F+\left\langle \Theta ,Z\right\rangle $ is constant
along $\gamma .$

If a section $\sigma $ of $I\times \R^n$ satisfies the Euler-Lagrange
equations, then $T(j^{3}\sigma (I))$ is in the kernel of $d\Theta $.
Therefore, $F+\left\langle \Theta ,Z\right\rangle $ is constant along $ 
j^{3}\sigma $.
\end{proof}

\subsection{Parametrization invariance}

Let $\Diff_+\R$ be the group of orientation preserving
diffeomorphisms of the real line $\R$. Then, for every $\varphi \in \Diff_+\R$, 
$\dot{\varphi}(t)>0$ for all $t$. Each $\varphi \in \Diff_+\R$ gives rise to 
another diffeomorphism 
\begin{equation*}
\varphi ^{0}:\R\times \R^{n}\rightarrow \R\times 
\R^{n}:(t,x)\mapsto \varphi ^{0}(t,x)=(\varphi (t),x).
\end{equation*} 
The prolongations of $\varphi ^{0}$ to jet bundles can be written as follows 
\begin{eqnarray}
\varphi ^{1} &:&J^{1}\rightarrow J^{1}:(t,x,\dot{x})\mapsto \varphi^{1} 
(t,x,\dot{x})=\left( \varphi (t),x,\frac{\dot{x}}{\dot{\varphi}(t)} 
\right) ,  \label{action} \\
\varphi ^{2} &:&J^{2}\rightarrow J^{2}:(t,x,\dot{x},\ddot{x})\mapsto \varphi^{2} 
(t,x,\dot{x},\ddot{x})=\left( \varphi (t),x,\frac{\dot{x}}{\dot{ 
\varphi}(t)},\frac{\ddot{x}}{\dot{\varphi}(t)^{2}}-\dot{x}\frac{\ddot{\varphi 
}}{\dot{\varphi}(t)^{3}}\right) ,  \notag \\
\varphi ^{3} &:&J^{3}\rightarrow J^{3}:(t,x,\dot{x},\ddot{x},\dddot{x} 
)\mapsto \varphi ^{3}(t,x,\dot{x},\ddot{x})=  \notag \\
&=&\left( \varphi (t),x,\frac{\dot{x}}{\dot{\varphi}(t)},\frac{\ddot{x}}{ 
\dot{\varphi}(t)^{2}}-\dot{x}\frac{\ddot{\varphi}}{\dot{\varphi}(t)^{3}}, 
\frac{\dddot{x}}{\dot{\varphi}(t)^{3}}-3\ddot{x}\frac{\ddot{\varphi}(t)}{ 
\dot{\varphi}(t)^{4}}-\dot{x}\frac{\dddot{\varphi}(t)}{\dot{\varphi}(t)^{4}} 
+3\dot{x}\frac{\ddot{\varphi}(t)^{2}}{\dot{\varphi}(t)^{5}}\right) .  \notag
\end{eqnarray}

\begin{proposition}
For $\varphi \in \Diff_+\R$, 
\begin{eqnarray*}
\varphi ^{1\ast }\theta _{1} &=&\theta _{1}, \\
\varphi ^{2\ast }\theta _{2} &=&\frac{1}{\dot{\varphi}}\,\theta _{2}.
\end{eqnarray*}
\end{proposition}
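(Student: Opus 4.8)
The plan is to compute both pullbacks directly from the explicit coordinate formulas for $\varphi^{1}$ and $\varphi^{2}$ recorded in equation (\ref{action}), using nothing more than the facts that pullback commutes with $d$ and with multiplication, together with the observation that the base diffeomorphism $\varphi^{0}$ fixes the fibre coordinate and moves only $t$, so that $\varphi^{i\ast}(dx)=dx$ and $\varphi^{i\ast}(dt)=d(\varphi(t))=\dot{\varphi}\,dt$ on every jet level. Throughout, $\varphi\in\Diff_+\R$ is fixed, so $\dot{\varphi}>0$ and all the reciprocal powers of $\dot{\varphi}$ appearing below are defined.

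First I would treat $\theta_1=dx-\dot{x}\,dt$. The $x$-coordinate is unchanged by $\varphi^{1}$, so $\varphi^{1\ast}(dx)=dx$. For the second term one must pull back the \emph{coordinate function} $\dot{x}$, which becomes $\dot{x}/\dot{\varphi}$, while $dt$ becomes $\dot{\varphi}\,dt$; the two factors of $\dot{\varphi}$ cancel and $\varphi^{1\ast}(\dot{x}\,dt)=\dot{x}\,dt$. Subtracting gives $\varphi^{1\ast}\theta_1=\theta_1$, which is just the familiar statement that the first contact form is reparametrization invariant.

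Next I would handle $\theta_2=d\dot{x}-\ddot{x}\,dt$, where the cancellation is slightly less transparent. Since $\dot{x}$ pulls back under $\varphi^{2}$ to the function $\dot{x}/\dot{\varphi}(t)$ (a function of $t$ and $\dot{x}$ only), we get $\varphi^{2\ast}(d\dot{x})=d\!\left(\dot{x}/\dot{\varphi}\right)=\dot{\varphi}^{-1}\,d\dot{x}-\dot{x}\,\ddot{\varphi}\,\dot{\varphi}^{-2}\,dt$. For the second term, $\ddot{x}$ pulls back to $\ddot{x}/\dot{\varphi}^{2}-\dot{x}\,\ddot{\varphi}/\dot{\varphi}^{3}$ and $dt$ to $\dot{\varphi}\,dt$, so $\varphi^{2\ast}(\ddot{x}\,dt)=\bigl(\ddot{x}/\dot{\varphi}-\dot{x}\,\ddot{\varphi}/\dot{\varphi}^{2}\bigr)\,dt$. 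Subtracting, the two $\dot{x}\,\ddot{\varphi}$ terms cancel, leaving $\varphi^{2\ast}\theta_2=\dot{\varphi}^{-1}\bigl(d\dot{x}-\ddot{x}\,dt\bigr)=\dot{\varphi}^{-1}\theta_2$, as claimed.

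The argument is entirely routine, so there is no real obstacle; the one point that requires care is to pull back the coordinate expressions for $\dot{x}$ and $\ddot{x}$ under the prolonged maps rather than merely relabelling the differentials, since those coordinate transformation laws are exactly the nontrivial content of the prolongation. I would close by noting that these two identities are precisely what is needed to describe the behaviour of the Cartan form $\Theta=L\,dt+p_x\theta_1+p_{\dot{x}}\theta_2$ under reparametrization, which is presumably the use to which the proposition is put in the sequel.
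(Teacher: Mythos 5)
Your computation is correct and is essentially identical to the paper's own proof: both arguments pull back the coordinate expressions for $\dot{x}$ and $\ddot{x}$ under the explicit prolongation formulas (\ref{action}), use $\varphi^{i\ast}(dt)=\dot{\varphi}\,dt$ and $\varphi^{i\ast}(dx)=dx$, and observe the cancellation of the $\dot{x}\,\ddot{\varphi}$ terms in the second identity. Nothing further is needed.
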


\begin{proof}
Let $I=(-\infty ,\infty )$. The contact forms are $\theta _{1}=dx-\dot{x}\,dt$ and $\theta _{2}=d\dot{x}-\ddot{x}\,dt$.
Since 
\begin{equation*}
\varphi ^{1}(t,x,\dot{x})=(\varphi (t),x,\dot{x}/\dot{\varphi}(t)),
\end{equation*} 
it follows that
\begin{eqnarray*}
\varphi ^{1\ast }\theta _{1} = dx-x^{\prime }dt=dx-\frac{\dot{x}}{ 
\dot{\varphi}(t)}d(\varphi (t))=dx-\frac{\dot{x}}{\dot{\varphi}(t)}\dot{ 
\varphi}(t)\,dt = dx-\dot{x}\,dt=\theta _{1}.
\end{eqnarray*}
Similarly, 
\begin{equation*}
\varphi ^{2}:J^{2}\rightarrow J^{2}:(t,x,\dot{x},\ddot{x})\mapsto \varphi
^{2}(t,x,\dot{x},\ddot{x})=\left(\varphi 
(t),x,\frac{\dot{x}}{\dot{\varphi}(t)}, 
\frac{\ddot{x}}{\dot{\varphi}(t)^{2}}-\dot{x}\frac{\ddot{\varphi}}{\dot{ 
\varphi}(t)^{3}}\right)
\end{equation*} 
implies 
\begin{eqnarray*}
\varphi ^{2\ast }\theta _{2} &=&\varphi ^{2\ast }(dx^{\prime}-x^{\prime\prime} \,
dt) \\
&=&d\left( \frac{\dot{x}}{\dot{\varphi}(t)}\right) -\left( \frac{\ddot{x}}{ 
\dot{\varphi}(t)^{2}}-\dot{x}\frac{\ddot{\varphi}(t)}{\dot{\varphi}(t)^{3}} 
\right) d(\varphi (t)) \\
&=&\frac{d\dot{x}}{\dot{\varphi}(t)}-\frac{\dot{x}}{\dot{\varphi}(t)^{2}} 
\ddot{\varphi}(t)\,dt-\left( \frac{\ddot{x}}{\dot{\varphi}(t)^{2}}-\dot{x} 
\frac{\ddot{\varphi}(t)}{\dot{\varphi}(t)^{3}}\right) \dot{\varphi}(t)\,dt \\
&=&\frac{d\dot{x}}{\dot{\varphi}(t)}-\frac{\dot{x}}{\dot{\varphi}(t)^{2}} 
\ddot{\varphi}(t)\,dt-\frac{\ddot{x}}{\dot{\varphi}(t)}\,dt+\dot{x}\frac{\ddot{ 
\varphi}(t)}{\dot{\varphi}(t)^{2}}\,dt \\
&=&\frac{1}{\dot{\varphi}(t)}(d\dot{x}-\ddot{x}\,dt)=\frac{1}{\dot{\varphi}(t)} 
\theta _{2}.
\end{eqnarray*}
\end{proof}

A one-parameter subgroup $\varphi _{\varepsilon }:t\mapsto \bar{t}=\varphi
_{\varepsilon }(t)$ of $\Diff_{+}\R$ is generated by a vector field $ 
X_{\tau }=\tau (t)\partial_t$, where 
\begin{equation*}
\tau (t)=\left. \frac{\partial \varphi _{\varepsilon }(t)}{\partial \varepsilon 
}\right|_{\varepsilon=0}
\end{equation*} 
is an arbitrary smooth function on $R$ with $\dot{\tau}(t)\neq 0$ for all $ 
t\in \R$. The Lie algebra of the group $\Diff_{+}\R$ is the
collection of vector fields 
\begin{equation*}
\diff_{+}\R=\left\{ X_{\tau }=\tau (t)\partial_t 
\mid \tau \in C^{\infty }(\R)\text{, and }\dot{\tau}(t)\neq 0\text{
for all }t\right\}
\end{equation*} 
with the Lie bracket 
\begin{equation*}
\left[ \tau _{1}(t)\partial_t,\tau _{2}(t)\partial_t\right] =(\tau _{1}(t)\dot{\tau}_{2}(t)-\tau _{2}(t)\dot{\tau} 
_{1}(t))\partial_t.
\end{equation*}

\begin{definition}
The variational problem with the Lagrangian $L$ is {\it parametrization invariant}
if the Lagrange form $L\,dt$ is invariant under the action of $\Diff_{+}\R$ on $J^{2}$.
\end{definition}

\begin{remark}
Suppose that the Lagrange form $L\,dt$ is $\Diff_+\R$-invariant. This
implies that for $X_{\tau }=\tau (t)\partial_t$,
the Lagrange form $L\,dt$ is invariant under the one-parameter subgroup of $\Diff_+\R$ generated by $X_{\tau }$. By Theorem \ref{JX}, $\mathscr{J}_{X_{\tau }}=\langle \Theta ,X_{\tau }^{3}\rangle $ is constant
on solutions of the Euler-Lagrange equations.
\end{remark}

\begin{theorem}
\label{JX=0}If the Lagrange form $L\,dt$ is $\Diff_+\R$-invariant,
then 
\begin{equation}
j^{3}\sigma ^{\ast }\mathscr{J}_{X_{\tau }}=0  \label{J=0}
\end{equation} 
for all $X_{\tau }\in \diff_+\R$ and all solutions $\sigma $ of the
Euler-Lagrange equations.
\end{theorem}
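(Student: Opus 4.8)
The plan is to prove the sharper statement that $\mathscr{J}_{X_\tau}$ vanishes identically as a function on $J^3$; equation (\ref{J=0}) then follows by pulling back along $j^3\sigma$ for \emph{any} section $\sigma$, whether or not it solves the Euler--Lagrange equations. The starting point is to make $\mathscr{J}_{X_\tau}$ explicit. For $X_\tau=\tau(t)\partial_t$ one has $\xi=0$, so Proposition \ref{JX} (and the computation in its proof, which exhibits $\mathscr{J}_X=X^3\lefthook\Theta$ as a function on $J^3$) gives $\mathscr{J}_{X_\tau}=X_\tau^3\lefthook\Theta$. Writing $\Theta$ in the form (\ref{Theta1}), namely $\Theta=p_x\,dx+p_{\dot x}\,d\dot x-H\,dt$, and reading off from Proposition \ref{4.1} that the $\partial_x$-component of $X_\tau^3$ vanishes while its $\partial_{\dot x}$-component equals $-\dot x\dot\tau$, one obtains
\begin{equation*}
\mathscr{J}_{X_\tau}=-\tau\,H-\dot\tau\,\langle p_{\dot x},\dot x\rangle .
\end{equation*}
Thus it is enough to establish the two identities $\langle p_{\dot x},\dot x\rangle=0$ and $H=0$ on $J^3$.

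Both identities are consequences of the homogeneity that parametrization invariance imposes on $L$. Since $L\,dt$ is $\Diff_+\R$-invariant, $(\varphi^2)^\ast(L\,dt)=L\,dt$ for every $\varphi\in\Diff_+\R$. Evaluating this equality of $1$-forms at a point of $J^2$ and using that the $2$-jet $\big(\varphi(t),\dot\varphi(t),\ddot\varphi(t)\big)$ at that point can be prescribed arbitrarily among orientation-preserving jets, formula (\ref{action}) yields --- once translations are used to conclude, as in Example \ref{energy}, that $L$ does not depend on $t$ --- the fibrewise functional equation
\begin{equation*}
L\big(x,\mu\dot x,\mu^2\ddot x+c\,\dot x\big)=\mu\,L(x,\dot x,\ddot x)\qquad\text{for all }\mu>0,\ c\in\R .
\end{equation*}
Differentiating at $\mu=1$, $c=0$ in the $c$-direction gives $\langle\partial L/\partial\ddot x,\dot x\rangle=0$, i.e.\ $\langle p_{\dot x},\dot x\rangle=0$; differentiating in the $\mu$-direction gives the Euler relation $\langle\partial L/\partial\dot x,\dot x\rangle+2\langle\partial L/\partial\ddot x,\ddot x\rangle=L$.

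It remains to substitute these into $H=p_x\dot x+p_{\dot x}\ddot x-L$ from (\ref{Hamiltonian}). Using $p_x=\partial L/\partial\dot x-\tfrac{d}{dt}(\partial L/\partial\ddot x)$ and $p_{\dot x}=\partial L/\partial\ddot x$ one gets
\begin{equation*}
H=\big(\langle\partial L/\partial\dot x,\dot x\rangle-L\big)+\langle\partial L/\partial\ddot x,\ddot x\rangle-\big\langle\tfrac{d}{dt}(\partial L/\partial\ddot x),\dot x\big\rangle ,
\end{equation*}
and replacing $\langle\partial L/\partial\dot x,\dot x\rangle-L$ by $-2\langle\partial L/\partial\ddot x,\ddot x\rangle$ via the Euler relation collapses the right-hand side to $-\tfrac{d}{dt}\langle\partial L/\partial\ddot x,\dot x\rangle=-\tfrac{d}{dt}\langle p_{\dot x},\dot x\rangle=0$, since $\langle p_{\dot x},\dot x\rangle$ vanishes identically. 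Hence $\mathscr{J}_{X_\tau}\equiv 0$, and in particular $j^3\sigma^\ast\mathscr{J}_{X_\tau}=0$ for every $X_\tau\in\diff_+\R$ and every solution $\sigma$.

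The one step that needs care is the passage from invariance under the \emph{group} $\Diff_+\R$ to the pointwise homogeneity relation: one must check that an arbitrary orientation-preserving $2$-jet at a point of $\R$ is the $2$-jet of some genuine element of $\Diff_+\R$ (a routine bump-function construction), so that invariance of the $1$-form $L\,dt$ on $J^2$ does force the displayed functional equation fibrewise. Everything after that is elementary differentiation; in particular, and in contrast to the First Noether Theorem, the conclusion here is an identity on the jet bundle and makes no use of the Euler--Lagrange equations.
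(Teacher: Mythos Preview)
Your proof is correct and takes a genuinely different route from the paper's. The paper argues along a fixed solution $\sigma$: by the First Noether theorem each $\mathscr{J}_{X_\tau}$ is constant on $j^3\sigma$, and then, by choosing several $\tau$'s that agree to first order at a point $t_0$ but differ at $t_1$, the constancy forces $H(t_1)=0$ and $\langle p_{\dot x},\dot x\rangle(t_1)=0$ separately. Your argument instead extracts directly from group invariance the fibrewise homogeneity $L(x,\mu\dot x,\mu^2\ddot x+c\dot x)=\mu L(x,\dot x,\ddot x)$ and differentiates it to obtain the Second Noether identities $\langle p_{\dot x},\dot x\rangle=0$ and $H=0$ as \emph{identities on $J^3$}, with no appeal to the Euler--Lagrange equations at all. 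What you gain is a sharper statement (vanishing for every section, not just solutions) and a shorter, purely algebraic proof; what the paper's approach highlights is the mechanism by which an infinite-dimensional symmetry group turns conserved quantities into pointwise constraints, which is the content of the Second Noether theorem in its traditional formulation. The only delicate point in your proof---that an arbitrary orientation-preserving $2$-jet at a point is realised by some $\varphi\in\Diff_+\R$---you have flagged correctly, and the bump-function construction is indeed routine.
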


\begin{proof}
Recall that $\Theta =p_x\,dx+p_{\dot{x}}\,d\dot{x}-H\,dt$. Omitting pull-backs by $ 
j^{3}\sigma $ for the sake of cleanliness, 
\begin{eqnarray}
\mathscr{J}_{X_{\tau }}=\langle \Theta ,X_{\tau }^{3}\rangle
& = & \left\langle pdx+p_{\dot{x}}d\dot{x}-Hdt,\tau \frac{\partial }{\partial t}- 
\dot{x}\dot{\tau}\frac{\partial }{\partial \dot{x}}\right\rangle
\label{JXtau} \\
& = &-\left\langle p_{\dot{x}},\dot{x}\right\rangle \dot{\tau}-H\tau .  \notag
\end{eqnarray} 
If $\sigma $ satisfies the Euler-Lagrange equations, then $j^{3}\sigma
^{\ast }\mathscr{J}_{X_{\tau }}$ is constant.

Take two points, $t_{0}<t_{1}$ in $I$, and consider two other vector fields $ 
X_{\tau _{1}}$ and $X_{\tau _{2}}$ in $\diff_{+}\R$ such that 
\begin{eqnarray*}
\tau (t_{0}) &=&\tau _{1}(t_{0})=\tau _{2}(t_{0})\text{ and }\dot{\tau} 
(t_{0})=\dot{\tau}_{1}(t_{0})=\dot{\tau}_{2}(t_{0}), \\
\tau (t_{1}) &\neq &\tau _{1}(t_{1})=\tau _{2}(t_{1})\text{ and }\dot{\tau} 
(t_{1})=\dot{\tau}_{1}(t_{1})\neq \dot{\tau}_{2}(t_{1}), \\
\tau (t_{2}) &\neq &\tau _{1}(t_{2})=\tau _{2}(t_{2})\text{ and }\dot{\tau} 
(t_{2})=\dot{\tau}_{1}(t_{2})\not=\dot{\tau}_{2}(t_{2}).
\end{eqnarray*} 
Then, $\mathscr{J}_{X_{\tau }}(t),$ $\mathscr{J}_{X_{\tau _{1}}}(t)$ and $ 
\mathscr{J}_{X_{\tau _{2}}}(t)$ are constant along $j^{3}\sigma $. Moreover,
the assumption that $\tau (t_{0})=\tau _{1}(t_{0})=\tau _{2}(t_{0})$, $\dot{ 
\tau}(t_{0})=\dot{\tau}_{1}(t_{0})=\dot{\tau}_{2}(t_{0})$ and equation (\ref 
{JXtau}) imply that $\mathscr{J}_{X_{\tau }}(t)=\mathscr{J}_{X_{\tau
_{1}}}(t)=\mathscr{J}_{X_{\tau _{2}}}(t)$ for all $t$. Therefore, $\mathscr{J 
}_{X_{\tau _{1}}}(t)-\mathscr{J}_{X_{\tau }}(t)=0$ and $\mathscr{J}_{X_{\tau
_{2}}}(t)-\mathscr{J}_{X_{\tau }}(t)=0\mathscr{\ }$for all $t$. Using
equation (\ref{JXtau}) and setting $t=t_{1}$ yields  
\begin{eqnarray*}
p_x(t_{1})\dot{x}(t_{1})\dot{\tau}_{1}(t_{1})+H(t_{1})\tau 
_{1}(t_{1})-p_x(t_{1}) 
\dot{x}(t_{1})\dot{\tau}(t_{1})-H(t_{1})\tau (t_{1}) &=&0 \\
p_x(t_{1})\dot{x}(t_{1})\dot{\tau}_{2}(t_{1})+H(t_{1})\tau 
_{2}(t_{1})-p_x(t_{1}) 
\dot{x}(t_{1})\dot{\tau}_{1}(t_{1})-H(t_{1})\tau _{1}(t_{1}) &=&0.
\end{eqnarray*} 
Since $\tau (t_{1})\neq \tau _{1}(t_{1})$ and $\dot{\tau}(t_{1})=\dot{\tau} 
_{1}(t_{1})$, the first equation above yields $H(t_{1})(\tau (t_{1})-\tau
_{1}(t_{1}))=0$, which implies that $H(t_{1})=0.$ Similarly, the assumption
that $\tau _{1}(t_{1})=\tau _{2}(t_{1})$ and $\dot{\tau}_{1}(t_{1})\neq \dot{ 
\tau}_{2}(t_{1})$ together with the second equation above yield $p_x(t_{1}) 
\dot{x}(t_{1})=0$. Since $t_{1}$ is an arbitrary point in $I$ different from 
$t_{0}$, it follows that 
\begin{equation}
H(t)=0\text{ \ and \ }p_x(t)\dot{x}(t)=0\text{ for all }t\in I.
\label{Noether 2}
\end{equation} 
Substituting this result into equation \ref{JXtau} gives (\ref{J=0}).
\end{proof}

\begin{remark}
\label{Second Noether Theorem}Equations (\ref{Noether 2}), rewritten in
terms of the configuration variables read 
\begin{eqnarray}
\dot{x}\frac{\partial L}{\partial \ddot{x}} &=&0,  \label{Noether 2a} \\
\dot{x}\left( \frac{\partial L}{\partial \dot{x}}-\frac{d}{dt}p_{\dot{x} 
}\right) +\ddot{x}\frac{\partial L}{\partial \ddot{x}}-L &=&0.  \notag
\end{eqnarray} 
These are the identities for our reparametrization invariant Lagrangian that
follow from the second Noether theorem (\cite{noether}). The proof of Theorem 
\ref{JX=0} establishes the equivalence between the Noether
identities (\ref{Noether 2a}) and the vanishing of the constant of motion $ 
\mathscr{J}_{X_{\tau }}$ corresponding to every $X_{\tau }\in \diff_{+} 
\R$.
\end{remark}

\subsection{Arclength parametrization}

Denote by $\left\langle x,x^{\prime }\right\rangle $ the Euclidean scalar
product and by $|x|=\sqrt{\left\langle x,x\right\rangle },$ the
corresponding norm in $\R^{n}$. For a curve $c:I\rightarrow \R^n:t\mapsto 
x(t)$, where $I=[t_{0},t_{1}]$, the arclength of the section
of $c$ from $t_{0}$ to $t$ is 
\begin{equation}
s(t)=\int_{t_{0}}^{t}\left\vert \dot{x} 
(t^{\prime })\right\vert dt^{\prime }.  \label{arc length}
\end{equation} 
In geometric problems it is often convenient to parametrize a curve in terms of
its arclength. If $t$ is the arclength of $c$, then along $c$ 
\begin{eqnarray}
\left\langle \dot{x},\dot{x}\right\rangle &=&\left\vert \dot{x}\right\vert
^{2}=1,  \label{arc-length1} \\
\left\langle \dot{x},\ddot{x}\right\rangle &=&0,  \label{arc-length2} \\
\left\langle \dot{x},\dddot{x}\right\rangle +\left\langle \ddot{x},\ddot{x} 
\right\rangle &=&0,  \label{arc-length3} \\
\langle \dot{x},\ddddot{x}\rangle +3\left\langle \ddot{x},\dddot{x} 
\right\rangle &=&0.  \label{arc-length4}
\end{eqnarray} 
These equations determine submanifolds $M^{1}$, $M^{2}$, $M^{3}$ and $M^{4}$
of $J^{1}$, $J^{2}$, $J^{3}$ and $J^{4}$, respectively.

\begin{proposition}
Let $X=\tau \frac{\partial }{\partial t}$ be a vector field on the 
configuration space $Q$. The
necessary and sufficient condition for its prolongations $X^{1},$ $X^{2},$ $ 
X^{3}$ and $X^{4}$ to be tangent to $M^{1}$, $M^{2}$, $M^{3}$ and $M^{4},$
respectively, is that the restriction of $\tau $ to $M^{1}$, $M^{2}$, $M^{3}$
and $M^{4},$ respectively, is constant.
\end{proposition}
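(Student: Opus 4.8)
The plan is to realise each $M^{k}$ as a regular level set, convert tangency of $X^{k}$ into a short list of scalar equations, and then recognise that list as the vanishing on $M^{k}$ of the successive total derivatives of $\tau$.

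Write $\phi _{1},\phi _{2},\phi _{3},\phi _{4}$ for the functions standing on the left of equations (\ref{arc-length1})--(\ref{arc-length4}), so that $M^{k}=\{\phi _{1}=\dots =\phi _{k}=0\}\subset J^{k}$. First I would observe that the differentials $d\phi _{1},\dots ,d\phi _{k}$ are independent along $M^{k}$: each $d\phi _{j}$ contains the term $\langle \dot{x},dx^{(j)}\rangle $ with $\dot{x}\neq 0$ on $M^{k}$, and $dx^{(j)}$ occurs in none of $d\phi _{1},\dots ,d\phi _{j-1}$. Hence $M^{k}$ is a closed submanifold and $X^{k}$ is tangent to it if and only if $(X^{k}\phi _{j})|_{M^{k}}=0$ for $j=1,\dots ,k$. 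Since $\phi _{j}$ involves only $x,\dot{x},\dots ,x^{(j)}$, only the components of $X^{k}$ of order $\leq j$ enter $X^{k}\phi _{j}$, and these coincide with those of $X^{j}$ given in Proposition \ref{4.1} with $\xi =0$; so $X^{k}\phi _{j}=X^{j}\phi _{j}$, and (for $k=4$) the only extra datum needed is the $\partial /\partial \ddddot{x}$-coefficient of $X^{4}$, namely $-4\ddddot{x}\dot{\tau}-6\dddot{x}\ddot{\tau}-4\ddot{x}\dddot{\tau}-\dot{x}\ddddot{\tau}$, obtained by continuing the pattern of Proposition \ref{4.1}.

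Next I would substitute and collect terms, writing $\dot{\tau},\ddot{\tau},\dddot{\tau},\ddddot{\tau}$ for the successive total derivatives $\tfrac{d\tau }{dt},\tfrac{d^{2}\tau }{dt^{2}},\dots $ of $\tau$. For $j\leq k$ the computation yields
\begin{align*}
X^{k}\phi _{1}&=-2\dot{\tau}\,\langle \dot{x},\dot{x}\rangle ,\\
X^{k}\phi _{2}&=-3\dot{\tau}\,\phi _{2}-\ddot{\tau}\,\langle \dot{x},\dot{x}\rangle ,\\
X^{k}\phi _{3}&=-4\dot{\tau}\,\phi _{3}-5\ddot{\tau}\,\phi _{2}-\dddot{\tau}\,\langle \dot{x},\dot{x}\rangle ,\\
X^{k}\phi _{4}&=-5\dot{\tau}\,\phi _{4}-9\ddot{\tau}\,\phi _{3}-7\dddot{\tau}\,\phi _{2}-\ddddot{\tau}\,\langle \dot{x},\dot{x}\rangle ;
\end{align*}
more conceptually, $X^{k}\phi _{j}\equiv -\tau ^{(j)}\langle \dot{x},\dot{x}\rangle $ modulo the ideal generated by $\phi _{2},\dots ,\phi _{j}$, which follows by induction on $j$ from the jet identity $[X^{\infty },\tfrac{d}{dt}]=-\dot{\tau}\,\tfrac{d}{dt}$ together with $\tfrac{d}{dt}\phi _{1}=2\phi _{2}$ and $\tfrac{d}{dt}\phi _{j}=\phi _{j+1}$ for $j\geq 2$. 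Restricting to $M^{k}$, where $\phi _{1}=\dots =\phi _{k}=0$ and hence $\langle \dot{x},\dot{x}\rangle =1$, this gives $(X^{k}\phi _{1})|_{M^{k}}=-2\,\dot{\tau}|_{M^{k}}$ and $(X^{k}\phi _{j})|_{M^{k}}=-\tau ^{(j)}|_{M^{k}}$ for $2\leq j\leq k$. Therefore $X^{k}$ is tangent to $M^{k}$ if and only if $\dot{\tau},\ddot{\tau},\dots ,\tau ^{(k)}$ all vanish on $M^{k}$.

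Finally I would match this with the statement that the pull-back of $\tau $ to $M^{k}$ along the base projection $J^{k}\to Q$ is constant. The relevant facts are that this projection, restricted to $M^{k}$, is onto $Q$ with connected fibres, and that over each $(t,x)$ the $\dot{x}$-coordinate on $M^{k}$ sweeps out the whole unit sphere $\{|\dot{x}|=1\}$ (for instance the jet $(t,x,\dot{x},0,\dots ,0)$ lies on $M^{k}$ for every unit $\dot{x}$). If $\tau |_{M^{k}}$ is constant, then $\tau $ is locally constant on $Q$, so every total derivative $\tau ^{(j)}$ vanishes identically, hence on $M^{k}$, and therefore $X^{k}$ is tangent to $M^{k}$. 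Conversely, if $X^{k}$ is tangent to $M^{k}$ then in particular $\dot{\tau}|_{M^{k}}=0$; writing $\dot{\tau}=\tfrac{\partial \tau }{\partial t}+\dot{x}^{i}\tfrac{\partial \tau }{\partial x^{i}}$ and comparing the values at antipodal unit vectors $\pm \dot{x}$ forces $\tfrac{\partial \tau }{\partial t}=0$ and then $\tfrac{\partial \tau }{\partial x}=0$, so $\tau $ is locally constant on $Q$ and $\tau |_{M^{k}}$ is constant. (When $\tau $ depends on $t$ alone, as in the reparametrisation setting of this section, this converse step is simply that $\dot{\tau}=\tau '(t)$ vanishes on $M^{k}$ while $M^{k}$ maps onto the whole $t$-axis.) I expect the main obstacle to be the verification of the four displayed formulas for $X^{k}\phi _{j}$: the inductive derivation via $[X^{\infty },\tfrac{d}{dt}]=-\dot{\tau}\,\tfrac{d}{dt}$ reduces it to one computation, but the four cases can equally well be checked directly; everything else is bookkeeping.
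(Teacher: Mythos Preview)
Your proof is correct and follows the same computational core as the paper: both compute $X^{k}\phi_{j}$ directly, restrict to $M^{k}$, and read off $-\tau^{(j)}$ (up to the factor $2$ when $j=1$). One small slip: you define $\phi_{1}$ as ``the function on the left of (\ref{arc-length1})'', i.e.\ $\langle\dot{x},\dot{x}\rangle$, and then write $M^{1}=\{\phi_{1}=0\}$; you of course mean $\phi_{1}=\langle\dot{x},\dot{x}\rangle-1$, and nothing downstream is affected since $X^{k}$ annihilates constants.

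Where you go beyond the paper is in two places. First, you package the four computations via the jet identity $[X^{\infty},\tfrac{d}{dt}]=-\dot{\tau}\,\tfrac{d}{dt}$ together with $\tfrac{d}{dt}\phi_{j}=\phi_{j+1}$; the paper simply carries out each case by hand. Second, you treat the final equivalence---between $\tau^{(j)}|_{M^{k}}=0$ for $j\le k$ and ``$\tau|_{M^{k}}$ constant''---explicitly, using surjectivity of $M^{k}\to Q$ and (for general $\tau(t,x)$) the antipodal argument on the unit sphere in $\dot{x}$. The paper's proof stops at the computation of $(X^{k}\phi_{j})|_{M^{k}}=-\tau^{(j)}$ and leaves this last step to the reader; in the paper's setting $\tau=\tau(t)$, so the step is immediate, but your version handles the general $\tau(t,x)$ that the proposition as stated allows.
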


\begin{proof}
The vector field 
\begin{equation*}
X^{1}=-\dot{\tau}\dot{x}^{i}\frac{\partial }{\partial \dot{x}^{i}}+\tau 
\frac{\partial }{\partial t}
\end{equation*}
on $J^{1}$, generating the reparametrization transformation, differentiating 
$\left\langle \dot{x},\dot{x}\right\rangle $ gives 
\begin{equation*}
\left( -\dot{\tau}\dot{x}^{i}\frac{\partial }{\partial \dot{x}^{i}}+\tau 
\frac{\partial }{\partial t}\right) \left\langle \dot{x},\dot{x}
\right\rangle =-\dot{\tau}\left\langle \dot{x}, \dot{x}\right\rangle .
\end{equation*}
Thus, $X^{1}$ is tangent to $M^{1}$ if and only if $\dot{\tau}=0$. Moreover, 
\begin{equation*}
\left. X^{1}\left\langle \dot{x},\dot{x}\right\rangle
\right|_{M^{1}}=-\dot{\tau}.
\end{equation*}
Next, 
\begin{eqnarray*}
X^{2}\left\langle \dot{x},\ddot{x}\right\rangle &=&\left( \tau \frac{ 
\partial }{\partial t}-\dot{\tau}\dot{x}^{i}\frac{ \partial }{\partial \dot{x 
}^{i}}-(2\ddot{x}_{i}\dot{\tau}+\dot{x}_{i}\ddot{ \tau})\frac{\partial }{ 
\partial \ddot{x}^{i}}\right) \left\langle \dot{x},\ddot{x}\right\rangle \\
&=&-3\dot{\tau} 
\left\langle \dot{x},\ddot{x}\right\rangle -\ddot{\tau} \left\langle \dot{x}, 
\dot{x}\right\rangle ,
\end{eqnarray*}
and 
\begin{equation*}
\left.\left( X^{2}\left\langle \dot{x},\ddot{x}\right\rangle
\right)\right|_{M^{2}}=-\ddot{\tau}.
\end{equation*}
Further, 
\begin{eqnarray*}
&&X^{3}(\left\langle \dot{x},\dddot{x}\right\rangle +\left\langle \ddot{x}, 
\ddot{x}\right\rangle )= \\
&=&\left( \tau \frac{\partial }{\partial t}-\dot{\tau}\dot{x}^{i}\frac{
\partial }{\partial \dot{x}^{i}}-(2\ddot{x}_{i}\dot{\tau}+\dot{x}_{i}\ddot{
\tau})\frac{\partial }{\partial \ddot{x}^{i}}-(3\dddot{x}_{i}\dot{\tau}+3 
\ddot{x}_{i}\ddot{\tau}+\dot{x}_{i}\dddot{\tau})\frac{\partial }{\partial 
\dddot{x}^{i}}\right) (\left\langle \dot{x},\dddot{x} \right\rangle
+\left\langle \ddot{x},\ddot{x} \right\rangle ) \\
&=&-4\dot{\tau}\left\langle \dot{x},\dddot{x}\right\rangle -3\ddot{\tau} 
\left\langle \dot{x},\ddot{x} \right\rangle -\dddot{\tau}\left\langle \dot{x} 
, \dot{x}\right\rangle -4\dot{\tau}\left\langle ( \ddot{x},\ddot{x} 
\right\rangle -2\ddot{\tau}\left\langle \dot{x},\ddot{x}\right\rangle ,
\end{eqnarray*}
and 
\begin{equation*}
\left. X^{3}(\left\langle \dot{x},\dddot{x}\right\rangle +\left\langle 
\ddot{x},\ddot{x}\right\rangle )\right|_{M^{3}}=-\dddot{\tau}.
\end{equation*}
Finally, 
\begin{eqnarray*}
&&X^{4}\left(\langle \dot{x},x^{(4)}\rangle +3\langle \ddot{ 
x},\dddot{x}\rangle \right) \\
&=&\left( \tau \frac{\partial }{\partial t}-\dot{\tau}\dot{x}^{i}\frac{
\partial }{\partial \dot{x}^{i}}-(2\ddot{x}_{i}\dot{\tau}+\dot{x}_{i}\ddot{
\tau})\frac{\partial }{\partial \ddot{x}^{i}}-(3\dddot{x}_{i}\dot{\tau}+3 
\ddot{x}_{i}\ddot{\tau}+\dot{x}_{i}\dddot{\tau})\frac{\partial }{\partial 
\dddot{x}^{i}}\right) \left( \left\langle \dot{x} ,x^{(4)}\right\rangle
+3\left\langle \ddot{x}, \dddot{x}\right\rangle \right) + \\
&&-\left( 4x^{(4)}\dot{\tau}+6\dddot{x}\ddot{\tau}+4\ddot{x}\dddot{\tau}+ 
\dot{x}\tau ^{(4)}\right) \frac{\partial }{\partial x^{(4)}}\left(
\left\langle \dot{x},x^{(4)}\right\rangle +3\left\langle \ddot{x},\dddot{x} 
\right\rangle \right) \\
&=&-5\dot{\tau}\langle \dot{x},x^{(4)}\rangle -9\ddot{\tau} 
\left\langle \dot{x},\dddot{x} \right\rangle -9\ddot{\tau}\left\langle \ddot{ 
x}, \ddot{x}\right\rangle -15\dot{\tau}\left\langle \ddot{x},\dddot{x} 
\right\rangle -7\dddot{\tau}\left\langle \dot{x},\ddot{x}\right\rangle -\tau
^{(4)}\left\langle \dot{x},\dot{x}\right\rangle .
\end{eqnarray*}
Therefore, $X^{4}\left( \langle \dot{x},x^{(4)}\rangle
+3\langle \ddot{x},\dddot{x} \rangle \right) =0$ if $\dot{\tau}=0$
and 
\begin{equation*}
X^{4}\left.\left( \langle \dot{x},x^{(4)}\rangle +3\langle 
\ddot{x},\dddot{x}\rangle \right)\right|_{M^{4}}=-\tau^{(4)}.
\end{equation*}
\end{proof}

Suppose a local section $\sigma $ of $Q$ with domain $I\subset 
\R$ and with $j^{1}\sigma (I)$ not in $M^{1}$, satisfies $\dot{x} 
(t)\neq 0$ for all $t\in I$.

\begin{lemma}
There exists $\varphi \in \Diff_{+}\R$ such that 
\begin{equation*}
\frac{d\varphi}{dt}=\left\vert \dot{x}(t)\right\vert
\end{equation*} 
for all $t\in I.$
\end{lemma}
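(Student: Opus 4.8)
The plan is to exhibit the required $\varphi$ as a primitive of $|\dot x|$, extended so as to be a diffeomorphism of all of $\R$ rather than merely of $I$.

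First I would record that $g(t):=|\dot x(t)|$ is a smooth, strictly positive function on $I$: the map $v\mapsto |v|=\sqrt{\langle v,v\rangle}$ is smooth on $\R^{n}\setminus\{0\}$, and by hypothesis $\dot x(t)\in\R^{n}\setminus\{0\}$ for every $t\in I$. (The assumption that $j^{1}\sigma(I)$ is not contained in $M^{1}$ merely says $g\not\equiv1$, so that the reparametrization is genuinely non-trivial; it plays no role in the existence argument.) The naive choice $\varphi(t)=\int_{t_{0}}^{t}g=s(t)$ does satisfy $\dot\varphi=g$ on $I$, but it carries $I$ onto the bounded interval of arclengths, not onto $\R$, so it is not an element of $\Diff_{+}\R$; the only thing that goes wrong is its behaviour near and beyond the ends of $I$.

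Next I would extend $g$ to a smooth, strictly positive function $G$ on all of $\R$ that is identically $1$ outside a bounded set. Since $\sigma$ is a local section we may assume $I$ is a bounded interval and $\dot x$ is smooth on an open neighbourhood of $\overline{I}$; then $g$ extends to some $\tilde g\in C^{\infty}(\R)$ (or invoke Whitney's extension theorem), and by continuity $\tilde g>0$ on a bounded open neighbourhood $U$ of $I$. Pick $\chi\in C^{\infty}(\R)$ with $0\le\chi\le1$, $\chi\equiv1$ on $I$, and $\chi\equiv0$ off $U$, and put $G:=\chi\tilde g+(1-\chi)$. Then $G\in C^{\infty}(\R)$, $G|_{I}=g$, and $G\equiv1$ off $U$; moreover $G>0$ everywhere, since on $U$ it is a convex combination, with weights $\chi(t)$ and $1-\chi(t)$ summing to $1$, of the positive numbers $\tilde g(t)$ and $1$, while off $U$ it equals $1$.

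Finally I would set $\varphi(t):=\int_{t_{0}}^{t}G(\tau)\,d\tau$. Then $\varphi\in C^{\infty}(\R)$ and $\dot\varphi=G>0$, so $\varphi$ is strictly increasing; and $\dot\varphi=G|_{I}=g=|\dot x|$ on $I$, which is the asserted identity (indeed $\varphi|_{I}=s$). Because $G$ is bounded on the bounded set $U$ and equals $1$ off $U$, one has $\varphi(t)\to\pm\infty$ as $t\to\pm\infty$, so $\varphi$ is a bijection of $\R$; since $\dot\varphi$ is nowhere zero the inverse function theorem gives $\varphi^{-1}\in C^{\infty}(\R)$, and $\dot\varphi>0$ makes $\varphi$ orientation preserving, so $\varphi\in\Diff_{+}\R$. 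The only step above that is not entirely routine is the smooth, strictly positive extension of $|\dot x|$ from $I$ to $\R$ with controlled (here, constant) behaviour near $\pm\infty$; this is precisely where one uses that arclength parametrization is a local matter, so that restricting to a bounded $I$ on which $\dot x$ is smooth up to and past the endpoints costs nothing. Once such a $G$ is produced, the remaining assertions are immediate.
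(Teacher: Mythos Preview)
Your proof is correct and considerably more careful than the paper's, which reads in its entirety: ``This follows from the fundamental theorem of the calculus.'' Both approaches rest on the same idea---integrate the speed---but the paper leaves implicit exactly the point you make explicit: the naive primitive $s(t)=\int_{t_0}^t|\dot x|$ only maps $I$ onto a bounded interval, so without extending $|\dot x|$ to a smooth positive function on all of $\R$ (with controlled behaviour at infinity) one does not obtain an element of $\Diff_+\R$. Your bump-function extension and the verification that $\varphi$ is a bijection of $\R$ with smooth inverse are precisely what is needed to justify the paper's one-line appeal.
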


\begin{proof}
This follows from the fundamental theorem of the calculus.
\end{proof}

Then, 
\begin{equation*}
\frac{dx}{d\varphi}=\frac{dx}{dt}\frac{dt}{d\varphi}=\frac{dx}{dt} 
\frac{1}{\left\vert \dot{x}(t)\right\vert }=\frac{\dot{x}(t)}{\left\vert 
\dot{x}(t)\right\vert },
\end{equation*} 
and it follows that 
\[
  \left|\frac{dx}{d\varphi}\right|=1.
\]
Thus the new parametrization gives rise to a section $\varphi^{\ast
}\sigma $ with its first jet in $M^{1}$. Similarly, the $k$-jet of 
$\varphi^{\ast }\sigma $ is in $M^{k}.$

\subsection{Hamiltonian formulation}

The Liouville form on the cotangent bundle $T^{\ast }J^{1}$ with variables $ 
(t,x,\dot{x},p_{t},p_{x},p_{\dot{x}})$ is 
\begin{equation}
\theta =p_{t}\,dt+p_{x}\,dx + p_{\dot{x}}\,d\dot{x}.   \label{Liouville}
\end{equation} 
The exterior derivative 
\begin{equation}
\omega =d\theta  \label{symplectic form}
\end{equation} 
is the canonical symplectic form of $T^{\ast }J^{1}$.

\begin{lemma}
The action  
\begin{equation*}
\Diff_{+}\R\times J^{1}\rightarrow J^{1}:(\varphi ,(t,x,\dot{x} 
))\mapsto \left( \varphi (t),x,\frac{\dot{x}}{\dot{\varphi}(t)}\right)
\end{equation*}
lifts to an action 
\begin{eqnarray}
\Diff_{+}\R\times T^{\ast }J^{1} &\rightarrow &T^{\ast }J^{1}:
\label{lifted action} \\
(\varphi ,(t,x,\dot{x},p_{t},p_{x},p_{\dot{x}})) &\mapsto &\left( \varphi
(t),x,\dot{\varphi}(t)^{-1}\dot{x},p_{t}\dot{\varphi}(t)^{-1}+\left\langle
p_{\dot{x}},\dot{x}\right\rangle \dot{\varphi}(t)^{-2}\ddot{\varphi} 
(t),p_{x},\dot{\varphi}(t)p_{\dot{x}}\right) .  \notag
\end{eqnarray} 
The lifted action (\ref{lifted action}) is Hamiltonian with momentum map $ 
\mathscr{J}:T^{\ast }J^{1}\rightarrow \diff_{+}\R^{\ast }$ such that,
for  $X=\tau (t)\partial_t\in \diff_{+}\R$, 
\begin{equation}
\mathscr{J}_{X}(t,x,\dot{x},p_{t},p_{x},p_{\dot{x}})=\tau (t)p_{t}-\dot{\tau} 
(t)\left\langle p_{\dot{x}},\dot{x}\right\rangle .  \label{Jtau}
\end{equation}
\end{lemma}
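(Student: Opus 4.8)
The plan is to recognise the lifted action as the cotangent lift of the $\Diff_+\R$-action on $J^1$, and then read off the momentum map from the general theory of cotangent lifts. Recall that a diffeomorphism $\Phi$ of a manifold $N$ has a canonical lift $\hat\Phi$ to $T^*N$, characterised as the unique bundle map covering $\Phi$ with $\hat\Phi^*\theta=\theta$ for the tautological one-form $\theta$; in coordinates this says the transformed momenta $\bar p$ satisfy $\langle\bar p,d(\Phi(q))\rangle=\langle p,dq\rangle$. I would apply this with $N=J^1$ and $\Phi=\varphi^1$ from (\ref{action}), using $d(\varphi(t))=\dot\varphi(t)\,dt$ and $d\bigl(\dot x/\dot\varphi(t)\bigr)=\dot\varphi(t)^{-1}d\dot x-\dot x\,\dot\varphi(t)^{-2}\ddot\varphi(t)\,dt$, and match the coefficients of $dt$, $dx$ and $d\dot x$ in $\varphi^{1\ast}\theta=\theta$. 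The $dx$- and $d\dot x$-terms give immediately $\bar p_x=p_x$ and $\bar p_{\dot x}=\dot\varphi(t)\,p_{\dot x}$, and substituting the latter into the $dt$-equation yields $\bar p_t=p_t\dot\varphi(t)^{-1}+\langle p_{\dot x},\dot x\rangle\dot\varphi(t)^{-2}\ddot\varphi(t)$, which is exactly (\ref{lifted action}); that these formulas define a left group action is automatic, being the cotangent lift of the action on $J^1$.

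Since $\hat\Phi$ preserves $\theta$ it preserves $\omega=d\theta$, so the lifted action is symplectic, and, as for any cotangent-lifted action, it is Hamiltonian with the $\mathrm{Ad}^\ast$-equivariant momentum map $\langle\mathscr{J}(\alpha_q),X\rangle=\langle\alpha_q,X_{J^1}(q)\rangle=\bigl(\iota_{X^{T^*J^1}}\theta\bigr)(\alpha_q)$, where $X_{J^1}$ denotes the infinitesimal generator on $J^1$ of $X\in\diff_+\R$. For $X=X_\tau=\tau(t)\partial_t$ this generator is precisely the first prolongation, which by Proposition \ref{4.1} (taking $\xi=0$) equals $X_\tau^1=\tau\partial_t-\dot x\dot\tau\,\partial_{\dot x}$; pairing it with $\theta=p_t\,dt+p_x\,dx+p_{\dot x}\,d\dot x$ gives $\mathscr{J}_X=\tau(t)p_t-\dot\tau(t)\langle p_{\dot x},\dot x\rangle$, which is (\ref{Jtau}). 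As an independent check one can differentiate (\ref{lifted action}) in $\varepsilon$ along a one-parameter subgroup $\varphi_\varepsilon$ with $\dot\varphi_0=1$ and $\ddot\varphi_0=0$, obtain the infinitesimal generator $\tau\partial_t-\dot\tau\dot x\,\partial_{\dot x}+\bigl(\ddot\tau\langle p_{\dot x},\dot x\rangle-\dot\tau p_t\bigr)\partial_{p_t}+\dot\tau p_{\dot x}\,\partial_{p_{\dot x}}$, and verify that it is the Hamiltonian vector field of $\mathscr{J}_X$ with respect to $\omega$.

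There is no substantive difficulty; the care required is purely in the bookkeeping. The $\ddot\varphi$-term in $\bar p_t$ arises entirely from differentiating $\dot x/\dot\varphi(t)$ when pulling back $\theta$, and one must remember that the relevant generator on $J^1$ is the prolonged field $X_\tau^1$ rather than $X_\tau$ itself, so that the contribution $-\dot\tau\langle p_{\dot x},\dot x\rangle$ to $\mathscr{J}_X$ is not overlooked.
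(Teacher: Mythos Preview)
Your proposal is correct and follows essentially the same approach as the paper: both derive the lifted action by imposing that the Liouville form is preserved under the cotangent lift (matching coefficients of $dt$, $dx$, $d\dot x$ in the primed and unprimed forms), and both obtain $\mathscr{J}_X$ by pairing $\theta$ with the prolonged generator $X_\tau^1=\tau\partial_t-\dot\tau\dot x\,\partial_{\dot x}$. Your additional consistency check via the infinitesimal generator on $T^*J^1$ is not in the paper but is a harmless extra.
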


\begin{proof}
The action of $\Diff_{+}\R$ on $J^{1}$ is given by 
\begin{equation*}
\Diff_{+}\R\times J^{1}\rightarrow J^{1}:(\varphi ,(t,x,\dot{x} 
))\mapsto (t^{\prime },x^{\prime },\dot{x}^{\prime })=\left( \varphi (t),x, 
\dot{\varphi}(t)^{-1}\dot{x}\right) ,
\end{equation*} 
(see equation (\ref{action}).) The lifted action takes $(p_{t},p_{x},p_{\dot{x} 
})\ $to $(p_{t}^{\prime },p_{x}^{\prime },p_{\dot{x}}^{\prime })$ such that 
\begin{equation*}
p_{t}^{\prime }dt^{\prime }+p_{x}^{\prime }dx^{\prime }+p_{\dot{x}}^{\prime
}d\dot{x}^{\prime }=p_{t}dt+p_{x}dx+p_{\dot{x}}d\dot{x}.
\end{equation*} 
But $dt^{\prime }=\dot{\varphi}(t)dt$, $dx^{\prime }=dx$ and $d\dot{x} 
^{\prime }=\dot{\varphi}(t)^{-1}d\dot{x}-\dot{\varphi}(t)^{-2}\dot{x}\ddot{ 
\varphi}(t)\,dt$. Hence, 
\begin{eqnarray*}
p_{t}^{\prime }dt^{\prime }+p_{x}^{\prime }dx^{\prime }+p_{\dot{x}}^{\prime
}d\dot{x}^{\prime } &=&p_{t}^{\prime }\dot{\varphi}(t)dt+p_{x}dx+p_{\dot{x} 
}^{\prime }\dot{\varphi}(t)^{-1}d\dot{x}-\left\langle p_{\dot{x}}^{\prime }, 
\dot{x}\right\rangle \dot{\varphi}(t)^{-2}\ddot{\varphi}(t)dt \\
&=&(p_{t}^{\prime }\dot{\varphi}(t)-\left\langle p_{\dot{x}}^{\prime },\dot{x 
}\right\rangle \dot{\varphi}(t)^{-2}\ddot{\varphi}(t))\,dt+p_{x}dx+p_{\dot{x} 
}^{\prime }\dot{\varphi}(t)^{-1}d\dot{x} \\
&=&p_{t}\,dt+p_{x}\,dx+p_{\dot{x}}\,d\dot{x}.
\end{eqnarray*} 
Hence, 
\begin{eqnarray*}
p_{t}^{\prime }\dot{\varphi}(t)-\left\langle p_{\dot{x}}^{\prime },\dot{x} 
\right\rangle \dot{\varphi}(t)^{-2}\ddot{\varphi}(t) &=&p_{t}, \\
p_{x^{\prime }} &=&p_{x}, \\
p_{\dot{x}}^{\prime }\dot{\varphi}(t)^{-1} &=&p_{\dot{x}}.
\end{eqnarray*} 
Therefore, 
\begin{eqnarray*}
p_{t}^{\prime } &=&p_{t}\dot{\varphi}(t)^{-1}+\left\langle p_{\dot{x}},\dot{x 
}\right\rangle \dot{\varphi}(t)^{-2}\ddot{\varphi}(t), \\
p_{x}^{\prime } &=&p_{x}, \\
p_{\dot{x}}^{\prime } &=&\dot{\varphi}(t)p_{\dot{x}}.
\end{eqnarray*}

The lifted action on the cotangent bundle is Hamiltonian, and the value of
the momentum map on an element of the Lie algebra is given by the evaluation
of the Liouville form on the vector field generating the action of the
one-parameter group corresponding to this element of the Lie algebra. Hence,
for $X=\tau (t)\partial_t \in \diff_{+}\R$, 
\begin{eqnarray*}
\mathscr{J}_{X}(t,x,\dot{x},p_{t},p_{x},p_{\dot{x}}) &=&\left\langle 
\mathscr{J},\,X\right\rangle (t,x,\dot{x},p_{t},p_{x},p_{\dot{x} 
})=\left\langle \theta ,X^{1}\right\rangle (t,x,\dot{x},p_{t},p_{x},p_{\dot{x 
}}) \\
&=&\left\langle p_{t}dt+p_{x}dx+p_{\dot{x}}d\dot{x},\tau \frac{{\small  
\partial }}{{\small \partial t}}-\dot{x}\dot{\tau}\frac{{\small \partial }}{ 
{\small \partial }\dot{x}}\right\rangle \\
&=&\tau (t)p_{t}-\dot{\tau}(t)\left\langle p_{\dot{x}},\dot{x}\right\rangle .
\end{eqnarray*}
\end{proof}

\begin{definition}
The Legendre-Ostrogradski transformation 
\begin{equation}
\mathscr{L}:J^{3}\rightarrow T^{\ast }J^{1}:(t,x,\dot{x},\ddot{x},\dddot{x} 
)\mapsto (t,x,\dot{x},p_{t},p_{x},p_{\dot{x}}),
\label{Legendre-Ostrogradski}
\end{equation} 
is given by 
\begin{eqnarray*}
p_{t} &=&-H=-\dot{x}p_{x}-\ddot{x}p_{\dot{x}}+L \\
p_{\dot{x}} &=&\frac{\partial L}{\partial \ddot{x}}\text{,} \\
p_x &=&\frac{\partial L}{\partial \dot{x}}-\frac{d}{dt}\frac{\partial L}{ 
\partial \ddot{x}}=\frac{\partial L}{\partial \dot{x}}-\frac{d}{dt}p_{\dot{x}}.
\end{eqnarray*}
\end{definition}
The Legendre transformation was extended by Ostragradski in 
\cite{ostrogradski}.  However, for brevity, from now on we shall just refer to  
it as the Legendre transformation.

Clearly, $\mathscr{L}$ is a smooth map of $J^{3}$ into $T^{\ast }J^{1}$. If $ 
\mathscr{L}$ is a diffeomorphism, Ostrogradski's approach leads to a regular
time-dependent Hamiltonian theory with the Hamiltonian 
\begin{equation*}
H=-p_{t}=\dot{x}p_{x}+\ddot{x}p_{\dot{x}}-L.
\end{equation*} 
In geometric problems, the Lagrangian is often reparametrization
invariant. In this case $L$ does not depend on $t$ and the range of the
Legendre transformation is restricted by the equations (see (\ref{JX=0}))
\begin{equation*}
H=0\text{ \ and \ }\left\langle p_{\dot{x}},\dot{x}\right\rangle =0.
\end{equation*}

\begin{theorem}\label{intertwine}
If the Lagrange form $L\,dt$ is invariant under the action of $\Diff_{+}\R$, then the Legendre transformation $\mathscr{L}$ intertwines the actions
of $\Diff_{+}\R$ on $J^{3}$ and on $T^{\ast }J^{1}.$
\end{theorem}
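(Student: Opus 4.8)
The plan is to verify directly that $\mathscr{L}\circ\varphi^{3}=\varphi\circ\mathscr{L}$, writing $\varphi$ also for the lifted action (\ref{lifted action}); this means comparing the coordinate functions $t,x,\dot{x},p_{t},p_{x},p_{\dot{x}}$ of $T^{\ast}J^{1}$ pulled back along the two composites. Nothing happens in the coordinates $t,x,\dot{x}$: the base part of $\mathscr{L}$ is the projection $\pi_{3,1}$, prolongations commute with projections so that $\pi_{3,1}\circ\varphi^{3}=\varphi^{1}\circ\pi_{3,1}$, and both composites therefore send a point to one whose first three coordinates are $(\varphi(t),x,\dot{\varphi}(t)^{-1}\dot{x})$. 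The whole content sits in the three momentum coordinates, and the only input is the invariance hypothesis, which in coordinates says $L\circ\varphi^{2}=\dot{\varphi}^{-1}L$ on $J^{2}$.

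First I would handle $p_{\dot{x}}=\partial L/\partial\ddot{x}$. Since $\varphi^{2}$ acts on the fibre coordinate $\ddot{x}$ by $\ddot{x}\mapsto\dot{\varphi}^{-2}\ddot{x}-\dot{\varphi}^{-3}\ddot{\varphi}\,\dot{x}$ and on $t,x,\dot{x}$ in a way not involving $\ddot{x}$, differentiating $L\circ\varphi^{2}=\dot{\varphi}^{-1}L$ with respect to $\ddot{x}$ and using the chain rule gives $\varphi^{2\ast}p_{\dot{x}}=\dot{\varphi}\,p_{\dot{x}}$, which is exactly the rule for $p_{\dot{x}}$ in (\ref{lifted action}). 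Differentiating the same identity with respect to $\dot{x}$ — now $\dot{x}$ enters the third slot of $\varphi^{2}$ as $\dot{\varphi}^{-1}\dot{x}$ and the fourth slot as $-\dot{\varphi}^{-3}\ddot{\varphi}\,\dot{x}$ — and then substituting the line just obtained, yields the auxiliary identity $\varphi^{2\ast}(\partial L/\partial\dot{x})=\partial L/\partial\dot{x}+\dot{\varphi}^{-1}\ddot{\varphi}\,p_{\dot{x}}$.

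For $p_{x}=\partial L/\partial\dot{x}-\frac{d}{dt}p_{\dot{x}}$, with $\frac{d}{dt}$ the total $t$-derivative (so that $p_{x}$ genuinely involves $\dddot{x}$), I need the way the total derivative interacts with the prolonged action, namely $\varphi^{(k+1)\ast}\bigl(\tfrac{d}{dt}f\bigr)=\dot{\varphi}^{-1}\,\tfrac{d}{dt}\bigl(\varphi^{k\ast}f\bigr)$ for a function $f$ on $J^{k}$. This is the chain rule $d/d\varphi(t)=\dot{\varphi}^{-1}d/dt$ expressed through prolongations, and it is enough to check it along jets of sections, which exhaust the jet bundles, where it merely records that prolongation intertwines reparametrization of a curve with the action. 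Granting it, and using $\tfrac{d}{dt}\dot{\varphi}=\ddot{\varphi}$, one gets $\varphi^{3\ast}p_{x}=\varphi^{2\ast}(\partial L/\partial\dot{x})-\dot{\varphi}^{-1}\tfrac{d}{dt}(\varphi^{2\ast}p_{\dot{x}})=\bigl(\partial L/\partial\dot{x}+\dot{\varphi}^{-1}\ddot{\varphi}\,p_{\dot{x}}\bigr)-\dot{\varphi}^{-1}\tfrac{d}{dt}(\dot{\varphi}\,p_{\dot{x}})=\partial L/\partial\dot{x}-\tfrac{d}{dt}p_{\dot{x}}=p_{x}$, again matching (\ref{lifted action}). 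Finally $p_{t}=-H=-\dot{x}p_{x}-\ddot{x}p_{\dot{x}}+L$: substituting the transformation laws for $\dot{x},\ddot{x}$ (read off from $\varphi^{3}$), for $p_{x},p_{\dot{x}}$ (just established), and for $L$ (the hypothesis), and collecting terms, gives $\varphi^{3\ast}p_{t}=\dot{\varphi}^{-1}p_{t}+\dot{\varphi}^{-2}\ddot{\varphi}\,\langle p_{\dot{x}},\dot{x}\rangle$, which is exactly the transformation of $p_{t}$ in (\ref{lifted action}). All six coordinates check, so $\mathscr{L}$ intertwines the actions.

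The only step with any bite is the commutation of the total derivative with reparametrization — it is what makes the $p_{x}$-component come out — and everything else is chain-rule bookkeeping. One may also package the two momentum computations as the single assertion that the Cartan form is invariant, $\varphi^{3\ast}\Theta=\Theta$ (combine $\varphi^{2\ast}(L\,dt)=L\,dt$, $\varphi^{1\ast}\theta_{1}=\theta_{1}$ and $\varphi^{2\ast}\theta_{2}=\dot{\varphi}^{-1}\theta_{2}$ with $\varphi^{2\ast}p_{\dot{x}}=\dot{\varphi}p_{\dot{x}}$ and $\varphi^{3\ast}p_{x}=p_{x}$). Since $\mathscr{L}^{\ast}\theta=\Theta$ for the Liouville form $\theta$ of $T^{\ast}J^{1}$ — immediate from the definitions of $\mathscr{L}$ and of $\Theta=p_{x}\,dx+p_{\dot{x}}\,d\dot{x}-H\,dt$ with $p_{t}=-H$ — and since (\ref{lifted action}) is the cotangent lift of $\varphi^{1}$, hence preserves $\theta$ and covers $\pi_{3,1}\circ\varphi^{3}$, both $\mathscr{L}\circ\varphi^{3}$ and $\varphi\circ\mathscr{L}$ cover the same submersion $J^{3}\to J^{1}$ and pull $\theta$ back to the same $1$-form $\Theta$; as a map into a cotangent bundle is pinned down by its base projection together with the pullback of $\theta$ as soon as that projection is a submersion, the two composites must coincide, giving a second and more structural route to the theorem.
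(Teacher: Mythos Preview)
Your direct verification is correct and follows the same overall strategy as the paper's proof: both check the intertwining relation coordinate by coordinate, extracting the transformation laws for $p_{\dot{x}}$, $\partial L/\partial\dot{x}$, and $p_{x}$ from the invariance hypothesis. Your organization is somewhat leaner: you differentiate the identity $L\circ\varphi^{2}=\dot{\varphi}^{-1}L$ directly and isolate the single nontrivial ingredient (the commutation $\varphi^{(k+1)\ast}(df/dt)=\dot{\varphi}^{-1}\,d(\varphi^{k\ast}f)/dt$), whereas the paper introduces an auxiliary Lagrangian $L'$ with $(\varphi^{2})^{\ast}L'=L$ and computes its partial derivatives, and along the way invokes the second--Noether identities $p_{t}=0$, $\langle p_{\dot{x}},\dot{x}\rangle=0$ to simplify. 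Your calculation shows those identities are not needed for the intertwining itself---the general transformation law $\varphi^{3\ast}p_{t}=\dot{\varphi}^{-1}p_{t}+\dot{\varphi}^{-2}\ddot{\varphi}\langle p_{\dot{x}},\dot{x}\rangle$ already matches~(\ref{lifted action}) on the nose.

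Your second argument, via $\mathscr{L}^{\ast}\theta=\Theta$ together with $\varphi^{3\ast}\Theta=\Theta$ and the observation that a map into a cotangent bundle is determined by its base projection and the pullback of the Liouville form once that projection is a submersion, is a genuinely different and more conceptual route. The paper in fact runs this implication in the opposite direction, deducing the $\Diff_{+}\R$-invariance of $\Theta$ as a corollary of the theorem; your version shows the two statements are essentially equivalent, which is a nice structural point the paper does not make.
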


\begin{proof}
The map $\mathscr{L}$, defined by (\ref{Legendre-Ostrogradski}), intertwines 
with $\varphi^3$, the induced action of $\Diff_+\R$ on $J^3$, if and only if
\begin{align}\label{Commute}
\tilde{\varphi}^1\circ\mathscr{L}(t,x,\dot{x},\ddot{x},\dddot{x}) = \mathscr{L}\circ\varphi^3(t,x,\dot{x},\ddot{x},\dddot{x}),
\end{align} 
for every $(t,x,\dot{x},\ddot{x},\dddot{x})\in J^3$.  Here the map 
$\tilde{\varphi}^1\colon T^*J^1\rightarrow T^*J^1$ is the lifted action of 
$\varphi^1$ to the cotangent bundle $T^*J^1$, defined by (\ref{lifted action}).

Let us first compute the right hand side of (\ref{Commute}).
Let $L'\colon J^2\rightarrow\R$ be a function on $J^2$ such that 
$(\varphi^2)^*L'=L$. This function is an induced Lagrangian by the 
$\Diff_+\R$-action. Based on (\ref{Legendre-Ostrogradski}), the map 
$\mathscr{L}\circ\varphi^3$ can be calculated by:
\begin{align*}
(t,x,\dot{x},\ddot{x},\dddot{x})&\mapsto (t',x,x',p'_t,p'_x,p'_{\dot{x}})\\
t'&=\varphi(t)\\
x'&=\frac{\dot{x}}{\dot{\varphi}}\\
p'_t&=-\left<p'_x,x'\right>-\left<p'_{\dot{x}},x''\right>+L'\\
p'_{\dot{x}}&=\frac{\partial L'}{\partial x''}\\
p'_x&=\frac{\partial L'}{\partial x'}-\frac{d}{dt'}(p'_{\dot{x}})\\
x''&=\frac{\ddot{x}}{\dot{\varphi}^2}-\frac{\ddot{\varphi}\dot{x}}{\dot{\varphi}^3}.
\end{align*}

In order to calculate the partial derivatives of $L'$, we use the assumption 
that the Lagrange form $L\,dt$ is invariant under the $\Diff_+\R$-action, i.e.,
\begin{align*}
(\varphi^2)^*(L'dt')=(L'\circ\varphi^2)\dot{\varphi}dt=Ldt \Longrightarrow L=\dot{\varphi}(L'\circ\varphi^2).
\end{align*}
Therefore, the partial derivatives of $L'$ with respect to $x, x'$ and $x''$ 
are calculated as
\begin{align*}
\frac{\partial L}{\partial x}&=\dot{\varphi} \frac{\partial L'}{\partial x} \Longrightarrow  \frac{\partial L'}{\partial x} = \frac{1}{\dot{\varphi}} \frac{\partial L}{\partial x}\\
\frac{\partial L}{\partial \ddot{x}} &= \dot{\varphi}\left( \frac{\partial L'}{\partial x''}\frac{\partial x''}{\partial \ddot{x}} \right) = \frac{1}{\dot{\varphi}}\frac{\partial L'}{\partial x''} \Longrightarrow \frac{\partial L'}{\partial x''} = \dot{\varphi}\frac{\partial L}{\partial \ddot{x}} \\
\frac{\partial L}{\partial \dot{x}} &= \dot{\varphi}\left( \frac{\partial L'}{\partial x'}\frac{\partial x'}{\partial \dot{x}} + \frac{\partial L'}{\partial x''}\frac{\partial x''}{\partial \dot{x}} \right) = \frac{\partial L'}{\partial x'} - \frac{\ddot{\varphi}}{\dot{\varphi}^2}\frac{\partial L'}{\partial x''} \Longrightarrow \frac{\partial L'}{\partial x'} = \frac{\partial L}{\partial \dot{x}} + \frac{\ddot{\varphi}}{\dot{\varphi}} \frac{\partial L}{\partial \ddot{x}}
\end{align*}
Since $L\,dt$ is parametrization invariant, by theorem (\ref{JX=0}), $L$ is 
time invariant, $p_t=-H=0$ and $\left<p_{\dot{x}},\dot{x}\right>=0$. As the 
result, we show that the partial derivative of $L'$ with respect to $t'$ is 
equal to zero:
\begin{align*}
0 &= \frac{\partial L}{\partial t} = \ddot{\varphi}L' + \dot{\varphi}\left( \frac{\partial L'}{\partial t'}\frac{\partial t'}{\partial t} + \frac{\partial L'}{\partial x'}\frac{\partial x'}{\partial t} +\frac{\partial L'}{\partial x''}\frac{\partial x''}{\partial t} \right)\\
&= \ddot{\varphi}\left( L' + \frac{1}{\dot{\varphi}} \left[ -\left<\frac{\partial L'}{\partial x'},\dot{x}\right> -\frac{2}{\dot{\varphi}} \left<\frac{\partial L'}{\partial x''},\ddot{x}\right> + \frac{3\ddot{\varphi}}{\dot{\varphi}^2} \left<\frac{\partial L'}{\partial x''},\dot{x}\right> \right] \right) + \dot{\varphi}^2\frac{\partial L'}{\partial t'} - \frac{\dddot{\varphi}}{\dot{\varphi}^2} \left<\frac{\partial L'}{\partial x''},\dot{x}\right>\\
&= \frac{\ddot{\varphi}}{\dot{\varphi}} \left( L - \left<\frac{\partial L}{\partial \dot{x}}, \dot{x}\right> - \frac{\ddot{\varphi}}{\dot{\varphi}}\left<p_{\dot{x}},\dot{x}\right> - 2\left<p_{\dot{x}},\ddot{x}\right> + \frac{3\ddot{\varphi}}{\dot{\varphi}^2}\left<p_{\dot{x}},\dot{x}\right>\right) + \dot{\varphi}^2\frac{\partial L'}{\partial t'} - \frac{\dddot{\varphi}}{\dot{\varphi}} \left<p_{\dot{x}},\dot{x}\right>\\ 
&= \frac{\ddot{\varphi}}{\dot{\varphi}} \left( L - \left<\frac{\partial L}{\partial \dot{x}}, \dot{x}\right> - 2\left<p_{\dot{x}},\ddot{x}\right>\right) + \dot{\varphi}^2\frac{\partial L'}{\partial t'}\\
&= \frac{\ddot{\varphi}}{\dot{\varphi}} \left( L - \left<\frac{\partial L}{\partial \dot{x}}-\frac{d}{dt}(p_{\dot{x}}), \dot{x}\right> -\frac{d}{dt}\left<p_{\dot{x}},\dot{x}\right> + \left<p_{\dot{x}},\ddot{x}\right> - 2\left<p_{\dot{x}},\ddot{x}\right>\right) + \dot{\varphi}^2\frac{\partial L'}{\partial t'} \\
&= \frac{\ddot{\varphi}}{\dot{\varphi}} \left( L - \left<p_x, \dot{x}\right> - \left<p_{\dot{x}},\ddot{x}\right>\right) + \dot{\varphi}^2\frac{\partial L'}{\partial t'} =  \frac{\ddot{\varphi}}{\dot{\varphi}} p_t + \dot{\varphi}^2\frac{\partial L'}{\partial t'} = \dot{\varphi}^2\frac{\partial L'}{\partial t'} \Longrightarrow \frac{\partial L'}{\partial t'} = 0.
\end{align*}
Now, we can calculate the right hand side of (\ref{Commute}) in terms of elements of $J^3$:
\begin{align*}
p'_{\dot{x}}&=\frac{\partial L'}{\partial x''} = \dot{\varphi}\frac{\partial L}{\partial \ddot{x}} = \dot{\varphi}p_{\dot{x}}\\
p'_x&=\frac{\partial L'}{\partial x'}-\frac{d}{dt'}(p'_{\dot{x}}) = \frac{\partial L}{\partial \dot{x}} + \frac{\ddot{\varphi}}{\dot{\varphi}} \frac{\partial L}{\partial \ddot{x}} -  \frac{1}{\dot{\varphi}} \frac{d}{dt}\left(\dot{\varphi}\frac{\partial L}{\partial \ddot{x}}\right) = \frac{\partial L}{\partial \dot{x}}-\frac{d}{dt}(p_{\dot{x}}) = p_x\\
p'_t&=-\left<p'_x,x'\right>-\left<p_{\dot{x}}',x''\right>+L' = -\left<p_x,\frac{\dot{x}}{\dot{\varphi}}\right>
-\left<\dot{\varphi}p_{\dot{x}},\frac{\ddot{x}}{\dot{\varphi}^2}-\frac{\ddot{\varphi}\dot{x}}{\dot{\varphi}^3}\right>+\frac{L}{\dot{\varphi}}\\
&= \frac{1}{\dot{\varphi}}\left(-\left<p_x,\dot{x}\right>-\left<p_{\dot{x}},\ddot{x}\right> + L\right) = \frac{1}{\dot{\varphi}} p_t = 0,
\end{align*} 
where $p_x$ and $p_{\dot{x}}$ are considered as functions on $J^3$, defined by:
\begin{align*}
p_x &= \frac{\partial L}{\partial \dot{x}}-\frac{d}{dt}\left(\frac{\partial L}{\partial \ddot{x}}\right),\\
p_{\dot{x}} &= \frac{\partial L}{\partial\ddot{x}}.
\end{align*} 

In the following, we calculate the left hand side of (\ref{Commute}). Since 
$L\, dt$ is $\Diff_+\R$-invariant, and based on (\ref{lifted action}):
\begin{align*}
\tilde{\varphi}^1\circ\mathscr{L}(t,x,\dot{x},\ddot{x},\dddot{x}) = 
\tilde{\varphi}^1 ( t, x , \dot{x}, 0, p_x,p_{\dot{x}}) = (\varphi(t), x, 
\dot{x}/\dot{\varphi}, 0, p_x, \dot{\varphi}p_{\dot{x}} ).
\end{align*}

Therefore, for $\Diff_+\R$-invariant Lagrange forms the relation 
(\ref{Commute}) holds. This completes the proof of the theorem.
\end{proof}

\begin{corollary}
If the Lagrange form $L\,dt$ is $\Diff_{+}\R$-invariant, then the
Cartan form $\Theta $ is $\Diff_{+}\R$-invariant.
\end{corollary}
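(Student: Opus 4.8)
The plan is to recognise the Cartan form as the pullback, under the Legendre transformation, of the canonical Liouville form on $T^{\ast}J^{1}$, and then to transport the invariance across that identification using Theorem \ref{intertwine}.

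First I would observe that, on setting $p_{t}=-H$, equation (\ref{Theta1}) exhibits $\Theta$ as $\mathscr{L}^{\ast}\theta$, where $\theta = p_{t}\,dt + p_{x}\,dx + p_{\dot{x}}\,d\dot{x}$ is the Liouville form (\ref{Liouville}) on $T^{\ast}J^{1}$ and $\mathscr{L}\colon J^{3}\rightarrow T^{\ast}J^{1}$ is the Legendre transformation (\ref{Legendre-Ostrogradski}). Indeed $\mathscr{L}$ fixes the base coordinates $t,x,\dot{x}$ and carries the fibre coordinates $p_{t},p_{x},p_{\dot{x}}$ to the functions $-H$, $p_{x}$, $p_{\dot{x}}$ on $J^{3}$, so that $\mathscr{L}^{\ast}\theta = -H\,dt + p_{x}\,dx + p_{\dot{x}}\,d\dot{x} = \Theta$. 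This is the only step that requires a verification, and it is a direct reading of the definitions.

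Next I would record that the lifted action $\tilde{\varphi}^{1}$ of $\Diff_{+}\R$ on $T^{\ast}J^{1}$ preserves the Liouville form, that is, $\tilde{\varphi}^{1\ast}\theta = \theta$ for every $\varphi \in \Diff_{+}\R$. This is built into the construction of $\tilde{\varphi}^{1}$ in the lemma establishing the lifted action (\ref{lifted action}): the transformed momenta $p_{t}^{\prime},p_{x}^{\prime},p_{\dot{x}}^{\prime}$ are defined there precisely by the requirement $p_{t}^{\prime}\,dt^{\prime} + p_{x}^{\prime}\,dx^{\prime} + p_{\dot{x}}^{\prime}\,d\dot{x}^{\prime} = p_{t}\,dt + p_{x}\,dx + p_{\dot{x}}\,d\dot{x}$, which says exactly that $\tilde{\varphi}^{1}$ is the cotangent lift of $\varphi^{1}$ and hence pulls $\theta$ back to itself.

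Finally, since $L\,dt$ is $\Diff_{+}\R$-invariant, Theorem \ref{intertwine} gives $\mathscr{L}\circ\varphi^{3} = \tilde{\varphi}^{1}\circ\mathscr{L}$ for every $\varphi \in \Diff_{+}\R$, and therefore
\[
\varphi^{3\ast}\Theta = \varphi^{3\ast}\mathscr{L}^{\ast}\theta = (\mathscr{L}\circ\varphi^{3})^{\ast}\theta = (\tilde{\varphi}^{1}\circ\mathscr{L})^{\ast}\theta = \mathscr{L}^{\ast}\tilde{\varphi}^{1\ast}\theta = \mathscr{L}^{\ast}\theta = \Theta .
\]
Since this holds for each $\varphi$, the Cartan form $\Theta$ is $\Diff_{+}\R$-invariant. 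The argument is almost entirely formal --- functoriality of pullback together with the two facts above --- so I do not expect a real obstacle; the only point needing slight care is the identification $\Theta = \mathscr{L}^{\ast}\theta$. An alternative, more computational route would pull $\Theta = L\,dt + p_{x}\theta_{1} + p_{\dot{x}}\theta_{2}$ back directly, using $\varphi^{3\ast}\theta_{1} = \theta_{1}$, $\varphi^{3\ast}\theta_{2} = \dot{\varphi}^{-1}\theta_{2}$ and the invariance of $L\,dt$; but that forces one to verify the transformation laws $\varphi^{3\ast}p_{x} = p_{x}$ and $\varphi^{3\ast}p_{\dot{x}} = \dot{\varphi}\,p_{\dot{x}}$ for the Ostrogradski momenta, which are already worked out inside the proof of Theorem \ref{intertwine}, so the Liouville-form route is the economical one.
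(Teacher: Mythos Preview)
Your proof is correct and follows essentially the same route as the paper's own argument: identify $\Theta = \mathscr{L}^{\ast}\theta$, use the invariance of the Liouville form under the cotangent lift, and invoke Theorem \ref{intertwine} to commute $\varphi^{3\ast}$ past $\mathscr{L}^{\ast}$. Your write-up is in fact more detailed than the paper's, which states the same chain of equalities in one line.
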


\begin{proof}
Since $L\,dt$ is $\Diff_{+}\R$-invariant, theorem (\ref{intertwine}) implies 
that for
$\varphi \in \Diff_{+}\R$, $\varphi ^{3\ast }\mathscr{L}^{\ast
}=\mathscr{L}^{\ast }\tilde{\varphi}^{1\ast }$, where $\tilde{\varphi}^{1}$
is the lift of $\varphi ^{1}$ to the cotangent bundle $T^*J^{1}$. But, $\Theta 
= 
\mathscr{L}^{\ast }\theta $, and the Liouville form $\theta $ is invariant
under the the lifted action $\tilde{\varphi}^{1.}$ Therefore,  
\begin{equation*}
\varphi ^{3\ast }\Theta =\varphi ^{3\ast }\mathscr{L}^{\ast }\theta = 
\mathscr{L}^{\ast }\tilde{\varphi}^{1\ast }\theta =\mathscr{L}^{\ast }\theta
=\Theta .
\end{equation*}
\end{proof}

\begin{corollary}
The range of the Legendre transformation $\mathscr{L}:J^{3}\rightarrow
T^{\ast }J^{1}$ is contained in the zero set of the momentum map $\mathscr{J} 
:T^{\ast }J^{1}\rightarrow \diff_{+}\R^{\ast }.$ That is, 
\begin{equation*}
\mathscr{L}(J^{3})\subseteq \mathscr{J}^{-1}(0).
\end{equation*}
\end{corollary}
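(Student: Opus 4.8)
The plan is to read the result off from the explicit formula (\ref{Jtau}) for the momentum map together with the definition (\ref{Legendre-Ostrogradski}) of the Legendre transformation, using the identities supplied by the second Noether theorem; as in the preceding corollary I take the Lagrange form $L\,dt$ to be $\Diff_{+}\R$-invariant. Recall that for $X_{\tau}=\tau(t)\partial_t\in\diff_{+}\R$ the momentum map is
\[
\mathscr{J}_{X_{\tau}}(t,x,\dot{x},p_{t},p_{x},p_{\dot{x}})=\tau(t)\,p_{t}-\dot{\tau}(t)\,\langle p_{\dot{x}},\dot{x}\rangle ,
\]
and that $\mathscr{L}$ sends $(t,x,\dot{x},\ddot{x},\dddot{x})\in J^{3}$ to the point of $T^{\ast}J^{1}$ with $p_{t}=-H=-\dot{x}p_{x}-\ddot{x}p_{\dot{x}}+L$, $p_{\dot{x}}=\partial L/\partial\ddot{x}$ and $p_{x}=\partial L/\partial\dot{x}-\frac{d}{dt}p_{\dot{x}}$. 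Hence for a point $\mathscr{L}(q)$ in the range of $\mathscr{L}$ one has $\mathscr{J}_{X_{\tau}}(\mathscr{L}(q))=-\tau(t)\,H-\dot{\tau}(t)\,\langle\partial L/\partial\ddot{x},\dot{x}\rangle$.

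Next I would invoke the second Noether identities (\ref{Noether 2a}), valid because $L\,dt$ is $\Diff_{+}\R$-invariant: the first, $\dot{x}\,\partial L/\partial\ddot{x}=0$, gives $\langle p_{\dot{x}},\dot{x}\rangle=0$ on $\mathscr{L}(J^{3})$; the second is exactly $H=\dot{x}p_{x}+\ddot{x}p_{\dot{x}}-L=0$, so $p_{t}=0$ there. Substituting these two vanishings into the displayed expression gives $\mathscr{J}_{X_{\tau}}(\mathscr{L}(q))=0$ for every $q\in J^{3}$ and every $X_{\tau}\in\diff_{+}\R$. Since the vector fields $X_{\tau}$ span $\diff_{+}\R$, this says precisely that the functional $\mathscr{J}(\mathscr{L}(q))\in\diff_{+}\R^{\ast}$ is zero for every $q$, i.e. $\mathscr{L}(J^{3})\subseteq\mathscr{J}^{-1}(0)$.

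The only genuine point is that the vanishing of $H$ and of $\langle p_{\dot{x}},\dot{x}\rangle$ must be used as identities on all of $J^{3}$, not merely along solutions of the Euler--Lagrange equations; this off-shell character is what Noether's second theorem gives for the gauge group $\Diff_{+}\R$ and is recorded in Remark \ref{Second Noether Theorem}. Equivalently it follows by differentiating the invariance relation $L=\dot{\varphi}\,(L'\circ\varphi^{2})$ with respect to $\ddot{\varphi}$ and $\dot{\varphi}$ at the identity, as already done inside the proof of Theorem \ref{intertwine}, or by repeating the independence-of-$(\tau,\dot{\tau})$ argument from the proof of Theorem \ref{JX=0}. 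With that understood the corollary is immediate.
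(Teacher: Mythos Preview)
Your proof is correct and follows essentially the same approach the paper has in mind: the corollary is stated without proof immediately after the intertwining theorem, and the reader is expected to combine the explicit formula (\ref{Jtau}) for $\mathscr{J}_{X_\tau}$ with the identities $p_t=-H=0$ and $\langle p_{\dot{x}},\dot{x}\rangle=0$ on $J^3$, exactly as you do. Your care in flagging that these must be \emph{off-shell} identities (and pointing to their derivation inside the proof of Theorem~\ref{intertwine} or from the second Noether theorem) is a welcome clarification, since Theorem~\ref{JX=0} itself is phrased only along solutions.
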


\section{Classical elastica}

\subsection{The variational equations}

The elastica functional is given by 
\begin{equation*}
A[\sigma ]=\int_{t_{0}}^{t_{1}}\kappa ^{2}\left\vert \dot{x}\right\vert \,dt,
\end{equation*} 
where 
\begin{equation*}
\sigma :[t_{0},t_{1}]\rightarrow \lbrack t_{0},t_{1}]\times \R 
^{3}:t\mapsto (t,x(t))
\end{equation*} 
corresponds to a curve $\sigma :t\mapsto x(t)$ in $\R^{3}$, and $ 
\kappa $ is the curvature of $\sigma $. Since the curvature of the curve
depends on its second derivatives, this is naturally a second order
variational problem.  As the curvature in Cartesian coordinates 
$x=(x^{1},x^{2},x^{3}) \in\R^3$ is  
\begin{equation*}
\kappa ^{2}=\frac{\left\vert \ddot{x}\right\vert ^{2}}{\left\vert \dot{x} 
\right\vert ^{4}}-\frac{\left\langle \dot{x}, \ddot{x}\right\rangle }{ 
\left\vert \dot{x}\right\vert ^{6}},
\end{equation*} 
the elastica Lagrangian is 
\begin{equation}
L(x,\dot{x},\ddot{x})=\frac{\left\vert \ddot{x}\right\vert ^{2}}{\left\vert 
\dot{x}\right\vert ^{3}}-\frac{\left\langle \dot{x}, \ddot{x} 
\right\rangle ^{2}}{\left\vert \dot{x}\right\vert ^{5}}.
\label{elastica Lagrangian}
\end{equation} 
It is defined on $\{(t,x,\dot{x},\ddot{x})\in J^{2}\mid \dot{x}\neq 0\}.$

\begin{proposition}
The elastica Lagrangian ({\rm \ref{elastica Lagrangian}}) is invariant under
translations and rotations in $\R^{3}$ and is independent of
parametrization.
\end{proposition}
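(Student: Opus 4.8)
The plan is to dispatch the three invariances in turn, noting first that all three group actions preserve the domain $\{\dot x\neq 0\}$ on which $L$ is defined (translations and rotations because $|R\dot x|=|\dot x|$, reparametrizations because $\dot\varphi>0$). Translation invariance is then immediate: the Lagrangian (\ref{elastica Lagrangian}) involves only $\dot x$ and $\ddot x$ and never $x$ itself, so each prolonged translation generator $\partial/\partial x^{i}$ annihilates it. Rotation invariance is almost as quick: an $R\in\SO(3)$ prolongs to $J^{2}$ by $(\dot x,\ddot x)\mapsto(R\dot x,R\ddot x)$, and orthogonality of $R$ leaves each of the scalars $|\ddot x|^{2}$, $|\dot x|$ and $\langle\dot x,\ddot x\rangle$ appearing in (\ref{elastica Lagrangian}) unchanged, hence leaves $L$ unchanged.

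The substantive point is parametrization invariance, which by the preceding definition means $(\varphi^{2})^{*}(L\,dt)=L\,dt$ for every $\varphi\in\Diff_{+}\R$. I would substitute the explicit formula for $\varphi^{2}$ from (\ref{action}), namely $\dot x\mapsto\dot x/\dot\varphi$ and $\ddot x\mapsto\ddot x/\dot\varphi^{2}-\dot x\,\ddot\varphi/\dot\varphi^{3}$, directly into (\ref{elastica Lagrangian}) and expand the squares. The two summands $|\ddot x'|^{2}/|\dot x'|^{3}$ and $\langle\dot x',\ddot x'\rangle^{2}/|\dot x'|^{5}$ each pick up, besides the leading term $\dot\varphi^{-1}|\ddot x|^{2}/|\dot x|^{3}$ respectively $\dot\varphi^{-1}\langle\dot x,\ddot x\rangle^{2}/|\dot x|^{5}$, the same pair of spurious contributions, one linear in $\ddot\varphi$ and one quadratic in $\ddot\varphi$; these cancel in the difference, leaving $L\circ\varphi^{2}=\dot\varphi^{-1}L$. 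Since $(\varphi^{2})^{*}dt=\dot\varphi\,dt$, this gives $(\varphi^{2})^{*}(L\,dt)=(\dot\varphi^{-1}L)\,\dot\varphi\,dt=L\,dt$. The only thing to watch is the bookkeeping of those $\ddot\varphi$-terms through the expansion; it is a short and routine computation, not a genuine obstacle.

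It is worth recording the reason behind the cancellation: $L\,dt=\kappa^{2}|\dot x|\,dt=\kappa^{2}\,ds$ is precisely the integrand of $\int\kappa^{2}\,ds$, and the curvature $\kappa$ of a curve in $\R^{3}$ together with its arclength element $ds=|\dot x|\,dt$ are intrinsic — unchanged by Euclidean motions and by orientation-preserving reparametrizations — so all three invariances follow at once from the intrinsic nature of $\int\kappa^{2}\,ds$. I would nevertheless present the explicit substitution as the primary argument, since it checks the form-level statement $(\varphi^{2})^{*}(L\,dt)=L\,dt$ cleanly and in the ``turn-the-crank'' spirit of the paper, and cite the geometric interpretation as confirmation.
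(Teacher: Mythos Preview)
Your proposal is correct and essentially matches the paper's proof: translation and rotation invariance follow because $L$ depends only on Euclidean scalar products of $\dot x$ and $\ddot x$, and parametrization invariance because $L\,dt=\kappa^{2}\,ds$ with $\kappa$ and $ds$ intrinsic. The one difference is emphasis: the paper gives only the geometric argument for parametrization invariance, whereas you propose leading with the explicit substitution of $\varphi^{2}$ into (\ref{elastica Lagrangian}) and relegating the $\kappa^{2}\,ds$ observation to a confirmation; either is fine, and your direct computation is a harmless (and short) addition.
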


\begin{proof}
The expression (\ref{elastica Lagrangian}) for $L$ is independent of $x$ and
 depends only on Euclidean scalar products of $\dot{x}$ and $\ddot{x}.$
Hence, $L$ is invariant under translations and rotations. Moreover, the
curvature $\kappa $ of a curve is independent of its parametrization, and $ 
\left\vert \dot{x}\right\vert dt=ds$ is the element of arclength.
Therefore, $L\,dt=\kappa ^{2}\,ds$ is independent of parametrization.
\end{proof}

For elastica, Ostrogradski's momenta are 
\begin{equation}
p_{\dot{x}}=2\frac{\ddot{x}}{\left\vert \dot{x}\right\vert ^{3}}-2\frac{ 
\left\langle \dot{x},\ddot{x}\right\rangle \dot{x}}{\left\vert \dot{x} 
\right\vert ^{5}},  \notag
\end{equation} 
and 
\begin{equation*}
p_{x}=-\frac{2}{\left\langle \dot{x},\dot{x}\right\rangle ^{5/2}} 
(\left\langle \dot{x},\dot{x}\right\rangle \dddot{x}-\left\langle \dot{x}, 
\dddot{x}\right\rangle \dot{x})-\frac{\left\langle \ddot{x},\ddot{x} 
\right\rangle \dot{x}}{\left\langle \dot{x},\dot{x}\right\rangle ^{5/2}}+6 
\frac{\left\langle \dot{x},\ddot{x}\right\rangle \ddot{x}}{\left\langle \dot{ 
x},\dot{x}\right\rangle ^{5/2}}-5\frac{\left\langle \dot{x},\ddot{x} 
\right\rangle ^{2}\dot{x}}{\left\langle \dot{x},\dot{x}\right\rangle ^{7/2}}.
\end{equation*} 
The Euler-Lagrange equations 
\begin{equation*}
\frac{\partial L}{\partial x}-\frac{d}{dt}\frac{\partial L}{\partial \dot{x}} 
+\frac{d^{2}}{dt^{2}}\frac{\partial L}{\partial \ddot{x}}=0
\end{equation*} 
can be written in the form 
\begin{equation*}
\frac{\partial L}{\partial x}-\frac{d}{dt}p_{x}=0.
\end{equation*} 
Since the Lagrangian is parameter-independent, the Euler-Lagrange equations
are necessarily degenerate in that they do not determine the fourth
derivative $\ddddot{x}$ uniquely. Let 
\begin{equation*}
\ddddot{x}^{\parallel }=\frac{\left\langle \ddddot{x},\dot{x}\right\rangle }{ 
\left\vert \dot{x}\right\vert ^{2}}\dot{x}\quad\text{ \ and \ 
}\quad \ddddot{x}^{\perp
}=\ddddot{x}-\ddddot{x}^{\parallel }
\end{equation*} 
denote the components of $\ddddot{x}$ that are parallel and perpendicular to 
$\dot{x}$, respectively. The Euler-Lagrange equations written in terms of
this decomposition are
\begin{equation}
\ddddot{x}^{\perp }=6\left\vert \dot{x}\right\vert ^{2}\left\langle \dot{x}, 
\ddot{x}\right\rangle \dddot{x}+4\left\langle \dot{x},\dddot{x}\right\rangle 
\ddot{x}+\frac{{\small 5}}{{\small 2}}\left\vert \ddot{x}\right\vert ^{2} 
\ddot{x}-10\left\langle \dot{x},\ddot{x}\right\rangle \left\langle \dot{x}, 
\dddot{x}\right\rangle -\frac{{\small 5}}{{\small 2}}\frac{\left\vert \ddot{x 
}\right\vert ^{2}\left\langle \dot{x},\ddot{x}\right\rangle }{\left\vert 
\dot{x}\right\vert ^{2}}\dot{x}+\frac{{\small 35}}{4}\frac{\left\langle \dot{ 
x},\ddot{x}\right\rangle ^{3}}{\left\vert \dot{x}\right\vert ^{6}}\dot{x}
\label{Euler-Lagrange 1}
\end{equation} 
They determine $\ddddot{x}^{\perp }$, but leave the
component $\ddddot{x}^{\parallel }$ undetermined. On the other hand, the
parametrization-invariance of the problem allows us to use parametrization
by the arclength. In the following, assume that $t$ is the arclength 
parameter of the curve. Therefore 
\begin{equation}
\left\vert \dot{x}\right\vert ^{2}=\left\langle \dot{x},\dot{x}\right\rangle
=1,  \label{arc-length 1}
\end{equation} 
and, by differentiation  
\begin{eqnarray}
\left\langle \dot{x},\ddot{x}\right\rangle &=&0,  \label{arc-length 2} \\
\left\langle \dot{x},\dddot{x}\right\rangle +\left\langle \ddot{x},\ddot{x} 
\right\rangle &=&0,  \label{arc-length 3} \\
\left\langle \dot{x},\ddddot{x}\right\rangle +3\left\langle \ddot{x},\dddot{x 
}\right\rangle &=&0,  \label{arc-length 4}
\end{eqnarray} 
as well.  Substitution into (\ref{Euler-Lagrange 1}) and (\ref{arc-length 
4}) yields 
\begin{equation}
\ddddot{x}^{\perp }=-\frac{{\small 3}}{{\small 2}}\left\vert \ddot{x} 
\right\vert ^{2}\ddot{x} \quad\text{and}\quad \ddddot{x}^{\parallel 
}=-3\left\langle \ddot{x},\dddot{x}\right\rangle \dot{x}. \label{elastica 1}
\end{equation}
These equations determine the elastica completely.  In other words, the choice 
of a parametrization determines an equation for the component 
$\ddddot{x}^{\parallel}$.  
\begin{remark}
  This yields an equation of the form 
\[
  \ddddot{x} = f(x,\dot{x},\ddot{x},\dddot{x})
\]
to which theorems in differential equations apply that guarantee the local 
existence and uniqueness of solutions.
\end{remark}

\subsection{The Frenet frame}
The elastica equations (\ref{elastica 1}) are
conveniently studied in the moving frame $(T,N,B)$, where $T=\dot{x}$ 
is the unit tangent vector, $N$ the normal vector and $B$ the binormal
vector of the curve $t\mapsto x(t)$. The Frenet equations are
\begin{eqnarray}
\dot{T} &=&\kappa N,  \label{Frenet 1} \\
\dot{N} &=&-\kappa T+\tau B,  \label{Frenet 2} \\
\dot{B} &=&-\tau N,  \label{frenet 3}
\end{eqnarray} 
with $\kappa =\left\vert \ddot{x}\right\vert $ the curvature and $\tau $
the torsion of the curve. In order to relate the torsion $\tau $ to the 
derivative variables, observe that 
\begin{equation}
\dddot{x}=\dot{\kappa}N+\kappa \dot{N}=\dot{\kappa}N-\kappa ^{2}\dot{x} 
+\kappa \tau B,  \label{torsion 0}
\end{equation}
which implies that, if $\kappa \neq 0$,  
\begin{equation}
\tau =\kappa ^{-1}\left\langle B,\dddot{x}\right\rangle =\kappa
^{-1}\left\langle T\times N,\dddot{x}\right\rangle =\kappa
^{-2}\left\langle \dot{x}\times \ddot{x},\dddot{x}\right\rangle .
\label{torsion}
\end{equation}

Differentiating (\ref{torsion 0}) and the Frenet equations imply 
\begin{eqnarray}
\ddddot{x} = -3\kappa \dot{\kappa}T+(\ddot{\kappa}-\kappa ^{3}-\kappa \tau
^{2})N+(2\dot{\kappa}\tau +\kappa \dot{\tau})B.
\end{eqnarray} 
This, together with equation (\ref{elastica 1}) implies that 
\begin{eqnarray}
2\dot{\kappa}\tau +\kappa \dot{\tau} &=&0,  \label{scalar 0} \\
2\ddot{\kappa}+\kappa ^{3}-2\kappa \tau ^{2} &=&0.  \label{scalar 1}
\end{eqnarray} 
Equation (\ref{elastica 1}) does not lead to any new condition because $ 
\kappa =\left\vert \ddot{x}\right\vert $ implies that $\left\langle \ddot{x}, 
\dddot{x}\right\rangle =\kappa \dot{\kappa}.$  Equation (\ref{scalar 0}) can be 
immediately integrated to yield 
\begin{equation}
\kappa ^{2}\tau =c, \qquad c\,\text{ a constant.}  \label{scalar 2}
\end{equation} 
If $\kappa \neq 0$, substituting $\tau =\frac{c}{\kappa ^{2}}$ into
equation (\ref{scalar 1}) and integrating gives 
\begin{equation}
\dot{\kappa}^{2}+\frac14\kappa^{4}+\frac{c^{2}}{\kappa^{2}}=~\text{constant.}  
\label{scalar 3}
\end{equation} 
Integration of equation (\ref{scalar 3}) determines completely the functions 
$\kappa (t)$ and $\tau (t)$ in terms of the initial data $\kappa
(t_{0})$, $\dot{\kappa}(t_{0})$ and $\tau (t_{0})$. Thus, in order to find
the solution $t\mapsto x(t)$, it suffices to integrate Frenet's equations
assuming that the curvature $\kappa $ and the torsion $\tau $ are known
functions of $t$. This can be achieved using the conservation laws for elastica.

\subsection{Conserved momenta}

Since the Lagrange form for elastica is invariant under translations, it
follows that the linear momentum $p=\frac{\partial L}{\partial \dot{x}}- 
\frac{d}{dt}\left( \frac{\partial L}{\partial \ddot{x}}\right) $ is
conserved. In the arclength parametrization
\begin{equation}
-2(\dddot{x}-\left\langle \dot{x},\dddot{x}\right\rangle \dot{x} 
)-\left\langle \ddot{x},\ddot{x}\right\rangle \dot{x}=p=\text{ constant.}
\label{p}
\end{equation} 
The arclength parametrization implies  
\begin{equation}
p=-2\dddot{x}-3\left\langle \ddot{x},\ddot{x}\right\rangle \dot{x}.
\label{p 1}
\end{equation} 
Similarly, rotational invariance of the Lagrange form implies that the
angular momentum 
\begin{equation*}
\mathscr{J}_{X_{ij}}=x^{i}p_{x^j}-x^{j}p_{x^i}+\dot{x}^{i}p_{\dot{x}^j}- 
\dot{x}^j p_{\dot{x}^i}
\end{equation*} 
is conserved (see example \ref{Conservation of angular momentum}.) Setting $ 
l=(l_{1},l_{2},l_{3})$, where 
\begin{equation*}
l_{i}=\epsilon _{ijk}\mathscr{J}_{X_{jk}},
\end{equation*} 
gives
\begin{equation}
l=x\times p_{x}+\dot{x}\times p_{\dot{x}}.  \label{l2}
\end{equation} 
The expression (\ref{elastica Lagrangian}) for the elastica Lagrangian in
arbitrary parametrization yields, in the arclength parametrization 
\begin{equation*}
p_{\dot{x}}=2\ddot{x}-\dot{x},
\end{equation*} 
which implies that 
\begin{equation}
l=x\times p+2\dot{x}\times \ddot{x}.  \label{l0}
\end{equation}

\begin{proposition}
The conserved momenta in the moving frame $(T,N,B)$ are 
\begin{eqnarray}
p &=&-\kappa ^{2}T-2\dot{\kappa}N-2\kappa \tau B,  \label{p1} \\
l &=&x\times p+2\kappa B.  \label{l}
\end{eqnarray}
\end{proposition}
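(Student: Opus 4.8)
The plan is to substitute the Frenet data into the two expressions for the conserved momenta already obtained in the arclength gauge, equations (\ref{p 1}) and (\ref{l0}). First I would record the frame expressions for the successive derivatives of $x$: since $t$ is arclength, $T=\dot{x}$; the first Frenet equation (\ref{Frenet 1}) gives $\ddot{x}=\dot{T}=\kappa N$, so that $\left\langle\ddot{x},\ddot{x}\right\rangle=\kappa^{2}$ because $N$ is a unit vector; and differentiating once more and using (\ref{Frenet 2}) — equivalently, just reading off equation (\ref{torsion 0}) — gives $\dddot{x}=-\kappa^{2}T+\dot{\kappa}N+\kappa\tau B$.

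For the linear momentum, I would plug $\dddot{x}$ and $\left\langle\ddot{x},\ddot{x}\right\rangle=\kappa^{2}$ into $p=-2\dddot{x}-3\left\langle\ddot{x},\ddot{x}\right\rangle\dot{x}$; the two $T$-terms combine to $-\kappa^{2}T$ and one is left with $p=-\kappa^{2}T-2\dot{\kappa}N-2\kappa\tau B$, which is (\ref{p1}). For the angular momentum, the orientation convention $B=T\times N$ (so that $(T,N,B)$ is a positively oriented orthonormal frame) turns the cross-product term in (\ref{l0}) into $2\,\dot{x}\times\ddot{x}=2\,T\times(\kappa N)=2\kappa B$, and hence $l=x\times p+2\kappa B$, which is (\ref{l}).

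The computation is entirely routine; the only points requiring care are to use the arclength identities (\ref{arc-length 1})–(\ref{arc-length 4}) consistently — they are exactly what makes $\dddot{x}$ take the stated three-term form and what was already used to derive (\ref{p 1}) and (\ref{l0}) — and to fix the orientation so that $T\times N=+B$ rather than $-B$. So I do not expect a genuine obstacle here: it is a direct verification, and the substance of the proposition is really that the translational and rotational Noether momenta, which a priori are unenlightening combinations of $\dot{x},\ddot{x},\dddot{x}$, acquire a transparent geometric meaning once written in the moving frame.
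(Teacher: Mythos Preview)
Your argument is correct and essentially identical to the paper's own proof: the paper also substitutes the Frenet expression (\ref{torsion 0}) for $\dddot{x}$ into (\ref{p 1}) to obtain $p$, and computes $\dot{x}\times\ddot{x}=T\times(\kappa N)=\kappa B$ to turn (\ref{l0}) into (\ref{l}). There is nothing to add.
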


\begin{proof}
Equation (\ref{torsion 0}) implies that 
\begin{equation*}
p=-2(\dot{\kappa}N-\kappa ^{2}T+\kappa \tau B)-3\kappa ^{2}T 
=-\kappa ^{2}T-2\dot{\kappa}N-2\kappa \tau B.
\end{equation*} 
while (\ref{Frenet 1}) and (\ref{frenet 3}) yield 
\begin{equation}
\dot{x}\times \ddot{x}=T\times (\kappa N)=\kappa (T\times
N)=\kappa B.  \label{b}
\end{equation}
\end{proof}

\begin{proposition}
Scalar equations for the curvature and torsion (\ref{scalar 2}) and (\ref 
{scalar 3}) can be rewritten in the form 
\begin{eqnarray}
\kappa ^{2}\tau &=&-\frac{{\small 1}}{{\small 4}}\left\langle
l,p\right\rangle \text{,}  \label{scalar 4} \\
\dot{\kappa}^{2}+\frac{{\small 1}}{{\small 4}}\kappa ^{4}+\frac{\left\langle
l,p\right\rangle ^{2}}{{\small 16}\kappa ^{2}} &=&\frac{{\small 1}}{{\small 4} 
}\left\vert p\right\vert ^{2}.  \label{scalar 5}
\end{eqnarray}
\end{proposition}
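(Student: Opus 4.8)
The plan is to evaluate the two Euclidean invariants $\langle l,p\rangle$ and $|p|^{2}$ directly in the Frenet frame, using the expressions (\ref{p1}) and (\ref{l}) for the conserved momenta together with the orthonormality of $(T,N,B)$; the two scalar identities then fall out by inspection.

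First I would compute $\langle l,p\rangle$. Since $l=x\times p+2\kappa B$ and the vector $x\times p$ is orthogonal to $p$, the inhomogeneous term drops out and $\langle l,p\rangle=2\kappa\langle B,p\rangle$. Reading off the $B$-component of $p$ from (\ref{p1}) gives $\langle B,p\rangle=-2\kappa\tau$, hence $\langle l,p\rangle=-4\kappa^{2}\tau$, which is exactly (\ref{scalar 4}). As a by-product this re-derives, in a coordinate-free way, the first integral $\kappa^{2}\tau=c$ of (\ref{scalar 2}) and identifies the integration constant as $c=-\tfrac14\langle l,p\rangle$.

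Next I would compute $|p|^{2}$. Because $(T,N,B)$ is orthonormal, (\ref{p1}) gives $|p|^{2}=\kappa^{4}+4\dot\kappa^{2}+4\kappa^{2}\tau^{2}$, so that $\dot\kappa^{2}+\tfrac14\kappa^{4}+\kappa^{2}\tau^{2}=\tfrac14|p|^{2}$. Finally, writing $\kappa^{2}\tau^{2}=(\kappa^{2}\tau)^{2}/\kappa^{2}$ (legitimate where $\kappa\neq 0$, as has been assumed throughout) and substituting the relation $\kappa^{2}\tau=-\tfrac14\langle l,p\rangle$ just obtained replaces $\kappa^{2}\tau^{2}$ by $\langle l,p\rangle^{2}/(16\kappa^{2})$, which is precisely (\ref{scalar 5}).

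There is no genuinely hard step here: the only points needing a word of care are that $x\times p\perp p$, so the $x$-dependent term of $l$ contributes nothing to $\langle l,p\rangle$, and that the constancy of $\langle l,p\rangle$ and $|p|^{2}$ — guaranteed by the conservation of $l$ and $p$ established earlier — is exactly what makes these the same constants that appear on the right-hand sides of (\ref{scalar 2}) and (\ref{scalar 3}). Thus the proposition is really just a translation of the already-known first integrals into the invariants $\langle l,p\rangle$ and $|p|^{2}$ of the conserved momenta.
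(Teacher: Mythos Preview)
Your proposal is correct and follows essentially the same route as the paper's own proof: both compute $\langle l,p\rangle$ by noting that $x\times p$ is orthogonal to $p$ and reading off the $B$-component of $p$, and both compute $|p|^2$ from the orthonormality of the Frenet frame, then rewrite $\kappa^2\tau^2$ as $(\kappa^2\tau)^2/\kappa^2$ and substitute. Your write-up is slightly more explicit about the assumption $\kappa\neq 0$ and about why $x\times p$ drops out, but the argument is the same.
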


\begin{proof}
Equations (\ref{p1}) and (\ref{l}) imply 
\begin{equation*}
\left\langle l,p\right\rangle =2\kappa \left\langle B,p\right\rangle
=2\kappa \left\langle B,-\kappa ^{2}T-2\dot{\kappa}N-2\kappa \tau
B\right\rangle =-4\kappa ^{2}\tau .
\end{equation*} 
Equation (\ref{p1}) implies 
\begin{eqnarray*}
\left\vert p\right\vert ^{2} &=&\kappa ^{4}+4\dot{\kappa}^{2}+4\kappa
^{2}\tau ^{2}=4(\dot{\kappa}^{2}+\frac{1}{4}\kappa ^{4}+\frac{\kappa
^{4}\tau ^{2}}{\kappa ^{2}}) \\
&=&4\left(\dot{\kappa}^{2}+\frac{{\small 1}}{{\small 4}}\kappa ^{4}+\frac{ 
\left\langle l,p\right\rangle ^{2}}{{\small 16}\kappa ^{2}}\right).
\end{eqnarray*}
\end{proof}

Suppose that $\kappa $ and $\tau $ as known as functions of $t$ (the 
differential equations imply that they may be expressed as elliptic functions.) 
 It remains to show how the integration of the Frenet equations is aided by the 
conservation laws
\begin{eqnarray*}
p & = & -2\dot{\kappa}N-2\kappa \tau B-\kappa ^{2}T , \\
l & = & x\times p+2\kappa B .
\end{eqnarray*}

\begin{theorem}
The velocity of the elastica in the direction of the conserved momentum $p$ is 
\begin{equation}
\left<\dot{x},p\right> = -\kappa^2.
\label{xdot direction p}
\end{equation} 
Hence 
\begin{equation}
\left<x,p\right> = \left<x(t_0),p\right> - \int_{t_0}^{t} \kappa^2 ds.
\label{x direction p}
\end{equation}
\end{theorem}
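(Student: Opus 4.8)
The plan is to compute $\langle \dot{x},p\rangle$ directly from one of the already-established expressions for the conserved linear momentum, and then integrate. I would start from equation (\ref{p 1}), which gives the arclength form $p=-2\dddot{x}-3\langle\ddot{x},\ddot{x}\rangle\dot{x}$. Pairing with $\dot{x}$ yields
\begin{equation*}
\langle\dot{x},p\rangle=-2\langle\dot{x},\dddot{x}\rangle-3\langle\ddot{x},\ddot{x}\rangle\langle\dot{x},\dot{x}\rangle,
\end{equation*}
and then I would invoke the arclength identities (\ref{arc-length 1}), $\langle\dot{x},\dot{x}\rangle=1$, and (\ref{arc-length 3}), $\langle\dot{x},\dddot{x}\rangle=-\langle\ddot{x},\ddot{x}\rangle$, to collapse the right-hand side to $2\langle\ddot{x},\ddot{x}\rangle-3\langle\ddot{x},\ddot{x}\rangle=-\langle\ddot{x},\ddot{x}\rangle$. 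Since $\kappa=|\ddot{x}|$ in arclength parametrization, this is exactly $-\kappa^2$, proving (\ref{xdot direction p}). (Equivalently, one can read this off instantly from the Frenet-frame expression (\ref{p1}), $p=-\kappa^2 T-2\dot\kappa N-2\kappa\tau B$, together with $\dot{x}=T$ and the orthonormality of $(T,N,B)$; I would mention this as the one-line alternative.)

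For the second assertion, I would differentiate $\langle x,p\rangle$ along a solution of the Euler--Lagrange equations: $\frac{d}{dt}\langle x,p\rangle=\langle\dot{x},p\rangle+\langle x,\dot{p}\rangle$. Because $p$ is conserved (translation invariance of the elastica Lagrange form, as recorded just before (\ref{p})), $\dot{p}=0$, so $\frac{d}{dt}\langle x,p\rangle=\langle\dot{x},p\rangle=-\kappa^2$ by the first part. Integrating from $t_0$ to $t$, and recalling that $t$ is the arclength parameter so that $dt=ds$, gives
\begin{equation*}
\langle x(t),p\rangle-\langle x(t_0),p\rangle=-\int_{t_0}^{t}\kappa^2\,ds,
\end{equation*}
which rearranges to (\ref{x direction p}).

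There is no real obstacle here: the statement is essentially a bookkeeping consequence of the conservation of $p$ and the arclength relations, both already in hand. The only point requiring a word of care is consistency of notation --- that the independent variable $t$ in (\ref{p 1}) has been fixed to be arclength, so that writing the integral with $ds$ is legitimate --- but this was assumed at the start of the subsection and needs no further argument.
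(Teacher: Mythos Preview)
Your argument is correct and, for the statement as written, considerably more direct than the paper's. You pair $p$ with $\dot{x}$ using either the raw arclength form (\ref{p 1}) or the Frenet expression (\ref{p1}) and read off $-\kappa^{2}$ in one line; the integration step is identical.

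The paper instead takes a deliberately indirect route: it starts from $\langle l,p\rangle$ to obtain $\langle B,p\rangle=\frac{1}{2\kappa}\langle l,p\rangle$, differentiates and uses the Frenet equations to get $\langle N,p\rangle=-2\dot\kappa$, differentiates again and invokes the scalar equation (\ref{scalar 1}) together with (\ref{scalar 4}) to finally arrive at $\langle T,p\rangle=-\kappa^{2}$. The point of that detour is to illustrate the paper's theme --- that the Frenet-frame components of $p$ can be reconstructed purely from the conserved scalars $\langle l,p\rangle$ and $|p|^{2}$ via the Frenet and scalar equations --- rather than to give the shortest proof. A side effect is that the paper's argument needs the running hypotheses $\kappa\neq 0$ and $\tau\neq 0$ at intermediate steps (it divides by $\tau$ to extract $\langle N,p\rangle$), whereas your computation from (\ref{p 1}) uses only the arclength identities and is valid without those restrictions.
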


\begin{proof}
Taking the scalar product of $l$ and $p$ yields
$$\left<B,p\right>=\frac{1}{2\kappa}\left<l,p\right>.$$
Moreover, by taking the derivative of both sides of this equation and using the
third Frenet equation,
$$\frac{d}{dt}\left<B,p\right> = \left<\dot{B},p\right> = -\tau\left<N,p\right> = \frac{d}{dt}\left(\frac{1}{2\kappa}\right)\left<l,p\right> = \frac{-\dot{\kappa}}{2\kappa^2}\left<l,p\right>.$$
Therefore if $\tau\neq 0$, by (\ref{scalar 4}) we have
$$\left<N,p\right> = \frac{\dot{\kappa}}{2\tau\kappa^2}\left<l,p\right> =  \frac{-2\dot{\kappa}}{\left<l,p\right>}\left<l,p\right> = -2\dot{\kappa}.$$
Finally, the second Frenet equation yields
$$\frac{d}{dt}\left<N,p\right> = -\kappa\left<\dot{x},p\right> + \tau\left<B,p\right> = -2\ddot{\kappa}.$$
Therefore if $\kappa\neq 0$, (\ref{scalar 1}) implies 
$$ \left<\dot{x},p\right> = \frac{2\ddot{\kappa}}{\kappa} + \frac{\tau}{\kappa}\left<B,p\right> = 
\frac{-\kappa^3 + 2\kappa\tau^2}{\kappa} + \frac{\tau}{2\kappa^2}\left<l,p\right> = 
-\kappa^2 + 2\tau^2 + \frac{\tau}{2\kappa^2}(-4\kappa^2\tau) = -\kappa^2. $$
In particular,
$$ \left<x,p\right> = \left<x(t_0),p\right> + \int_{t_0}^{t} \left<\dot{x}(s),p\right> ds = \left<x(t_0),p\right> - \int_{t_0}^{t} \kappa^2 ds. $$
\end{proof}

It remains to determine the component of the motion perpendicular to $p$.
If $\dot{x}$ is not parallel to $p$, then the vectors $\dot{x}\times p$ and $ 
(\dot{x}\times p)\times p$ span the plane of directions perpendicular to $p.$
Their lengths are 
\begin{equation*}
\left\vert \dot{x}\times p\right\vert =\left\vert -2\dot{\kappa}B+2\kappa
\tau N\right\vert =\sqrt{4\dot{\kappa}^{2}+4\kappa ^{2}\tau ^{2}}=\sqrt{ 
\left\vert p\right\vert ^{2}-\kappa ^{4}},
\end{equation*} 
and 
\begin{equation*}
\left\vert (\dot{x}\times p)\times p\right\vert =\sqrt{\left\vert
p\right\vert ^{4}-\left\vert p\right\vert ^{2}\kappa ^{4}}=\left\vert
p\right\vert \sqrt{\left\vert p\right\vert ^{2}-\kappa ^{4}}.
\end{equation*} 
Let $D$ and $E$ denote the unit vectors in the direction of $\dot{x}\times p$
and $(\dot{x}\times p)\times p$, respectively. Equations (\ref{p1}) and (\ref 
{l}) give 
\begin{equation}
D=\frac{\dot{x}\times p}{\left\vert \dot{x}\times p\right\vert }=\left(
\left\vert p\right\vert ^{2}-\kappa ^{4}\right) ^{-1/2}\left( -2\dot{\kappa} 
B+2\kappa \tau N\right) .  \label{d}
\end{equation} 
Similarly, 
\begin{equation}
E=\frac{(\dot{x}\times p)\times p}{\left\vert (\dot{x}\times p)\times
p\right\vert }=\frac{-(4\dot{\kappa}^{2}+4\kappa ^{2}\tau ^{2})T 
+2\kappa ^{2}\dot{\kappa}N+2\kappa ^{3}\tau B}{\left\vert p\right\vert
\left( \left\vert p\right\vert ^{2}-\kappa ^{4}\right) ^{1/2}}  \label{e}
\end{equation}

\begin{proposition}
\label{F}The frame $(D(t),E(t))$ satisfies the equations 
\begin{eqnarray*}
\dot{D} &=&-\frac{\left\langle l,p\right\rangle \left\vert p\right\vert }{ 
2\left( \left\vert p\right\vert ^{2}-\kappa ^{4}\right) }E, \\
\dot{E} &=&\frac{\left\langle l,p\right\rangle \left\vert p\right\vert }{ 
2(\left\vert p\right\vert ^{2}-\kappa ^{4})}D,
\end{eqnarray*} 
where $\kappa ^{2}\tau =-\frac{{\small 1}}{{\small 4}}\left\langle
l,p\right\rangle $.
\end{proposition}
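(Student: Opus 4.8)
The plan is to exploit the rigidity that comes from $p$ being a \emph{constant} vector. First I would observe that $(D,E)$ is an orthonormal pair spanning $p^{\perp}$: both $\dot{x}\times p$ and $(\dot{x}\times p)\times p$ are orthogonal to $p$, the latter is orthogonal to the former, and by construction $|D|=|E|=1$. Since $p$ is constant, differentiating $\langle D,p\rangle=0$ and $\langle D,D\rangle=1$ shows $\dot D\perp p$ and $\dot D\perp D$, so $\dot D=fE$ for a scalar function $f$; likewise $\dot E=gD$, and differentiating $\langle D,E\rangle=0$ forces $g=-f$. Thus the \emph{form} of the two equations is automatic, and the whole content of the proposition reduces to computing the single scalar $f=\langle \dot D,E\rangle$ and checking that $f=-\frac{\langle l,p\rangle|p|}{2(|p|^{2}-\kappa^{4})}$.

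To compute $f$ I would write $D=(|p|^{2}-\kappa^{4})^{-1/2}(\dot{x}\times p)$, using the length computation $|\dot{x}\times p|=\sqrt{|p|^{2}-\kappa^{4}}$ already established before the statement. Differentiating, the term in which the scalar prefactor is differentiated is parallel to $\dot{x}\times p$, hence to $D$, and therefore contributes nothing to $\langle\dot D,E\rangle$; so $f=(|p|^{2}-\kappa^{4})^{-1/2}\langle \ddot{x}\times p,E\rangle$. Now substitute $\ddot{x}=\kappa N$ from the Frenet equations and $E=\bigl(\langle\dot{x},p\rangle p-|p|^{2}\dot{x}\bigr)/\bigl(|p|(|p|^{2}-\kappa^{4})^{1/2}\bigr)$ (the standard expansion of $(\dot{x}\times p)\times p$). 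The $p$-component drops because $\langle N\times p,p\rangle=0$, leaving a scalar triple product $\langle N\times p,\dot{x}\rangle=\langle T\times N,p\rangle=\langle B,p\rangle$. Assembling, $f=-\kappa|p|\langle B,p\rangle/(|p|^{2}-\kappa^{4})$.

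Finally I would eliminate $\langle B,p\rangle$ using the conserved angular momentum: from $l=x\times p+2\kappa B$ one gets $\langle l,p\rangle=2\kappa\langle B,p\rangle$, so $\langle B,p\rangle=\langle l,p\rangle/(2\kappa)$, giving $f=-\langle l,p\rangle|p|/\bigl(2(|p|^{2}-\kappa^{4})\bigr)$ as required; the identity $\kappa^{2}\tau=-\tfrac14\langle l,p\rangle$ is (\ref{scalar 4}). This requires $\kappa\neq 0$ and $\dot{x}$ not parallel to $p$, exactly the standing hypotheses, so that $D$, $E$ and the prefactors are defined. An alternative route is to differentiate the explicit expressions (\ref{d}) and (\ref{e}) in the $(T,N,B)$ frame using the Frenet equations together with the scalar ODEs (\ref{scalar 2})--(\ref{scalar 3}); this works but is computationally heavier, so I would only fall back on it as a consistency check. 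The only real pitfall I anticipate is bookkeeping of orientation and the sign of the scalar triple product $\langle N\times p,T\rangle$; getting that right is what makes the two signs in the statement come out opposite.
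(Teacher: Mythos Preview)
Your argument is correct and in fact cleaner than the route the paper takes. The paper does exactly what you flag as your ``alternative route'': it expands $\dot{x}\times p$ and $(\dot{x}\times p)\times p$ in the Frenet frame $(T,N,B)$, differentiates each of them term by term using the Frenet equations, separately differentiates the normalizing factors $|\dot{x}\times p|^{-1}$ and $|(\dot{x}\times p)\times p|^{-1}$, and then reassembles everything, only at the very end invoking $\kappa^{2}\tau=-\tfrac14\langle l,p\rangle$. By contrast, your initial observation that constancy of $p$ forces $\dot D=fE$, $\dot E=-fD$ for a \emph{single} scalar $f$ reduces the whole computation to one inner product, and your evaluation of $\langle\ddot{x}\times p,E\rangle$ via the triple-product identity $\langle N\times p,T\rangle=\langle B,p\rangle$ and the relation $\langle l,p\rangle=2\kappa\langle B,p\rangle$ is both shorter and more transparent about where the constants of motion enter. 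What the paper's approach buys is that it never needs the a~priori structural step (the rotation-in-$p^{\perp}$ form), so it would survive if one had not noticed that; what yours buys is a computation an order of magnitude lighter and a built-in explanation of why the two equations have equal and opposite coefficients.
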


\begin{proof}
 Equations (\ref{p1}) and (\ref{l}) give 
\begin{equation}
\dot{x}\times p=-2\dot{\kappa}B-2\kappa \dot{B}=-2\dot{\kappa}B+2\kappa \tau
N  \label{xdotp}
\end{equation} 
and 
\[
(\dot{x}\times p)\times p =\left\langle p,p\right\rangle \dot{x} 
-\left\langle \dot{x},p\right\rangle p=(4\dot{\kappa}^{2}+4\kappa ^{2}\tau 
^{2})T-2\kappa ^{2}\dot{\kappa} N-2\kappa ^{3}\tau B.  
\] 
Differentiation yields
\begin{eqnarray*}
\frac{{\small d}}{{\small dt}}(\dot{x}\times p) &=&\ddot{x}\times p=\kappa
N\times (-2\dot{\kappa}N-2\kappa \tau B-\kappa ^{2}T) \\
&=&-2\kappa ^{2}\tau T+\kappa ^{3}B \\
&=&-\frac{2\kappa ^{2}\tau }{(4\dot{\kappa}^{2}+4\kappa ^{2}\tau ^{2})}\{( 
\dot{x}\times p)\times p+2\kappa ^{2}\dot{\kappa}N+2\kappa ^{3}\tau
B\}+\kappa ^{3}B \\
&=&-\frac{2\kappa ^{2}\tau }{(4\dot{\kappa}^{2}+4\kappa ^{2}\tau ^{2})}(\dot{ 
x}\times p)\times p-\frac{4\kappa ^{4}\dot{\kappa}\tau }{(4\dot{\kappa} 
^{2}+4\kappa ^{2}\tau ^{2})}N+\frac{4\dot{\kappa}^{2}}{(4\dot{\kappa} 
^{2}+4\kappa ^{2}\tau ^{2})}\kappa ^{3}B \\
&=&-\frac{2\kappa ^{2}\tau }{(4\dot{\kappa}^{2}+4\kappa ^{2}\tau ^{2})}(\dot{ 
x}\times p)\times p-\frac{2\kappa ^{3}\dot{\kappa}}{(4\dot{\kappa} 
^{2}+4\kappa ^{2}\tau ^{2})}(2\kappa \tau N-2\dot{\kappa}B) \\
&=&-\frac{2\kappa ^{2}\tau }{(4\dot{\kappa}^{2}+4\kappa ^{2}\tau ^{2})}(\dot{ 
x}\times p)\times p-\frac{2\kappa ^{3}\dot{\kappa}}{(4\dot{\kappa} 
^{2}+4\kappa ^{2}\tau ^{2})}(\dot{x}\times p),
\end{eqnarray*} 
and 
\begin{eqnarray*}
\frac{{\small d}}{{\small dt}}(\dot{x}\times p)\times p &=&(\ddot{x}\times
p)\times p \\
&=&\left( -\frac{2\kappa ^{2}\tau }{(4\dot{\kappa}^{2}+4\kappa ^{2}\tau ^{2}) 
}(\dot{x}\times p)\times p-\frac{2\kappa ^{3}\dot{\kappa}}{(4\dot{\kappa} 
^{2}+4\kappa ^{2}\tau ^{2})}(\dot{x}\times p)\right) \times p \\
&=&-\frac{2\kappa ^{2}\tau }{(4\dot{\kappa}^{2}+4\kappa ^{2}\tau ^{2})}(( 
\dot{x}\times p)\times p)\times p-\frac{2\kappa ^{3}\dot{\kappa}}{(4\dot{ 
\kappa}^{2}+4\kappa ^{2}\tau ^{2})}(\dot{x}\times p)\times p \\
&=&-\frac{2\kappa ^{2}\tau }{(4\dot{\kappa}^{2}+4\kappa ^{2}\tau ^{2})}(- 
\dot{x} | p|^{2} +p\left\langle \dot{x},p\right\rangle
)\times p-\frac{2\kappa ^{3}\dot{\kappa}}{(4\dot{\kappa}^{2}+4\kappa
^{2}\tau ^{2})}(\dot{x}\times p)\times p \\
&=&\frac{2\kappa ^{2}\tau | p|^{2}\vert }{(4\dot{\kappa} 
^{2}+4\kappa ^{2}\tau ^{2})}\dot{x}\times p-\frac{2\kappa ^{3}\dot{\kappa}}{ 
(4\dot{\kappa}^{2}+4\kappa ^{2}\tau ^{2})}(\dot{x}\times p)\times p.
\end{eqnarray*}

Now compute for the orthonormal frame  
\begin{equation*}
\left\{\left\vert \dot{x}\times p\right\vert ^{-1}\dot{x}\times p,\left\vert ( 
\dot{x}\times p)\times p\right\vert ^{-1}(\dot{x}\times p)\times p\right\}.
\end{equation*} 
Since 
\begin{equation*}
\left\vert \dot{x}\times p\right\vert =\left\vert -2\dot{\kappa}b+2\kappa
\tau n\right\vert =\sqrt{4\dot{\kappa}^{2}+4\kappa ^{2}\tau ^{2}}=\sqrt{ 
\left\vert p\right\vert ^{2}-\kappa ^{4}},
\end{equation*} 
it follows that
\begin{equation*}
\frac{{\small d}}{{\small dt}}\left\vert \dot{x}\times p\right\vert ^{-1}= 
\frac{{\small d}}{{\small dt}}(\left\vert p\right\vert ^{2}-\kappa
^{4})^{-1/2}=-\frac{{\small 1}}{{\small 2}}(\left\vert p\right\vert
^{2}-\kappa ^{4})^{-3/2}(-4\kappa ^{3}\dot{\kappa})=2\kappa ^{3}\dot{\kappa} 
(\left\vert p\right\vert ^{2}-\kappa ^{4})^{-3/2}.
\end{equation*} 
This further implies
\begin{eqnarray}
\frac{{\small d}}{{\small dt}}(\dot{x}\times p) &=&-\frac{2\kappa ^{2}\tau }{ 
(4\dot{\kappa}^{2}+4\kappa ^{2}\tau ^{2})}(\dot{x}\times p)\times p-\frac{ 
2\kappa ^{3}\dot{\kappa}}{(4\dot{\kappa}^{2}+4\kappa ^{2}\tau ^{2})}(\dot{x} 
\times p)  \label{revised} \\
&=&-\frac{2\kappa ^{2}\tau }{\left\vert p\right\vert ^{2}-\kappa ^{4}}(\dot{x 
}\times p)\times p-\frac{2\kappa ^{3}\dot{\kappa}}{\left\vert p\right\vert
^{2}-\kappa ^{4}}(\dot{x}\times p),  \notag \\
\frac{{\small d}}{{\small dt}}(\dot{x}\times p)\times p &=&\frac{2\kappa
^{2}\tau \left\vert p^{2}\right\vert }{(4\dot{\kappa}^{2}+4\kappa ^{2}\tau
^{2})}\dot{x}\times p-\frac{2\kappa ^{3}\dot{\kappa}}{(4\dot{\kappa} 
^{2}+4\kappa ^{2}\tau ^{2})}(\dot{x}\times p)\times p  \notag \\
&=&\frac{2\kappa ^{2}\tau \left\vert p^{2}\right\vert }{\left\vert
p\right\vert ^{2}-\kappa ^{4}}\dot{x}\times p-\frac{2\kappa ^{3}\dot{\kappa} 
}{\left\vert p\right\vert ^{2}-\kappa ^{4}}(\dot{x}\times p)\times p.  \notag
\end{eqnarray} 
Similarly, 
\begin{eqnarray*}
\left\vert (\dot{x}\times p)\times p\right\vert ^{2} &=&\left\vert -\dot{x} 
\left\vert p\right\vert ^{2}+p\left\langle \dot{x},p\right\rangle
\right\vert ^{2}=\left\vert p\right\vert ^{4}-\left\vert
p\right\vert ^{2}\kappa ^{4},
\end{eqnarray*} 
and thus 
\begin{eqnarray*}
\frac{{\small d}}{{\small dt}}\left\vert (\dot{x}\times p)\times
p\right\vert ^{-1} &=&\frac{{\small d}}{{\small dt}}(\left\vert p\right\vert
^{4}-\left\vert p\right\vert ^{2}\kappa ^{4})^{-1/2}=\left\vert p\right\vert
^{-1}\frac{{\small d}}{{\small dt}}(\left\vert p\right\vert ^{2}-\kappa
^{4})^{-1/2} \\
&=&\frac{{\small 2}\kappa ^{3}\dot{\kappa}}{\left\vert p\right\vert } 
(\left\vert p\right\vert ^{2}-\kappa ^{4})^{-3/2}.
\end{eqnarray*} 
Therefore, 
\begin{eqnarray}
\frac{{\small d}}{{\small dt}}(\left\vert \dot{x}\times p\right\vert ^{-1} 
\dot{x}\times p) &=&\frac{{\small d}}{{\small dt}}(\left\vert \dot{x}\times
p\right\vert ^{-1})\dot{x}\times p+\left\vert \dot{x}\times p\right\vert
^{-1}\frac{{\small d}}{{\small dt}}(\dot{x}\times p) \\
&=&-\frac{2\kappa ^{2}\tau }{\left( \left\vert p\right\vert ^{2}-\kappa
^{4}\right) ^{3/2}}(\dot{x}\times p)\times p \\
&=&-\frac{2\kappa ^{2}\tau \left\vert p\right\vert }{\left( \left\vert
p\right\vert ^{2}-\kappa ^{4}\right) }\frac{1}{\left\vert (\dot{x}\times
p)\times p\right\vert }(\dot{x}\times p)\times p.
\end{eqnarray} 
Similarly, 
\begin{eqnarray*}
\frac{{\small d}}{{\small dt}}(\left\vert (\dot{x}\times p)\times
p\right\vert ^{-1}(\dot{x}\times p)\times p) &=&\left( \frac{{\small d}}{ 
{\small dt}}\left\vert (\dot{x}\times p)\times p\right\vert ^{-1}\right) ( 
\dot{x}\times p)\times p+  \\
 & \phantom{=} & +\left\vert (\dot{x}\times p)\times p\right\vert 
\frac{{\small d}}{{\small dt}}((\dot{x}\times p)\times p) \\
&=&(\left\vert p\right\vert ^{2}-\kappa ^{4})^{-3/2}2\kappa ^{2}\tau
\left\vert p\right\vert \dot{x}\times p \\
&=&\frac{2\kappa ^{2}\tau \left\vert p\right\vert }{(\left\vert p\right\vert
^{2}-\kappa ^{4})^{3/2}}\frac{\left\vert \dot{x}\times p\right\vert }{ 
\left\vert \dot{x}\times p\right\vert }\dot{x}\times p \\
&=&\frac{2\kappa ^{2}\tau \left\vert p\right\vert }{(\left\vert p\right\vert
^{2}-\kappa ^{4})}\frac{1}{\left\vert \dot{x}\times p\right\vert }\dot{x} 
\times p.
\end{eqnarray*} 
Thus, 
\begin{eqnarray*}
\dot{D} &=&-\frac{2\kappa ^{2}\tau \left\vert p\right\vert }{\left(
\left\vert p\right\vert ^{2}-\kappa ^{4}\right) }E, \\
\dot{E} &=&\frac{2\kappa ^{2}\tau \left\vert p\right\vert }{(\left\vert
p\right\vert ^{2}-\kappa ^{4})}D,
\end{eqnarray*} 
and the proof is finished since $\kappa ^{2}\tau =-\frac{{\small 
1}}{{\small 4}}\left\langle
l,p\right\rangle .$
\end{proof}

Define the curve of complex-valued vectors $Z(t)$ by 
\begin{equation}
Z(t)=D(t)+iE(t).  \label{z}
\end{equation} 
Proposition \ref{F} implies
\begin{equation*}
\dot{Z}=\dot{D}+i\dot{E}=-i\frac{\left\langle l,p\right\rangle
\left\vert p\right\vert }{2(\left\vert p\right\vert ^{2}-\kappa ^{4})}Z.
\end{equation*}

\begin{proposition}
Define 
\begin{equation*}
\phi (t)=\left\langle l,p\right\rangle \left\vert p\right\vert
\int_{t_{0}}^{t}\frac{1}{(\left\vert p\right\vert ^{2}-\kappa ^{4})} 
ds,
\end{equation*} 
and set $Z_{0}=Z(t_{0})=Z_{0}=D_{0}+iE_{0},$ then 
\begin{equation}
Z(t)=e^{-i\phi (t)}Z_{0}.  \label{z(t)}
\end{equation} 
In particular, 
\begin{eqnarray}
D(t) &=&\cos \phi (t)D_{0}+\sin \phi (t)E_{0},  \label{d(t)} \\
E(t) &=&-\sin \phi (t)D_{0}+\cos \phi (t)E_{0}.  \label{e(t)}
\end{eqnarray}
\end{proposition}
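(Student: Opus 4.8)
The plan is to read the relation for $Z$ coming from Proposition \ref{F} as a linear ordinary differential equation and integrate it directly. First I would check that $\phi$ is well defined: on the interval under consideration $\dot x$ is not parallel to $p$, so $|p|^{2}-\kappa ^{4}=|\dot x\times p|^{2}>0$ there, and since $\kappa$ is a smooth function of $t$ the integrand $\langle l,p\rangle\,|p|/(|p|^{2}-\kappa ^{4})$ is continuous; hence $\phi$ is $C^{1}$ and, by the fundamental theorem of calculus,
\begin{equation*}
\dot\phi(t)=\frac{\langle l,p\rangle\,|p|}{|p|^{2}-\kappa ^{4}}.
\end{equation*}
Moreover $D$ and $E$ are differentiable — their derivatives are precisely the content of Proposition \ref{F} — so $Z=D+iE$ is a differentiable $\C^{3}$-valued curve, and combining the two equations of Proposition \ref{F} with the formula for $\dot\phi$ just obtained puts the equation for $Z$ in the form $\dot Z=-i\,\dot\phi(t)\,Z$, a scalar-type linear equation.

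Next I would integrate this by the elementary integrating-factor argument, which also supplies uniqueness without invoking a general existence theorem. Consider the curve $t\mapsto e^{i\phi(t)}Z(t)$ and differentiate, using the product rule:
\begin{equation*}
\frac{d}{dt}\bigl(e^{i\phi(t)}Z(t)\bigr)=i\dot\phi(t)\,e^{i\phi(t)}Z(t)+e^{i\phi(t)}\dot Z(t)=e^{i\phi(t)}\bigl(i\dot\phi(t)\,Z(t)+\dot Z(t)\bigr)=0.
\end{equation*}
Hence $e^{i\phi(t)}Z(t)$ is constant on the interval; since $\phi(t_{0})=0$, evaluating at $t=t_{0}$ gives $e^{i\phi(t)}Z(t)=Z(t_{0})=Z_{0}$, i.e. $Z(t)=e^{-i\phi(t)}Z_{0}$. (Equivalently, in Cartesian components the equation decouples into three scalar complex linear equations $\dot Z^{\,j}=-i\dot\phi\,Z^{\,j}$, each solved uniquely by $Z^{\,j}(t)=e^{-i\phi(t)}Z^{\,j}(t_{0})$.)

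Finally, to recover the real-vector formulas I would expand with Euler's formula $e^{-i\phi(t)}=\cos\phi(t)-i\sin\phi(t)$ and multiply out:
\begin{equation*}
e^{-i\phi(t)}Z_{0}=\bigl(\cos\phi(t)-i\sin\phi(t)\bigr)(D_{0}+iE_{0})=\bigl(\cos\phi(t)\,D_{0}+\sin\phi(t)\,E_{0}\bigr)+i\bigl(\cos\phi(t)\,E_{0}-\sin\phi(t)\,D_{0}\bigr).
\end{equation*}
Since $D(t)$ and $E(t)$ are the real and imaginary parts of $Z(t)$, comparing real and imaginary parts gives $D(t)=\cos\phi(t)\,D_{0}+\sin\phi(t)\,E_{0}$ and $E(t)=-\sin\phi(t)\,D_{0}+\cos\phi(t)\,E_{0}$, as claimed. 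There is no substantive obstacle here: the proof is essentially a one-line integration of a scalar linear ODE. The only points requiring attention are bookkeeping ones — confirming that $\phi$ is well defined (this is exactly the hypothesis $|p|^{2}-\kappa ^{4}\neq 0$, i.e.\ that $\dot x$ is not parallel to $p$), confirming the differentiability needed for the product rule, and tracking the numerical constant so that the integrating factor comes out to be exactly $e^{i\phi(t)}$.
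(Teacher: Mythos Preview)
Your proposal is correct and follows essentially the same approach as the paper: both recognize that the equation $\dot Z = -i\dot\phi\,Z$ is a scalar linear ODE and integrate it directly. The only cosmetic difference is that you use the integrating factor (showing $\frac{d}{dt}(e^{i\phi}Z)=0$), whereas the paper simply differentiates the candidate $e^{-i\phi(t)}Z_{0}$ and checks it satisfies the ODE and initial condition.
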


\begin{proof} This follows immediately upon differentiating
\begin{equation*}
\dot{Z}=\frac{d}{dt}Z=\frac{d}{dt}e^{-i\phi (t)}Z_{0}=-i\dot{\phi}e^{-i\phi
(t)}Z_{0}=-i\frac{\left\langle l,p\right\rangle \left\vert p\right\vert }{ 
2(\left\vert p\right\vert ^{2}-\kappa ^{4})}Z,
\end{equation*} 
and $Z(t_{0})=e^{-i\phi (t_{0})}Z_{0}=Z_{0}.$
\end{proof}

Note that 
\begin{equation*}
(\dot{x}\times p)\times p=-\left\vert p\right\vert ^{2}\dot{x}+\left\langle 
\dot{x},p\right\rangle p
\end{equation*} 
implies that 
\begin{equation*}
\dot{x}^{\perp}_p:=-\left\vert p\right\vert ^{-2}(\dot{x}\times p)\times p
\end{equation*} 
is the component of $\dot{x}$ perpendicular to $p$.

\begin{theorem}
The time evolution of $\dot{x}^{\perp}_p$ is 
\begin{equation*}
\dot{x}^{\perp}_p(t)=-\frac{(\left\vert p\right\vert ^{2}-\kappa
(t)^{4})^{1/2}}{\left\vert p\right\vert }(-\sin \phi (t)D_{0}+\cos \phi
(t)E_{0}).
\end{equation*} 
Hence, the component $t\mapsto x_{0}+x^{\perp}_p(t)$ of the motion of the
elastica in the plane perpendicular to $p$ through $x_{0}=x(t_{0})$ is 
\begin{equation*}
x^{\perp}_p(t)=-\left\vert p\right\vert ^{-2}(x_{0}\times p)\times p-\frac{ 
{\small 1}}{\left\vert p\right\vert }\int_{t_{0}}^{t}(\left\vert
p\right\vert ^{2}-\kappa (s)^{4})^{1/2}(-\sin \phi (s)D_{0}+\cos \phi 
(s)E_{0})\,ds.
\end{equation*}
\end{theorem}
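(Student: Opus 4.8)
The plan is to obtain both displayed formulas by substitution, since all the analytic content is already contained in Proposition~\ref{F} and in the integration of the equation for $Z$.

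\emph{Step 1: the formula for $\dot{x}^{\perp}_p$.} By the definition $\dot{x}^{\perp}_p=-|p|^{-2}(\dot{x}\times p)\times p$ recorded just above the statement, and since equation~(\ref{e}) exhibits $E$ as the unit vector along $(\dot{x}\times p)\times p$ with $|(\dot{x}\times p)\times p|=|p|(|p|^{2}-\kappa^{4})^{1/2}$, one has $(\dot{x}\times p)\times p=|p|(|p|^{2}-\kappa^{4})^{1/2}\,E$, whence $\dot{x}^{\perp}_p=-|p|^{-1}(|p|^{2}-\kappa^{4})^{1/2}\,E$. Substituting $E(t)=-\sin\phi(t)\,D_{0}+\cos\phi(t)\,E_{0}$ from~(\ref{e(t)}) gives the first displayed identity. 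Here $\kappa$, hence the factor $(|p|^{2}-\kappa^{4})^{1/2}$, is a known function of $s$ by~(\ref{scalar 5}), and $\phi$ is the phase determined in the preceding proposition.

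\emph{Step 2: the formula for $x^{\perp}_p$.} Since $p$ is a fixed vector, the orthogonal projection $v\mapsto v-|p|^{-2}\langle v,p\rangle\,p=-|p|^{-2}(v\times p)\times p$ onto the hyperplane $p^{\perp}$ is a constant linear map and therefore commutes with $d/dt$. Writing $x^{\perp}_p(t):=-|p|^{-2}(x(t)\times p)\times p$ for the component of $x(t)$ perpendicular to $p$, it follows that $\frac{d}{dt}x^{\perp}_p(t)=-|p|^{-2}(\dot{x}(t)\times p)\times p=\dot{x}^{\perp}_p(t)$. The fundamental theorem of the calculus then gives $x^{\perp}_p(t)=x^{\perp}_p(t_{0})+\int_{t_{0}}^{t}\dot{x}^{\perp}_p(s)\,ds$ with $x^{\perp}_p(t_{0})=-|p|^{-2}(x_{0}\times p)\times p$; inserting the formula for $\dot{x}^{\perp}_p$ from Step~1 yields the second displayed formula. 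Finally $\langle x^{\perp}_p(t),p\rangle=0$ for all $t$, so the curve $t\mapsto x_{0}+x^{\perp}_p(t)$ does lie in the plane through $x_{0}$ normal to $p$, as asserted.

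I do not expect any genuine obstacle: the argument is bookkeeping. The only points needing care are keeping the sign conventions of~(\ref{p1}), (\ref{l}) and~(\ref{e}) consistent, and noting --- exactly as in Proposition~\ref{F} --- that $(|p|^{2}-\kappa^{4})^{1/2}$ does not vanish on the interval under consideration, so that the $(D,E)$ frame is well defined there and the manipulations are valid.
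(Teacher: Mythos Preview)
Your proof is correct and follows essentially the same route as the paper: express $\dot{x}^{\perp}_p$ as a scalar multiple of $E$ via the known magnitude $|(\dot{x}\times p)\times p|=|p|(|p|^2-\kappa^4)^{1/2}$, then substitute the rotation formula~(\ref{e(t)}) for $E(t)$. Your Step~2 is in fact more explicit than the paper's, which leaves the integration to the reader; and you keep track of the overall minus sign more carefully than the paper's displayed proof, which drops it in the first line.
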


\begin{proof}
\begin{eqnarray*}
\dot{x}^{\perp}_p &=&\frac{\left\vert (\dot{x}\times p)\times
p\right\vert }{\left\vert p\right\vert ^{2}}E \\
&=&\frac{\left\vert p\right\vert \left( \left\vert p\right\vert ^{2}-\kappa
^{4}\right) ^{1/2}}{\left\vert p\right\vert ^{2}}E \\
&=&\frac{\left( \left\vert p\right\vert ^{2}-\kappa ^{4}\right) ^{1/2}}{ 
\left\vert p\right\vert }(-\sin \phi (t)D_{0}+\cos \phi (t)E_{0}).
\end{eqnarray*}
\end{proof}

\begin{corollary}
The elastica equations in the arclength parametrization 
\begin{equation*}
\sigma :I\rightarrow R^{n}:t\mapsto x(t),
\end{equation*} 
have a unique solution 
\begin{equation*}
x(t)=x_{0}+\int_{t_{0}}^{t}\left( - \frac{\kappa(s)^2}{\left\vert p\right\vert
^{2}}p-\frac{\left( \left\vert p\right\vert ^{2}-\kappa (s)^{4}\right) 
^{1/2}}{\left\vert p\right\vert }(-\sin \phi (s)D_{0}+\cos \phi 
(s)E_{0})\right) ds
\end{equation*} 
for initial data in 
\begin{equation*}
M_{0}^{3}=\{(t,x,\dot{x},\ddot{x},\dddot{x})\in M^{3}\mid \kappa \neq
0,~\tau \neq 0\}.
\end{equation*}
\end{corollary}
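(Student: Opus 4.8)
The plan is to reconstruct the velocity $\dot{x}(t)$ of an elastica entirely from its initial data and then integrate. First I would invoke the Remark following equation (\ref{elastica 1}): in the arclength parametrization the Euler--Lagrange equations take the normal form $\ddddot{x}=f(x,\dot{x},\ddot{x},\dddot{x})$ with $f$ smooth on the region where $\kappa\neq 0$, so the Picard--Lindel\"of theorem gives, for each initial point $(x_0,\dot{x}_0,\ddot{x}_0,\dddot{x}_0)\in M_0^3$, a unique maximal solution. It therefore suffices to show that this solution is given by the displayed formula; equivalently, that the conservation laws force $\dot{x}(t)$ to have the stated form.

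Next I would check that every ingredient in the formula is well defined along the solution. The $2$-jet of $\sigma$ at $t_0$ determines $\kappa(t_0)=|\ddot{x}_0|$ and $\dot\kappa(t_0)=\langle\ddot{x}_0,\dddot{x}_0\rangle/\kappa(t_0)$, the $3$-jet determines $\tau(t_0)$ via (\ref{torsion}), and equations (\ref{p 1}), (\ref{l0}) fix the conserved vectors $p$ and $l$. Since $\kappa(t_0)\tau(t_0)\neq 0$ on $M_0^3$, the integration constant $c=\kappa^2\tau$ of (\ref{scalar 2}) is nonzero; because $c$ is constant and $c^2/\kappa^2$ occurs in the conserved quantity of (\ref{scalar 3}), $\kappa$ stays bounded away from $0$, so $\kappa\neq 0$ and $\tau=c/\kappa^2\neq 0$ for all $t$ and the solution never leaves $M_0^3$. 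Consequently $|p|^2-\kappa^4=4\dot\kappa^2+4\kappa^2\tau^2>0$ throughout, whence $p\neq 0$, $\dot{x}(t_0)$ is not parallel to $p$, and the unit vectors $D_0,E_0$ of (\ref{d}), (\ref{e}) and the phase $\phi$ are all well defined.

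Then I would decompose $\dot{x}$ orthogonally relative to the constant vector $p$. Since $p\neq 0$, $\dot{x}=\dot{x}^{\parallel}_p+\dot{x}^{\perp}_p$ with $\dot{x}^{\parallel}_p=|p|^{-2}\langle\dot{x},p\rangle\,p$; equation (\ref{xdot direction p}) gives $\langle\dot{x},p\rangle=-\kappa^2$, so $\dot{x}^{\parallel}_p=-|p|^{-2}\kappa^2 p$. For the perpendicular component I would quote the theorem immediately preceding this corollary, which --- using Proposition \ref{F} and the solution $Z(t)=e^{-i\phi(t)}Z_0$ --- gives $\dot{x}^{\perp}_p(t)=-|p|^{-1}(|p|^2-\kappa(t)^4)^{1/2}(-\sin\phi(t)D_0+\cos\phi(t)E_0)$. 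Adding the two components and integrating from $t_0$ to $t$ with $x(t_0)=x_0$ yields precisely the asserted expression for $x(t)$. As every term on the right-hand side is determined by the initial data, this curve is the unique solution through $(x_0,\dot{x}_0,\ddot{x}_0,\dddot{x}_0)$, consistent with the uniqueness already supplied by the ODE.

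The step I expect to be the genuine obstacle is not any single identity but the combination of two consistency points: that the line through $p$ and the plane $p^{\perp}$ together span $\R^3$ and that $\dot{x}^{\parallel}_p$, $\dot{x}^{\perp}_p$ are in fact the orthogonal projections onto them --- so that the two quoted formulas really do recover all of $\dot{x}$ --- together with the verification that the flow preserves $M_0^3$, since off that set ($\kappa=0$ or $\tau=0$) the denominators $|p|$ and $|p|^2-\kappa^4$ and the construction of $D_0,E_0,\phi$ degenerate.
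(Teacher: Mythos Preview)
Your proposal is correct and follows essentially the same route as the paper: the corollary is stated without proof because it is meant to follow immediately by adding the parallel component $\dot{x}^{\parallel}_p=-|p|^{-2}\kappa^2 p$ from (\ref{xdot direction p}) to the perpendicular component $\dot{x}^{\perp}_p$ from the preceding theorem and integrating. Your additional care in verifying that the solution stays in $M_0^3$ (so that $\kappa,\tau\neq 0$, $|p|^2-\kappa^4>0$, and $D_0,E_0,\phi$ are well defined) and in invoking Picard--Lindel\"of for uniqueness fills in details the paper leaves implicit.
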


It remains to consider the special cases when $\kappa \neq 0$ and $\tau =0$, and 
when $\kappa =0$. Equation (\ref{scalar 4}), $\kappa ^{2}\tau =-\frac{ 
{\small 1}}{{\small 4}}\left\langle l,p\right\rangle $, shows that $ 
\left\langle l,p\right\rangle =0$. Hence, if either $\kappa $ or $\tau $
vanishes at some point $t_{0},$ then it vanishes for all $t$ for which the
solution exists.
\begin{enumerate}
  \item If $\tau =0$ and $\kappa \neq 0$, the Frenet equations 
are $\dot{T}=\kappa N$, $\dot{N}=-\kappa T$, $\dot{B} =0$, 
and the conservation of the linear momentum $p$ and the angular momentum $l$ 
are 
\begin{eqnarray}
p & = & -2\dot{\kappa}N-\kappa ^{2}T,  \label{p2} \\
l & = & x\times p+2\kappa B .  \notag
\end{eqnarray} 
Thus, there is an additional conserved quantity, 
\begin{equation*}
B=\frac{1}{\kappa }\dot{x}\times \ddot{x}.
\end{equation*} 
As the scalar product $\left\langle \dot{x},p\right\rangle =-\kappa 
^{2}\left\langle \dot{x},\dot{x}\right\rangle =-\kappa ^{2}$,
\begin{equation*}
\left\langle x(t),p\right\rangle =\left\langle x(t_{0}),p\right\rangle
-\int_{t_{0}}^{t}\kappa ^{2}(s)\,ds.
\end{equation*}
Taking the cross product of equation (\ref{p2}) with $B$ yields 
\begin{equation*}
-2\dot{\kappa}N\times B-\kappa ^{2}T\times B=p\times B.
\end{equation*} 
Since $N\times B=T$ and $T\times B=-N$, 
\begin{equation*}
-2\dot{\kappa}T+\kappa ^{2}N=p\times B.
\end{equation*} 
Therefore, 
\begin{equation*}
\left\langle \dot{x},p\times B\right\rangle =-2\dot{\kappa},
\end{equation*} 
and 
\begin{eqnarray*}
\left\langle x(t),p\times B\right\rangle & = & \left\langle x(t_{0}),p\times
B\right\rangle +\int_{t_{0}}^{t}\left\langle \dot{x}(s),p\times
B\right\rangle ds \\
 & = & \left\langle x(t_{0}),p\times B\right\rangle -2\kappa
(t)+2\kappa (t_{0}).
\end{eqnarray*} 
Thus, if $p\times B\neq 0$, then 
\begin{equation*}
x(t)=x(t_{0})-\frac{2\kappa (t_{0})}{\left\vert p\times B\right\vert ^{2}} 
p\times B-\left( \frac{1}{\left\vert p\right\vert ^{2}}\int_{t_{0}}^{t} 
\kappa ^{2}(t)\,dt\right) p-\frac{2\kappa (t)}{\left\vert
p\times B\right\vert ^{2}}p\times B.
\end{equation*}

\item The special case $p\times B=0,$ $p\neq 0$.
If $p\times B=0$, and $p\neq 0$, then $p$ is parallel to $B$, and equation 
(\ref{p2}) implies that $\kappa =0$, so the solution is a straight line.

\item  If $p=0,$ then equation (\ref{p2}) implies that $\kappa =0.$  If 
$\kappa =0$, then $\dot{x}$ is constant, and the motion is again a straight 
line.
\end{enumerate}

\subsection{Closed elastica}

As mentioned in the introduction, a significant motivatation for this work was 
understanding how symmetry and conservation laws could be systematically 
exploited to integrate the elastica equations.  In the case of closed elastica, 
as studied by Langer and Singer \cite{langer-singer}, it is necessary to add an 
arclength constraint to the variational problem.  This results in studying a 
modified problem with an undetermined Lagrange multiplier.  The modifications 
to our analysis are straightforward insofar as the use of the conserved 
quantities is concerned.  However, since there is also an immediate integration 
and reduction of order in the problem, which results in a loss of manifest 
Euclidean invariance, it seemed preferable to avoid the arclength constrained 
problem and keep the full symmetry in order to see more clearly how the 
conservation laws enabled the integration, which is the route taken in the 
previous section.

In more detail, a slicker, but less transparent approach to the Euler-Lagrange 
equations runs as follows.  For  a second order Lagrangian 
$L(x,\dot{x},\ddot{x},t)$ the 
Euler-Lagrange equations are
\[
 \frac{\partial L}{\partial x}-\frac{d}{dt}\left( \frac{\partial L}{\partial 
\dot{x}}\right) + \frac{d^2}{dt^2}\left( \frac{\partial L}{\partial 
\ddot{x}}\right) = 0,
\]
it follows that if 
\[
 \frac{\partial L}{\partial x} \equiv 0, 
\]
which is the case in the elastica problem, 
\[
 \frac{d}{dt} \left(\frac{\partial L}{\partial \dot{x}}-\frac{d}{dt}\left( 
\frac{\partial L}{\partial 
\ddot{x}}\right)\right) =0,
\]
immediately integrates to 
\[
 \frac{\partial L}{\partial \dot{x}}-\frac{d}{dt}\left( 
\frac{\partial L}{\partial 
\ddot{x}}\right) = c,
\]
where $c$ is a constant.

Now put $q=\dot{x}$, $\dot{q}=\ddot{x}$, and set
\[
l(q,\dot{q},t) := L(x,\dot{x},\ddot{x},t) -c \cdot \dot{x}
\]
Then the integrated equations are now the Euler-Lagrange equations for the 
{\it first order} Lagrangian $l$
\[
 \frac{\partial l}{\partial q}-\frac{d}{dt}\left( 
\frac{\partial l}{\partial 
\dot{q}}\right) = 0.
\]

\subsubsection{Reduction for elastica}

\paragraph{Linear momentum}

The elastica functional for fixed arclength is 
\[
 \int_{\gamma} \kappa^2+ \lambda\,ds.
\]
Here $\lambda$ is a constant whose value is {\it a priori} unknown. 
The curvature $\kappa$ is
\[
 \kappa = \frac{|\dot{x} \times \ddot{x}|}{|\dot{x}|^3} 
\]
so since $ds = |\dot{x}|\,dt$, the reduced Lagrangian is
\[
 l(q,\dot{q}) = \frac{|q \times \dot{q}|^2}{|q|^5} +\lambda |q|- c\cdot q
\]
It remains to compute the Euler-Lagrange equations and look at them in the 
Frenet frame.  The derivatives are
\[
 \frac{\partial l}{\partial q} = \left( 
-5\frac{|q\times\dot{q}|}{|q|^7} + \frac{\lambda}{|q|}\right) q 
+\frac{2}{|q|^5}\dot{q}\times(q\times\dot{q}) - c,
\]
\[
 \frac{\partial l}{\partial \dot{q}} = \frac{2}{|q|^5} q\times(\dot{q}\times q).
\]
Define new variables $T,N,B$ and $v$ by setting $v=|q|$, $T=v^{-1}q$, 
\[
 N= \frac{(q\times\dot{q})\times q}{|q\times\dot{q}|\,|q|} = 
\frac{|q|^2\,\dot{q} 
-\langle q,\dot{q}\rangle \,q}{|q\times\dot{q}|\,|q|}, \qquad B=T\times N.
\]
This implies 
\[
 \dot{q} = \dot{v}T+v^2\kappa N, \qquad \frac{\partial l}{\partial \dot{q}} = 
\frac{2}{v}\kappa N.
\]
If we recall the Frenet equations 
then it follows that
\[
 \frac{\partial l}{\partial q} = (-3\kappa^2+ \lambda ) T 
-2\frac{\kappa\dot{v}}{v^2}N -c,
\]
and 
\[
 \frac{d}{dt}\left(\frac{\partial l}{\partial\dot{q}}\right) = 2\left( 
-\kappa^2 T + \left( \frac{\kappa}{v} \right)^{\cdot} N +\kappa\tau B \right).
\]
This implies that the Euler-Lagrange equations are
\[
 (\lambda-\kappa^2) T -2\frac{\dot{\kappa}}{v} N -2\kappa\tau B = c.
\]
Taking the inner product of this equation with itself and choosing the 
arclength parametrization (so $v=1$) yields
\[
 4(\kappa^{\prime})^2 +(\lambda - \kappa^2)^2 +4\kappa^2\tau^2=c^2.
\]

\paragraph{Angular momentum}

The reduced Lagrangian $l$ is not invariant under the rotation group $\SO(3)$, 
but it is invariant under the $\SO(2)$ subgroup generated by the vector field 
\[
 X = (c\times q) \frac{\partial}{\partial q}
\]
which is rotation about the axis defined by $c\neq 0$.   The associated 
conserved momentum is 
\[
 j=\langle p\,dq, X\rangle = \langle 2v^{-1}\kappa N, c\times q\rangle.
\]
The only nonzero component of this contraction is in the $N$ component of 
$c\times q$, and since the Frenet frame is orthonormal,
\[
 j= 2\kappa \langle c,B\rangle.
 \]
 Taking the inner product of $c$ with the Euler-Lagrange equations gives 
 \[
  \langle c,B\rangle = -2\kappa\tau,
 \]
and this implies that the conserved angular momentum $j$ is
\[
 j= -4\kappa^2\tau.
\]
Thus $4\kappa^2\tau^2 = j^2/4\kappa^2$, and substituting back into the equation 
for $\kappa^{\prime}$ yields
\[
 4(\kappa^{\prime})^2 +(\lambda - \kappa^2)^2 + \frac{j^2}{4\kappa^2} = c^2.
\]
This recovers equations $(3)$ and $(4)$ of {\sc foltinek} \cite{foltinek}, 
together with the interpretation of $c$ as linear momentum and $j$ as angular 
momentum.

\section{Elastica as a constrained Hamiltonian system}

\subsection{Range of the Legendre transformation}

In this section we discuss the range of the Legendre
transformation 
\begin{equation*}
\mathscr{L}:J_{0}^{3}\rightarrow T^{\ast }J_{0}^{1}:(t,x,\dot{x},\ddot{x}, 
\dddot{x})\mapsto (t,x,\dot{x},p_{t},p_{x},p_{\dot{x}})
\end{equation*} 
for elastica with Lagrangian 
\begin{equation}
L(x,\dot{x},\ddot{x})=\frac{\left\langle \ddot{x}^{\perp },\ddot{x}^{\perp
}\right\rangle }{\left\vert \dot{x}\right\vert ^{3}}.  \label{L}
\end{equation}
Here, the subscript ${0}$ denotes that $\dot{x}\neq 0$, and 
\begin{eqnarray}
p_{\dot{x}} &=&\frac{\partial L}{\partial \ddot{x}}=2\frac{\ddot{x}^{\perp } 
}{\left\vert \dot{x}\right\vert ^{3}},  \label{pxdot} \\
p_{x} &=&\frac{\partial L}{\partial \dot{x}}-\frac{d}{dt}\frac{\partial L}{ 
\partial \ddot{x}}=-\frac{2}{\left\vert \dot{x}\right\vert ^{3}}\dddot{x} 
^{\perp }+\frac{6}{\left\vert \dot{x}\right\vert ^{5}}\left\langle \dot{x}, 
\ddot{x}\right\rangle \ddot{x}^{\perp }-\frac{\left\langle \ddot{x}^{\perp }, 
\ddot{x}^{\perp }\right\rangle }{\left\vert \dot{x}\right\vert ^{5}}\dot{x},
\label{px} \\
p_{t} &=&L-\left\langle p_{x},\dot{x}\right\rangle -\left\langle p_{ 
\dot{x}}, \ddot{x}\right\rangle =0.  \label{pt}
\end{eqnarray} 
The $\Diff_{+}\R$-invariance of $L\,dt$ is responsible for the
vanishing of $p_{t}$ above, and it implies that the variable $p_{\dot{x}}$
satisfies the equation 
\begin{equation}
\left\langle p_{\dot{x}},\dot{x}\right\rangle =0  \label{xdotpxdot}
\end{equation} 
(see remark \ref{Second Noether Theorem}.) Note that equations (\ref{pxdot}) and 
(\ref{px}) imply 
\begin{eqnarray}
\left\langle p_{\dot{x}},p_{\dot{x}}\right\rangle &=&4\frac{\left\langle 
\ddot{x}^{\perp },\ddot{x}^{\perp }\right\rangle }{\left\vert \dot{x} 
\right\vert ^{6}}=\frac{4}{\left\vert \dot{x}\right\vert ^{3}}L,  \label{px2}
\\
\left\langle p_{\dot{x}},\ddot{x}\right\rangle &=&\left\langle 2\frac{\ddot{x 
}^{\perp }}{\left\vert \dot{x}\right\vert ^{3}},\ddot{x}\right\rangle =2 
\frac{\left\langle \ddot{x}^{\perp },\ddot{x}\right\rangle }{\left\vert \dot{ 
x}\right\vert ^{3}}=2 
\frac{\left\langle \ddot{x}^{\perp },\ddot{x}^{\perp }\right\rangle }{ 
\left\vert \dot{x}\right\vert ^{3}}=2L=\frac{{\small 1}}{{\small 2}} 
\left\vert \dot{x}\right\vert ^{3}\left\langle p_{\dot{x}},p_{\dot{x} 
}\right\rangle , \label{pxdotxddot} \\
\left\langle p_{x},\dot{x}\right\rangle &=&\left\langle -\frac{2}{\left\vert 
\dot{x}\right\vert ^{3}}\dddot{x}^{\perp }+\frac{6}{\left\vert \dot{x} 
\right\vert ^{5}}\left\langle \dot{x},\ddot{x}\right\rangle \ddot{x}^{\perp
}-\frac{\left\langle \ddot{x}^{\perp },\ddot{x}^{\perp }\right\rangle }{ 
\left\vert \dot{x}\right\vert ^{5}}\dot{x},\dot{x}\right\rangle =-\frac{{\small 
1}}{{\small 4}}\left\vert \dot{x} 
\right\vert ^{3}\left\langle p_{\dot{x}},p_{\dot{x}}\right\rangle .
\label{pxxdot}
\end{eqnarray} 
Equation (\ref{pt}), written in terms of the variables on $T^{\ast
}J_{0}^{1},$ reads 
\begin{equation*}
H(x,\dot{x},p_{x},p_{\dot{x}})=-\frac{{\small 1}}{ 
{\small 4}}\left\vert \dot{x}\right\vert ^{3}\left\langle p_{\dot{x}},p_{ 
\dot{x}}\right\rangle +\frac{{\small 1}}{{\small 2}}\left\vert \dot{x} 
\right\vert ^{3}\left\langle p_{\dot{x}},p_{\dot{x}}\right\rangle -\frac{ 
{\small 1}}{{\small 4}}\left\vert \dot{x}\right\vert ^{3}\left\langle p_{ 
\dot{x}},p_{\dot{x}}\right\rangle =0.
\end{equation*} 
Hence, it does not introduce further restrictions of the variables $(t,x, 
\dot{x},p_{t},p_{x},p_{\dot{x}})$.
Equations (\ref{pxdotxddot}) and (\ref{pxxdot}) lead to the new constraint
equation 
\begin{equation}
\left\langle p_{\dot{x}},\ddot{x}\right\rangle +2\left\langle p_{x},\dot{x} 
\right\rangle =0,  \label{new constraint}
\end{equation} 
while equation (\ref{new constraint}) in terms of the variables on $T^{\ast 
}J_{0}^{1}$ is
\begin{equation}
\left\vert \dot{x}\right\vert ^{3}\left\langle p_{\dot{x}},p_{\dot{x} 
}\right\rangle +4\left\langle p_{x},\dot{x}\right\rangle =0.
\label{new constraint 1}
\end{equation}

\begin{theorem}
The range of the Legendre transformation $\mathscr{L}:J_{0}^{3}\rightarrow
T^{\ast }J_{0}^{1}$ is the common zero set of the three 
functions $p_{t}$, $\left\langle p_{\dot{x}},\dot{x}\right\rangle$ and 
$\left\langle p_{\dot{x}},p_{\dot{x} 
}\right\rangle +\frac{4}{\left\vert \dot{x}\right\vert ^{3}}\left\langle
p_{x},\dot{x}\right\rangle $. That is, 
\begin{equation}
\mathrm{range}~\mathscr{L}=\{(t,x,\dot{x},p_{t},p_{x,}p_{\dot{x}})\in
T^{\ast }J_{0}^{1}\mid p_{t}=\left\langle p_{\dot{x}},\dot{x} 
\right\rangle =\left\vert \dot{x}\right\vert ^{3}\left\langle
p_{\dot{x}},p_{\dot{x}}\right\rangle +4\left\langle p_{x},\dot{x} 
\right\rangle =0\}.  \label{rangeL 1}
\end{equation}
\end{theorem}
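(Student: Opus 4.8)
The plan is to prove the stated equality by establishing the two inclusions between $\mathrm{range}~\mathscr{L}$ and the common zero set of $p_{t}$, $\langle p_{\dot x},\dot x\rangle$ and $\langle p_{\dot x},p_{\dot x}\rangle+\tfrac{4}{|\dot x|^{3}}\langle p_{x},\dot x\rangle$ (equivalently, of $|\dot x|^{3}\langle p_{\dot x},p_{\dot x}\rangle+4\langle p_{x},\dot x\rangle$, since $|\dot x|\neq0$ on $T^{\ast}J_{0}^{1}$). One inclusion is essentially already in hand: on $\mathrm{range}~\mathscr{L}$, equation (\ref{pt}) gives $p_{t}=0$, the $\Diff_{+}\R$-invariance of $L\,dt$ together with Remark \ref{Second Noether Theorem} gives $\langle p_{\dot x},\dot x\rangle=0$ (equation (\ref{xdotpxdot})), and the identities (\ref{pxdotxddot}) and (\ref{pxxdot}) yield equation (\ref{new constraint}), which in the variables of $T^{\ast}J_{0}^{1}$ becomes equation (\ref{new constraint 1}). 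So the first step is merely to record that $\mathrm{range}~\mathscr{L}$ lies in the prescribed zero set, and the rest of the argument is devoted to the reverse inclusion.

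For the reverse inclusion the plan is to write down an explicit preimage. Given a point $(t,x,\dot x,p_{t},p_{x},p_{\dot x})\in T^{\ast}J_{0}^{1}$ with $\dot x\neq0$, $p_{t}=0$, $\langle p_{\dot x},\dot x\rangle=0$ and $|\dot x|^{3}\langle p_{\dot x},p_{\dot x}\rangle+4\langle p_{x},\dot x\rangle=0$, I would set
\[
\ddot x:=\tfrac12|\dot x|^{3}p_{\dot x},\qquad \dddot x:=-\tfrac12|\dot x|^{3}p_{x}-\tfrac18|\dot x|^{4}\langle p_{\dot x},p_{\dot x}\rangle\,\dot x,
\]
and check that $(t,x,\dot x,\ddot x,\dddot x)\in J_{0}^{3}$ maps to the given point under $\mathscr{L}$. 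The hypothesis $\langle p_{\dot x},\dot x\rangle=0$ makes $\langle\dot x,\ddot x\rangle=0$, hence $\ddot x^{\perp}=\ddot x$; the third hypothesis is precisely the statement $\langle\dddot x,\dot x\rangle=0$, hence $\dddot x^{\perp}=\dddot x$. (Adding arbitrary multiples of $\dot x$ to $\ddot x$ and to $\dddot x$, with a compensating adjustment, produces further preimages, so the fibres of $\mathscr{L}$ are two-dimensional, in accordance with the $\Diff_{+}\R$-invariance; only the normalized choice above is needed here.)

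The remaining step is to substitute these expressions into the defining formulas (\ref{pxdot})--(\ref{pt}). The $t$, $x$, $\dot x$ entries are unchanged; the $p_{\dot x}$ entry gives $2\ddot x^{\perp}/|\dot x|^{3}=p_{\dot x}$; in the $p_{x}$ entry the middle term of (\ref{px}) drops out because $\langle\dot x,\ddot x\rangle=0$, one has $\langle\ddot x^{\perp},\ddot x^{\perp}\rangle=\tfrac14|\dot x|^{6}\langle p_{\dot x},p_{\dot x}\rangle$, and a two-line computation returns $p_{x}$; and the $p_{t}$ entry $L-\langle p_{x},\dot x\rangle-\langle p_{\dot x},\ddot x\rangle$ vanishes once one inserts $L=\tfrac14|\dot x|^{3}\langle p_{\dot x},p_{\dot x}\rangle$ (from (\ref{L})), $\langle p_{\dot x},\ddot x\rangle=\tfrac12|\dot x|^{3}\langle p_{\dot x},p_{\dot x}\rangle$ and the third hypothesis rewritten as $\langle p_{x},\dot x\rangle=-\tfrac14|\dot x|^{3}\langle p_{\dot x},p_{\dot x}\rangle$, matching the prescribed value $p_{t}=0$.

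The step I expect to be the crux is the handling of the third constraint function: it is exactly the condition that $-\tfrac12|\dot x|^{3}p_{x}-\tfrac18|\dot x|^{4}\langle p_{\dot x},p_{\dot x}\rangle\dot x$ be orthogonal to $\dot x$, so that it may legitimately be taken as $\dddot x^{\perp}$ in the construction; if it failed there would be no $\dddot x$ producing the prescribed $p_{x}$, which is why precisely these three functions occur and no others. Everything else reduces to routine bookkeeping with the decomposition into components parallel and perpendicular to $\dot x$.
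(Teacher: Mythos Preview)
Your proof is correct and follows essentially the same approach as the paper: both establish the forward inclusion from the identities (\ref{pt}), (\ref{xdotpxdot}), (\ref{new constraint 1}) preceding the theorem, and for the reverse inclusion both exploit the orthogonal decomposition relative to $\dot x$ to produce a preimage. The only stylistic difference is that you write down one explicit preimage (with $\ddot x^{\parallel}=0$) and verify it directly, whereas the paper splits equation (\ref{px}) into perpendicular and parallel parts, solves each, and in the process describes the full two-parameter fibre $\mathscr{L}^{-1}(t,x,\dot x,p_{t},p_{x},p_{\dot x})$ with $\ddot x^{\parallel}$ and $\dddot x^{\parallel}$ arbitrary; your parenthetical remark about additional preimages captures the same content.
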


\begin{proof}
Equations (\ref{pt}), (\ref{xdotpxdot}) and (\ref{new constraint 1}) imply
that 
\begin{equation*}
\mathrm{range}~\mathscr{L}\subseteq \{(t,x,\dot{x},p_{t},p_{x,}p_{\dot{x} 
})\in T^{\ast }J_{0}^{1}\mid p_{t}=\left\langle p_{\dot{x}},\dot{x} 
\right\rangle =\left\vert \dot{x}\right\vert^{3}\left\langle
p_{\dot{x}},p_{\dot{x}}\right\rangle +4\left\langle p_{x},\dot{x} 
\right\rangle =0\}.
\end{equation*} 
Suppose $(t,x,\dot{x},p_{t},p_{x,}p_{\dot{x}})\in T^{\ast
}J_{0}^{1} $ is such that $p_{t}=0$, $\left\langle p_{\dot{x}},\dot{x} 
\right\rangle =0$ and $\left\vert \dot{x}\right\vert ^{3}\left\langle p_{ 
\dot{x}},p_{\dot{x}}\right\rangle +4\left\langle p_{x},\dot{x}\right\rangle
=0$. Since $\left\langle p_{\dot{x}},\dot{x}\right\rangle =0,$ it follows
that $p_{\dot{x}}=p_{\dot{x}}^{\perp },$ and equation (\ref{pxdot}) implies $ 
\ddot{x}^{\perp }=\frac{1}{2}\left\vert \dot{x}\right\vert ^{3}p_{\dot{x}}.$
By definition, the vanishing of $\left\vert \dot{x}\right\vert
^{3}\left\langle p_{\dot{x}},p_{\dot{x}}\right\rangle +4\left\langle p_{x}, 
\dot{x}\right\rangle $ is equivalent to equation (\ref{new constraint});
that is $\left\langle p_{\dot{x}},\ddot{x}\right\rangle +2\left\langle p_{x}, 
\dot{x}\right\rangle =0$. Splitting equation (\ref{px}) into its components
perpendicular and parallel to $\dot{x}$ gives 
\begin{eqnarray}
p_{x}^{\perp } &=&-\frac{2}{\left\vert \dot{x}\right\vert ^{3}}\dddot{x} 
^{\perp }+\frac{6}{\left\vert \dot{x}\right\vert ^{5}}\left\langle \dot{x}, 
\ddot{x}\right\rangle \ddot{x}^{\perp },  \label{px perp} \\
p_{x}^{\parallel } &=&-\frac{\left\langle \ddot{x}^{\perp },\ddot{x}^{\perp
}\right\rangle }{\left\vert \dot{x}\right\vert ^{5}}\dot{x}.
\label{px parallel}
\end{eqnarray} 
Equation (\ref{px perp}) gives 
\begin{eqnarray*}
\frac{2}{\left\vert \dot{x}\right\vert ^{3}}\dddot{x}^{\perp }
&=&-p_{x}^{\perp }+\frac{6}{\left\vert \dot{x}\right\vert ^{5}}\left\langle 
\dot{x},\ddot{x}\right\rangle \ddot{x}^{\perp }=-p_{x}^{\perp }+\frac{6}{ 
\left\vert \dot{x}\right\vert ^{5}}\left\langle \dot{x},\ddot{x} 
\right\rangle \frac{1}{2}\left\vert \dot{x}\right\vert ^{3}p_{\dot{x}} \\
&=&-p_{x}^{\perp }+\frac{3}{\left\vert \dot{x}\right\vert ^{2}}\left\langle 
\dot{x},\ddot{x}\right\rangle p_{\dot{x}},
\end{eqnarray*} 
where $\left\langle \dot{x},\ddot{x}\right\rangle $ is arbitrary. Equation ( 
\ref{px parallel}) is equivalent to equation (\ref{new constraint}) because 
\begin{equation*}
\left\langle p_{x},\dot{x}\right\rangle =\left\langle p_{x}^{\parallel }, 
\dot{x}\right\rangle =-\frac{\left\langle \ddot{x}^{\perp },\ddot{x}^{\perp
}\right\rangle }{\left\vert \dot{x}\right\vert ^{5}}\left\langle \dot{x}, 
\dot{x}\right\rangle =-\frac{\left\langle \ddot{x}^{\perp },\ddot{x}^{\perp
}\right\rangle }{\left\vert \dot{x}\right\vert ^{3}}=-\frac{1}{2} 
\left\langle p_{\dot{x}},\ddot{x}\right\rangle
\end{equation*} 
by equation (\ref{pxdotxddot}). 

The above argument shows that the fibre of $\mathscr{L}$ over the point $ 
(t,x,\dot{x},p_{t},p_{x,}p_{\dot{x}})\in T^{\ast }J_{0}^{1}$ such that $ 
p_{t}=0,~\left\langle p_{\dot{x}},\dot{x}\right\rangle =0$ and $\left\vert 
\dot{x}\right\vert ^{3}\left\langle p_{\dot{x}},p_{\dot{x}}\right\rangle
+4\left\langle p_{x},\dot{x}\right\rangle =0$ is not empty. In fact, 
\begin{equation*}
\mathscr{L}^{-1}(t,x,\dot{x},p_{t},p_{x,}p_{\dot{x}})=\{(t,x,\dot{x},\ddot{x} 
^{\perp }+\ddot{x}^{\parallel },\dddot{x}^{\perp }+\dddot{x}^{\parallel }\},
\end{equation*} 
where 
\begin{eqnarray*}
\ddot{x}^{\perp } &=&\frac{{\small 1}}{{\small 2}}\left\vert \dot{x} 
\right\vert ^{3}p_{\dot{x}}, \\
\dddot{x}^{\perp } &=&\frac{{\small 1}}{{\small 2}}\left( -\left\vert \dot{x} 
\right\vert ^{3}p_{x}^{\perp }+3\left\vert \dot{x}\right\vert \left\langle 
\dot{x},\ddot{x}^{\parallel }\right\rangle p_{\dot{x}}\right),
\end{eqnarray*} 
and $\ddot{x}^{\parallel }$ and $\dddot{x}^{\parallel }$ are arbitrary.
\end{proof}

\begin{theorem}
\label{range}The range of the Legendre transformation is a submanifold of $ 
T^{\ast }J_{0}^{1}$.
\end{theorem}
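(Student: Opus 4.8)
The plan is to exhibit $\mathrm{range}\,\mathscr{L}$ as the zero set of a submersion and then apply the regular value theorem. By the preceding theorem,
\[
\mathrm{range}\,\mathscr{L}=\Phi^{-1}(0),
\]
where $\Phi=(\Phi_{1},\Phi_{2},\Phi_{3}):T^{\ast }J_{0}^{1}\rightarrow \R^{3}$ is given by $\Phi_{1}=p_{t}$, $\Phi_{2}=\langle p_{\dot{x}},\dot{x}\rangle$ and $\Phi_{3}=\left\vert \dot{x}\right\vert ^{3}\langle p_{\dot{x}},p_{\dot{x}}\rangle +4\langle p_{x},\dot{x}\rangle$. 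Each $\Phi_{i}$ is smooth on $T^{\ast }J_{0}^{1}$ precisely because $\dot{x}\neq 0$ there, so that $\left\vert \dot{x}\right\vert ^{3}$ is smooth. It therefore suffices to check that $0$ is a regular value of $\Phi$, i.e. that $d\Phi_{1}$, $d\Phi_{2}$, $d\Phi_{3}$ are linearly independent at every point of $\Phi^{-1}(0)$; in fact the argument below gives independence at every point of $T^{\ast }J_{0}^{1}$.

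First I would compute the differentials in the bundle coordinates $(t,x,\dot{x},p_{t},p_{x},p_{\dot{x}})$:
\[
d\Phi_{1}=dp_{t},\qquad d\Phi_{2}=\langle \dot{x},dp_{\dot{x}}\rangle +\langle p_{\dot{x}},d\dot{x}\rangle ,
\]
\[
d\Phi_{3}=3\left\vert \dot{x}\right\vert \langle p_{\dot{x}},p_{\dot{x}}\rangle \langle \dot{x},d\dot{x}\rangle +2\left\vert \dot{x}\right\vert ^{3}\langle p_{\dot{x}},dp_{\dot{x}}\rangle +4\langle \dot{x},dp_{x}\rangle +4\langle p_{x},d\dot{x}\rangle .
\]
The structural features that matter are: only $d\Phi_{1}$ contains $dp_{t}$; only $d\Phi_{3}$ contains $dp_{x}$, and its $dp_{x}$-part is $4\langle \dot{x},\cdot\rangle$, a nonzero covector on the $p_{x}$-fibre since $\dot{x}\neq 0$; and the $dp_{\dot{x}}$-part of $d\Phi_{2}$ is $\langle \dot{x},\cdot\rangle$, again nonzero since $\dot{x}\neq 0$.

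Next I would conclude by a triangular elimination. Suppose $a\,d\Phi_{1}+b\,d\Phi_{2}+c\,d\Phi_{3}=0$ at a point, for scalars $a,b,c$. Comparing the $dp_{t}$-components forces $a=0$; comparing the $dp_{x}$-components then forces $4c\langle \dot{x},\cdot\rangle =0$, hence $c=0$; and comparing the $dp_{\dot{x}}$-components then forces $b\langle \dot{x},\cdot\rangle =0$, hence $b=0$. Thus $\Phi$ is a submersion on all of $T^{\ast }J_{0}^{1}$, $0$ is a regular value, and $\mathrm{range}\,\mathscr{L}=\Phi^{-1}(0)$ is a closed embedded submanifold of $T^{\ast }J_{0}^{1}$ of codimension $3$.

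There is no genuine obstacle here; the only point meriting attention is that the entire argument relies on $\dot{x}\neq 0$, which is exactly the condition defining $J_{0}^{1}$ (and hence $T^{\ast }J_{0}^{1}$), consistent with the elastica Lagrangian being undefined on the locus $\dot{x}=0$. One could also remark that, since the three differentials are independent everywhere and not merely on the zero set, the submanifold structure is global and independent of any local trivialization.
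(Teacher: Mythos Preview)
Your proof is correct. The approach differs in presentation from the paper's, though the underlying idea is the same. The paper fixes $(t,x,\dot{x})$ and explicitly solves the constraint equations for the parallel components $p_{\dot{x}}^{\parallel}=0$ and $p_{x}^{\parallel}=-\tfrac{1}{4}|\dot{x}|^{3}\langle p_{\dot{x}}^{\perp},p_{\dot{x}}^{\perp}\rangle$, thereby exhibiting $\mathrm{range}\,\mathscr{L}$ as a smooth graph over the remaining coordinates; smoothness of the parallel/perpendicular splitting when $\dot{x}\neq 0$ is the key observation. You instead verify directly that $\Phi=(\Phi_{1},\Phi_{2},\Phi_{3})$ is a submersion by a clean triangular argument in the fibre coordinates $p_{t},p_{x},p_{\dot{x}}$, and then invoke the regular value theorem. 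The two arguments are essentially implicit-function-theorem duals of one another: the paper's version yields an explicit parametrization of the constraint manifold, while yours is slightly more economical and, as you note, gives independence globally on $T^{\ast}J_{0}^{1}$ rather than only on the zero set.
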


\begin{proof}
Fix $(t,x,\dot{x})\in J_{0}^{1}$. The constraint equations 
\begin{eqnarray*}
\left\langle p_{\dot{x}},\dot{x}\right\rangle &=&0, \\
\left\langle \dot{x},\dot{x}\right\rangle ^{3/2}\left\langle p_{\dot{x}},p_{ 
\dot{x}}\right\rangle +4\left\langle p_{x},\dot{x}\right\rangle &=&0,
\end{eqnarray*}
give 
\begin{eqnarray*}
p_{\dot{x}}^{\parallel } &=&0, \\
p_{x}^{\parallel } &=&-\frac{1}{4}\left\langle \dot{x},\dot{x}\right\rangle
^{3/2}\left\langle p_{\dot{x}},p_{\dot{x}}\right\rangle =-\frac{1}{4} 
\left\langle \dot{x},\dot{x}\right\rangle ^{3/2}\left\langle p_{\dot{x} 
}^{\perp },p_{\dot{x}}^{\perp }\right\rangle .
\end{eqnarray*} 
By assumption, $\dot{x}\neq 0$, which implies that the splitting of vectors into 
components parallel and perpendicular to $\dot{x}$ is smooth. 
Therefore, 
\[
\{(t,x,\dot{x},p_{t},p_{x,}p_{\dot{x}})\in T^{\ast }J_{0}^{1}\mid
p_{t}=0,\left\langle p_{\dot{x}},\dot{x}\right\rangle =0\text{ and } 
\left\vert \dot{x}\right\vert^{3}\left\langle p_{\dot{x}},p_{\dot{x} 
}\right\rangle +4\left\langle p_{x},\dot{x}\right\rangle =0\}
\]
is equal to 
\[
\{(t,x,\dot{x},p_{t},p_{x,}p_{\dot{x}})\in T^{\ast }J_{0}^{1}\mid
p_{t}=0,~p_{\dot{x}}^{\parallel }=0\text{ and }p_{x}^{\parallel 
}=-\frac{1}{4}\left\vert \dot{x}\right\vert ^{3}\langle p_{\dot{x}}^{\perp 
},p_{\dot{ 
x}}^{\perp }\rangle \}
\]
and is a submanifold of $T^{\ast }J_{0}^{1}.$ Hence, 
$\mathrm{range}~\mathscr{L}$ is a submanifold of $T^{\ast }J_{0}^{1}$.
\end{proof}
Recall that the Liouville form of $T^{\ast }J_{0}^{1}$ is 
\begin{equation}
\theta =p_{t}\,dt+p_{x}\,dx +p_{\dot{x}} \,d\dot{x}  \label{theta}
\end{equation} 
with exterior derivative 
\begin{equation}
\omega =d\theta =dp_{t}\wedge dt+ dp_{x} \wedge dx
+ dp_{\dot{x}} \wedge d\dot{x}  \label{omega}
\end{equation} 
the canonical symplectic form of $T^{\ast }J_{0}^{1}$.

For $f\in C^{\infty }(T^{\ast }J_{0}^{1})$, the Hamiltonian vector field of $ 
f$ is the unique vector field $X_{f}$ on $T^{\ast }J_{0}^{1}$ such that 
\begin{equation*}
X_{f} 
\lefthook
\omega =-df,
\end{equation*} 
where $ 
\lefthook$ denote the left interior product (contraction). The Poisson bracket of two
functions $f_{1},f_{2}\in C^{\infty }(T^{\ast }J_{0}^{1})$ is given by 
\begin{equation}
\{f_{1},f_{2}\}=X_{f_{2}}(f_{1}).  \label{Poissson bracket}
\end{equation} 
It is bilinear, antisymmetric, and it satisfies the Jacobi identity 

For the sake of future convenience, define the reparametrization-invariant 
function $h$ by
\begin{equation}
h=\frac{\left\vert \dot{x}\right\vert ^{2}}{4}\left\langle p_{\dot{x}},p_{ 
\dot{x}}\right\rangle +\frac{\left\langle p_{x},\dot{x}\right\rangle }{ 
\left\vert \dot{x}\right\vert }.  \label{h}
\end{equation} 
Note that $h$ is smooth, because $\dot{x}\neq 0$, and we can use the
constraint $h=0$ instead of $\left\vert \dot{x}\right\vert ^{3}\left\langle
p_{\dot{x}},p_{\dot{x}}\right\rangle +4\left\langle p_{x},\dot{x} 
\right\rangle =0$ in describing the range of $\mathscr{L}.$ In other \
words, 
\begin{equation}
\mathrm{range}~\mathscr{L}=\{(t,x,\dot{x},p_{t},p_{x,}p_{\dot{x}})\in
T^{\ast }J_{0}^{1}\mid p_{t}=0,~\left\langle p_{\dot{x}},\dot{x} 
\right\rangle =0\text{ and }h=0\}.  \label{rangeL 2}
\end{equation} 
The Hamiltonian vector fields of the constraint functions $p_{t}$, $ 
\left\langle p_{\dot{x}},\dot{x}\right\rangle $ and $h$ are 
\begin{eqnarray*}
X_{p_{t}} &=&\frac{\partial }{\partial t}, \\
X_{\left\langle p_{\dot{x}},\dot{x}\right\rangle } &=&\dot{x}\frac{\partial 
}{\partial \dot{x}}-p_{\dot{x}}\frac{\partial }{\partial p_{\dot{x}}}, \\
X_{h} &=&\frac{1}{2}\left\langle \dot{x},\dot{x}\right\rangle p_{\dot{x}} 
\frac{\partial }{\partial \dot{x}}+\frac{1}{| 
\dot{x}|}\dot{x}\frac{\partial }{\partial 
x}-\frac{1}{|\dot{x}|}p_{x}\frac{\partial }{ 
\partial p_{\dot{x}}}+\left( -\frac{1}{2}\left\langle p_{\dot{x}},p_{\dot{x} 
}\right\rangle +\frac{\left\langle p_{x},\dot{x}\right\rangle 
}{|\dot{x}|^3}\right) \dot{x}\frac{\partial }{\partial
p_{\dot{x}}}
\end{eqnarray*}
Note that all the Poisson brackets of the constraint functions vanish 
identically
\begin{eqnarray}
\{\left\langle p_{\dot{x}},\dot{x}\right\rangle ,p_{t}\} 
=\{h,p_{t}\} =\{h,\left\langle 
p_{\dot{x}},\dot{x}\right\rangle \} =  0. \label{Poisson brackets}
\end{eqnarray} 
This implies that $\mathrm{range}~\mathscr{L}$ is a coisotropic submanifold
of $(T^{\ast }J_{0}^{1},\omega )$.

\subsection{Action of $\Diff_+\R$ on $T^{\ast }J_{0}^{1}$}

Recall that for $X=\tau \partial_t\in \diff_+ \R$, the action of the 
one-parameter subgroup $\exp sX$ on $J_{0}^{1}$
is generated by the vector field $X^{1}=\tau \frac{\partial }{\partial t}- 
\dot{\tau}\dot{x}\frac{\partial }{\partial \dot{x}}$. The lifted action of 
$\exp sX$ on $T^{\ast }J_{0}^{1}$ is generated by the
Hamiltonian vector field $X_{\mathscr{J}_{\tau }}$, where 
\begin{equation*}
\mathscr{J}_{\tau }(t,x,\dot{x},p_{t},p_{x},p_{\dot{x}})=\left\langle
p_{t}dt+p_{x}dx+p_{\dot{x}}d\dot{x},X_{\tau }(t,x,\dot{x})\right\rangle
=\tau (t)p_{t}-\dot{\tau}(t)\left\langle p_{\dot{x}},\dot{x}\right\rangle .
\end{equation*} 
The map 
\begin{equation*}
\mathscr{J}_{\diff}:\text{\textrm{diff}}_{+}\R\rightarrow
T^{\ast }J_{0}^{1}:\tau \frac{{\small \partial }}{\partial t}\mapsto 
\mathscr{J}_{\tau }=\tau (t)p_{t}-\dot{\tau}(t)\left\langle p_{\dot{x}},\dot{ 
x}\right\rangle
\end{equation*} 
may be interpreted as the momentum map for the action of the group 
$\Diff_+\R$ on $T^{\ast }J_{0}^{1}$. Writing it this way avoids
unnecessary discussion about the topology of the dual of the Lie algebra 
\textrm{diff}$_{+}\R$. The constraint equations $p_{t}=0$ and $ 
\left\langle p_{\dot{x}},\dot{x}\right\rangle =0$ imply that $\mathscr{J}_{ 
\diff}$ vanishes on $\mathrm{range~}\mathscr{L}$. In other words, 
\begin{equation*}
\mathrm{range}~\mathscr{L}\subseteq \mathscr{J}_{\diff}^{-1}(0).
\end{equation*}

\begin{proposition}
$\mathscr{J}_{\diff}^{-1}(0)$ is a coisotropic submanifold of $ 
T^{\ast }J_{0}^{1}$. The null distribution of the pullback of $\omega $ to $ 
\mathscr{J}_{\diff}^{-1}(0)$ is spanned by the Hamiltonian vector
fields $X_{p_{t}}$ and $X_{\left\langle p_{\dot{x}},\dot{x}\right\rangle }$.
\end{proposition}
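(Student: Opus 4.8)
The plan is to identify the level set $\mathscr{J}_{\diff}^{-1}(0)$ explicitly and then apply the standard criterion: a submanifold cut out by functions with everywhere independent differentials is coisotropic precisely when their pairwise Poisson brackets vanish on it, in which case the null distribution of the restricted symplectic form is the span of the corresponding Hamiltonian vector fields.

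First I would show that
\[
\mathscr{J}_{\diff}^{-1}(0)=\{(t,x,\dot{x},p_{t},p_{x},p_{\dot{x}})\in T^{\ast }J_{0}^{1}\mid p_{t}=0,\ \left\langle p_{\dot{x}},\dot{x}\right\rangle =0\}.
\]
The inclusion $\supseteq$ is immediate from $\mathscr{J}_{\tau}=\tau(t)p_{t}-\dot{\tau}(t)\left\langle p_{\dot{x}},\dot{x}\right\rangle $. For $\subseteq$, fix a point with time coordinate $t_{*}$ and argue exactly as in the proof of Theorem \ref{JX=0}: since $\tau$ and $\dot{\tau}$ may be prescribed independently at $t_{*}$ while keeping $\dot{\tau}$ nowhere vanishing — take affine functions with $(\tau(t_{*}),\dot{\tau}(t_{*}))$ equal to $(1,1)$ and $(1,2)$, whose constant derivatives $1,2$ never vanish so that $X_{\tau}\in\diff_{+}\R$ — the equations $\mathscr{J}_{\tau}=0$ read $p_{t}-\left\langle p_{\dot{x}},\dot{x}\right\rangle =0$ and $p_{t}-2\left\langle p_{\dot{x}},\dot{x}\right\rangle =0$; subtracting gives $\left\langle p_{\dot{x}},\dot{x}\right\rangle =0$ and then $p_{t}=0$.

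Next I would check that this common zero set is a submanifold of codimension two. Put $f_{1}=p_{t}$ and $f_{2}=\left\langle p_{\dot{x}},\dot{x}\right\rangle $. Then $df_{1}=dp_{t}$, whereas $df_{2}=\left\langle dp_{\dot{x}},\dot{x}\right\rangle +\left\langle p_{\dot{x}},d\dot{x}\right\rangle $ contains no $dp_{t}$- or $dt$-component; as $\dot{x}\neq 0$ on $J_{0}^{1}$, the covector $\left\langle dp_{\dot{x}},\dot{x}\right\rangle $ is nonzero, so $df_{1}$ and $df_{2}$ are linearly independent at every point of $T^{\ast }J_{0}^{1}$. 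Hence $\mathscr{J}_{\diff}^{-1}(0)=f_{1}^{-1}(0)\cap f_{2}^{-1}(0)$ is a codimension-two submanifold. Finally I would compute the bracket: with the Hamiltonian vector fields already displayed, $X_{f_{1}}=X_{p_{t}}=\partial/\partial t$ and $X_{f_{2}}=X_{\left\langle p_{\dot{x}},\dot{x}\right\rangle }=\dot{x}\,\partial/\partial\dot{x}-p_{\dot{x}}\,\partial/\partial p_{\dot{x}}$, so $\{f_{1},f_{2}\}=X_{f_{2}}(p_{t})=0$ identically, since $p_{t}$ depends on none of $\dot{x},p_{\dot{x}}$. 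Therefore $X_{f_{1}}$ and $X_{f_{2}}$ annihilate both $f_{1}$ and $f_{2}$ everywhere and are in particular tangent to $\mathscr{J}_{\diff}^{-1}(0)$; and for a submanifold cut out by two functions with independent differentials the symplectic orthogonal of the tangent space is exactly $\spn(X_{f_{1}},X_{f_{2}})$, because $X_{f_{i}}\lefthook\omega=-df_{i}$ and $\omega$ is nondegenerate. Since this two-dimensional space lies inside the tangent space, $\mathscr{J}_{\diff}^{-1}(0)$ is coisotropic, and the kernel of $\omega$ pulled back to it — which for a coisotropic submanifold equals the whole symplectic orthogonal — is spanned by $X_{p_{t}}$ and $X_{\left\langle p_{\dot{x}},\dot{x}\right\rangle }$.

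The only step needing genuine care is the first one, the identification of $\mathscr{J}_{\diff}^{-1}(0)$ with the zero set of just two functions: one must exploit the freedom to prescribe $\tau$ and $\dot{\tau}$ independently at a point subject to $\dot{\tau}\neq 0$, precisely the device of Theorem \ref{JX=0}. Everything after that is routine linear symplectic algebra and the computation, already done in the excerpt, of the Hamiltonian vector fields of the constraints.
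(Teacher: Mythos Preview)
Your proof is correct and follows essentially the same line as the paper's, which simply cites the proof of Theorem~\ref{range} (for the submanifold property) and equation~(\ref{Poisson brackets}) (for the vanishing brackets). You have in fact filled in details the paper leaves implicit: the explicit identification of $\mathscr{J}_{\diff}^{-1}(0)$ with the common zero set of $p_t$ and $\langle p_{\dot{x}},\dot{x}\rangle$ via an independence-of-values argument for $\tau,\dot{\tau}$, and the direct verification that $dp_t$ and $d\langle p_{\dot{x}},\dot{x}\rangle$ are everywhere independent, where the paper instead relies on the parallel/perpendicular splitting from the proof of Theorem~\ref{range}.
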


\begin{proof}
This follows from the proof of theorem \ref{range} and
equation (\ref{Poisson brackets}).
\end{proof}

Integral curves of the Hamiltonian vector field $X_{p_{t}}=\frac{\partial }{ 
\partial t}$ are lines parallel to the $t$-axis. Integral curves of $ 
X_{\left\langle p_{\dot{x}},\dot{x}\right\rangle }$ satisfy equations 
\begin{eqnarray*}
\frac{d}{ds}\dot{x}(s) &=&\dot{x}(s)\text{,} \\
\frac{d}{ds}p_{\dot{x}}(s) &=&-p_{\dot{x}}(s).
\end{eqnarray*} 
Hence, for each $\boldsymbol{p}=(t,x,\dot{x},p_{t},p_{x},p_{\dot{x}\,})\in
T^{\ast }J_{0}^{1}$, the integral manifold of the distribution on $T^{\ast
}J_{0}^{1}$ spanned by $X_{p_{t}}$ and $X_{\left\langle p_{\dot{x}},\dot{x} 
\right\rangle }$ that passes through $\boldsymbol{p}$ is 
\begin{equation}
O_{\boldsymbol{p}}=\{(u,x,e^{s}\dot{x},p_{t},p_{x},e^{-s}p_{\dot{x}})\mid
(u,s)\in \R^{2}\}.  \label{O0}
\end{equation}

\begin{theorem}
\label{orbits}For each $\boldsymbol{p}\in \mathscr{J}_{\diff 
}^{-1}(0)\subset T^{\ast }J_{0}^{1}$, the orbit of
the Lie algebra $\diff_{+}\R$ through $\boldsymbol{p}$ and of the reparametrization
group $\Diff_{+}\R$ coincides with the integral manifold $O_{ 
\boldsymbol{p}}$ given by equation (\ref{O0}), where $p_{t}=0.$
\end{theorem}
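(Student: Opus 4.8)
The plan is to read off both orbits directly from the explicit formulas (\ref{lifted action}) and (\ref{O0}). Fix $\boldsymbol{p}=(u_{0},x,\dot{x},p_{t},p_{x},p_{\dot{x}})\in\mathscr{J}_{\diff}^{-1}(0)$. Since $\mathscr{J}_{\tau}=\tau(t)p_{t}-\dot{\tau}(t)\langle p_{\dot{x}},\dot{x}\rangle$ and the numbers $\tau(u_{0})$, $\dot{\tau}(u_{0})$ may be prescribed independently, membership in $\mathscr{J}_{\diff}^{-1}(0)$ is exactly the pair of relations $p_{t}=0$ and $\langle p_{\dot{x}},\dot{x}\rangle=0$. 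Substituting these into (\ref{lifted action}), the lifted action of $\varphi\in\Diff_{+}\R$ on $\boldsymbol{p}$ collapses to
\[
\boldsymbol{p}\longmapsto\bigl(\varphi(u_{0}),\,x,\,\dot{\varphi}(u_{0})^{-1}\dot{x},\,0,\,p_{x},\,\dot{\varphi}(u_{0})\,p_{\dot{x}}\bigr).
\]
I would then note that the evaluation map $\Diff_{+}\R\to\R\times(0,\infty)$, $\varphi\mapsto(\varphi(u_{0}),\dot{\varphi}(u_{0}))$, is surjective, since already the affine subgroup $\{t\mapsto a+bt:b>0\}\subset\Diff_{+}\R$ realizes every value $(u,v)$ with $v>0$. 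Writing $\dot{\varphi}(u_{0})=e^{-s}$ and $\varphi(u_{0})=u$, the $\Diff_{+}\R$-orbit of $\boldsymbol{p}$ is therefore precisely $\{(u,x,e^{s}\dot{x},0,p_{x},e^{-s}p_{\dot{x}})\mid(u,s)\in\R^{2}\}$, which is $O_{\boldsymbol{p}}$ with $p_{t}=0$. This settles the group statement.

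For the $\diff_{+}\R$-orbit I would proceed infinitesimally, so as to reconnect with the preceding discussion. Using $\omega$ in the form (\ref{omega}) together with $\mathscr{J}_{\tau}=\tau p_{t}-\dot{\tau}\langle p_{\dot{x}},\dot{x}\rangle$, a short computation gives
\[
X_{\mathscr{J}_{\tau}}=\tau\,X_{p_{t}}-\dot{\tau}\,X_{\langle p_{\dot{x}},\dot{x}\rangle}+\bigl(\langle p_{\dot{x}},\dot{x}\rangle\ddot{\tau}-p_{t}\dot{\tau}\bigr)\frac{\partial}{\partial p_{t}},
\]
with $X_{p_{t}}=\partial/\partial t$ and $X_{\langle p_{\dot{x}},\dot{x}\rangle}=\dot{x}\,\partial/\partial\dot{x}-p_{\dot{x}}\,\partial/\partial p_{\dot{x}}$ as exhibited above. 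On $\mathscr{J}_{\diff}^{-1}(0)$ the last term vanishes, so every fundamental vector field restricts to a combination of $X_{p_{t}}$ and $X_{\langle p_{\dot{x}},\dot{x}\rangle}$; letting $\tau(u_{0}),\dot{\tau}(u_{0})$ vary (with $\dot{\tau}(u_{0})\ne0$, as required of elements of $\diff_{+}\R$) these combinations span, at $\boldsymbol{p}$ and at every point of $O_{\boldsymbol{p}}$, the full two-plane $\langle X_{p_{t}},X_{\langle p_{\dot{x}},\dot{x}\rangle}\rangle$, which is of constant rank $2$ because $\dot{x}\ne0$ on $J^{1}_{0}$. By (\ref{Poisson brackets}) this distribution on $\mathscr{J}_{\diff}^{-1}(0)$ is involutive (the bracket $[X_{p_{t}},X_{\langle p_{\dot{x}},\dot{x}\rangle}]$ vanishes since $\{p_{t},\langle p_{\dot{x}},\dot{x}\rangle\}=0$), so the orbit of $\diff_{+}\R$ through $\boldsymbol{p}$ is its maximal integral manifold; since the integral curves of $X_{p_{t}}$ are the lines parallel to the $t$-axis and those of $X_{\langle p_{\dot{x}},\dot{x}\rangle}$ are $s\mapsto(e^{s}\dot{x},e^{-s}p_{\dot{x}})$ — as recorded just before the theorem — that integral manifold is exactly $O_{\boldsymbol{p}}$.

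The step that needs the most care is the legitimacy of the last paragraph's passage from ``the fundamental vector fields lie in an involutive rank-$2$ distribution'' to ``the orbit is its integral leaf $O_{\boldsymbol{p}}$'' for the infinite-dimensional group $\Diff_{+}\R$: one must know that the flows of the fundamental vector fields preserve $\mathscr{J}_{\diff}^{-1}(0)$, so that the accessible set really does not pick up directions outside the two-plane. This follows from the cotangent-lift structure of the action on $T^{\ast}J_{0}^{1}$, which makes $\mathscr{J}_{\diff}$ an equivariant momentum map and hence $\mathscr{J}_{\diff}^{-1}(0)$ invariant (equivalently, $\{\mathscr{J}_{\tau_{1}},\mathscr{J}_{\tau_{2}}\}=\mathscr{J}_{[\tau_{1},\tau_{2}]}$ vanishes there). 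Everything else is routine substitution into (\ref{lifted action}) and (\ref{O0}), and the three descriptions — $\diff_{+}\R$-orbit, $\Diff_{+}\R$-orbit, and $O_{\boldsymbol{p}}$ — then visibly agree.
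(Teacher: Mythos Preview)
Your proof is correct and follows essentially the same route as the paper: compute the lifted $\Diff_{+}\R$-action explicitly on $\mathscr{J}_{\diff}^{-1}(0)$ and use the independence of $\varphi(u_{0})$ and $\dot\varphi(u_{0})$ to identify the group orbit with $O_{\boldsymbol{p}}$, and for the Lie-algebra orbit observe that $X_{\mathscr{J}_{\tau}}$ restricted to the zero set is a combination of $X_{p_{t}}$ and $X_{\langle p_{\dot{x}},\dot{x}\rangle}$, whose integral leaf is $O_{\boldsymbol{p}}$. Your argument is in fact a bit more explicit than the paper's in two places---you exhibit the affine subgroup to witness surjectivity of $\varphi\mapsto(\varphi(u_{0}),\dot\varphi(u_{0}))$, and you display the extra $\partial/\partial p_{t}$ term in $X_{\mathscr{J}_{\tau}}$ and explain why the accessible set stays inside the leaf---but the strategy is the same.
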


\begin{proof}
Orbits of the action of the Lie algebra $\diff_{+}\R$ on $ 
T^{\ast }J_{0}^{1}$ are orbits (accessible sets) of the family $\{X_{ 
\mathscr{J}_{\tau }}\mid \tau \frac{\partial }{\partial t}\in \diff 
_{+}\R\}$ of Hamiltonian vector fields on $T^{\ast }J_{0}^{1}$.
Since $X_{\mathscr{J}_{\tau }}=X_{\tau p_{t}}-X_{\dot{\tau}\left\langle p_{ 
\dot{x}},\dot{x}\right\rangle }$ and $\mathscr{J}_{\tau }$ vanishes on $ 
\mathscr{J}_{\diff}^{-1}(0)$, it follows that the restriction of $X_{ 
\mathscr{J}_{\tau }}$ to $\mathscr{J}_{\diff}^{-1}(0)$ is 
\begin{equation*}
X_{\mathscr{J}_{\tau }\mid \mathscr{J}_{\diff}^{-1}(0)}=\tau
X_{p_{t}\mid \mathscr{J}_{\diff}^{-1}(0)}-\dot{\tau}X_{\left\langle
p_{\dot{x}},\dot{x}\right\rangle \mid \mathscr{J}_{\diff}^{-1}(0)} 
\text{.}
\end{equation*} 
Therefore, $X_{\mathscr{J}_{\tau }\mid \mathscr{J}_{\diff}^{-1}(0)}$
is contained in the distribution spanned by $X_{p_{t}\mid \mathscr{J}_{ 
\diff}^{-1}(0)}$ and $X_{\left\langle p_{\dot{x}},\dot{x} 
\right\rangle \mid \mathscr{J}_{\diff}^{-1}(0)}$. Hence, for each $ 
\boldsymbol{p}\in \mathscr{J}_{\diff}^{-1}(0)$, the orbit of $\diff_{+} 
\R$ through $\boldsymbol{p}$ coincides with the integral manifold $O_{\boldsymbol{p}}$ given
by equation (\ref{O0}).

The reparametrization group $\Diff_{+}\R$ acts on $J_{0}^{1}$
by
\begin{equation*}
\Diff_{+}\R\times J_{0}^{1}\rightarrow J_{0}^{1}:(\varphi
,(t,x,\dot{x}))\mapsto \left( \varphi (t),x,\frac{\dot{x}}{\dot{\varphi}(t)} 
\right) ,
\end{equation*} 
where $\varphi $ is a smooth function on $\R$ such that $\dot{\varphi 
}(t)>0$ for $t\in \R$.\ The lift of this action to $T^{\ast }J_{0}^{1}$
is 
\begin{equation*}
\Diff_{+}\R\times T^{\ast }J_{0}^{1}\rightarrow T^{\ast
}J_{0}^{1}:(\varphi ,\boldsymbol{p})\mapsto 
\left(
\varphi (t),x,\frac{\dot{x}}{\dot{\varphi}(t)},\frac{p_{t}}{\dot{\varphi}(t)} 
+\frac{\left\langle p_{\dot{x}},\dot{x}\right\rangle \ddot{\varphi}(t)}{\dot{ 
\varphi}(t)^{2}},p_{x},\dot{\varphi}(t)p_{\dot{x}}\right) .
\end{equation*} 
It preserves $\mathscr{J}_{\diff}^{-1}(0)$. Hence, the orbit of $ 
\Diff_{+}\R$ is 
\begin{equation*}
\{(\varphi (t),x,\dot{\varphi}^{-1}(t)\dot{x},\dot{\varphi} 
(t)^{-1}p_{t},p_{x},\dot{\varphi}(t)p_{\dot{x}})\in T^{\ast }J_{0}^{1}\mid
\varphi \in C^{\infty }(R)\text{, }\dot{\varphi}(t)>0\}.
\end{equation*} 
The action of $\Diff_{+}\R$ preserves $\mathscr{J}_{\mathrm{ 
diff}}^{-1}(0)$, given by $p_{t}=0$ and $\left\langle p_{\dot{x}},\dot{x} 
\right\rangle =0$. Hence,\ orbits of $\Diff_{+}\R$ contained
in $\mathscr{J}_{\diff}^{-1}(0)$ are 
\begin{equation*}
\{(\varphi (t),x,\dot{\varphi}^{-1}(t)\dot{x},0,p_{x},\dot{\varphi}(t)p_{ 
\dot{x}})\in T^{\ast }J_{0}^{1}\mid \left\langle p_{\dot{x}},\dot{x} 
\right\rangle =0\text{, }\varphi \in C^{\infty }(R)\text{, }\dot{\varphi} 
(t)>0\}.
\end{equation*} 
For each $t$, $\varphi (t)=u$ and $\dot{\varphi}(t)=-s$, are independent.
Therefore, orbits of $\Diff_{+}\R$\ contained in $\mathscr{J} 
_{\diff}^{-1}(0)$ coincide with the corresponding integral manifolds
given by equation (\ref{O0}).
\end{proof}

\subsection{Reduction of Diff$_{+}\R$ symmetries}

In this section, we discuss the space 
\begin{equation*}
R=\mathscr{J}_{\diff}^{-1}(0)/\Diff_{+}\R
\end{equation*} 
of $\Diff_{+}\R$-orbits in $\mathscr{J}_{\diff 
}^{-1}(0)$. According to Theorem \ref{orbits}, the reduced phase space $R$
is the space of integral manifolds in $\mathscr{J}_{\diff}^{-1}(0)$
of the distribution spanned by $X_{p_{t}}$ and $X_{\left\langle p_{\dot{x}}, 
\dot{x}\right\rangle }$. We have shown that the orbit of the vector fields $ 
\{X_{p_{t}},X_{\left\langle p_{\dot{x}},\dot{x}\right\rangle }\}$ through $ 
\boldsymbol{p}=(t,x,\dot{x},p_{t},p_{x},p_{\dot{x}})\in \mathscr{J}_{\mathrm{ 
diff}}^{-1}(0)$ is 
\begin{equation}
O_{\boldsymbol{p}}=\{(u,x,e^{s}\dot{x},p_{t},p_{x},e^{-s}p_{\dot{x}})\mid
(u,s)\in \R^{2}\}.  \label{orbit}
\end{equation}

We are going to show $R$ is a quotient manifold of $\mathscr{J}_{\mathrm{diff 
}}^{-1}(0)$, which will imply that $R$ has a unique symplectic form $ 
\omega _{R}$ such that 
\begin{equation}
\rho ^{\ast }\omega _{R}=\iota ^{\ast }\omega ,  \label{omegaR}
\end{equation} 
where $\iota :\mathscr{J}_{\diff}^{-1}(0)\rightarrow T^{\ast
}J_{0}^{1}$ is the inclusion map.

In order to parametrize the reduced phase space $R$, define spherical
 coordinates $(\dot{r},$ $\dot{\alpha},\dot{\beta})$ by 
\begin{eqnarray}
\dot{x}^{1} &=&\dot{r}\sin \dot{\beta}\cos \dot{\alpha},
\label{polar coordinates} \\
\dot{x}^{2} &=&\dot{r}\sin \dot{\beta}\sin \dot{\alpha},  \notag \\
\dot{x}^{3} &=&\dot{r}\cos \dot{\beta},  \notag
\end{eqnarray} 
together with the dual momentum variables 
$(p_{\dot{r}},p_{\dot{\beta}},p_{\dot{\alpha} 
})$ defined by 
\begin{equation}
p_{\dot{x}} d\dot{x} =p_{\dot{r}}d\dot{r}+p_{\dot{ 
\beta}}d\dot{\beta}+p_{\dot{\alpha}}d\dot{\alpha}.  \label{polar momenta}
\end{equation}

\begin{proposition}
$p_{\dot{r}}=\left\langle p_{\dot{x}},\dot{x}\right\rangle /\dot{r}$.
\end{proposition}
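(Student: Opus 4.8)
The statement asserts that the radial momentum $p_{\dot r}$ conjugate to the spherical radial coordinate $\dot r = |\dot x|$ equals $\langle p_{\dot x},\dot x\rangle/\dot r$. The natural approach is to compute $d\dot r$ explicitly in terms of $d\dot x^i$ and match coefficients in the defining relation $p_{\dot x}\,d\dot x = p_{\dot r}\,d\dot r + p_{\dot\beta}\,d\dot\beta + p_{\dot\alpha}\,d\dot\alpha$. First I would observe that $\dot r^2 = \langle \dot x,\dot x\rangle = \sum_i (\dot x^i)^2$, so differentiating gives $\dot r\, d\dot r = \sum_i \dot x^i\, d\dot x^i = \langle \dot x, d\dot x\rangle$, hence $d\dot r = \langle \dot x, d\dot x\rangle/\dot r$. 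Equivalently, $d\dot r = \langle \dot x/\dot r, d\dot x\rangle$, i.e. the unit radial covector.

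The second ingredient is that $(\dot r,\dot\alpha,\dot\beta)$ is an orthogonal-type coordinate system in the sense that the coordinate vector fields $\partial/\partial\dot\alpha$ and $\partial/\partial\dot\beta$ (at fixed $\dot r$) are tangent to the sphere $\{|\dot x| = \mathrm{const}\}$, so $\langle \dot x, \partial \dot x/\partial\dot\alpha\rangle = \langle \dot x, \partial \dot x/\partial\dot\beta\rangle = 0$ — this is immediate from $\langle \dot x,\dot x\rangle$ being constant along those directions. Writing $p_{\dot x}\,d\dot x = \langle p_{\dot x}, d\dot x\rangle$ and decomposing $d\dot x = (\partial\dot x/\partial\dot r)\,d\dot r + (\partial\dot x/\partial\dot\alpha)\,d\dot\alpha + (\partial\dot x/\partial\dot\beta)\,d\dot\beta$ with $\partial\dot x/\partial\dot r = \dot x/\dot r$ (from the explicit formulas (\ref{polar coordinates})), one reads off
\begin{equation*}
p_{\dot r} = \left\langle p_{\dot x}, \frac{\partial\dot x}{\partial\dot r}\right\rangle = \left\langle p_{\dot x}, \frac{\dot x}{\dot r}\right\rangle = \frac{\langle p_{\dot x},\dot x\rangle}{\dot r},
\end{equation*}
which is the claim. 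The coefficients $p_{\dot\alpha}$ and $p_{\dot\beta}$ come out as $\langle p_{\dot x},\partial\dot x/\partial\dot\alpha\rangle$ and $\langle p_{\dot x},\partial\dot x/\partial\dot\beta\rangle$ respectively, but these are not needed here.

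There is essentially no obstacle: the only thing to be slightly careful about is the bookkeeping that the transformation $\dot x \mapsto (\dot r,\dot\alpha,\dot\beta)$ is a diffeomorphism on the open set $\dot x \neq 0$ (excluding the usual coordinate singularities of spherical coordinates), so that the change-of-coordinates identity (\ref{polar momenta}) for the canonical one-form is legitimate and the coefficient-matching is valid. I would state the computation of $d\dot r$ and the orthogonality $\langle \dot x, \partial\dot x/\partial\dot\alpha\rangle = \langle\dot x,\partial\dot x/\partial\dot\beta\rangle = 0$ as the two key identities, then conclude by reading off the $d\dot r$-coefficient. The whole proof is two or three lines.
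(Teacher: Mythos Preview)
Your proposal is correct and is precisely the simple verification the paper alludes to: the paper's proof consists of the single sentence ``This is a simple verification,'' and your computation of $\partial\dot x/\partial\dot r = \dot x/\dot r$ together with coefficient-matching in the identity $p_{\dot x}\,d\dot x = p_{\dot r}\,d\dot r + p_{\dot\beta}\,d\dot\beta + p_{\dot\alpha}\,d\dot\alpha$ is exactly that verification spelled out.
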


\begin{proof}
This is a simple verification.
\end{proof}

Denote by $\rho :\mathscr{J}_{\diff}^{-1}(0)\rightarrow R$ the
reduction map associating to each point in $\mathscr{J}_{\diff 
}^{-1}(0)$ the orbit of $\{X_{p_{t}},X_{\left\langle p_{\dot{x}},\dot{x} 
\right\rangle }\}$ through that point.$\,$\ Let 
\begin{equation*}
S=\{(x,\dot{x})\in T\R^{3}\mid \left\vert \dot{x}\right\vert =1\}
\end{equation*} 
be the unit sphere bundle over $\R^{3}$ parametrized by coordinates $ 
(x,\dot{\alpha},\dot{\beta}).$ The Liouville form of $T^{\ast }S$ is 
\begin{equation}
\theta _{S}= p_{x} dx +p_{\dot{\beta}}d\dot{\beta} 
+p_{\dot{\alpha}}d\dot{\alpha},  \label{thetaS}
\end{equation} 
and 
\begin{equation*}
\omega _{S}=d\theta _{S}
\end{equation*} 
is the canonical symplectic form of $T^{\ast }S.$

\begin{proposition}
\label{symplectomorphism}There is a unique symplectomorphism $\varkappa
:(R,\omega _{R})\rightarrow (T^{\ast }S,\omega _{S})$ such that 
\begin{equation*}
\varkappa \circ \rho :\mathscr{J}_{\diff}^{-1}(0)\rightarrow T^{\ast }S:(t,x,\dot{x} 
,0,p_{x},p_{\dot{x}})\mapsto (x,\dot{\beta},\dot{\alpha},p_{x},p_{\dot{\beta}},p_{ 
\dot{\alpha}}),
\end{equation*} 
where the $(\dot{\beta},\dot{\alpha},p_{\dot{\beta}},p_{\dot{\alpha}})$ are
related to $(x,\dot{x},p_{x},p_{\dot{x}})$ by equations (\ref{polar
coordinates}) and (\ref{polar momenta}).
\end{proposition}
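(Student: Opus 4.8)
The plan is to build $\varkappa$ out of a concrete map $\Psi\colon\mathscr{J}_{\diff}^{-1}(0)\to T^{\ast}S$ and then show it factors through $\rho$. Intrinsically, $\Psi$ discards the coordinate $t$ and the momentum $p_{t}$ (which vanishes on $\mathscr{J}_{\diff}^{-1}(0)$), keeps $(x,p_{x})$, sends $\dot{x}$ to the unit vector $\dot{x}/|\dot{x}|\in S$ over $x$, and sends the covector $p_{\dot{x}}$ — which on $\mathscr{J}_{\diff}^{-1}(0)$ is orthogonal to $\dot{x}$ since $\langle p_{\dot{x}},\dot{x}\rangle=0$ — to the induced covector on $S$; in the spherical chart (\ref{polar coordinates}), (\ref{polar momenta}) this is precisely $(t,x,\dot{x},0,p_{x},p_{\dot{x}})\mapsto(x,\dot{\beta},\dot{\alpha},p_{x},p_{\dot{\beta}},p_{\dot{\alpha}})$. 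Because $p_{t}=0$ and, by the proposition identifying $p_{\dot{r}}=\langle p_{\dot{x}},\dot{x}\rangle/\dot{r}$, also $p_{\dot{r}}=0$ on $\mathscr{J}_{\diff}^{-1}(0)$, the functions $(t,x,\dot{r},\dot{\beta},\dot{\alpha},p_{x},p_{\dot{\beta}},p_{\dot{\alpha}})$ form a coordinate system there, and $\Psi$ is in these coordinates the projection dropping $t$ and $\dot{r}$; hence $\Psi$ is a surjective submersion. (The poles of the spherical chart are harmless: $\Psi$ is globally defined as the projectivization of $\dot{x}$ together with the cotangent correspondence, so the same local description holds in any rotated chart.)

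Next I would check that the fibres of $\Psi$ are exactly the orbits $O_{\boldsymbol{p}}$ of equation (\ref{orbit}). Along $O_{\boldsymbol{p}}$ the pair $(\dot{x},p_{\dot{x}})$ changes to $(e^{s}\dot{x},e^{-s}p_{\dot{x}})$ while $t$ is arbitrary: the unit vector $\dot{x}/|\dot{x}|$ and $(x,p_{x})$ are unchanged, $\dot{r}\mapsto e^{s}\dot{r}$, and $p_{\dot{\alpha}}=\dot{r}\,\langle p_{\dot{x}},\partial_{\dot{\alpha}}(\dot{x}/\dot{r})\rangle$ and the analogous $p_{\dot{\beta}}$ are invariant because the factor $\dot{r}$ cancels the factor $e^{-s}$; thus $\Psi$ is constant on each $O_{\boldsymbol{p}}$. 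Conversely, if $\Psi(\boldsymbol{p}_{1})=\Psi(\boldsymbol{p}_{2})$ then $x,p_{x}$ agree, the $\dot{x}_{i}=\dot{r}_{i}\,\hat{n}$ are positive multiples of a common unit vector, and equality of $p_{\dot{\alpha}},p_{\dot{\beta}}$ forces $\dot{r}_{1}p_{\dot{x},1}=\dot{r}_{2}p_{\dot{x},2}$; taking $s=\ln(\dot{r}_{2}/\dot{r}_{1})$ exhibits $\boldsymbol{p}_{2}\in O_{\boldsymbol{p}_{1}}$. So $\Psi$ is a surjective submersion whose connected fibres ($O_{\boldsymbol{p}}\cong\R^{2}$ by (\ref{orbit})) coincide with the $\Diff_{+}\R$-orbits in $\mathscr{J}_{\diff}^{-1}(0)$ described in Theorem \ref{orbits}. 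Since a surjective submersion realizes its target as the quotient of its domain by the fibre equivalence relation, and here that relation is the $\Diff_{+}\R$-orbit relation, the orbit space $R$ acquires the unique smooth structure making $\rho$ a submersion, $R$ is a quotient manifold of $\mathscr{J}_{\diff}^{-1}(0)$, and $\Psi$ factors as $\Psi=\varkappa\circ\rho$ for a diffeomorphism $\varkappa\colon R\to T^{\ast}S$, unique because $\rho$ is surjective.

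Finally I would verify the symplectic condition by comparing the tautological one-forms directly. Restricting $\theta=p_{t}\,dt+p_{x}\,dx+p_{\dot{x}}\,d\dot{x}$ to $\mathscr{J}_{\diff}^{-1}(0)$ and using $p_{t}=0$ together with $p_{\dot{x}}\,d\dot{x}=p_{\dot{r}}\,d\dot{r}+p_{\dot{\beta}}\,d\dot{\beta}+p_{\dot{\alpha}}\,d\dot{\alpha}$, $p_{\dot{r}}=0$, gives $\iota^{\ast}\theta=p_{x}\,dx+p_{\dot{\beta}}\,d\dot{\beta}+p_{\dot{\alpha}}\,d\dot{\alpha}$; since $\Psi$ is the identity on the coordinates $(x,\dot{\beta},\dot{\alpha},p_{x},p_{\dot{\beta}},p_{\dot{\alpha}})$, the form $\theta_{S}$ of (\ref{thetaS}) pulls back to $\Psi^{\ast}\theta_{S}=p_{x}\,dx+p_{\dot{\beta}}\,d\dot{\beta}+p_{\dot{\alpha}}\,d\dot{\alpha}=\iota^{\ast}\theta$. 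Taking exterior derivatives, $\iota^{\ast}\omega=\Psi^{\ast}\omega_{S}=\rho^{\ast}(\varkappa^{\ast}\omega_{S})$, so $\omega_{R}:=\varkappa^{\ast}\omega_{S}$ is the unique $2$-form on $R$ with $\rho^{\ast}\omega_{R}=\iota^{\ast}\omega$, i.e. with (\ref{omegaR}) — uniqueness because $\rho$, being a surjective submersion, has injective pullback on forms — it is symplectic since $\omega_{S}$ is and $\varkappa$ is a diffeomorphism, and $\varkappa$ is then tautologically a symplectomorphism. The only nonformal step is the fibre/orbit identification, and within it the observation that the angular momenta $p_{\dot{\alpha}},p_{\dot{\beta}}$ are scale invariant because the factor $\dot{r}$ in (\ref{polar momenta}) exactly cancels the $e^{-s}$ rescaling of $p_{\dot{x}}$; the fact that $\iota^{\ast}\omega$ is basic for $\rho$ is, as expected, consistent with the earlier computation that its null distribution is spanned by $X_{p_{t}}$ and $X_{\langle p_{\dot{x}},\dot{x}\rangle}$.
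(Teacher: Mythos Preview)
Your argument is correct and is essentially the paper's proof, organized a bit differently. The paper carries out the reduction in two stages: first it forms $R_{1}=p_{t}^{-1}(0)/X_{p_{t}}$, then pushes $\langle p_{\dot{x}},\dot{x}\rangle$ down to $R_{1}$, observes that on the constraint set its Hamiltonian vector field is proportional to $\partial/\partial\dot{r}$, and reduces again to obtain $R_{2}$ parametrized by $(x,\dot{\beta},\dot{\alpha},p_{x},p_{\dot{\beta}},p_{\dot{\alpha}})$, which it identifies with $T^{\ast}S$. You collapse both stages into a single surjective submersion $\Psi$, verify directly that its fibres coincide with the orbits $O_{\boldsymbol{p}}$ of (\ref{orbit}), and check the symplectic condition at the level of Liouville forms ($\iota^{\ast}\theta=\Psi^{\ast}\theta_{S}$) rather than symplectic forms. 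The key computation---that $p_{\dot{\alpha}},p_{\dot{\beta}}$ are invariant under the scaling $(\dot{x},p_{\dot{x}})\mapsto(e^{s}\dot{x},e^{-s}p_{\dot{x}})$ because the factor of $\dot{r}$ in $\partial\dot{x}/\partial\dot{\alpha}$ cancels the $e^{-s}$---is the same content as the paper's observation that the Hamiltonian vector field of $\langle p_{\dot{x}},\dot{x}\rangle_{1}$ is proportional to $\partial/\partial\dot{r}$. Your one-shot route is slightly more self-contained and makes the uniqueness of $\varkappa$ and of $\omega_{R}$ explicit via the universal property of the submersion; the paper's staged route makes clearer that each constraint is being reduced in turn.
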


\begin{proof}
Consider first the space $R_{1}=p_{t}^{-1}(0)/X_{p_{t}}$ of integral curves
of $X_{p_{t}}$ in $p_{t}^{-1}(0)$. It is a quotient manifold of $p_{t}^{-1}(0)$
with projection map 
\begin{equation*}
\rho _{1}:p_{t}^{-1}(0)\rightarrow R_{1}:(t,x,\dot{x},0,p_{x},p_{\dot{x} 
})\mapsto (x,\dot{x},p_{x},p_{\dot{x}}).
\end{equation*} 
Moreover, it is a symplectic manifold with the symplectic form 
\begin{equation*}
\omega _{1}=dp_{x}\wedge dx+dp_{\dot{x}}\wedge d\dot{x}.
\end{equation*} 
The constraint function $\left\langle p_{\dot{x}},\dot{x}\right\rangle $ is
left invariant by the action $X_{p_{t}}$, and  pushes forward to a
function on $R_{1}$, denoted by $\left\langle p_{\dot{x}},\dot{x} 
\right\rangle _{1}$. That is, $\left\langle p_{\dot{x}},\dot{x}\right\rangle
=\rho _{1}^{\ast }\left\langle p_{\dot{x}},\dot{x}\right\rangle _{1}.$
Moreover, the Hamiltonian vector field $X_{\left\langle p_{\dot{x}},\dot{x} 
\right\rangle }$ restricted to $p_{t}^{-1}(0)$ pushes forward to the
Hamiltonian vector field on $R_{1}$ corresponding to the function $ 
\left\langle p_{\dot{x}},\dot{x}\right\rangle_1 $ on $R_{1}$. Denote
this vector field by $X_{\left\langle p_{\dot{x}},\dot{x}\right\rangle _{1}}$.

By definition, $\dot{r}=\left\vert \dot{x}\right\vert \neq 0$ on $T^{\ast
}J_{0}^{1}$. Since $p_{\dot{r}}\dot{r}=\left\langle p_{\dot{x}},\dot{x} 
\right\rangle $, it follows that on $\rho _{1}\left(\mathscr{J}_{\diff 
}^{-1}(0)\right)$ the Hamiltonian vector field of $\left\langle 
p_{\dot{x}},\dot{x} 
\right\rangle _{1}$ is proportional to the Hamiltonian vector field $ 
X_{p_{r}}=\frac{\partial }{\partial \dot{r}}$. Therefore, the space $ 
R_{2}=\left\langle p_{\dot{x}},\dot{x}\right\rangle
_{1}^{-1}(0)/X_{\left\langle p_{\dot{x}},\dot{x}\right\rangle _{1}}$ of
orbits of $X_{\left\langle p_{\dot{x}},\dot{x}\right\rangle _{1}}$ in $ 
\left\langle p_{\dot{x}},\dot{x}\right\rangle _{1}^{-1}(0)$ can be
parametrized by $(x,\dot{\beta},\dot{\alpha},p_{x},p_{\dot{\beta}},p_{\dot{ 
\alpha}})$. It is a symplectic manifold with the symplectic form 
\begin{equation*}
\omega _{2}=dp_{x}\wedge dx+dp_{\dot{\beta}}\wedge d\dot{\beta}+dp_{\dot{ 
\alpha}}\wedge d\dot{\alpha}.
\end{equation*}

The coordinates $(x,\dot{\beta},\dot{\alpha},p_{x},p_{\dot{\beta}},p_{\dot{ 
\alpha}})$ define a symplectomorphism between $(R_{2},\omega _{2})$ and $ 
(T^{\ast }S,\omega _{S})$, where $\omega _{s}$ is the pullback to $T^{\ast
}S$ of the canonical symplectic form on $T^{\ast}(T\R^{3})$. However, 
$R=\mathscr{J}_{\diff}^{-1}(0)/\{X_{p_{t}},X_{\left 
\langle p_{\dot{x}},\dot{x}\right\rangle }\}$ with the symplectic form $ 
\omega _{R}$ is naturally symplectomorphic to $(R_{2},\omega _{2})$. Hence, $ 
(R,\omega _{R})$ is symplectomorphic to $(T^{\ast }S,\omega _{S}).$
\end{proof}

It follows from Proposition \ref{symplectomorphism} that we may identify $ 
(R,\omega _{R})$ with $(T^{\ast }S,\omega _{S})$.

The action of the Euclidean group $\SE(3)$ on $\R^{3}$ induces a Hamiltonian 
action of $E$ on $T^{\ast }J_{0}^{1}$ generated by the Hamiltonian vector 
fields $X_{p_{x}}$, 
$X_{p_{\dot{\beta}}}$ and $X_{p_{\dot{\alpha} 
}}$. This action preserves the constraint functions $p_{t},$ $\left\langle
p_{\dot{x}},\dot{x}\right\rangle $ and $h$. In particular, it induces an
action of $\SE(3)$ on the zero level set $\mathscr{J}_{\diff}^{-1}(0)$ of
the momentum map for the action of $\diff_{+}\R$. On the
other hand, the action of $E$ on $\R^{3}$ induces a Hamiltonian action of 
$\SE(3)$ on $T^{\ast }S$, presented as 
\begin{equation*}
T^{\ast }S=\{\,(x,\dot{x},p_{x},p_{\dot{x}})\in T^{\ast }(T\R^{3})\mid 
\left\vert \dot{x}\right\vert =1,~p_{\dot{r}}=0\,\},
\end{equation*} 
which is generated by the Hamiltonian vector fields of $p_{x},~p_{\dot{\beta} 
}$, and $p_{\dot{\alpha}}$ considered as functions on $T^{\ast }S$.
Moreover, these actions of $\SE(3)$ are intertwined by the reduction map $\rho 
: 
\mathscr{J}_{\diff}^{-1}(0)\rightarrow R$ followed by the
identification $R\cong T^{\ast }S$.

\subsection{Hamiltonian dynamics}

The range of the Legendre transformation is characterized as 
\begin{equation*}
\mathrm{range}~\mathscr{L}=\mathscr{J}_{\diff}^{-1}(0)\cap h^{-1}(0),
\end{equation*} 
where
\begin{equation*}
h=\frac{\left\vert \dot{x}\right\vert ^{2}}{4}\left\langle p_{\dot{x}},p_{ 
\dot{x}}\right\rangle +\frac{\left\langle p_{x},\dot{x}\right\rangle }{ 
\left\vert \dot{x}\right\vert }.
\end{equation*} 
The Hamiltonian vector field of $h$ is 
\begin{equation*}
X_{h}=\frac{1}{2}\left\langle \dot{x},\dot{x}\right\rangle p_{\dot{x}}\frac{ 
\partial }{\partial \dot{x}}+\frac{1}{\left\langle \dot{x},\dot{x} 
\right\rangle ^{1/2}}\dot{x}\frac{\partial }{\partial x}-\frac{1}{ 
\left\langle \dot{x},\dot{x}\right\rangle ^{1/2}}p_{x}\frac{\partial }{ 
\partial p_{\dot{x}}}+\left( -\frac{1}{2}\left\langle p_{\dot{x}},p_{\dot{x} 
}\right\rangle +\frac{\left\langle p_{x},\dot{x}\right\rangle }{\left\langle 
\dot{x},\dot{x}\right\rangle ^{3/2}}\right) \dot{x}\frac{\partial }{\partial
p_{\dot{x}}}.
\end{equation*} 
In order to find the integral curves of $X_{h}$ on the range of 
$\mathscr{L}$,
observe that they satisfy the equations 
\begin{eqnarray*}
\frac{d}{ds}x &=&\frac{1}{\left\langle \dot{x},\dot{x}\right\rangle ^{1/2}} 
\dot{x}, \\
\frac{d}{ds}\dot{x} &=&\frac{1}{2}\left\langle \dot{x},\dot{x}\right\rangle
p_{\dot{x}}, \\
\frac{d}{ds}p_{\dot{x}} &=&-\frac{1}{\left\langle \dot{x},\dot{x} 
\right\rangle ^{1/2}}p_{x}+\left( -\frac{1}{2}\left\langle p_{\dot{x}},p_{ 
\dot{x}}\right\rangle +\frac{\left\langle p_{x},\dot{x}\right\rangle }{ 
\left\langle \dot{x},\dot{x}\right\rangle ^{3/2}}\right) \dot{x}, \\
\frac{d}{ds}p_{x} &=&0, \\
\frac{d}{ds}t &=&0.
\end{eqnarray*} 
Multiplying equation $\frac{d}{ds}\dot{x}=\frac{1}{2}\left\langle \dot{x}, 
\dot{x}\right\rangle p_{\dot{x}}$ by $\dot{x}$, and using the constraint
equation $\left\langle p_{\dot{x}},\dot{x}\right\rangle =0$, yields $\frac{d 
}{ds}\left\langle \dot{x},\dot{x}\right\rangle =0$, hence $\left\langle \dot{ 
x}(s),\dot{x}(s)\right\rangle =\left\vert \dot{x}_{0}\right\vert ^{2}$. 
Since $h$ is reparametrization invariant, without loss of generality, we may
assume that $\left\vert \dot{x}_{0}\right\vert =1$, that is, the
arclength parametrization. Moreover, on the range of $\mathscr{L}$, $h=0$,
\ which implies that $\frac{1}{2}\left\langle p_{\dot{x}},p_{\dot{x} 
}\right\rangle =-2\left\langle p_{x},\dot{x}\right\rangle $. This leads to 
\begin{eqnarray}
\frac{d}{ds}x &=&\dot{x},  \label{x} \\
\frac{d}{ds}\dot{x} &=&\frac{1}{2}p_{\dot{x}},  \label{xdot} \\
\frac{d}{ds}p_{\dot{x}} &=&-p_{x}+3\left\langle p_{x},\dot{x}\right\rangle 
\dot{x},  \label{dpxdot} \\
\frac{d}{ds}p_{x} &=&0,  \label{dpx} \\
\frac{d}{ds}t &=&0.  \label{dt}
\end{eqnarray}
Equation (\ref{dpx}) implies that $p_{x}$ is constant. The angular momentum
is given by $l=x\times p_{x}+\dot{x}\times p_{\dot{x}}$ (see equation (\ref 
{l2}.)  Hence, 
\begin{eqnarray*}
\frac{d}{ds}l &=&\left( \frac{d}{ds}x\right) \times p_{x}+x\times \left( 
\frac{d}{ds}p_{x}\right) +\left( \frac{d}{ds}\dot{x}\right) \times p_{\dot{x} 
}+\dot{x}\times \left( \frac{d}{ds}p_{\dot{x}}\right) \\
&=&\dot{x}\times p_{x}+\frac{1}{2}p_{\dot{x}}\times p_{\dot{x}}+\dot{x} 
\times \left( -p_{x}+3\left\langle p_{x},\dot{x}\right\rangle \dot{x}\right)
\\
&=&0,
\end{eqnarray*} 
and so is conserved. Therefore, $\left\langle p_x,l\right\rangle
=\left\langle p_{x},\dot{x}\times p_{\dot{x}}\right\rangle $ is also
conserved.
Multiplying equations (\ref{xdot}) and (\ref{dpxdot}) by $ 
p_{x}$ yields  
\begin{eqnarray*}
\frac{d}{ds}\left\langle p_{x},\dot{x}\right\rangle &=&\left\langle p_{x},p_{ 
\dot{x}}\right\rangle , \\
\frac{d}{ds}\left\langle p_{x},p_{\dot{x}}\right\rangle &=&-\left\langle
p_{x},p_{x}\right\rangle +3\left\langle p_{x},\dot{x}\right\rangle ^{2},
\end{eqnarray*} 
or 
\begin{equation*}
\frac{d^{2}}{ds^{2}}\left\langle p_{x},\dot{x}\right\rangle =-\left\langle
p_{x},p_{x}\right\rangle +3\left\langle p_{x},\dot{x}\right\rangle ^{2}.
\end{equation*} 
Multiplying by $\frac{d}{ds}\left\langle p_{x},\dot{x}\right\rangle $ and
integrating gives
\begin{equation*}
\frac{1}{2}\left( \frac{d\left\langle p_{x},\dot{x}\right\rangle }{ds} 
\right) ^{2}=-\left\langle p_{x},p_{x}\right\rangle \left\langle p_{x},\dot{x 
}\right\rangle +\left\langle p_{x},\dot{x}\right\rangle ^{3}+\text{\textrm{ 
constant}},
\end{equation*} 
which can be integrated since it is separable. If $p_{x}\neq 0$, then this 
equation gives the
component 
\begin{equation*}
\dot{x}^{\parallel }=\frac{\left\langle p_{x},\dot{x}\right\rangle }{ 
\left\vert p_{x}\right\vert ^{2}}p_{x}
\end{equation*} 
of $\dot{x}$ parallel to $p_{x}$. Integrating $\dot{x}^{\parallel
}(s)$ yields the component of the motion in the direction of $p_{x}$.
Returning to equations (\ref{xdot}) and (\ref{dpxdot}) gives
\begin{eqnarray}
\frac{d}{ds}\dot{x} &=&\frac{1}{2}p_{\dot{x}},  \label{xdot1} \\
\frac{d}{ds}p_{\dot{x}} &=&-p_{x}+3\left\langle p_{x},\dot{x}\right\rangle 
\dot{x},  \label{dpxdot1}
\end{eqnarray} 
where $\left\langle p_{x},\dot{x}\right\rangle $ is assumed  known from
the discussion above. Hence, 
\begin{equation}
\frac{d^{2}}{ds^{2}}\dot{x}=-\frac{1}{2}p_{x}+\frac{3}{2}\left\langle p_{x}, 
\dot{x}\right\rangle \dot{x}.  \label{d2xdot}
\end{equation} 
Writing $\dot{x}$ and $p_{x}$ in terms of their components $\dot{x}^{i}$ and 
$p_{x^i},$ 
\begin{equation}
\frac{d^{2}}{ds^{2}}\dot{x}^{i}=-\frac{1}{2}p_{x^i}+\frac{3}{2}\left\langle
p_{x},\dot{x}\right\rangle \dot{x}^{i}.  \label{d2xdoti}
\end{equation} 
Division by $\dot{x}_{i}$ implies
\begin{equation*}
\frac{d}{ds}\left( \ln \left\vert \frac{d}{ds}\dot{x}^{i}\right\vert \right)
=\frac{1}{\frac{d\dot{x}^{i}}{ds}}\frac{d^{2}\dot{x}^{i}}{ds^{2}}=-\frac{ 
p_{x^i}}{2\frac{d\dot{x}^{i}}{ds}}+\frac{3}{2}\left\langle p_{x},\dot{x} 
\right\rangle ,
\end{equation*} 
which implies 
\begin{equation*}
\ln \left\vert \frac{d}{ds}\dot{x}^{i}\right\vert =-\frac{p_{x^i}}{2}\ln
\left\vert \dot{x}^{i}\right\vert +\frac{3}{2}\int \left\langle p_{x},\dot{x} 
\right\rangle (s)\,ds
\end{equation*} 
or 
\begin{equation*}
\ln \left\vert \dot{x}^{p_{x^i}/2}\frac{d}{ds}\dot{x}^{i}\right\vert =\frac{3 
}{2}\int \left\langle p_{x},\dot{x}\right\rangle (s)\,ds,
\end{equation*} 
so that 
\begin{equation*}
(\dot{x}^{i})^{p_{x^i}/2}\frac{d}{ds}\dot{x}^{i}=c\exp \left( \int
f(s)ds\right) ,
\end{equation*} 
where $c$ is a constant dependent on the initial data. Integrating once 
more yields  
\begin{equation*}
\frac{1}{c+1}(\dot{x}^{i})^{1+p_{xi}/2}=c\int \exp \left( \int 
f(s)\, ds\right)\, ds
\end{equation*} 
if $p_{x^i}/2\neq -1$, and 
\begin{equation*}
\ln \left\vert \dot{x}^{i}\right\vert =\int \exp \left( \int f(s)\,ds\right) 
\, ds
\end{equation*} 
if $p_{xi}/2=-1.$

\begin{remark}
The Hamiltonian vector field of $p_{t}+h$ is 
\begin{equation*}
X_{p_{t}+h}=X_{p_{t}}+X_{h}=\frac{\partial }{\partial t}+\frac{1}{2}p_{\dot{x 
}}\frac{\partial }{\partial 
\dot{x}}+\frac{1}{|\dot{x}|^3}\dot{x}\frac{\partial }{\partial x}-\frac{1}{ 
|\dot{x}|^3}p_{x}\frac{\partial }{ 
\partial p_{\dot{x}}}+3\frac{\left\langle p_{x},\dot{x}\right\rangle }{ 
| \dot{x}|^5}\dot{x}\frac{\partial }{ 
\partial p_{\dot{x}}}.
\end{equation*} 
In the arclength parametrization, it is 
\begin{eqnarray*}
\frac{d}{ds}t &=&1, \\
\frac{d}{ds}x &=&\dot{x}, \\
\frac{d}{ds}\dot{x} &=&\frac{1}{2}p_{\dot{x}}, \\
\frac{d}{ds}p_{\dot{x}} &=&-p_{x}+3\left\langle p_{x},\dot{x}\right\rangle 
\dot{x}, \\
\frac{d}{ds}p_{x} &=&0.
\end{eqnarray*} 
Hence, $t=t_{0}+s$, and the solutions of this system can be obtained from
the solutions for integral curves of $X_{h}$ by replacing $s$ by $t-t_{0}.$
\end{remark}

\begin{theorem}
If $(t_{0},x_{0},\dot{x}_{0},0,p_{x_0},p_{\dot{x}_0})=\mathscr{L}(t_{0},x_{0}, 
\dot{x}_{0},\ddot{x}_{0},\dddot{x}_{0})$, the solution of the Euler-Lagrange
equation with initial data $(t_{0},x_{0},\dot{x}_{0},\ddot{x}_{0},\dddot{x} 
_{0})$ is equivalent to the integral curve of $ 
X_{p_{t}+h}$ through $(t_{0},x_{0},\dot{x}_{0},0,p_{x_0},p_{\dot{x}_0})$ given
above.
\end{theorem}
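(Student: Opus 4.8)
The plan is to obtain the statement by pushing the characterization of Euler--Lagrange solutions given in Theorem \ref{3.4} through the Legendre transformation, exploiting that $\mathscr{L}$ pulls the canonical structure of $T^{\ast}J_0^1$ back to the Cartan structure of $J_0^3$. First I would record the two structural facts on which the argument rests. Since $p_x,p_{\dot x}$ are the Ostrogradski momenta one has $\Theta=\mathscr{L}^{\ast}\theta$ (compare (\ref{Theta1}) and (\ref{theta})), hence $d\Theta=\mathscr{L}^{\ast}\omega$; and, since over $\mathrm{range}~\mathscr{L}$ the fibres of $\mathscr{L}$ are parametrized precisely by the parallel components $\ddot x^{\parallel},\dddot x^{\parallel}$ (as in the explicit description of $\mathscr{L}^{-1}$ above), $\mathscr{L}\colon J_0^3\to\mathrm{range}~\mathscr{L}$ is a surjective submersion onto the submanifold (\ref{rangeL 2}), which by Theorem \ref{range} is coisotropic in $(T^{\ast}J_0^1,\omega)$.

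Combining these facts, for a tangent vector $v$ to $J_0^3$ one has $v\in\ker d\Theta$ if and only if $\mathscr{L}_{\ast}v$ lies in the null distribution of $\iota^{\ast}\omega$, where $\iota\colon\mathrm{range}~\mathscr{L}\hookrightarrow T^{\ast}J_0^1$ is the inclusion. Because the Poisson brackets of $p_t$, $\langle p_{\dot x},\dot x\rangle$, $h$ all vanish identically (equation (\ref{Poisson brackets})), these three functions have independent differentials and Poisson-commute, so their Hamiltonian vector fields $X_{p_t}$, $X_{\langle p_{\dot x},\dot x\rangle}$, $X_h$ form a frame for that (rank three) null distribution along $\mathrm{range}~\mathscr{L}$.

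For the forward implication, let $\sigma\colon t\mapsto x(t)$ solve the Euler--Lagrange equations in the arclength parametrization, so $|\dot x|=1$ along $\sigma$. By Theorem \ref{3.4}(1), $T(j^3\sigma)\subset\ker d\Theta$, hence $\mathscr{L}_{\ast}\!\big(\tfrac{d}{dt}(j^3\sigma)\big)=a\,X_{p_t}+b\,X_{\langle p_{\dot x},\dot x\rangle}+c\,X_h$ along the curve for unique scalar functions $a,b,c$. Comparing $\partial/\partial t$ components forces $a=1$; the $\partial/\partial x$ component gives $\dot x=c\,|\dot x|^{-1}\dot x$, so $c=|\dot x|=1$; and the $\partial/\partial\dot x$ component gives $\ddot x=b\,\dot x+\tfrac12 p_{\dot x}$, which since $p_{\dot x}=2\ddot x^{\perp}=2\ddot x$ in the arclength gauge (equation (\ref{pxdot}), $\ddot x^{\parallel}=0$) forces $b=0$. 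Therefore $\mathscr{L}_{\ast}\!\big(\tfrac{d}{dt}(j^3\sigma)\big)=X_{p_t}+X_h=X_{p_t+h}$, so $s\mapsto\mathscr{L}\big(j^3\sigma(t_0+s)\big)$ is the integral curve of $X_{p_t+h}$ through $\mathscr{L}(j^3\sigma(t_0))=(t_0,x_0,\dot x_0,0,p_{x_0},p_{\dot{x}_0})$; the remaining two components then match automatically, reproducing $\tfrac{d}{ds}p_x=0$ and equations (\ref{dpxdot})--(\ref{dt}).

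For the converse, $X_{p_t+h}$ is tangent to $\mathrm{range}~\mathscr{L}$ since (\ref{Poisson brackets}) gives $\{p_t,p_t+h\}=\{\langle p_{\dot x},\dot x\rangle,p_t+h\}=\{h,p_t+h\}=0$, so the integral curve through the given point stays on $\mathrm{range}~\mathscr{L}$; reparametrizing by $t=t_0+s$ (legitimate because $\tfrac{d}{ds}t=1$) and reading off $x(t),\dot x(t),p_x(t),p_{\dot x}(t)$, equations (\ref{x})--(\ref{xdot}) give $dx/dt=\dot x$ and $d\dot x/dt=\tfrac12 p_{\dot x}$, and setting $\ddot x:=\tfrac12 p_{\dot x}$, $\dddot x:=d^3x/dt^3$ one checks, using $\langle p_{\dot x},\dot x\rangle=0$ and $h=0$ on $\mathrm{range}~\mathscr{L}$, that $(t,x,\dot x,\ddot x,\dddot x)$ lies in the arclength-compatible section of $\mathscr{L}^{-1}(t,x,\dot x,0,p_x,p_{\dot x})$ and that the given integral curve equals $\mathscr{L}\circ j^3\sigma$ for $\sigma\colon t\mapsto x(t)$. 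Running the second paragraph in reverse ($\mathscr{L}_{\ast}T(j^3\sigma)=X_{p_t+h}\in\ker\iota^{\ast}\omega$, hence $T(j^3\sigma)\subset\ker d\Theta$) and invoking Theorem \ref{3.4}(1) shows $\sigma$ solves the Euler--Lagrange equations, and local existence and uniqueness for $\ddddot x=f(x,\dot x,\ddot x,\dddot x)$ and for integral curves of $X_{p_t+h}$ identify the two. I expect the last reconstruction to be the only delicate point: one must verify that the $t$-derivatives of $x$ read off from the Hamiltonian flow are consistent with the Legendre relations for $p_x$ and $p_{\dot x}$, and this is exactly where the constraint $h=0$ does its work; everything else is bookkeeping.
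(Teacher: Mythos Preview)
Your argument is correct, and it takes a genuinely different route from the paper. The paper proceeds by direct computation: starting from the general second-order Euler--Lagrange equations, it specializes to the elastica Lagrangian, computes $\partial L/\partial x=0$ so that the equations reduce to $\dot p_x=0$, evaluates $\partial L/\partial\dot x$ explicitly in the arclength gauge, and uses the constraint $h=0$ to rewrite $-\tfrac34\lvert p_{\dot x}\rvert^{2}\dot x$ as $3\langle p_x,\dot x\rangle\dot x$. The resulting first-order system is then matched term by term against the integral-curve equations for $X_{p_t+h}$.

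Your approach is structural rather than computational. You exploit $\Theta=\mathscr{L}^{\ast}\theta$ and the coisotropic description of $\mathrm{range}\,\mathscr{L}$ to identify $\ker d\Theta$ with the preimage of the null distribution of $\iota^{\ast}\omega$, which is framed by $X_{p_t},X_{\langle p_{\dot x},\dot x\rangle},X_h$. This lets you read off the coefficients $a=1$, $c=1$, $b=0$ from just three components, after which the remaining $p_x$- and $p_{\dot x}$-components match automatically. The converse you give, lifting the Hamiltonian integral curve back through $\mathscr{L}$ to a jet extension and then invoking Theorem~\ref{3.4}(1), is also sound; the choice $\ddot x^{\parallel}=0$ in the fibre of $\mathscr{L}$ is exactly what makes the lifted curve holonomic in the arclength gauge. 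Note that you correctly use only part~(1) of Theorem~\ref{3.4}: the elastica Lagrangian is \emph{not} regular (its Hessian $\partial^{2}L/\partial\ddot x\,\partial\ddot x$ is the projection off~$\dot x$), so part~(2) is unavailable; your argument never needs it. What your route buys is a proof that generalizes immediately to any parametrization-invariant second-order problem whose Legendre image is coisotropic, at the cost of invoking more of the surrounding geometric machinery; the paper's computation is self-contained but specific to this Lagrangian.
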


\begin{proof}
The Euler-Lagrange equations for a second order Lagrangian are
\begin{equation*}
\frac{\partial L}{\partial x}-\frac{d}{dt}\frac{\partial L}{\partial \dot{x}} 
+\frac{d^{2}}{dt^{2}}\frac{\partial L}{\partial \ddot{x}}=0.
\end{equation*} 
Since $p_{\dot{x}}=\frac{\partial L}{\partial \ddot{x}}$ and $p_{x}=\frac{ 
\partial L}{\partial \dot{x}}-\frac{d}{dt}p_{\dot{x}},$ they are  
$\frac{\partial L}{\partial x}-\frac{d}{dt}p_{x}=0.$ For
elastica, 
\begin{equation*}
L(x,\dot{x},\ddot{x})=\frac{\left\vert \ddot{x}\right\vert ^{2}}{\left\vert 
\dot{x}\right\vert ^{3}}-\frac{\left\langle \dot{x}, \ddot{x} 
\right\rangle ^{2}}{\left\vert \dot{x}\right\vert ^{5}},
\end{equation*} 
$\frac{\partial L}{\partial x}=0$, and the Euler-Lagrange equations reduce to 
\begin{equation*}
\frac{d}{dt}p_{x}=0.
\end{equation*} 
Since $p_{x}=\frac{\partial L}{\partial \dot{x}}-\frac{d}{dt}p_{\dot{x}},$
the evolution of $p_{\dot{x}}$ is given by 
\begin{equation*}
\frac{d}{dt}p_{\dot{x}}=-p_{x}+\frac{\partial L}{\partial \dot{x}},
\end{equation*} 
where 
\begin{equation}
p_{\dot{x}}=\frac{\partial L}{\partial \ddot{x}}=2\frac{\ddot{x}}{\left\vert 
\dot{x}\right\vert ^{3}}-2\frac{\left\langle \dot{x},\ddot{x}\right\rangle 
\dot{x}}{\left\vert \dot{x}\right\vert ^{5}},  \notag
\end{equation} 
and 
\begin{equation*}
\frac{\partial L}{\partial \dot{x}}=\frac{\partial }{\partial \dot{x}}\left( 
\frac{\left\vert \ddot{x}\right\vert ^{2}}{| \dot{x}|^3}-\frac{\left\langle 
\dot{x}, \ddot{x}\right\rangle
^{2}}{| \dot{x}|^5}\right) =-3\frac{ 
\left\vert \ddot{x}\right\vert 
^{2}}{|\dot{x}|^5}\dot{x}-2\frac{\left\langle \dot{x}, \ddot{x} 
\right\rangle }{| \dot{x}|^5}\ddot{x} 
+5\frac{\left\langle \dot{x}, \ddot{x}\right\rangle 
^{2}}{|\dot{x}|^7}\dot{x}.
\end{equation*} 
In the arclength parametrization $| \dot{x}| =1$, $\left\langle 
\dot{x}, \ddot{x}\right\rangle =0$, $p_{\dot{x}}=2\ddot{x}$ and 
\begin{equation*}
\frac{\partial L}{\partial \dot{x}}=-3\left\vert \ddot{x}\right\vert ^{2} 
\dot{x}=-\frac{3}{4}\left\vert p_{\dot{x}}\right\vert ^{2}\dot{x} 
=3\left\langle p_{x},\dot{x}\right\rangle \dot{x}
\end{equation*} 
because 
\begin{equation*}
h=\frac{1}{4}\left\langle p_{\dot{x}},p_{\dot{x}}\right\rangle +\left\langle
p_{x},\dot{x}\right\rangle =0
\end{equation*} 
on the range of $\mathscr{L}$. Thus, the Euler-Lagrange equations of
elastica in the arclength parametrization are equivalent to 
\begin{eqnarray*}
\frac{d}{dt}p_{x} &=&0, \\
\frac{d}{dt}p_{\dot{x}} &=&-p_{x}+3\left\langle p_{x},\dot{x}\right\rangle 
\dot{x}, \\
\frac{d}{dt}\dot{x} &=&\frac{1}{2}p_{\dot{x}}, \\
\frac{d}{dt}x &=&\dot{x}.
\end{eqnarray*} 
This system of equations, together with the substitution $t=t_{0}+s$, 
leads to the
equation for integral curves of the Hamiltonian vector field of $X_{p_{t}+h}$
given in the remark above.
\end{proof}

\section{Quantization}

There are two ways of quantization of a system with constraints: the
Bleuler-Gupta quantization of the extended phase space followed by the
quantum reduction \cite{bleuler}, \cite{gupta}, and Dirac's classical
reduction followed by quantization of the reduced phase space studied by
Dirac in \cite{dirac63}. Here, we follow the Bleuler-Gupta approach.

The extended phase space $T^{\ast }J_{0}^{1}$ is the cotangent bundle of an
open subset of $\R^{14}.$ Therefore, it is convenient to use the
geometric quantization in terms of the vertical polarization tangent to
fibres of the cotangent bundle projection. This approach leads to
Schr\"odinger quantization. Since the functions we want to quantize are at
most quadratic in momenta, we can use results on Schr\"odinger quantization
as they are presented in texts on quantum mechanics. For technical details
see \cite{sniatycki80}.

Consider the space $C_{0}^{\infty }(J_{0}^{1})\otimes \C$ of
complex-valued compactly supported smooth functions $\Psi $ on $J_{0}^{1}$
endowed with the scalar product 
\begin{equation}
\left( \Psi _{1}\mid \Psi _{2}\right) =\int_{J_{0}^{1}}\overline{\Psi } 
_{1}(t,x,\dot{x})\Psi _{2}(t,x,\dot{x})\,dt\,\,d^{3}x\,\,d^{3}\dot{x}.
\label{scalar product 1}
\end{equation} 
The completion of $C_{0}^{\infty }(J_{0}^{1})\otimes \C$ with
respect to the norm given by this scalar product gives rise to the Hilbert
space $\mathfrak{H}_{0}$ of quantum states of the system.

If $f\in C^{\infty }(J_{0}^{1})$, then the operator $\boldsymbol{Q}_{\pi
^{\ast }f}$ corresponding to the pullback $\pi ^{\ast }f$ of $f$ by the
cotangent bundle projection $\pi :T^{\ast }J_{0}^{1}\rightarrow J_{0}^{1}$
acts on $C^{\infty }(J_{0}^{1})\otimes \C$ via multiplication by $f$ 
\begin{equation}
\boldsymbol{Q}_{\pi ^{\ast }f}\Psi =f\Psi .  \label{Qpi*f}
\end{equation} 
Operators corresponding to canonical momenta act by derivations with respect
to the corresponding variables in $J_{0}^{1}$. In particular, the linear
momentum $p_{x}$ gives rise to 
\begin{equation}
\boldsymbol{Q}_{p_{x}}=-i\hbar \frac{\partial }{\partial x},  \label{Qpx}
\end{equation} 
and the angular momentum $l=x\times p_{x}+\dot{x}\times p_{\dot{x}}$ leads
to 
\begin{equation}
\boldsymbol{Q}_{l}=-i\hbar x\times \frac{\partial }{\partial x}-i\hbar \dot{x 
}\times \frac{\partial }{\partial \dot{x}},  \label{Ql}
\end{equation} 
where $\hbar $ is Planck's constant. Furthermore, $\mathscr{J}_{\tau }=\tau
(t)p_{t}-\dot{\tau}(t)\left\langle p_{\dot{x}},\dot{x}\right\rangle \ $ 
quantizes to 
\begin{equation}
\boldsymbol{Q}_{\mathscr{J}_{\tau }}=-i\hbar \tau \frac{\partial }{\partial t 
}+i\hbar \dot{\tau}\left\langle \dot{x},\frac{\partial }{\partial \dot{x}} 
\right\rangle -i\hbar \dot{\tau}.  \label{QJtau}
\end{equation} 
Moreover, for $h=\frac{\left\vert \dot{x}\right\vert ^{2}}{4}\left\langle p_{ 
\dot{x}},p_{\dot{x}}\right\rangle +\frac{\left\langle p_{x},\dot{x} 
\right\rangle }{\left\vert \dot{x}\right\vert }$, the metric on $\R 
^{3}$ giving the quadratic term has scalar curvature 2, and the
corresponding operator is 
\begin{equation}
\boldsymbol{Q}_{h}=-\frac{\hbar ^{2}}{4}\left( \left\vert \dot{x}\right\vert
^{2}\dot{\Delta}-\frac{1}{3}\right) -\frac{i\hbar }{\left\vert \dot{x} 
\right\vert }\left\langle \dot{x},\frac{\partial }{\partial \dot{x}} 
\right\rangle ,  \label{Qh}
\end{equation}
where 
\begin{equation*}
\dot{\Delta}=\left\langle \frac{\partial }{\partial \dot{x}},\frac{\partial 
}{\partial \dot{x}}\right\rangle
\end{equation*} 
is the Laplace operator in the variables $\dot{x}$.\footnote{ 
Correction terms in equations (\ref{QJtau}) and (\ref{Qh}) are consequence
of using half-forms in order to ensure that the space $\mathfrak{H}_{0}$ of
quantum states can be described as the space of square-integrable complex
functions on $J_{0}^{1}$ (see \cite{sniatycki80}.)} These differential 
operators 
on 
$C^{\infty }(J_{0}^{1})\otimes \mathbb{C}$ extend to self-adjoint operators
on $\mathfrak{H}_{0}$.

\subsection{Quantization representation of $\SE(3)$}

The skew-adjoint operators $\frac{-i}{\hbar }\boldsymbol{Q}_{p_{x}}$ and $ 
\frac{-i}{\hbar }\boldsymbol{Q}_{l}$ generate a unitary representation of
the Euclidean group $\SE(3)$ on $\mathfrak{H}_{0}$ such that for $g\in
\SE(3)$ and $\Psi \in C_{0}^{\infty }(J_{0}^{1})\otimes \C$ 
\begin{equation*}
U_{g}\Psi (t,x,\dot{x})=\Phi _{g^{-1}}^{\ast }\Psi (t,x,\dot{x}),
\end{equation*} 
where 
\begin{equation*}
\Phi :\SE(3)\times J_{0}^{1}\rightarrow J_{0}^{1}
\end{equation*} 
is the action of $\SE(3)$ on $J_{0}^{1}.$ 
 The invariant 
vectors of this representation are eigenvectors of $\frac{-i}{ 
\hbar }\boldsymbol{Q}_{p_{x}}$ and $\frac{-i}{\hbar }\boldsymbol{Q}_{l}$
corresponding to the eigenvalue $0$. In other words, invariant vectors $\Psi 
$ of $U$ are characterized by the equations 
\begin{eqnarray*}
\boldsymbol{Q}_{p_{x}}\Psi &=&0, \\
\boldsymbol{Q}_{l}\Psi &=&0.
\end{eqnarray*} 
Since $\SE(3)$ is not compact, invariant vectors of $U$ are distributions on $ 
J_{0}^{1}$.

\subsection{Quantization representation of $\Diff_{+}\R$}

The reparametrization group $\Diff_{+}\R$ acts on 
$C^{\infty}(J_0^1)\otimes \C$ by the pullback of its action on $J_{0}^{1}$ 
\begin{equation*}
\Diff_+\R\times (C_{0}^{\infty }(J_0^1)\otimes \C)\rightarrow C^{\infty 
}(J_{0}^{1})\otimes \C:(\varphi ,\Psi
)\mapsto (\varphi ^{-1})^{1\ast }\Psi ,
\end{equation*} 
where $\varphi ^{-1}$ is the inverse of $\varphi $, and 
\begin{equation*}
(\varphi ^{-1})^{1\ast }\Psi (t,x,\dot{x})=\Psi (\varphi ^{-1}(t,x,\dot{x} 
))=\Psi \left( \varphi ^{-1}(t),x,\frac{\dot{x}}{(\varphi ^{-1}\dot{)}(t)} 
\right) .
\end{equation*} 
For an infinitesimal diffeomorphism $\varphi_{\epsilon}(t)=t+\epsilon \tau
(t)+\dots$ generated by $\tau (t)\partial_t$,

\begin{equation*}
(\varphi _{\epsilon }^{-1})^{1\ast }\Psi (t,x,\dot{x})=\Psi (t,x,\dot{x} 
)-\epsilon \left( \tau \frac{\partial }{\partial t}-\dot{x}\dot{\tau}\frac{ 
\partial f}{\partial \dot{x}}\right) \Psi +...
\end{equation*}
and 
\begin{eqnarray}
\frac{d}{d\epsilon }(\varphi _{\epsilon }^{-1})^{1\ast }\Psi (t,x,\dot{x} 
)|_{\epsilon =0} &=&\left( \tau \frac{\partial }{\partial t}-\dot{x}\dot{ 
\tau}\frac{\partial f}{\partial \dot{x}}\right) \Psi (t,x,\dot{x})
\label{dpull-back} \\
&=&\left( \frac{-i}{\hbar }\boldsymbol{Q}_{\mathscr{J}_{\tau }}\Psi \right)
(t,x,\dot{x})-\dot{\tau}\Psi (t,x,\dot{x}).  \notag
\end{eqnarray} 
Therefore, 
\begin{eqnarray*}
\left( \frac{-i}{\hbar }\boldsymbol{Q}_{\mathscr{J}_{\tau }}\Psi \right)
(t,x,\dot{x}) &=&\frac{d}{d\epsilon }(\varphi _{\epsilon }^{-1})^{1\ast
}\Psi (t,x,\dot{x})|_{\epsilon =0}+\dot{\tau}\Psi (t,x,\dot{x}) \\
&=&\frac{d}{d\epsilon }\{(\varphi _{\epsilon }^{-1})^{1\ast }\Psi (t,x,\dot{x 
})+\dot{\varphi}_{\epsilon }(t)\Psi (t,x,\dot{x}))\}|_{\epsilon =0} \\
&=&\frac{d}{d\epsilon }\{(\varphi _{\epsilon }^{-1})^{1\ast }[\dot{\varphi} 
_{\epsilon }(t)\Psi (t,x,\dot{x})]\}|_{\epsilon =0}.
\end{eqnarray*} 
This establishes
\begin{proposition}
\label{phi epsilon}The operator 
\begin{equation*}
\frac{-i}{\hbar }\boldsymbol{Q}_{\mathscr{J}_{\tau }}=\tau \frac{\partial }{ 
\partial t}-i\hbar \dot{\tau}\left\langle \dot{x},\frac{\partial }{\partial 
\dot{x}}\right\rangle +\dot{\tau}
\end{equation*}
generates the action on $C^{\infty }(J_{0}^{1})\otimes \C$ of the
one-parameter group $\varphi _{\epsilon }=\exp \epsilon \tau $ given by 
\begin{equation*}
\Xi :(\varphi _{\epsilon },\Psi )\mapsto \Xi _{\varphi _{\epsilon }}\Psi ,
\end{equation*} 
where 
\begin{equation*}
\Xi _{\varphi _{\epsilon }}\Psi (t,x,\dot{x})=(\varphi _{\epsilon
}^{-1})^{1\ast }[\dot{\varphi}_{\epsilon }\Psi (t,x,\dot{x})].
\end{equation*}
\end{proposition}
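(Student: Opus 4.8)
The plan is to verify two things: that $\epsilon\mapsto\Xi_{\varphi_\epsilon}$ is a genuine one-parameter group of operators on $C^\infty(J_0^1)\otimes\C$, and that its infinitesimal generator at $\epsilon=0$ is exactly $\frac{-i}{\hbar}\boldsymbol{Q}_{\mathscr{J}_\tau}$. The second point is essentially the content of the display immediately preceding the statement, so the new work is organizing the first.

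First I would check the group law $\Xi_{\varphi_\epsilon}\circ\Xi_{\varphi_{\epsilon'}}=\Xi_{\varphi_{\epsilon+\epsilon'}}$. Since $\varphi_\epsilon=\exp\epsilon\tau$ is the flow of $\tau\partial_t$ we have $\varphi_\epsilon\circ\varphi_{\epsilon'}=\varphi_{\epsilon+\epsilon'}$, and from the explicit first-jet action $(t,x,\dot x)\mapsto(\varphi(t),x,\dot x/\dot\varphi(t))$ together with the chain rule $(\varphi\circ\psi)'(t)=\dot\varphi(\psi(t))\dot\psi(t)$ one gets $(\varphi\circ\psi)^1=\varphi^1\circ\psi^1$, hence $\varphi\mapsto(\varphi^{-1})^{1\ast}$ is a homomorphism into operators. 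Substituting this into $\Xi_\varphi\Psi=(\varphi^{-1})^{1\ast}[\dot\varphi\,\Psi]$ and carefully tracking where the multiplier $\dot\varphi$ gets evaluated, the same chain rule forces the two multipliers $\dot\varphi$ and $\dot\psi$ to combine into $\frac{d}{dt}(\varphi\circ\psi)$, so that $\Xi_\varphi\circ\Xi_\psi=\Xi_{\varphi\circ\psi}$ and in particular $\Xi_{\varphi_0}=\mathrm{id}$; restricting to the subgroup generated by $X_\tau$ gives a one-parameter group of operators. Since $\boldsymbol{Q}_{\mathscr{J}_\tau}$ extends to a self-adjoint operator on $\mathfrak H_0$, Stone's theorem guarantees a unique strongly continuous one-parameter group with skew-adjoint generator $\frac{-i}{\hbar}\boldsymbol{Q}_{\mathscr{J}_\tau}$; the remaining step shows this group is $\Xi_{\varphi_\epsilon}$.

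Next I would differentiate $\Xi_{\varphi_\epsilon}\Psi=(\varphi_\epsilon^{-1})^{1\ast}[\dot\varphi_\epsilon\,\Psi]$ at $\epsilon=0$ by the Leibniz rule, using $\varphi_0=\mathrm{id}$, $\dot\varphi_0\equiv 1$, $(\varphi_0^{-1})^{1\ast}=\mathrm{id}$, and $\frac{d}{d\epsilon}\dot\varphi_\epsilon\big|_{\epsilon=0}=\dot\tau$. This yields $\frac{d}{d\epsilon}\Xi_{\varphi_\epsilon}\Psi\big|_{\epsilon=0}=\frac{d}{d\epsilon}(\varphi_\epsilon^{-1})^{1\ast}\Psi\big|_{\epsilon=0}+\dot\tau\,\Psi$, and inserting the already-established identity $\frac{d}{d\epsilon}(\varphi_\epsilon^{-1})^{1\ast}\Psi\big|_{\epsilon=0}=\frac{-i}{\hbar}\boldsymbol{Q}_{\mathscr{J}_\tau}\Psi-\dot\tau\,\Psi$ from equation (\ref{dpull-back}) makes the two $\dot\tau\,\Psi$ terms cancel, leaving exactly $\frac{-i}{\hbar}\boldsymbol{Q}_{\mathscr{J}_\tau}\Psi$. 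This is the assertion that the displayed operator generates the action $\Xi$.

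The main obstacle is the bookkeeping in the group-law step: once the inverse $\varphi^{-1}$ appears in the definition of $\Xi_\varphi$ one has to be careful about the order of composition, and about whether the density factor $\dot\varphi$ is applied before or after the pullback, because it is precisely the cocycle property of $\dot\varphi$ under the chain rule that converts the pullback-with-multiplier into a true representation rather than a projective one. Once equation (\ref{dpull-back}) is in hand the generator computation is routine.
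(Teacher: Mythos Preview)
Your proposal is correct and the core differentiation argument is exactly the paper's: the paper derives the proposition from equation (\ref{dpull-back}) by rewriting $\frac{-i}{\hbar}\boldsymbol Q_{\mathscr J_\tau}\Psi$ as $\frac{d}{d\epsilon}(\varphi_\epsilon^{-1})^{1\ast}\Psi\big|_{\epsilon=0}+\dot\tau\,\Psi$ and then recognizing this, via the Leibniz rule at $\epsilon=0$, as $\frac{d}{d\epsilon}\{(\varphi_\epsilon^{-1})^{1\ast}[\dot\varphi_\epsilon\Psi]\}\big|_{\epsilon=0}$, which is your second paragraph run in the opposite direction.

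The differences are organizational. You verify the cocycle identity $\Xi_\varphi\circ\Xi_\psi=\Xi_{\varphi\circ\psi}$ here, whereas the paper defers that computation to the proof of the theorem immediately following this proposition; so your proof is self-contained while the paper's treatment of the proposition is only the infinitesimal computation. Your appeal to Stone's theorem is slightly out of place: the proposition concerns the action on $C^\infty(J_0^1)\otimes\C$ rather than on $\mathfrak H_0$, so the identification of the generator is a matter of direct smooth calculus (which you carry out anyway) rather than spectral theory. This is harmless but unnecessary.
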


\begin{theorem}
The map 
\begin{equation*}
\Xi :\Diff_{+}\R\times \left( C^{\infty }(J_{0}^{1})\otimes 
\C\right) \rightarrow C^{\infty }(J_{0}^{1})\otimes \C
:(\varphi ,\Psi )\mapsto \Xi _{\varphi }\Psi ,
\end{equation*} 
where 
\begin{equation}
\Xi _{\varphi }\Psi (t,x,\dot{x})=(\varphi ^{-1})^{1\ast }[\dot{\varphi} 
(t)\Psi (t,x,\dot{x})],  \label{Ksi phi}
\end{equation} 
is a linear representation of $\Diff_{+}\R$ on $C^{\infty
}(J_{0}^{1})\otimes \C$ preserving the scalar product given by
equation {\rm(\ref{scalar product 1})}. 
\end{theorem}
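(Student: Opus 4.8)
The plan is to establish the three constituents of the claim in turn: that each $\Xi_\varphi$ is a linear endomorphism of $C^\infty(J_0^1)\otimes\C$ preserving the subspace $C_0^\infty(J_0^1)\otimes\C$, that $\varphi\mapsto\Xi_\varphi$ is a group homomorphism, and that each $\Xi_\varphi$ is an isometry for the scalar product (\ref{scalar product 1}). Linearity in $\Psi$ is immediate from (\ref{Ksi phi}), and since $\varphi^1$ is a diffeomorphism of $J_0^1$ (smooth with smooth inverse $(\varphi^{-1})^1$, by the chain rule applied to (\ref{action})), $\Xi_\varphi$ maps compactly supported functions to compactly supported functions; so the substance lies in the last two points.

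For the homomorphism property I would first record that $\varphi\mapsto\varphi^1$ is a left action, $(\varphi\circ\psi)^1=\varphi^1\circ\psi^1$, which follows from (\ref{action}) and the chain rule $(\varphi\circ\psi)'(t)=\dot\varphi(\psi(t))\,\dot\psi(t)$; in particular $(\varphi^{-1})^1=(\varphi^1)^{-1}$. Writing $D_\varphi\in C^\infty(J_0^1)$ for the function $(t,x,\dot x)\mapsto\dot\varphi(t)$, the definition (\ref{Ksi phi}) reads $\Xi_\varphi\Psi=(\varphi^{-1})^{1*}(D_\varphi\,\Psi)$. Since pullback is multiplicative and reverses composition, expanding $\Xi_\varphi\Xi_\psi\Psi$ and comparing with $\Xi_{\varphi\circ\psi}\Psi=\bigl((\varphi\circ\psi)^{-1}\bigr)^{1*}(D_{\varphi\circ\psi}\,\Psi)$ reduces the homomorphism property to the multiplier cocycle identity
\[
\bigl((\varphi^{-1})^{1*}D_\varphi\bigr)\cdot\Bigl(\bigl((\varphi\circ\psi)^{-1}\bigr)^{1*}D_\psi\Bigr)=\bigl((\varphi\circ\psi)^{-1}\bigr)^{1*}D_{\varphi\circ\psi}.
\]
This I would check by evaluation at $(t,x,\dot x)$: the $t$-component of $(\varphi^{-1})^1$ is $\varphi^{-1}(t)$, that of $\bigl((\varphi\circ\psi)^{-1}\bigr)^1=(\psi^{-1}\circ\varphi^{-1})^1$ is $\psi^{-1}(\varphi^{-1}(t))$, and $D_{\varphi\circ\psi}$ depends only on the $t$-coordinate with value $\dot\varphi(\psi(t))\dot\psi(t)$, so both sides equal $\dot\varphi(\varphi^{-1}(t))\,\dot\psi\bigl(\psi^{-1}(\varphi^{-1}(t))\bigr)$. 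Together with the obvious $\Xi_{\mathrm{id}}=\mathrm{id}$, this proves $\Xi$ is a representation.

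For the isometry property the point is that $D_\varphi$ is precisely the square root of the Jacobian weight of the prolonged map. Using (\ref{Ksi phi}) and (\ref{action}) one has explicitly
\[
\Xi_\varphi\Psi(t,x,\dot x)=\dot\varphi(\varphi^{-1}(t))\,\Psi\bigl(\varphi^{-1}(t),\,x,\,\dot x\,\dot\varphi(\varphi^{-1}(t))\bigr),
\]
so in $(\Xi_\varphi\Psi_1\mid\Xi_\varphi\Psi_2)$ I would substitute $s=\varphi^{-1}(t)$ and $\dot y=\dot x\,\dot\varphi(s)$ (with $x$ unchanged). Then $dt\,d^3x\,d^3\dot x=\dot\varphi(s)\cdot\dot\varphi(s)^{-3}\,ds\,d^3x\,d^3\dot y=\dot\varphi(s)^{-2}\,ds\,d^3x\,d^3\dot y$, which cancels the prefactor $\dot\varphi(\varphi^{-1}(t))^2=\dot\varphi(s)^2$ exactly and leaves $(\Psi_1\mid\Psi_2)$; here the hypothesis $\dot\varphi>0$ on $\R$ is what lets one drop absolute values and guarantees the change of variables is a global diffeomorphism of $J_0^1$.

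There is no genuinely hard step; the only thing to watch is the bookkeeping of inverses and of the direction of the prolonged action, and matching the density weight $\dot\varphi(t)$ to the Jacobian $\dot\varphi^{-2}$ of $\varphi^1$ (coming from $dt\mapsto\dot\varphi\,dt$ together with $d^3\dot x\mapsto\dot\varphi^{-3}\,d^3\dot x$, with $d^3x$ fixed) rather than to the Jacobian of $\varphi$ alone. Conceptually $\Xi$ is just the natural unitary action of $\Diff_+\R$ on half-densities on $J_0^1$ identified with ordinary functions via the fixed volume form $dt\,d^3x\,d^3\dot x$, which is why unitarity comes for free — consistent with the half-form correction already recorded after equation (\ref{Qh}).
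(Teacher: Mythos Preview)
Your proof is correct and follows essentially the same route as the paper: both verify the homomorphism property by reducing to the chain-rule identity $\dot\varphi(\varphi^{-1}(t))\,\dot\psi(\psi^{-1}(\varphi^{-1}(t)))=(\varphi\circ\psi)'\bigl((\varphi\circ\psi)^{-1}(t)\bigr)$, and both establish unitarity by the change of variables $s=\varphi^{-1}(t)$, $\dot y=\dot x\,\dot\varphi(s)$, observing that the Jacobian $\dot\varphi(s)^{-2}$ exactly cancels the prefactor $\dot\varphi(s)^2$. Your packaging via the multiplier $D_\varphi$ and the cocycle identity is a bit tidier than the paper's direct expansion, and your closing remark identifying $\Xi$ with the natural action on half-densities is a nice conceptual gloss, but the underlying computation is the same.
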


\begin{proof}
Clearly, $\Xi _{\varphi }$ acts linearly on $C^{\infty }(J_{0}^{1})\otimes 
\C$. For $\varphi _{1},\varphi _{2}\in \Diff_{+}\R$
and $\Psi \in C^{\infty }(J_{0}^{1})\otimes \C$, 
\begin{eqnarray*}
(\Xi _{\varphi _{2}}\Xi _{\varphi _{1}}\Psi ) &=&\Xi _{\varphi
_{2}}\{(\varphi _{1}^{-1})^{1\ast }[\dot{\varphi}_{1}\Psi ]\} \\
&=&(\varphi _{2}^{-1})^{1\ast }\{\dot{\varphi}_{2}(t)(\varphi
_{1}^{-1})^{1\ast }[\dot{\varphi}_{1}\Psi ]\} \\
&=&(\varphi _{2}^{-1})^{\ast }\dot{\varphi}_{2}(\varphi _{2}^{-1})^{1\ast
}\{(\varphi _{1}^{-1})^{1\ast }\dot{\varphi}_{1}[(\varphi _{1}^{-1})^{1\ast
}\Psi ]\} \\
&=&(\varphi _{2}^{-1})^{\ast }\dot{\varphi}_{2}(\varphi _{2}^{-1})^{\ast
}(\varphi _{1}^{-1})^{\ast }\dot{\varphi}_{1}[(\varphi _{2}^{-1})^{1\ast
}(\varphi _{1}^{-1})^{1\ast }\Psi ] \\
&=&(\varphi _{2}^{-1})^{\ast }[\dot{\varphi}_{2}(\varphi _{1}^{-1})^{\ast } 
\dot{\varphi}_{1}][((\varphi _{2}\circ \varphi _{1})^{-1})^{\ast }\Psi ]
\end{eqnarray*} 
But 
\begin{eqnarray*}
\lbrack (\varphi _{2}\circ \varphi _{1})^{-1}]^{\ast }(\varphi _{2}\circ
\varphi _{1}\dot{)}(t) &=&(\varphi _{2}\circ \varphi _{1}\dot{)}((\varphi
_{2}\circ \varphi _{1})^{-1}(t)) \\
&=&(\varphi _{2}\circ \varphi _{1}\dot{)}((\varphi _{1}^{-1}\circ \varphi
_{2}^{-1})(t)) \\
&=&(\varphi _{2}\circ \varphi _{1}\dot{)}((\varphi _{1}^{-1}(\varphi
_{2}^{-1}(t))) \\
&=&\dot{\varphi}_{2}(\varphi _{1}(\varphi _{1}^{-1}(\varphi _{2}^{-1}(t)))) 
\dot{\varphi}_{1}(\varphi _{1}^{-1}(\varphi _{2}^{-1}(t))) \\
&=&\dot{\varphi}_{2}(\varphi _{2}^{-1}(t))\dot{\varphi}_{1}(\varphi
_{1}^{-1}(\varphi _{2}^{-1}(t))) \\
&=&(\varphi _{2}^{-1})^{\ast }[\dot{\varphi}_{2}(\varphi _{1}^{-1})^{\ast } 
\dot{\varphi}_{1}],
\end{eqnarray*} 
as required.

For $\Psi _{1},\Psi _{2}\in C^{\infty }(J_{0}^{1})\otimes \C$ and $ 
\varphi \in \Diff_+\R$, 
\begin{eqnarray*}
(\Xi _{\varphi }\Psi _{1}\mid \Xi _{\varphi }\Psi _{2}) & = &\int_{J_{0}^{1}} 
\overline{\Xi _{\varphi }\Psi _{1}(t,x,\dot{x})}\,\Xi _{\varphi }\Psi _{2}(t,x, 
\dot{x})\,dt\,d^3x\,d^3\dot{x} \\
&=&\int_{J_{0}^{1}}\overline{(\varphi ^{-1})^{1\ast }\{\dot{\varphi}(t)\Psi
_{1}(t,x,\dot{x})\}}\,(\varphi ^{-1})^{1\ast }\{\dot{\varphi}(t)\Psi _{2}(t,x, 
\dot{x})\}dt\,d^3x\,d^3\dot{x} \\
&=&\int_{J_{0}^{1}}[\dot{\varphi}(\varphi ^{-1}(t))]^{2}\, \overline{\Psi
_{1}(\varphi ^{-1}(t),x,\dot{x}/(\varphi ^{-1}\dot{)}(t))}\,\{\Psi _{1}(\varphi
^{-1}(t),x,\dot{x}/(\varphi ^{-1}\dot{)}(t)\}\,dt\,d^3x\,d^3\dot{x}.
\end{eqnarray*} 
Note that the inverse function theorem guarantees 
\begin{equation*}
\dot{\varphi}(\varphi ^{-1}(t))=\frac{1}{(\varphi ^{-1}\dot{)}(t)}.
\end{equation*} 
Introducing new variables 
\begin{equation*}
\bar{t}=\varphi ^{-1}(t),~\bar{x}=x\text{ and }\bar{x}^{\prime }=\dot{x} 
/(\varphi ^{-1}\dot{)}(t),
\end{equation*} 
yields
\begin{eqnarray*}
d\bar{t} &=&(\varphi ^{-1}\dot{)}(t)\,dt, \\
d\bar{x} &=&dx, \\
d\bar{x}^{\prime } &=&\frac{1}{(\varphi ^{-1}\dot{)}(t)}\,d\dot{x}-\frac{ 
(\varphi ^{-1}\ddot{)}(t)\dot{x}}{[(\varphi ^{-1}\dot{)}(t)]^{2}}\,dt, \\
d\bar{t}\,d^3\bar{x}\,d^3\bar{x}^{\prime } &=&\frac{1}{\left( (\varphi 
^{-1}\dot{)}(t)\right) ^{2}}\,dt\,d^3x\,d^3\dot{x},
\end{eqnarray*} 
so that 
\begin{equation*}
dt\,d^3x\,d^3\dot{x}=\left[ (\varphi ^{-1}\dot{)}(t)\right] ^{2}d\bar{t}\,d^3 
\bar{x}\,d^3\bar{x}^{\prime }.
\end{equation*} 
Therefore, 
\begin{eqnarray*}
(\Xi _{\varphi }\Psi _{1} \mid \Xi _{\varphi }\Psi _{2}) & = & 
\int_{J_{0}^{1}}[ 
\dot{\varphi}(\varphi ^{-1}(t))]^{2}\overline{\Psi _{1}(\bar{t},\bar{x},\bar{ 
x}^{\prime })}\Psi _{2}(\bar{t},\bar{x},\bar{x}^{\prime })\,dt\,d^3x\,d^3\dot{x 
} \\
&=&\int_{J_{0}^{1}}\frac{1}{[(\varphi ^{-1}\dot{)}(t)]^{2}}\overline{\Psi
_{1}(\bar{t},\bar{x},\bar{x}^{\prime })}\Psi _{2}(\bar{t},\bar{x},\bar{x} 
^{\prime })\left[ (\varphi ^{-1}\dot{)}(t)\right] ^{2}d\bar{t}\,d^3\bar{x} 
\,d^3\bar{x} \\
&=&\int_{J_{0}^{1}}\overline{\Psi _{1}(\bar{t},\bar{x},\bar{x}^{\prime })} 
\Psi _{2}(\bar{t},\bar{x},\bar{x}^{\prime })\,d\bar{t}d^3\bar{x}\,d^3\bar{x}.
\end{eqnarray*}
\end{proof}

Note that, for $\Psi,\Psi^{\prime }\in C^{\infty }_0(J_{0}^{1})\otimes \C$, the
integral defining the scalar product (\ref{scalar product 1}), and 
\begin{equation}
(\Psi ^{\prime }\mid \Psi )=\int_{J_{0}^{1}}\overline{\Psi ^{\prime }(t,x, 
\dot{x})}\Psi (t,x,\dot{x})\,dt\,d^3x\,d^3\dot{x}  \label{scalar product 2}
\end{equation} 
can be interpreted as the evaluation on $\Psi $ of the generalized function
(distribution) $\Psi ^{\prime }\in (C_{0}^{\infty }(J_{0}^{1})\otimes 
\C)^{\prime }$. The representation $\Xi $ of $\Diff_+ 
\R$ on $C_{0}^{\infty }(J_{0}^{1})\otimes \C$ extends to a
representation of $\Diff_+\R$ on $(C^{\infty
}(J_{0}^{1})\otimes \C)^{\prime }$, which we also denote by $\Xi $,
such 
\begin{equation}
(\Xi _{\varphi }\Psi ^{\prime }\mid \Psi )=(\Psi ^{\prime }\mid \Xi
_{\varphi ^{-1}}\Psi )  \label{Ksi distribution}
\end{equation}
for $\varphi \in \Diff_+\R\,$, $\Psi ^{\prime }\in
(C_{0}^{\infty }(J_{0}^{1})\otimes \C)^{\prime }$. Note that, if $ 
\Psi ^{\prime }\in C^{\infty }(J_{0}^{1})\otimes \C$, then the
definition of the action $\Xi _{\varphi }$ on $\Psi ^{\prime }$, given here,
coincides with the definition given in equation (\ref{Ksi phi}).

It remains to examine the space of $\Diff_{+}\R$-invariant
functions. Since the group $\Diff_{+}\R$ is not compact, the
only compactly supported $\Diff_{+}\R$-invariant function in 
$C_{0}^{\infty }(J_{0}^{1})\otimes \C$ is identically zero. Hence,
 $\Diff_{+}\R$-invariant functions are in $C^{\infty 
}(J_{0}^{1})\otimes\C$. 

\begin{lemma}
For $\boldsymbol{q}=(t_{0},x_{0},\dot{x}_{0})\in J_{0}^{1}$, the orbit $ 
\exp (\diff_+\R)(\boldsymbol{q})$ of $\diff_+\R$ through $\boldsymbol{q}$ 
coincides with the orbit $\Diff_+\R(\boldsymbol{q})$ of $\Diff_+\R$ 
through $ \boldsymbol{q}$ 
\begin{equation*}
\exp (\diff_+\R)(\boldsymbol{q})=\Diff_+\R(\boldsymbol{q}).
\end{equation*}
\end{lemma}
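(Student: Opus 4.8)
The plan is to compute both orbits explicitly and observe that the finite–dimensional affine subgroup of $\Diff_+\R$ already realizes all the required motions. Recall that $\varphi\in\Diff_+\R$ acts on $J_0^1$ by $\varphi\cdot(t,x,\dot x)=(\varphi(t),x,\dot x/\dot\varphi(t))$ (equation (\ref{action})), so for $\boldsymbol q=(t_0,x_0,\dot x_0)$ the only relevant data are $\varphi(t_0)\in\R$ and $\dot\varphi(t_0)>0$. Hence $\Diff_+\R(\boldsymbol q)\subseteq\{(u,x_0,s\,\dot x_0)\mid u\in\R,\ s>0\}$, and since $\dot x_0\neq0$ on $J_0^1$ this set is a $2$-dimensional embedded submanifold of $J_0^1$. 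For the reverse containment, the affine maps $\varphi(t)=at+b$ with $a>0$ form a subgroup $\Aff_+\R\subset\Diff_+\R$ acting by $\varphi\cdot\boldsymbol q=(at_0+b,\,x_0,\,a^{-1}\dot x_0)$; as $(a,b)$ runs over $\R_{>0}\times\R$ this hits every point of that submanifold, so in fact $\Diff_+\R(\boldsymbol q)=\{(u,x_0,s\,\dot x_0)\mid u\in\R,\ s>0\}$ and $\Aff_+\R$ already acts transitively on it.

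Next I would prove $\exp(\diff_+\R)(\boldsymbol q)\subseteq\Diff_+\R(\boldsymbol q)$. By Proposition \ref{4.1}, the prolongation of $X_\tau=\tau(t)\partial_t\in\diff_+\R$ to $J^1$ is $X_\tau^1=\tau\partial_t-\dot\tau\,\dot x\,\partial_{\dot x}$, which is precisely the infinitesimal generator of the $\Diff_+\R$–action through the one-parameter subgroup $\exp(\varepsilon X_\tau)$ (this is just the derivative of $\varphi\cdot(t,x,\dot x)$ along that subgroup). Consequently any concatenation of flow segments of vector fields in $\diff_+\R$ applied to $\boldsymbol q$ remains in $\Diff_+\R(\boldsymbol q)$; if some $X_\tau$ happens to be incomplete one simply subdivides its flow time into steps small enough that each step agrees, near the point in question, with a genuine element of $\Diff_+\R$, which does not change the conclusion.

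For the reverse inclusion $\Diff_+\R(\boldsymbol q)\subseteq\exp(\diff_+\R)(\boldsymbol q)$, I would work inside $\Aff_+\R$. Its Lie algebra $\mathfrak{aff}_+\R$, viewed as vector fields on $\R$, is spanned by $(t-c_1)\partial_t$ and $(t-c_2)\partial_t$ for any $c_1\neq c_2$; both of these lie in $\diff_+\R$ (their $t$-derivatives equal $1$, which never vanishes) and both are complete, generating the dilations about $c_1$ and $c_2$. Since $\Aff_+\R$ is a connected Lie group it is generated, as a group, by $\exp(\R\,(t-c_1)\partial_t)$ and $\exp(\R\,(t-c_2)\partial_t)$, so every element of $\Aff_+\R$ is a finite product of flows of vector fields in $\diff_+\R$. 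Because $\Aff_+\R$ acts transitively on $\Diff_+\R(\boldsymbol q)$, every point of $\Diff_+\R(\boldsymbol q)$ is reachable from $\boldsymbol q$ by such flows, giving the desired inclusion; together with the previous paragraph this proves $\exp(\diff_+\R)(\boldsymbol q)=\Diff_+\R(\boldsymbol q)$.

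The only real subtlety, and the point one must be careful about, is that the paper's definition of $\diff_+\R$ imposes $\dot\tau\neq0$, which excludes the generator $\partial_t$ of pure translations; one therefore cannot "translate $t_0$ to $u$" in one obvious step. The resolution is exactly the observation above: two dilation generators $(t-c_1)\partial_t,(t-c_2)\partial_t$ — both satisfying $\dot\tau\equiv1$ — already span $\mathfrak{aff}_+\R$ and hence generate all of $\Aff_+\R$. If one prefers to avoid invoking that a connected Lie group is generated by the exponentials of its Lie algebra, the same end is reached by an explicit composition: first flow along $(t-t_0)\partial_t$ (which fixes $t_0$ and rescales the fibre coordinate $\dot x$ by an arbitrary positive factor), then flow along $(t-c)\partial_t$ with $c$ and the time chosen so that the composite carries $\boldsymbol q$ to the prescribed $(u,\,x_0,\,s\,\dot x_0)$.
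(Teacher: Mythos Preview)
Your proof is correct and takes a genuinely different route from the paper's. The paper proceeds by direct integration: it writes the ODE for the integral curves of $X_\tau^1 = \tau\,\partial_t - \dot\tau\,\dot x\,\partial_{\dot x}$, picks the specific generator $\tau(t) = e^{-t}$ (which satisfies $\dot\tau \neq 0$), solves $\int_{t_0}^t e^{t'}\,dt' = s$ to see that the $t$-coordinate sweeps out all of $\R$, and then argues---somewhat loosely, invoking that ``$\tau$ is an arbitrary function''---that the $\dot x$-coordinate sweeps out all positive multiples of $\dot x_0$, so that $\exp(\diff_+\R)(\boldsymbol q)=\{(u,x_0,e^v\dot x_0)\mid (u,v)\in\R^2\}$; it then computes the $\Diff_+\R$-orbit separately and matches the two. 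You instead isolate the finite-dimensional affine subgroup $\Aff_+\R \subset \Diff_+\R$, observe that it already acts transitively on the $\Diff_+\R$-orbit, and then note that its Lie algebra is spanned by the complete vector fields $(t-c_1)\partial_t$, $(t-c_2)\partial_t$ lying in $\diff_+\R$ (with $\dot\tau \equiv 1$), whose flows therefore generate all of $\Aff_+\R$. Your approach is more structural and handles the awkward constraint $\dot\tau \neq 0$ more transparently---you explicitly flag that $\partial_t \notin \diff_+\R$ and explain why two dilation generators circumvent this---whereas the paper's argument is more hands-on but leaves the reader to check that the single choice $\tau = e^{-t}$ together with ``arbitrary $\tau$'' really fills out the full two-dimensional orbit.
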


\begin{proof}
For $X=\tau \partial_t\in \diff_+\R$, the
integral curves of $X^{1}=\tau \frac{\partial }{\partial t}-\dot{\tau}\dot{x} 
\frac{\partial }{\partial \dot{x}}$ satisfy the differential equations 
\begin{equation*}
\frac{dt}{ds}=\tau ,\quad \frac{dx}{ds}=0, \quad\text{and}\quad 
\frac{d\dot{x}}{ds}=- 
\dot{\tau}\dot{x}.
\end{equation*} 
Hence, 
\begin{equation*}
\frac{dt}{\tau }=ds
\end{equation*} 
and 
\begin{equation*}
\int_{t_{0}}^{t}\frac{dt^{\prime }}{\tau (t^{\prime })}=s.
\end{equation*} 
Choosing $\tau (t)=e^{-t}$, 
\begin{equation*}
\int_{t_{0}}^{t}\frac{dt^{\prime }}{\tau (t^{\prime })} 
=\int_{t_{0}}^{t}e^{t^{\prime }}dt^{\prime }=e^{t}-e^{t_{0}}.
\end{equation*} 
Hence, $e^{t}-e^{t_{0}}=s$, which implies that 
\begin{equation*}
t=\log \left\vert s+e^{-t_{0}}\right\vert .
\end{equation*} 
But the range of the logarithm is $(-\infty ,\infty )$. Therefore,
the range of values of $t$ on the orbit of $\diff_+\R$
through $\boldsymbol{q}$ is $(-\infty ,\infty ).$

Moreover, for $i=1,2,3,$ 
\begin{equation*}
\frac{d\dot{x}_{i}}{ds}=-\dot{\tau}\dot{x}_{i}
\end{equation*} 
implies 
\begin{equation*}
\frac{d\dot{x}_{i}}{\dot{x}_{i}}=-\dot{\tau}(t(s))\,ds
\end{equation*} 
so that 
\begin{equation*}
\dot{x}(s)=\dot{x}_{0}\exp \left( -\int_{0}^{s}\dot{\tau}(t(s))\,ds\right) .
\end{equation*} 
Since $\tau (t)$ is an arbitrary function of $t$, it follows that the orbit
of $\diff_+\R$ through $q=(t_{0},x_{0},\dot{x}_{0})$ is 
\begin{equation*}
\exp (\diff_+\R)(q)=\{(u,x_{0},e^{v}\dot{x})\mid (u,v)\in 
\R^{2}\}.
\end{equation*}

The action of $\Diff_+\R$ on $J_{0}^{1}$
is 
\begin{equation*}
\Diff_+\R\times J_{0}^{1}\rightarrow J_{0}^{1}:(\varphi
,(t,x,\dot{x}))\mapsto \left( \varphi (t),x,\frac{\dot{x}}{\dot{\varphi}(t)} 
\right) ,
\end{equation*} 
where $\dot{\varphi}(t)>0$. Since, $\varphi (t)$ and $\dot{\varphi}(t)$ are
independent, it follows that the orbit of $\Diff_+\R$
through $q$ is 
\begin{equation*}
\Diff_+\R(q)=\{(u,x_{0},w\dot{x})\mid (u,w)\in \R 
^{2},~w>0\}.
\end{equation*} 
Hence, $\exp (\diff_+\R)(q)=\Diff_+\R(q)$.
\end{proof}

\begin{theorem}
A function $\Psi ^{\prime }\in C^{\infty }(J_{0}^{1})\otimes \C$ is 
$\Diff_+\R$-invariant if and only if 
\begin{equation*}
\boldsymbol{Q}_{J_{\tau }}\Psi ^{\prime }=0
\end{equation*} 
for all $\tau \in \diff_{+}\R$.
\end{theorem}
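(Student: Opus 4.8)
The plan is to read this off from Proposition \ref{phi epsilon}, which identifies $\tfrac{-i}{\hbar}\boldsymbol{Q}_{\mathscr{J}_{\tau}}$ as the infinitesimal generator of the one-parameter group $\epsilon\mapsto\Xi_{\varphi_{\epsilon}}$ attached to $\varphi_{\epsilon}=\exp\epsilon\tau$, together with the connectedness of $\Diff_{+}\R$ and the representation property $\Xi_{\alpha\circ\beta}=\Xi_{\alpha}\Xi_{\beta}$ established above. Recall that $\Psi'$ is $\Diff_{+}\R$-invariant precisely when $\Xi_{\varphi}\Psi'=\Psi'$ for every $\varphi\in\Diff_{+}\R$. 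For the forward implication, assume $\Psi'$ is $\Diff_{+}\R$-invariant, fix $\tau\in\diff_{+}\R$, and let $\varphi_{\epsilon}=\exp\epsilon\tau$ be the flow of $\tau\partial_{t}$; then $\Xi_{\varphi_{\epsilon}}\Psi'=\Psi'$ for all $\epsilon$ in the domain, and differentiating at $\epsilon=0$ with Proposition \ref{phi epsilon} yields $\tfrac{-i}{\hbar}\boldsymbol{Q}_{\mathscr{J}_{\tau}}\Psi'=0$, hence $\boldsymbol{Q}_{\mathscr{J}_{\tau}}\Psi'=0$. (No completeness issue arises: the identity $\boldsymbol{Q}_{\mathscr{J}_{\tau}}\Psi'=0$ at a point $(t,x,\dot x)$ is a pointwise condition depending only on $\tau(t)$ and $\dot\tau(t)$.)

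For the converse, assume $\boldsymbol{Q}_{\mathscr{J}_{\tau}}\Psi'=0$ for all $\tau$. Given $\varphi\in\Diff_{+}\R$, connect it to the identity by the convex path $\varphi_{s}=(1-s)\,\mathrm{id}+s\varphi$, $s\in[0,1]$: since $\varphi_{s}'=1-s+s\varphi'>0$ and $\varphi_{s}$ is proper, each $\varphi_{s}$ lies in $\Diff_{+}\R$, and $\varphi_{s}$ is jointly smooth in $(s,t)$. Set $F(s)=\Xi_{\varphi_{s}}\Psi'$ and write $\varphi_{s+h}=\varphi_{s}\circ(\varphi_{s}^{-1}\circ\varphi_{s+h})$, so that $F(s+h)=\Xi_{\varphi_{s}}\bigl(\Xi_{\varphi_{s}^{-1}\circ\varphi_{s+h}}\Psi'\bigr)$. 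The family $h\mapsto\varphi_{s}^{-1}\circ\varphi_{s+h}$ is the identity at $h=0$ with $t$-derivative $\tau_{s}\partial_{t}$, where $\tau_{s}(t)=\bigl(\varphi(t)-t\bigr)/\varphi_{s}'(t)$ is smooth in $(s,t)$; since the $h$-derivative at $h=0$ of $\Xi_{\psi_{h}}\Psi'$ depends only on the first-order jet of the family $\psi_{h}$, Proposition \ref{phi epsilon} applies and gives $\tfrac{d}{dh}\big|_{h=0}\Xi_{\varphi_{s}^{-1}\circ\varphi_{s+h}}\Psi'=\tfrac{-i}{\hbar}\boldsymbol{Q}_{\mathscr{J}_{\tau_{s}}}\Psi'=0$. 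As $\Xi_{\varphi_{s}}$ is linear and continuous it passes through the $h$-derivative, so $F'(s)=\Xi_{\varphi_{s}}(0)=0$; hence $F$ is constant and $\Xi_{\varphi}\Psi'=F(1)=F(0)=\Psi'$. This holds for every $\varphi$, so $\Psi'$ is $\Diff_{+}\R$-invariant.

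The step requiring genuine care is the last differentiation: one must verify that $\tfrac{d}{dh}\big|_{h=0}\Xi_{\psi_{h}}\Psi'$ depends only on $\dot\psi_{0}$ — so that Proposition \ref{phi epsilon}, stated for the exponential groups $\exp\epsilon\tau$, may legitimately be invoked for the non-autonomous family $\psi_{h}=\varphi_{s}^{-1}\circ\varphi_{s+h}$ — and that this derivative commutes with the linear operator $\Xi_{\varphi_{s}}$. Both follow by a direct chain-rule computation from the explicit pointwise formula \eqref{Ksi phi} for $\Xi_{\varphi}$, using joint smoothness of $\varphi_{s}$ in $(s,t)$ and working on an exhaustion of $J_{0}^{1}$ by compact sets. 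An alternative to the convex path is to note that $\Diff_{+}\R$ is generated by any neighbourhood of the identity and that elements near the identity are time-one flows, reducing again to Proposition \ref{phi epsilon}; but the explicit path above makes the connectedness input completely transparent.
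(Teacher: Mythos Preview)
Your argument is correct, and the forward direction is identical to the paper's. For the converse, however, you take a genuinely different route. The paper invokes the lemma immediately preceding the theorem, which identifies the $\diff_+\R$-orbits with the $\Diff_+\R$-orbits in $J_0^1$; it then argues pointwise, observing that $\Xi_{\varphi^{-1}}\Psi'(t,x,\dot{x})$ depends on $\varphi$ only through $(\varphi(t),\dot\varphi(t))$, and that by the orbit lemma these values can be matched by a finite composition of one-parameter subgroups, each of which fixes $\Psi'$. Your proof instead exploits path-connectedness of $\Diff_+\R$ directly via the convex homotopy $\varphi_s=(1-s)\,\mathrm{id}+s\varphi$, differentiates $s\mapsto\Xi_{\varphi_s}\Psi'$, and reduces to the infinitesimal hypothesis. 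This is the standard ``infinitesimal invariance plus connectedness implies global invariance'' argument, and it has the advantage of not needing the orbit lemma at all; the paper's approach, by contrast, makes the pointwise finite-dimensionality of the action manifest. One small remark: since the paper's (somewhat idiosyncratic) definition of $\diff_+\R$ imposes $\dot\tau\neq0$, and your $\tau_s$ need not satisfy this, you are implicitly using that $\boldsymbol{Q}_{\mathscr{J}_\tau}\Psi'$ is linear in $(\tau,\dot\tau)$ so the hypothesis extends to arbitrary smooth $\tau$---your parenthetical about pointwise dependence on $(\tau(t),\dot\tau(t))$ is exactly what justifies this.
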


\begin{proof}
If $\Psi ^{\prime }\in C^{\infty }(J_{0}^{1})\otimes \C$ is 
$\Diff_+\R$-invariant, then it is invariant under the action of
every one-parameter subgroup of $\Diff_+\R$. By
proposition \ref{phi epsilon}, actions of one-parameter subgroups of $ 
\Diff_+\R$ on $C^{\infty }(J_{0}^{1})\otimes \C$
are generated by $\frac{-i}{\hbar }\boldsymbol{Q}_{\mathscr{J}_{\tau }}$ for 
$\tau \in \diff_+\R$. Hence, $\boldsymbol{Q}_{J_{\tau
}}\Psi ^{\prime }=0$ for all $\tau \in \diff_+\R$.

Conversely, suppose that $\Psi ^{\prime }$ is a function in $C^{\infty
}(J_{0}^{1})\otimes \C$ such that $\boldsymbol{Q}_{J_{\tau }}\Psi
^{\prime }=0$ for all $\tau \in \diff_{+}\R$. Hence, $\Xi
_{\varphi _{\epsilon }}\Psi ^{\prime }=\Psi ^{\prime }$ for every
one-parameter subgroup $\varphi _{\epsilon }$ of $\Diff_{+}\R
$. Recall that, for $\varphi \in \diff_+\R$, 
\begin{eqnarray*}
\Xi _{\varphi }\Psi ^{\prime }(t,x,\dot{x}) &=&(\varphi ^{-1})^{1\ast }[\dot{ 
\varphi}(t)\Psi ^{\prime }(t,x,\dot{x})]=[(\varphi ^{-1})^{\ast }\dot{\varphi 
}(t)][(\varphi ^{-1})^{1\ast }\Psi ^{\prime }(t,x,\dot{x})] \\
&=&\dot{\varphi}(\varphi ^{-1}(t))\Psi ^{\prime }(\varphi ^{-1}(t,x,\dot{x} 
))=\frac{1}{\frac{d\varphi ^{-1}(t)}{dt}}\Psi ^{\prime }(\varphi ^{-1}(t,x, 
\dot{x})),
\end{eqnarray*} 
because 
\begin{equation*}
\varphi \circ \varphi ^{-1}=\mathrm{identity}
\end{equation*} 
implies 
\begin{equation*}
\dot{\varphi}(\varphi ^{-1}(t))\frac{d\varphi ^{-1}(t)}{dt}=1.
\end{equation*} 
Therefore, 
\begin{equation*}
\Xi _{\varphi ^{-1}}\Psi ^{\prime }(t,x,\dot{x})=\frac{1}{\dot{\varphi}(t)} 
\Psi ^{\prime }(\varphi (t,x,\dot{x})).
\end{equation*}

It follows from the lemma above that there exists a finite sequence of
one-parameter subgroups $\varphi_{\epsilon _{1}},\dots,\varphi _{\epsilon
_{k}}$ such that 
\begin{equation*}
\varphi (t,x,\dot{x})=\varphi _{\epsilon _{k}}(\dots(\varphi _{\epsilon
_{1}}(t,x,\dot{x}))\dots).
\end{equation*} 
Since 
\begin{equation*}
\varphi (t,x,\dot{x})=\left( \varphi (t),x,\frac{\dot{x}}{\dot{\varphi}(t)} 
\right),
\end{equation*} 
\begin{equation*}
\varphi (t)=\varphi _{\epsilon _{k}}(\dots(\varphi _{\epsilon
_{1}}(t))\dots)=\varphi _{\epsilon _{k}}\circ \cdots\circ \varphi _{\epsilon 
_{1}}(t)
\end{equation*} 
and 
\begin{equation*}
\dot{\varphi}(t)=\dot{\varphi}_{\epsilon _{k}}(\dots(\varphi _{\epsilon
_{1}}(t))\dots)\dots\dot{\varphi}_{\epsilon _{1}}(t)=\frac{d}{dt}\varphi 
_{\epsilon
_{k}}\circ \cdots\circ \varphi _{\epsilon _{1}}(t).
\end{equation*} 
Hence, 
\begin{eqnarray*}
\Xi _{\varphi ^{-1}}\Psi ^{\prime }(t,x,\dot{x}) &=&\frac{1}{\dot{\varphi}(t) 
}\Psi ^{\prime }(\varphi (t,x,\dot{x}))=\frac{1}{\dot{\varphi}(t)}\Psi
^{\prime }\left( \varphi (t),x,\frac{\dot{x}}{\dot{\varphi}(t)}\right) \\
&=&\frac{1}{\frac{d}{dt}\varphi _{\epsilon _{k}}\circ \cdots\circ \varphi
_{\epsilon _{1}}(t)}\Psi ^{\prime }\left( \varphi _{\epsilon _{k}}\circ
\cdots\circ \varphi _{\epsilon _{1}}(t),x,\frac{\dot{x}}{\frac{d}{dt}\varphi
_{\epsilon _{k}}\circ \cdots\circ \varphi _{\epsilon _{1}}(t)}\right) \\
&=&\Xi _{(\varphi _{\epsilon _{k}}\circ \cdots\circ \varphi _{\epsilon
_{1}})^{-1}}\Psi ^{\prime }(t,x,\dot{x})=\Xi _{\varphi _{\epsilon
_{1}}^{-1}\circ \cdots\circ \varphi _{\epsilon _{k}}^{-1}}\Psi ^{\prime }(t,x, 
\dot{x})=\Xi _{\varphi _{-\epsilon _{1}}\circ \cdots\circ \varphi _{-\epsilon
_{k}}}\Psi ^{\prime }(t,x,\dot{x}) \\
&=&\Xi _{\varphi _{-\epsilon _{1}}}\cdots\Xi _{\varphi _{-\epsilon _{k}}}\Psi
^{\prime }(t,x,\dot{x})=\Psi ^{\prime }(t,x,\dot{x}),
\end{eqnarray*} 
because $\Psi ^{\prime }$ is invariant under the action of one-parameter
subgroups of $\Diff_+\R$.
\end{proof}

\subsection{Quantum implementation of constraints}

In the Bleuler-Gupta approach, the extended phase space is quantized first.
This associates to each constraint function the corresponding quantum
operator. The next step is to implement the constraint conditions on the
quantum level. This is done by placing a restriction on the states of the 
system.

\begin{definition}
{\it Admissible quantum states} are eigenstates of the quantum operators
associated to the constraint functions corresponding to the joint eigenvalue
zero.
\end{definition}

For elastica, the classical constraints are  
\begin{eqnarray*}
p_{t} &=&0, \\
\left\langle p_{\dot{x}},\dot{x}\right\rangle &=&0, \\
\left\vert \dot{x}\right\vert ^{3}\left\langle p_{\dot{x}},p_{\dot{x} 
}\right\rangle +4\left\langle p_{x},\dot{x}\right\rangle &=&0.
\end{eqnarray*} 
However, linear combinations of these functions with smooth coefficients
leads to further functions that vanish on the range of Legendre
transformation. We might not be able to quantize these functions in our
chosen quantization scheme. This is why it is important to have criteria that
help select a convenient basis of the ideal of functions that
vanish on the range of Legendre transformation.

The first two constraint conditions $p_{t}=0$ and $ 
\left\langle p_{\dot{x}},\dot{x}\right\rangle =0$ are equivalent to
vanishing of the momenta $\mathscr{J}_{\tau }$ for the action of one-parameter
subgroups of $\Diff_+\R$. Therefore, the quantum
implementation of these constraints is the requirement that the admissible wave
functions $\Psi $ should satisfy the conditions 
\begin{equation*}
\boldsymbol{Q}_{\mathscr{J}_{\tau }}\Psi =0\text{ for }\tau \in \diff_+\R.
\end{equation*} 
By the results of the preceding section, this is equivalent to requiring
that admissible states should be invariant under the action of the
quantization representation of $C^{\infty }(J_{0}^{1})\otimes \C$.

The third constraint function has no immediately clear geometric 
interpretation. We have
used the freedom of the choice of generators of the ideal of constraint
functions and replaced it by the reparametrization invariant function 
\begin{equation*}
h=\frac{\left\vert \dot{x}\right\vert ^{2}}{4}\left\langle p_{\dot{x}},p_{ 
\dot{x}}\right\rangle +\frac{\left\langle p_{x},\dot{x}\right\rangle }{ 
\left\vert \dot{x}\right\vert }.
\end{equation*} 
The quantum operator corresponding to $h$ is 
\begin{equation*}
\boldsymbol{Q}_{h}=-\frac{\hbar ^{2}}{4}\left( \left\vert \dot{x}\right\vert
^{2}\dot{\Delta}-\frac{1}{3}\right) -\frac{i\hbar }{\left\vert \dot{x} 
\right\vert }\left\langle \dot{x},\frac{\partial }{\partial \dot{x}} 
\right\rangle .
\end{equation*} 
Thus,  admissible states $\Psi $ of quantum elastica are also required to 
satisfy the equation $\boldsymbol{Q}_{h}\Psi =0$.

\begin{summary}
The admissible states of quantum elastica are given by functions $\Psi \in
C^{\infty }(J_{0}^{1})\otimes \C$ that are invariant under the
quantization representation of the reparametrization group $\Diff_+\R$ and 
satisfy the equation 
\begin{equation*}
-\frac{\hbar ^{2}}{4}\left( \left\vert \dot{x}\right\vert ^{2}\dot{\Delta}- 
\frac{1}{3}\right) \Psi -\frac{i\hbar }{\left\vert \dot{x}\right\vert } 
\left\langle \dot{x},\frac{\partial }{\partial \dot{x}}\right\rangle \Psi =0.
\end{equation*}
\end{summary}

As we have mentioned before, admissible functions of quantum elastica
are not square integrable on $J_{0}^{1}$. Therefore, in this
formulation of the theory, we need to introduce a new scalar product
on the space of admissible states using physical or geometric
criteria. For example, we could use $ \Diff_+\R$ invariance
of admissible quantum states and relate them to smooth functions on
the space $S$ of $\Diff_+\R $ -orbits in $J_{0}^{1}$, which
satisfy the differential equation obtained from the quantization of $h$
considered as a function on $T^{\ast }S$.

\bibliographystyle{plain}


\vspace{20pt}

\noindent Larry M. Bates \newline
Department of Mathematics \newline
University of Calgary \newline
Calgary, Alberta \newline
Canada T2N 1N4 \newline
bates@ucalgary.ca

\vspace{20pt}

\noindent Robin Chhabra \newline
Guidance, Navigation and Control Department \newline
MacDonald, Dettwiler and Associates Ltd. \newline
Brampton, Ontario \newline
Canada L6S 4J3 \newline
robin.chhabra@mdacorporation.com

\vspace{20pt}

\noindent J\k{e}drzej \'Sniatycki \newline
Department of Mathematics \newline
University of Calgary \newline
Calgary, Alberta \newline
Canada T2N 1N4 \newline
sniatyck@ucalgary.ca

\end{document}